\documentclass[11pt]{article}
\usepackage[utf8]{inputenc}

\usepackage{amssymb, amsmath, amsthm, graphicx, bbm, authblk}
\usepackage{mathtools}
\usepackage{fullpage}
\usepackage[dvipsnames]{xcolor}
\RequirePackage[round]{natbib}
\RequirePackage[hyperindex=true,pdftitle={},pagebackref=false,
colorlinks,citecolor=blue,urlcolor=blue,plainpages=false,
pdfpagelabels]{hyperref}
\usepackage{cleveref}
\usepackage{tikz}
\usetikzlibrary{arrows}

\usepackage{comment}

\usepackage{tikz}
\usetikzlibrary{arrows.meta}
\usetikzlibrary{decorations.pathreplacing}

\usepackage{algorithm}
\usepackage{algpseudocode}
\algnewcommand\algorithmicinput{\textbf{INPUT:}}
\algnewcommand\INPUT{\item[\algorithmicinput]}
\algnewcommand\algorithmicoutput{\textbf{OUTPUT:}}
\algnewcommand\OUTPUT{\item[\algorithmicoutput]}

\DeclareMathOperator*{\argmin}{arg\,min}

\DeclareMathOperator*{\sign}{sign}

\DeclareMathOperator*{\cov}{Cov}
\DeclareMathOperator*{\var}{Var}

\newcommand{\I}{{\mathcal I }}

\newcommand{\J}{{  \mathcal J } }
\newcommand{\E}{ { \mathbb{E} } }
\newcommand{\hatp}{  {\widehat  { \mathcal P }  } }
\newcommand{\s}{ { \mathfrak{s}  } }
\newcommand{\p}{\mathbb P}
\DeclareMathOperator*{\supp}{supp}
\DeclareMathOperator*{\CL}{Cl}
\DeclareMathOperator*{\conv}{conv}
\DeclareMathOperator*{\card}{Card}

\theoremstyle{plain}
\newtheorem{theorem}{Theorem}
\newtheorem{lemma}[theorem]{Lemma}

\newtheorem{proposition}[theorem]{Proposition}
\newtheorem{assumption}{Assumption}
\newtheorem{definition}{Definition}

\theoremstyle{definition}
\newtheorem{remark}{Remark}

\allowdisplaybreaks

\title{Change point inference in high-dimensional regression models under temporal dependence}
\author[1]{Haotian Xu}
\author[2]{Daren Wang}
\author[3]{Zifeng Zhao}
\author[1]{Yi Yu}
\affil[1]{Department of Statistics, University of Warwick}
\affil[2]{Department of Statistics, University of Notre Dame}
\affil[3]{Mendoza College of Business, University of Notre Dame}
\date{\today}

\begin{document}

\maketitle

\begin{abstract}
    This paper concerns about the limiting distributions of change point estimators, in a high-dimensional linear regression time series context, where a regression object $(y_t, X_t) \in \mathbb{R} \times \mathbb{R}^p$ is observed at every time point $t \in \{1, \ldots, n\}$. At unknown time points, called change points, the regression coefficients change, with the jump sizes measured in $\ell_2$-norm. We provide limiting distributions of the change point estimators in the regimes where the minimal jump size vanishes and where it remains a constant. We allow for both the covariate and noise sequences to be temporally dependent, in the functional dependence framework, which is the first time seen in the change point inference literature.  We show that a block-type long-run variance estimator is consistent under the functional dependence, which facilitates the practical implementation of our derived limiting distributions. We also present a few important byproducts of our analysis, which are of their own interest. These include a novel variant of the dynamic programming algorithm to boost the computational efficiency, consistent change point localisation rates under temporal dependence and a new Bernstein inequality for data possessing functional dependence.  Extensive numerical results are provided to support our theoretical results.  The proposed methods are implemented in the R package \texttt{changepoints} \citep{changepoints_R}.

\vskip 0.3cm
{\bf Keywords.} High-dimensional linear regression; Change point inference; Functional dependence; Long-run variance; Confidence interval.

\end{abstract}

\section{Introduction}\label{sec:intro}

Given a sequence of data $\{(y_t, X_t)\}_{t = 1}^n \subset \mathbb{R} \times \mathbb{R}^p$, with the dimensionality $p$ being a function of the sample size $n$, assume that 
\begin{equation}\label{eq-model-1}
  y_t = X_t^{\top}\beta_t^* + \epsilon_t, \quad t = 1, \dots, n, 
\end{equation}
where $\{y_t\}_{t = 1}^n$ are the responses, $\{X_t\}_{t = 1}^n$ are the covariates, $\{\epsilon_t\}_{t = 1}^n$ are the error terms and $\{\beta^*_t\}_{t = 1}^n$ are true regression coefficients.  Across time, assume that the coefficients sequence $\{\beta^*_t\}_{t = 1}^n$ possesses a piecewise-constant pattern, i.e.
\begin{equation}\label{eq-model-2}
    \beta^*_t \neq \beta^*_{t-1}, \quad \mbox{if and only if} \quad t \in \{\eta_1, \ldots, \eta_K\},
\end{equation}
where $\{\eta_k\}_{k = 1}^K \subset \{1, \ldots, n\}$ are called change points, satisfying that 
\begin{equation}\label{eq-model-3}
    1 = \eta_0 < \eta_1 < \cdots < \eta_K < \eta_{K+1} = n+1. 
\end{equation}
We refer to the model specified in \eqref{eq-model-1}, \eqref{eq-model-2} and \eqref{eq-model-3} as the high-dimensional linear regression model with change points.  Data of this pattern commonly arise from various application areas, including  biology, climatology, economics, finance and neuroscience.   As a concrete example, \cite{rinaldo2021localizing} studied an air quality data set \citep{data-set}, with the daily index data of Particular Matter 10 in 2015 in Banqiao as responses, other climate and air quality indices of Banqiao and other nearby cities as covariates.  \cite{rinaldo2021localizing} localised two change points of the regression coefficients, corresponding to the first strong typhoon in 2015 in the area and the beginning of the severe air-pollution season starting in winter.

The considered high-dimensional linear regression model with change points falls into the vast body of change point analysis, which has been identified as one of the major challenges for modern data applications~\citep{Council2013}.  The primary interest of change point analysis is to study the possibly piecewise-constant pattern in various data types. The best studied data type is fixed-dimensional time series, see e.g.~\cite{yao1988estimating}, \cite{yao1989least}, \cite{shao2010testing}, \cite{cao2015changepoint}, \cite{wang2020univariate}, \cite{verzelen2020optimal} and \cite{wang2021testing}.  We refer to \cite{Csoergoe1997}, \cite{brodsky2013nonparametric} for book-length treatments, and to \cite{aue:13}, \cite{Casini2019structural} for survey articles.  Due to the increasing availability of big data, high-dimensional data have recently received great attention in change point analysis, including analysis in high-dimensional vector sequences \citep[e.g.][]{wang2021optimal, pilliat2020optimal}, matrices/networks sequences \citep[e.g.][]{bhattacharjee2018change, wang2021optimal}, functional curves sequences \citep[e.g.][]{berkes2009detecting, aue2018detecting, misael2022change}, among many others. 

Depending on specific statistical tasks, work on high-dimensional data change point analysis can be further grouped into several branches: (i) \cite{chen2021inference}, \cite{yuchen2021}, \cite{wangzhushaovolgushev2021AoS}, \cite{zhang2021adaptive} and \cite{wang2022optimal} conducted hypothesis testing on the existence of change points.  This task is referred to as testing.  (ii) \cite{lee2016lasso}, \cite{kaul2019efficient}, \cite{wang2021statistically}, \cite{rinaldo2021localizing}, \cite{chen2021inference}, \cite{yuchen2021} and \cite{kaul2021inference} provided estimation error controls on the estimated change point locations.  This task is referred to as localisation/estimation.  (iii) \cite{bai2010common}, \cite{wangshao2020}, \cite{kaul2021inference} and \cite{chen2021inference} derived limiting distributions of the change point estimators.  This task is referred to as inference.

For all three statistical tasks we summarised above, a handle on the unknown variance is always desirable, and in fact vital for the testing and inference tasks. However, most of the existing change point literature in high-dimension provides only heuristic variance estimators justified through numerical experiments~\citep[e.g.][]{kaul2021inference, wang2022optimal}. For the problem of mean change in a high-dimensional time series, \cite{chen2021inference} proposed a robust block-type estimator for the long-run covariance matrix, with theoretical justification, and \cite{zhang2021adaptive} and \cite{wangzhushaovolgushev2021AoS} cleverly avoided variance estimation via the use of U-statistics and self-normalisation.

Besides different data types and different tasks, different temporal dependence notions have also been adopted in the change point analysis literature.  Temporal dependence assumptions include independence assumptions, linear process assumptions, different types of mixing conditions ($\alpha$, $\beta$, $\tau$, $\rho$, etc.)~and the functional dependence assumption \citep{wu2005nonlinear}. In particular, the functional dependence measure (see \Cref{sec:background} for more details) covers a large class of dependent processes, including linear and nonlinear processes, and enjoys mathematical convenience when dealing with more complex data types.  Incorporating temporal dependence in high-dimensional data is in general challenging but is attracting increasing attention due to its practical importance. In the existing literature, the high-dimensional linear regression model with change points is studied under independence assumptions \citep{lee2016lasso, kaul2019efficient, wang2021statistically, rinaldo2021localizing} for estimation and $\beta$-mixing conditions \citep{wang2022optimal} for testing. To the best of our knowledge, this model is yet to be studied under functional dependence.

In this paper, we consider the inference task for the high-dimensional linear regression model with change points specified in \eqref{eq-model-1}, \eqref{eq-model-2} and \eqref{eq-model-3}.  To our best knowledge, in the existing literature, such model has only been studied in the testing~\citep[e.g.][]{wang2022optimal} or estimation~\citep[e.g.][]{lee2016lasso, kaul2019efficient, wang2021statistically} aspects, while the inference task has yet to be explored.  More discussions on connections with existing literature are deferred to \Cref{sec-rel-lit-result} after we present our main results.  To achieve broad applicability, we allow for both the covariate and noise sequences to possess temporal dependence, as well as dependence between the covariate and noise sequences, detailed in \Cref{sec:background}. This is, arguably, the most general setting considered in the literature. Under temporal dependence and high-dimensionality, we further derive and justify a block-type long-run variance estimator, which facilitates the construction of confidence intervals for the estimated change points.

\subsection{List of contributions}
We summarise the contributions of this paper as follows.
\begin{itemize}
    \item We conduct inference for the high-dimensional linear regression model with change points, allowing for temporal dependence and potentially multiple change points. See \Cref{sec:main_result}. Depending on whether the jump size vanishes as the sample size diverges, the limiting distributions are shown to have two different non-degenerate regimes. This two-regime phenomenon has previously been observed for mean change under the fixed-dimensional time series~\citep[e.g.][]{yao1987approximating,yao1989least,bai1994}, high-dimensional vector time series~\citep[e.g.][]{kaul2021inference} or functional time series setting~\citep[e.g.][]{aue2009estimation}. We believe such result is the first time seen in the high-dimensional regression setting, which requires substantial theoretical effort due to high-dimensionality and temporal dependence.
    
    \item To facilitate the practical implementation of our derived limiting distributions, it is vital to provide a theoretically-justified long-run variance estimator. In \Cref{sec:LRV}, we provide a consistent, block-type estimator for the long-run variance.  Such result is again, the first time seen in the literature for high-dimensional regression under functional dependence.  Based on the long-run variance estimator, we provide a completely data-driven procedure for constructing confidence intervals for change points in \Cref{sec-CI}.
    
    \item In addition, there are three notable byproducts of our analysis that are of independent interest.  
    \begin{itemize}
        \item To improve computational efficiency, we propose a new dynamic programming algorithm to solve the optimal partition problem \citep[e.g.][]{friedrich2008complexity, killick2012optimal} with a Lasso-estimation sub-routine, which lowers the computational complexity from $O(n^3p^2)$ to $O(n^2p^2)$, without sacrificing any memory cost. See \Cref{sec:algorithm}.
        \item In addition to the limiting distributions of the change point estimators, we also provide its localisation properties under functional dependence and more general tail behaviours. See \Cref{sec:theory_preliminary_est}. The final result is jointly characterised by the tail-behaviour and the strength of temporal dependence of covariates and errors.  When the model assumes sub-Gaussian covariates and errors without temporal independence, our results recover the optimal localisation error rate derived in \cite{rinaldo2021localizing}.  
        \item  Last but not least, we provide a novel Bernstein inequality under the functional dependence.  See \Cref{sec:theory_preliminary_est}.  This is an exciting new addition to the literature for solving a wide range of high-dimensional problems under functional dependence.  For instance, in this paper, we derive the estimation error bounds on the Lasso estimator under functional dependence, as an application of this new Bernstein inequality.
    \end{itemize}
\end{itemize}

\subsubsection*{Notation and organisation}

For any vector $v \in \mathbb R^p$ and any $q \in (0, \infty]$, let $|v|_q$ be its $\ell_q$-norm.  For any random variable $Z \in \mathbb{R}$ and any $q > 0$, let $\|Z\|_q= \{ \E[|Z|^q] \}^{1/q}$, if $\E[|Z|^q] < \infty$. For any matrix $A$, let $|A|_{\infty}$ denote its entry-wise max norm. For any set $S$, let $|S|$ denote its cardinality. For two deterministic or random $\mathbb{R}$-valued sequences $a_n, b_n > 0$, write $a_n \gg b_n$ if $a_n/b_n \to \infty$ as $n$ diverges. Write $a_n \lesssim b_n$ if $a_n/b_n \leq C$, for some absolute constant $C>0$ and for all $n$ sufficiently large.  For a deterministic or random $\mathbb{R}$-valued sequence $a_n$, write that a sequence of random variable $X_n = O_p(a_n)$ if $\lim_{M \to \infty}\limsup_{n}\mathbb{P}(|X_n| \geq Ma_n) = 0$.  Write $X_n = o_p(a_n)$ if $\limsup_{n}\mathbb{P}(|X_n| \geq Ma_n) = 0$ for all $M > 0$. The convergences in distribution and probability are respectively denoted by $\overset{\mathcal{D}}{\longrightarrow}$ and $\overset{P.}{\longrightarrow}$. 

The rest of the paper is organised as follows. Functional dependence is introduced in \Cref{sec:background}.  \Cref{sec:method} consists of our estimation procedure, which starts with a new variant of the dynamic programming algorithm tailored for the Lasso estimation sub-routine in \Cref{sec:algorithm}.  The limiting distributions of the change point estimators are collected in \Cref{sec:theory}, accompanied by the new results on Bernstein's inequality, the Lasso estimator and change point localisation rates. In \Cref{sec:LRV}, the long-run variance estimator along with its consistency are presented. Numerical studies including a real data application to macroeconomics are conducted in \Cref{sec:numeric_study}. All the proofs and technical details are relegated to the Appendix. 

\subsection{Background: Functional dependence}\label{sec:background}

Before formally introducing functional dependence, we remark that the functional dependence framework is attractive in two aspects: (1) It is general enough to include a large class of dependence assumptions, including ARMA models, linear processes, GARCH models, iterated random functions \citep{wu2005nonlinear,wu2011asymptotic} and their high-dimensional variants.  (2) It quantifies general nonlinear dependence in terms of moments and is thus easy to be computed explicitly for nonlinear processes.  Many important mathematical tools which are widely used for independent sequences or martingale difference sequences are not available for general nonlinear processes. Functional dependence enables us to import some of the tools from the independence territory to the dependence one, while accommodating a large class of dependence assumptions. In contrast, different mixing type dependence assumptions assert the mixing coefficients to be based on $\sigma$-algebras, thus can be difficult to compute and transfer to useful mathematical tools.  

\medskip
\noindent \textbf{Functional dependence of the covariate sequence.}  For each $t \in \mathbb{Z}$, let
\begin{equation}\label{eq:covariate}
    X_t = (X_{t1}, \dots, X_{tp})^{\top} = G(\mathcal{F}_t^X),  
\end{equation} 
where $G(\cdot) = (g_1(\cdot), \dots, g_p(\cdot))^{\top}$ is an $\mathbb{R}^p$-valued measurable function with input $\mathcal F_t^X  = \{\mathcal{X}_s\}_{s \leq t}$, and $\{\mathcal{X}_t\}_{t \in \mathbb{Z}}$ is a collection of independent and identically distributed~(i.i.d.) random elements.  Note that \eqref{eq:covariate} leads to the strict stationarity of $\{X_t\}_{t \in \mathbb{Z}}$.

For any $t \in \mathbb{Z}$ and any integer pair $s_2 \leq s_1$, let $\mathcal{X}_t^*$ be an independent copy of $\mathcal{X}_t$ and $X_{t, \{s_1, s_2\}} = G(\mathcal{F}^X_{t, \{s_1, s_2\}})$ be a coupled random variable, where 
\begin{equation}\label{eq-def-f-couple}
    \mathcal{F}^X_{t, \{s_1, s_2\}} = \begin{cases}
        \{\dots, \mathcal X_{s_2-1}, \mathcal X_ {s_2}^*, \dots, \mathcal X_{s_1}^*,  \mathcal X _{s_1+1}, \dots, \mathcal X_t\}, & s_1 < t, \\
        \{\dots, \mathcal X_{s_2-1}, \mathcal X_ {s_2}^*, \dots, \mathcal X_t^*\}, & s_2 \leq t \leq s_1, \\
        \{\dots, \mathcal X_t\}, & t < s_2.
    \end{cases}
\end{equation}
For any $s, t \in \mathbb{Z}$, let $\mathcal{F}^X_{t, \{s\}} = \mathcal{F}^X_{t, \{s, s\}}$, satisfying that when $t < s$, $\mathcal{F}^X_{t, \{s\}} = \mathcal{F}^X_t$ and $X_{t, \{s\}} = X_t$.

For any $v\in \mathbb R^p$, any $q > 0$ satisfying that $\|v^\top  X_1\|_q < \infty$ and any $s \in \mathbb{Z}$, let
\begin{equation}\label{eq-functional-dependence-x-def-1}
    \delta_{s, q}^X(v) = \|v^\top X_1 - v^\top X_{1, \{1-s\}}\|_q = \|v^\top X_s - v^\top X _{s, \{0\}}\|_q,
\end{equation}
due to the strict stationarity implied by \eqref{eq:covariate}.  Define the uniform functional dependence measure and its cumulative version as
\begin{equation}\label{eq-functional-dependence-x-def-2}
    \delta_{s, q}^X = \sup_{|v|_2=1} \delta_{s, q}^X (v) \quad \text{and} \quad \Delta_{m, q}^X = \sum_{s = m}^{\infty}\delta ^X _{s, q}, \quad m \in \mathbb{Z},
\end{equation}
respectively. In this paper, functional dependence assumption is imposed on the decay rate of the cumulative uniform function dependence measure $\Delta_{m, q}^X$, see, for example \eqref{eq:X temporal dependence}.

\medskip
\noindent \textbf{Functional dependence of the noise sequence.}  For each $t \in \mathbb{Z}$, let
\begin{equation}\label{eq:error}
    \epsilon_t = g(\mathcal F_t^\epsilon),
\end{equation}
where $g(\cdot)$ is an $\mathbb{R}$-valued measurable function with input $\mathcal F_t^\epsilon = \{\varepsilon_s\}_{s \leq t}$, and $\{\varepsilon_t\}_{t \in \mathbb{Z}}$ is a collection of i.i.d.\ random elements.  Note that \eqref{eq:error} leads to the strict stationarity of $\{\epsilon_t\}_{t \in \mathbb{Z}}$.  

For any $t \in \mathbb{Z}$ and any integer pair $s_2 \leq s_1$, let $\varepsilon_t^*$ be an independent copy of $\varepsilon_t$ and $\epsilon_{t, \{s_1, s_2\}} = g(\mathcal{F}^\epsilon _{t, \{s_1, s_2\}})$ be a coupled random variable, defined in the same way as that in \eqref{eq-def-f-couple}.  For any $q > 0$ satisfying that $\|  \epsilon_1\|_q < \infty$ and any $s \in \mathbb{Z}$, let the functional dependence measure and its cumulative version be
\begin{equation}\label{eq-functional-dependence-eps-def}
    \delta_{s, q}^\epsilon = \|\epsilon_1 - \epsilon_{1, \{1-s\}}\|_q = \|\epsilon_s - \epsilon _{s, \{0\}}\|_q \quad \text{and} \quad \Delta_{m, q}^\epsilon = \sum_{s = m}^{\infty}\delta ^\epsilon_{s, q}, \quad m \in \mathbb{Z},
\end{equation}
respectively. Same as above, functional dependence assumption is imposed on the decay rate of the cumulative function dependence measure $\Delta_{m, q}^\epsilon$, see, for example \eqref{eq:epsilon temporal dependence}.

We remark that although $\{X_t\}_{t \in \mathbb{Z}}$ and $\{\epsilon_t\}_{t \in \mathbb{Z}}$ are assumed to be strictly stationary, which is a direct consequence of the time-invariance of $G(\cdot)$ and $g(\cdot)$, our theoretical analysis in this paper extends to nonstationary cases, which are indeed needed for change point analysis.

\medskip
\noindent \textbf{Functional dependence of nonstationary sequences.}  Despite that $\{X_t\}_{t \in \mathbb{Z}}$ and $\{\epsilon_t\}_{t \in \mathbb{Z}}$ are assumed to be strictly stationary, we still need to deal with nonstationary sequences, for instance~$\{X_{tj}X_t^{\top}\beta^*_t\}_{t \in \mathbb{Z}}$, $j \in \{1, \ldots, p\}$ and $\{\epsilon_tX_t^{\top}\beta^*_t\}_{t \in \mathbb{Z}}$, since the coefficient sequence $\{\beta_t^*\}$ may possess change points.  We therefore introduce the functional dependence measure for a (possibly) nonstationary process $\{Z_t\}_{t \in \mathbb{Z}} \subset \mathbb{R}$.  Suppose that, for each $t \in \mathbb{Z}$,
\begin{equation}\label{eq:nonstationary}
    Z_t = g_t(\mathcal{F}_t^{\zeta}),
\end{equation}
where $g_t(\cdot)$ is a time-dependent $\mathbb{R}$-valued measurable functions with input $\mathcal{F}_t^{\zeta} = \{\zeta_s\}_{s \leq t}$, and $\{\zeta_t\}_{t \in \mathbb{Z}}$ is a collection of i.i.d.\ random elements.  Note that \eqref{eq:nonstationary} covers a large class of locally or piece-wise stationary processes \citep[e.g.][]{dahlhaus2019towards, chen2021inference}.  For any $t \in \mathbb{Z}$ and any integer pair $s_2 \leq s_1$, let $\zeta_t^*$ be an independent copy of $\zeta_t$ and $Z_{t, \{s_1, s_2\}} = g_t(\mathcal{F}^\zeta_{t, \{s_1, s_2\}})$ be a coupled random variable, defined in the same way as that in \eqref{eq-def-f-couple}.  For any $q > 0$ satisfying that $\sup_t\|Z_t\|_q < \infty$ and any $s \in \mathbb{Z}$, let the functional dependence measure and its cumulative version be
\begin{equation}\label{eq:fdm_nonstationary}
    \delta_{s, q} ^Z  = \sup_{t \in \mathbb{Z}}\Vert Z_{t} - Z_{t,\{t-s\}} \|_q \quad \text{and} \quad \Delta^Z_{m, q} = \sum_{s = m}^{\infty}\delta^Z_{s, q}, \quad m \in \mathbb{Z},
\end{equation}
respectively, and we again impose functional dependence assumptions on the decay rate of~$\Delta^Z_{m, q}$.

\section{Methodology}\label{sec:method}

To provide a change point estimator with tractable limiting distributions, we exploit a two-step procedure, where we obtain a preliminary estimator in the first step and further refine it in the second step. This two-step strategy has been summoned regularly in recent literature to handle the multiple change point scenario.  We study the two steps in Sections~\ref{sec:algorithm} and \ref{sec:local_refinement}, respectively.

\subsection{Preliminary estimators}\label{sec:algorithm}

To obtain preliminary estimators, we exploit an $\ell_0$-penalised estimation procedure \citep[e.g.][]{friedrich2008complexity, killick2012optimal, wang2020univariate}.  Let
\begin{align}\label{eq:pre_est}
    \widehat{\mathcal{P}} = \widehat{\mathcal{P}}(\zeta) \in \argmin_{\mathcal{P}} \bigg\{\sum_{\I \in \mathcal{P}}\mathcal{G}(\I) + \zeta|\mathcal{P}| \bigg\},
\end{align}
where the minimisation is over all possible integer partitions of $\{1, \ldots, n\}$, $\mathcal{P}$ denotes an integer partition and $\zeta > 0$ is a penalisation parameter. For any integer interval $\mathcal{I} \subset (0, n]$, the loss function $\mathcal{G}(\cdot)$ is defined as
\begin{equation}\label{eq:interval gof 2}
    \mathcal{G}(\mathcal{I}) = \begin{cases} 
        \sum_{t \in \mathcal I} \{-2y_t X_t^{\top} \widehat{\beta}_{\I} + \widehat{\beta}_{\I}^\top X_t X_t^\top \widehat{\beta}_{\I}\}, & |\I| \geq \zeta, \\
        0, & \text{otherwise},
    \end{cases}    
\end{equation}
with a to-be-specified $\zeta > 0$.  The estimator $\widehat{\beta}_{\mathcal{I}}$ in \eqref{eq:interval gof 2} is defined as 
\begin{align}\label{eq:interval lasso}
\widehat \beta _{ \mathcal I } = \argmin_{\beta \in \mathbb R^p }  \bigg\{\sum_{t \in \mathcal I}   (y_t -X_t ^\top \beta) ^2 +  \lambda |\I |^{1/2}  |\beta |_1 \bigg\},
\end{align}
where $\lambda > 0$ is the Lasso tuning parameter to be specified.  Note that there is a one-to-one relationship between a subset of integers and an integer partition, using the left endpoints of all the intervals.  An estimator $\widehat{\mathcal{P}}$ therefore corresponds to a unique set of change point estimators denoted as $\widehat{\mathcal{B}}$ in \Cref{algorithm:DPDU}. 

\begin{remark}[Tuning parameter $\zeta$ in \eqref{eq:pre_est} and \eqref{eq:interval gof 2}] 
Denote $\zeta_1$ and $\zeta_2$ as the respective tuning parameters in \eqref{eq:pre_est} and \eqref{eq:interval gof 2}, where they play different roles.  In \eqref{eq:pre_est}, $\zeta_1$ penalises over-partitioning and is a high-probability upper bound controlling the loss function $\mathcal{G}(\mathcal{I})$, for desirable $\mathcal{I}$.  While in \eqref{eq:interval gof 2}, $\zeta_2$ restricts the attention to the intervals of lengths at least $\zeta_2$~(see \Cref{algorithm:DPDU}), which are needed for a restricted eigenvalue condition (\Cref{theorem:RES Version II}) to hold.  To see why these two parameters coincide, we discuss from two aspects. Recall that $\kappa$ is the minimum jump size~(see \Cref{assume:regression parameters}\textbf{b} for more details).  Firstly, the success of $\zeta_1$ serving as a high-probability upper bound in \eqref{eq:pre_est} relies on a restricted eigenvalue condition, holding only when the sample size is large enough, i.e.~the sample size is at least larger than the fluctuation.  This means that the tuning parameter in \eqref{eq:pre_est} should be no smaller than $\zeta_2$. Secondly, \eqref{eq:pre_est} implies that the localisation errors, which can be achieved by DPDU, are of order $\zeta_1/\kappa^2$~(see \Cref{prop-1} in the supplementary material) and \eqref{eq:interval gof 2} implies that the localisation errors are at least $\zeta_2$.  Since $\kappa = O(1)$, $\zeta_1/\kappa^2 \gtrsim \zeta_1$, it suggests the choice of $\zeta_1$ (up to a constant factor) should be no larger than $\zeta_2$.  Based on these two, we see that these two tuning parameters can take the same value (up to a constant factor), i.e.~$\zeta_2 = C\zeta_1$ for some absolute constant $C>0$.  Indeed, fine-tuned $\zeta_1$ and $\zeta_2$ can potentially enhance the finite sample performance of \Cref{algorithm:DPDU} by providing more flexibility, we simplify the setting by choosing $\zeta_1 = \zeta_2 = \zeta$ to reduce the number of tuning parameters. Our theoretical results are derived under this restriction.  But we provide this flexibility in function \texttt{DPDU.regression} available in the R package \texttt{changepoints}.  More discussions on $\zeta$ can be found in \Cref{sec:main_result}.
\end{remark}

Despite the different form, we remark that the optimisation problem specified in \eqref{eq:pre_est} and \eqref{eq:interval gof 2} is equivalent to that in \cite{rinaldo2021localizing}.  The formulation in \eqref{eq:interval gof 2} enables us to apply a more computationally-efficient optimisation without affecting the memory cost.
To solve the minimisation problem, we propose a variant of the dynamic programming (DP) algorithm \citep[e.g. Algorithm 1 in][]{friedrich2008complexity} coined as dynamic programming with dynamic update (DPDU) detailed in \Cref{algorithm:DPDU}.

\begin{algorithm}[!ht]
\begin{algorithmic}
    \INPUT Data $\{(y_t, X_t)\}_{t =1}^{n }$, tuning parameters $\lambda, \zeta > 0$.
    \State $\widehat{\mathcal{B}} \leftarrow \varnothing$,\; $\mathfrak{p} \leftarrow (-1,\ldots, -1)^{\top} \in \mathbb{R}^n$,\; $B \leftarrow (-\zeta, \infty, \ldots, \infty)^{\top} \in \overline{\mathbb{R}}^{n+1}$
 	\For{$r \in \{2, \ldots, n+1\}$} 
 	 \State $\mathcal{M}_{\text{temporary}} \leftarrow (0) \in \mathbb R^{p \times p}$,\; $\mathcal{V}_{\text{temporary}} \leftarrow (0) \in \mathbb R^p$
        \For{$l \in \{r-1, \ldots, 1\}$} 
        \begin{equation}\label{eq:updated matrices} 
 	        \mathcal{M}_{\text{temporary}} \leftarrow \mathcal{M}_{\text{temporary}} + X_{l} X_{l}^\top, \; 
            \mathcal{V}_{\text{temporary}}  \leftarrow  \mathcal{V}_{\text{temporary}} + y_{l} X_{l}^\top
            \end{equation}
 	        \State $\I \leftarrow [l, r) \cap \mathbb{Z}$. 
            \If{$|\I| \ge \zeta$}
                \State $\widehat{\beta}_{ \mathcal I} \leftarrow \argmin_{\beta \in \mathbb R^p } |\mathcal I|^{-1} \{-2\mathcal V_{\text{temporary}}\beta + \beta^\top \mathcal M_{\text{temporary}}\beta\} + \lambda/\sqrt{| \I|} |\beta|_1$
                \State $\mathcal G (\I) \leftarrow -2    \mathcal V_{\text{temporary}}  \widehat \beta _{  \I  } + \widehat\beta _{  \I  } ^\top \mathcal M _{\text{temporary}} \widehat\beta _{  \I  }$    
            \Else
                \State$\mathcal G (\I )  \leftarrow 0$  
            \EndIf  
            \State $b \leftarrow B_{l}+ \zeta   + \mathcal G (\I)$
            \If{$b <  B_{r}$}
 	            \State $B_{r} \leftarrow  b$, $\mathfrak {p}_r  \leftarrow l$
            \EndIf  
        \EndFor
    \EndFor
    \State $k \leftarrow  n$
    \While {$k >1$}
        \State $h \leftarrow  \mathfrak{p}_ k$, $\widehat{\mathcal{B}} \leftarrow \widehat{\mathcal B} \cup  \{h\}$, $k \leftarrow h$
    \EndWhile 
    \OUTPUT $\widehat{\mathcal{B}}$, $\widehat{K} = |\widehat{\mathcal{B}}|$, $\{\widehat{\beta}_k\}_{k = 0}^{\widehat{K}}$
\caption{Dynamic Programming with Dynamic Update. DPDU $(\{(y_t, X_t)\}_{t =1}^{n}, \lambda, \zeta)$} \label{algorithm:DPDU}
\end{algorithmic}
\end{algorithm}

DPDU involves two nested for-loops that update the endpoints of an interval $\I = [l, r)$. For a given right endpoint $r$, DPDU iterates backward through all possible left endpoints $l$ and updates the current minimal loss $B_r$ along with the associated pointer~$\mathfrak{p}_r$.  Once the two for-loops are completed, the locations of estimated change points can be extracted from the vector of pointers $\mathfrak{p}$.  We highlight the properties of DPDU over the standard DP.
\begin{itemize}
    \item \textbf{Smaller computational cost.} The formulation \eqref{eq:interval gof 2} suggests that the loss function $\mathcal{G}(\mathcal{I})$ contains the partial sums of $\sum_{t \in \mathcal I} X_tX_t^{\top}$ and $\sum_{t \in \mathcal I} y_t X_t$, which can be updated iteratively. We can save computational costs by storing the partial sums in the memory as in \eqref{eq:updated matrices}.  These partial sums are directly used to obtain the Lasso estimator $\widehat \beta_{\I}$.
    
    To be specific, solving \eqref{eq:pre_est} using the standard DP requires $O(n^2{\bf Cost})$ operations, where ${\bf Cost}$ represents the number of operations needed to compute $\mathcal G(\mathcal I)$ for an $\mathcal I \subset (0,n]$. Without storing and updating the partial sums iteratively as in \Cref{algorithm:DPDU}, one has to compute the partials sums $\sum_{t \in \mathcal I} y_t X_t \in \mathbb{R}^p$ and $\sum_{t \in \mathcal I} X_t X_t^\top \in \mathbb R^{p\times p}$ for each $\mathcal{I}$ incurring a cost up to $O(np^2)$. Denote ${\bf{Lasso}}(p)$ as the computational cost of the lasso procedure given quantities $\sum_{t \in \mathcal I} y_t X_t$ and $\sum_{t \in \mathcal I} X_t X_t^\top$.  The exact value of ${\bf{Lasso}}(p)$ depends on the choice of algorithms.  For example, for the coordinate descent algorithm, ${\bf{Lasso}} (p) =O(p^2)$, see \Cref{lemma:lasso one iteration} in the supplementary material. For the LARS algorithm, ${\bf{Lasso}} (p)  = O(p^3)$, see \cite{efron2004least}.  We thus have ${\bf Cost} = np^2 + {\bf{Lasso}}(p)$, and the total cost of the standard DP is $O(n^3p^2 + n^2{\bf Lasso}(p))$. DPDU improves the computational efficiency by decreasing the term $n^3$ to $n^2$, with the help of storing the partial sums in memory~(see \Cref{lemma:memory_computation_DPDU}).  

    \item \textbf{Same memory cost.}  Unlike the standard DP \citep[e.g.][]{rinaldo2021localizing}, DPDU updates the left endpoint $l$ in a \textit{backward} manner from $l=r-1$ to $l=1$. Importantly, this backward updating scheme enables us to store the reusable partial sums without paying a higher memory cost than that of the standard DP, i.e.~$O(n+p^2)$.
\end{itemize}

The memory and computational costs of the DPDU are formally collected below.

\begin{lemma}\label{lemma:memory_computation_DPDU}
For $\mathrm{DPDU}(\{y_t, X_t\}_{t = 1}^n$, $\lambda$, $\zeta$) with $\{X_t\}_{t = 1}^n \subset \mathbb{R}^p$ and any $\lambda, \zeta > 0$, the memory cost is $O(n+p^2)$ and the computational  cost is $O(n^2p^2 + n^2 {\bf Lasso}(p))$.
\end{lemma}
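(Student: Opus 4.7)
The plan is to bound the memory and computational costs separately by a line-by-line accounting of \Cref{algorithm:DPDU}, with special attention to the backward inner loop that enables the incremental updates in \eqref{eq:updated matrices}.

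For the memory cost, I will enumerate every persistent object held during execution. The set $\widehat{\mathcal{B}} \subset \{1,\ldots,n\}$, the pointer vector $\mathfrak{p} \in \mathbb{R}^n$, and the Bellman-value vector $B \in \overline{\mathbb{R}}^{n+1}$ each occupy $O(n)$ memory; the temporary matrix $\mathcal{M}_{\text{temporary}} \in \mathbb{R}^{p \times p}$ occupies $O(p^2)$; and the temporary vector $\mathcal{V}_{\text{temporary}}$, together with the current Lasso iterate $\widehat{\beta}_{\I}$, each occupy $O(p)$. The crucial observation is that $\mathcal{M}_{\text{temporary}}$ and $\mathcal{V}_{\text{temporary}}$ are reset at the start of every outer iteration and overwritten in place throughout the inner loop, so at no point do we need to retain more than one $p \times p$ matrix; likewise, only the current $\widehat{\beta}_{\I}$ and scalar $\mathcal{G}(\I)$ need to be stored, since the latter is consumed by the comparison against $B_r$ immediately after it is produced. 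Summing these contributions gives the claimed $O(n + p^2)$ bound.

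For the computational cost, I will verify that each inner-loop iteration costs $O(p^2 + {\bf Lasso}(p))$. Within one iteration: the rank-one update of $\mathcal{M}_{\text{temporary}}$ in \eqref{eq:updated matrices} costs $O(p^2)$ and the update of $\mathcal{V}_{\text{temporary}}$ costs $O(p)$; by the definition of ${\bf Lasso}(p)$, solving the Lasso problem with the already-formed aggregates as input costs $O({\bf Lasso}(p))$; evaluating $\mathcal{G}(\I) = -2\mathcal{V}_{\text{temporary}} \widehat{\beta}_{\I} + \widehat{\beta}_{\I}^\top \mathcal{M}_{\text{temporary}} \widehat{\beta}_{\I}$ is dominated by a single quadratic form at cost $O(p^2)$; and the comparison against $B_r$ together with the pointer update contribute $O(1)$. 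Since the total number of inner-loop iterations across the two \texttt{for}-loops is $\sum_{r=2}^{n+1}(r-1) = O(n^2)$, the total cost is $O(n^2 p^2 + n^2 {\bf Lasso}(p))$; the final trace-back \texttt{while}-loop contributes at most $O(n)$ and is absorbed.

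The only substantive point to check is that the backward ordering of the inner loop makes these incremental updates correct without any auxiliary storage. Initialising $\mathcal{M}_{\text{temporary}} = 0$ at $l = r-1$ and adding $X_l X_l^\top$ at each step as $l$ decreases to $1$ produces $\mathcal{M}_{\text{temporary}} = \sum_{t=l}^{r-1} X_t X_t^\top$, which is precisely the partial sum over $\I = [l, r) \cap \mathbb{Z}$ required by \eqref{eq:interval gof 2} and \eqref{eq:interval lasso}; the identical argument applies to $\mathcal{V}_{\text{temporary}}$. There is no real technical obstacle beyond this bookkeeping, but it is worth emphasising that without the backward sweep one would either recompute each partial sum from scratch at cost $O(np^2)$ per interval (the source of the $O(n^3 p^2)$ bound for standard DP) or cache all $O(n^2)$ partial sums at a prohibitive $O(n^2 p^2)$ memory cost; the backward sweep sidesteps both and is the algorithmic essence of DPDU.
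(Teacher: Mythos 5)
Your proof is correct and follows essentially the same line-by-line accounting as the paper's: enumerate the persistent data structures for the $O(n+p^2)$ memory bound, and charge each of the $O(n^2)$ inner-loop iterations $O(p^2 + \mathbf{Lasso}(p))$ for the rank-one updates, the Lasso solve, and the quadratic form. The extra paragraph verifying the backward-sweep invariant is a nice addition of rigour but does not change the argument.
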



\subsection{Final estimators}\label{sec:local_refinement}

Denote the preliminary estimators obtained from the optimisation problem specified in \eqref{eq:pre_est}, \eqref{eq:interval gof 2} and \eqref{eq:interval lasso}, or equivalently from \Cref{algorithm:DPDU} as 
\begin{equation}\label{eq:preliminary_cpt}
\{\widehat  {\eta}_k\}_{k = 1}^{\widehat{K}} = \widehat{\mathcal{B}}.
\end{equation}
Denote the corresponding regression coefficient estimators \eqref{eq:interval lasso} as $\{ \widehat  \beta_k \}_{k=0}^{\widehat{K}}$.  Our final estimators are obtained via a local refinement step, which is considered in \cite{rinaldo2021localizing}, \cite{yu2022localising} and others.  The final estimators are defined as
\begin{align}\label{eq:refinement}
    \widetilde{\eta}_k = \argmin_{s_k < \eta < e_k} Q_k(\eta) = \argmin_{s_k < \eta < e_k} \left\{\sum_{t = s_k }^{\eta-1}(y_t - X_t^{\top}\widehat{\beta}_{k-1})^2 + \sum_{t = \eta}^{e_k-1}(y_t - X_t^{\top}\widehat{\beta}_{k})^2\right\}, 
\end{align}
where $k \in \{1, \ldots \widehat{K}\}$ and $(s_k, e_k)$ is defined as
\begin{align}\label{eq:corrected boundaries}
   s_k = 9\widehat{\eta}_{k-1}/10 + \widehat{\eta}_{k}/10 \quad \text{and} \quad e_k = \widehat{\eta}_{k}/10 + 9\widehat{\eta}_{k+1}/10.
\end{align}

Provided the preliminary estimators $\{\widehat {\eta}_k\}_{k = 1}^{\widehat{K}}$ are good enough, each interval $(s_k, e_k)$ contains one and only one true change point $\eta_k$.  This fact grants us to refine the initial estimators in $(s_k, e_k)$'s.  The choice of the constants $9/10$ and $1/10$ in \eqref{eq:corrected boundaries} is arbitrary.  To be specific, \Cref{theorem:DUDP} shows that, under certain conditions, with high probability, the output of DPDU satisfies that 
\[
    \widehat{K} = K \quad \mbox{and} \quad \max_{k = 1, \ldots, K} \frac{|\widehat{\eta}_k - \eta_k|}{\min\{\eta_k - \eta_{k-1}, \, \eta_{k+1} - \eta_k\}} \to 0, \quad n \to \infty,
\]
which guarantees the success of the $\{(s_k, e_k)\}_{k = 1}^K$ construction above.

\section{Theory}\label{sec:theory}

We kick off by a list of assumptions in \Cref{sec:assumptions}, followed by the main results on the limiting distributions in \Cref{sec:main_result}.  The connection of our main results with relevant literature is discussed in \Cref{sec-rel-lit-result}.  We conclude this section by discussing a few important byproducts of our analysis in \Cref{sec:theory_preliminary_est}.

\subsection{Assumptions}\label{sec:assumptions}

\Cref{assume:regression parameters} formalises some of the model assumptions we have mentioned so far. 

\begin{assumption}[Model] \label{assume:regression parameters} 
Consider the high-dimensional linear regression model with change points specified in \eqref{eq-model-1}, \eqref{eq-model-2} and \eqref{eq-model-3}, with the covariate sequence $\{X_t\}_{t = 1}^n \subset \mathbb{R}^p$ and noise sequence $\{\epsilon_t\}_{t = 1}^n \subset \mathbb{R}$, such that $\mathbb E[\epsilon_t|X_t] = 0$ for any $t \in \{1, \dots, n\}$.  For any $k \in \{1, \dots, K\}$, where $K \geq 1$ is an absolute constant integer, let the $k$th jump vector and its normalised version be $\Psi_k = \beta_{\eta_k}^* - \beta_{\eta_k-1}^*$ and $v_k = \Psi_k/|\Psi_k|_2 =  \Psi_k/\kappa_k$, respectively.

\noindent \textbf{a.}~(Sparsity) For any $t \in \{1, \ldots, n\}$, assume that there exists a support $S_t \subset \{1, \ldots, p\}$, with $|S_t|\le \mathfrak{s}$, such that $\beta^*_{t, j} = 0$, if $j \not \in S_t$.

\noindent \textbf{b.}~(Change points) Let the minimal jump size be
\begin{equation}\label{eq-kappa-def}
    \kappa = \min_{k = 1, \ldots, K} \kappa_k = \min_{k = 1, \ldots, K} |\Psi_k|_2. 
\end{equation}
Assume that there exists an absolute constant $C_\kappa > 0$ such that for any $k\in \{1,\ldots, K \}$, $\kappa_k \le C_\kappa$.  Let the minimal spacing be $\Delta = \min_{k = 1, \ldots, K+1}(\eta_{k} - \eta_{k-1})$.
\end{assumption} 

In \Cref{assume:regression parameters}, we impose the $\ell_0$-sparsity on the regression coefficients, following the suit of standard high-dimensional linear regression literature.  We also upper bound the jump size $\kappa_k$, i.e.\ the $\ell_2$-norm of the difference between the true regression coefficients before and after the $k$th change point for $k \in \{1, \ldots, K\}$. Since the larger the jump size, the more prominent the change points, \Cref{assume:regression parameters}\textbf{b} focuses our analysis in the more challenging territory.   In \Cref{assume:regression parameters}, although the number of change points $K$ is assumed to be of order $O(1)$, we do allow the minimum spacing $\Delta$ to be a function of $n$ and allow $\Delta/n$ to vanish as $n$ diverges, as specified later in \Cref{assump-snr}.  We assume that $K$ to be independent of $n$ for simplicity of presentation, as our main result \Cref{thm:asymptotics} focuses on the limiting distribution of every change point estimator.  

We highlight that \Cref{assume:regression parameters} only requires the noise to be conditional mean zero given the covariate.  This allows for dependence between the covariate and noise sequences. In Assumptions~\ref{assume:X} and \ref{assume:epsilon} below, we will further exploit the flexibility of our model by specifying the assumptions on the covariate and noise sequences, respectively.

\begin{assumption}[Covariate sequence] \label{assume:X}
Assume that the covariate sequence $\{X_t\}_{t = 1}^n \subset \mathbb R^p$ is a consecutive subsequence of an infinite sequence $\{X_t\}_{t \in \mathbb{Z}} \subset \mathbb R^p$, which has marginal mean zero and is of the form~\eqref{eq:covariate}, with covariance matrix $\Sigma = \cov(X_1)$.

\noindent \textbf{a.}~(Functional dependence) For the cumulative uniform functional dependence measure $\Delta_{\cdot, \cdot}^X$ defined in \eqref{eq-functional-dependence-x-def-1} and \eqref{eq-functional-dependence-x-def-2}, for some absolute constants $\gamma_1, c > 0$, assume that there exists an absolute constant $D_X > 0$ such that
\begin{align}\label{eq:X temporal dependence}
    \sup_{m \geq 0}\exp(cm^{\gamma_1})\Delta_{m,4}^X \leq D_X.
\end{align}  

\noindent \textbf{b.}~(Tail behaviour) Assume that there exist absolute constants $\gamma_2 \in (0, 2]$ and $C_X > 0$, such that for any $\tau > 0$, 
\begin{align}\label{eq:X subGaussian}
    \sup_{|v|_2=1 }\mathbb{P}(|v^\top X_1| > \tau) \leq 2\exp\{-(\tau/C_X)^{\gamma_2}\}.
\end{align}

\noindent \textbf{c.}~(Marginal covariance matrix) Assume that the smallest eigenvalue of $\Sigma$ satisfies $0 < c_{\min} \leq \Lambda_{\min}(\Sigma)$, where $c_{\min} > 0$ is an absolute constant.  For any $k \in \{1, \ldots, K\}$, assume that there exists an absolute constant $\varpi_k > 0$ such that $v_k^{\top}\Sigma v_k \to \varpi_k$, as $n \to \infty$.
\end{assumption}

As mentioned in \Cref{sec:background}, the functional dependence is measured through the quantity~$\Delta^X_{\cdot, \cdot}$ and their decay rate.  \Cref{assume:X}\textbf{a}~specifies that $\Delta_{m, 4}^X$ decays exponentially in $m$ at the rate of at least $\exp(-cm^{\gamma_1})$.  Recall the definition of $\Delta^X_{\cdot, \cdot}$ in \eqref{eq-functional-dependence-x-def-1} and \eqref{eq-functional-dependence-x-def-2}, the term~$4$ means that the functional dependence is measured in the $4$th moment.  This moment condition is required to unlock a functional central limit theorem under functional dependence.  The term~$\gamma_1$ characterises the exponential decay rate.  In the extreme case that $\gamma_1 = \infty$, \Cref{assume:X}\textbf{a} essentially reduces to the i.i.d.~case.

\Cref{assume:X}\textbf{b}~regulates the tail behaviour of the covariate random vectors' marginal distribution.  The specific rate is characterised by the parameter $\gamma_2 \in (0, 2]$, indicating that the marginal distribution falls in the sub-Weibull family, with sub-Exponential ($\gamma_2 = 1$) and sub-Gaussian ($\gamma_2 = 2$) as its special cases.  \Cref{assume:X}\textbf{b}~not only implies the boundedness of the largest eigenvalue, i.e.~$\Lambda_{\max}(\Sigma) < \infty$, but it also ensures the finiteness of the asymptotic variances and the existence of the limiting distribution, which are respectively specified in \Cref{lemma:long-run_UB} and \Cref{thm:asymptotics}.

\Cref{assume:X}\textbf{c}~regulates the smallest eigenvalue, being bounded away from zero. It is a necessity to lead to a form of restricted eigenvalue condition \citep[e.g.][]{van2018tight}, which is again a necessary condition in providing estimators of reliable performance. 

Temporal dependence and moment assumptions are two indispensable assumptions when studying the theoretical properties of high-dimensional time series.  In the existing literature \citep[e.g.][]{wu2016performance, zhang2017gaussian, kuchibhotla2021uniform}, these two assumptions are often entangled via a single dependence-adjusted moment assumption. Instead, \Cref{assume:X} provides separate assumptions on the temporal dependence and tail behaviour, which is more intuitive. We specify the temporal dependence and tail behaviour of the noise sequence $\{\epsilon_t\}_{t = 1}^n$ in \Cref{assume:epsilon}.

\begin{assumption}[Noise sequence] \label{assume:epsilon}	
Assume that the noise sequence $\{\epsilon_t\}_{t=1}^n \subset \mathbb{R}$ is a consecutive subsequence of an infinite sequence $\{\epsilon_t\}_{t \in \mathbb{Z}} \subset \mathbb{R}$, which has marginal mean zero and is of the form \eqref{eq:error}, with variance $\sigma_{\epsilon}^2 = \mathrm{Var}(\epsilon_1) > 0$ being an absolute constant.

\noindent \textbf{a.}~(Functional dependence)  For the cumulative functional dependence measure $\Delta_{\cdot, \cdot}^{\epsilon}$, defined in \eqref{eq-functional-dependence-eps-def}, for some absolute constants $\gamma_1, c > 0$, assume there exists an absolute constant $D_{\epsilon} > 0$ such that   
\begin{align}\label{eq:epsilon temporal dependence}
    \sup_{m \geq 0}\exp(cm^{\gamma_1})\Delta_{m, 4}^\epsilon \leq D_\epsilon.
\end{align}  

\noindent \textbf{b.}~(Tail behaviour) Assume that there exist absolute constants $\gamma_2 \in (0, 2]$ and $C_{\epsilon} > 0$, such that for any $\tau >0$, $\mathbb{P}(|\epsilon_0| >  \tau) \leq 2\exp\{-(\tau/C_\epsilon)^{\gamma_2}\}$.
\end{assumption}

In spite of the same notation $\gamma_1$ and $\gamma_2$ used in Assumptions~\ref{assume:X} and \ref{assume:epsilon}, there is no reason to force these two sequences to have the same functional dependence measure decay rates or the same tail behaviour. We adopt the same notation for simplicity and they can be seen as worse cases between two.  To be specific, let $\gamma_1(X)$ and $\gamma_1(\epsilon)$ be the functional dependence measure decay rate indicator for the covariate and noise sequences, respectively.  In Assumptions~\ref{assume:X} and~\ref{assume:epsilon}, we in fact let $\gamma_1 = \min\{\gamma_1(X), \gamma_1(\epsilon)\}$ and $\gamma_2 = \min\{\gamma_2(X), \gamma_2(\epsilon)\}$.  

With the temporal dependence and tail behaviour specified for both the covariate and noise sequences, we introduce the parameter
\begin{equation}\label{eq-gamma-def}
    \gamma = \left(\gamma_1^{-1} + 2 \gamma_2^{-1}\right)^{-1}.
\end{equation}
The coefficient 2 in the term $\gamma_2^{-1}$ accounts for the tail behaviour of the cross-term sequence $\{\epsilon_t X_t\}_{t = 1}^n$.  In Assumptions~\ref{assume:X}\textbf{b}~and \ref{assume:epsilon}\textbf{b}, we allow $\gamma_1 \in (0, \infty)$ provided that $\gamma < 1$. We restrict our analysis to $\gamma_2 \leq 2$ in this paper, as $\gamma_2>2$ corresponds to tails lighter than sub-Gaussian, which is less interesting for practical purposes. 

We state the required signal-to-noise ratio condition below.

\begin{assumption}[Signal-to-noise ratio] \label{assump-snr}
\
\\
\noindent \textbf{a.}
Assume that there exists a sufficiently large absolute constant $C_{\mathrm{snr}} > 0$ such that 
\begin{equation}\label{eq:SNR_cond}
    \Delta \kappa^2 \geq C_{\mathrm{snr}} \big\{\s\log(pn)\big\}^{2/\gamma - 1}\alpha_n,
\end{equation}
where $\alpha_n > 0$ is any diverging sequence.

{\color{black}
\noindent \textbf{b.} 
In addition to \eqref{eq:SNR_cond}, assume that $\alpha_n$ diverges faster than $\s\{\log(pn)\}^{2/\gamma}$, i.e.
\[
    \alpha_n \gg \s\{\log(pn)\}^{2/\gamma}.
\]}
\end{assumption}

\Cref{assump-snr}\textbf{a}~and \textbf{b}~impose lower bounds on the quantity $\Delta \kappa^2$ to ensure theoretical guarantees of the change point estimation (\Cref{theorem:DUDP}) and inference (\Cref{thm:asymptotics}) procedures, respectively.  Note that \Cref{assump-snr}\textbf{b}~is strictly stronger than \Cref{assump-snr}\textbf{a}.  We conjecture that Assumption~\ref{assump-snr}\textbf{b} is necessary for the inference tasks, to be specific, to establish the uniform tightness of the refined change point estimators in the proof of \Cref{thm:asymptotics}.  A sketch of the proof of \Cref{thm:asymptotics} is provided in \Cref{sec:main_result}.  Such signal-to-noise ratio conditions are widely seen in different change point analysis problems. In \cite{rinaldo2021localizing}, it is shown that for change point estimation of the model specified in \Cref{assume:regression parameters}, when the data considered are temporally independent and sub-Gaussian ($\gamma = 1$), the minimax optimal condition is that $\Delta \kappa^2 \asymp \s$, saving a logarithmic factor, matching our \Cref{assump-snr}\textbf{a}. We extend \cite{rinaldo2021localizing} by allowing for temporal dependence, indexed by $\gamma_1$, and potentially heavier tails, indexed by $\gamma_2$.  Such effects are captured by $\gamma$, which not only inflates the logarithmic term but also affects the term regarding the sparsity parameter $\s$.

Similar effects of $\gamma$ on the sparsity level are seen in the existing literature, on the Lasso estimation in high-dimensional linear regression problems with temporal dependence, but without the presence of change points \citep[e.g.][]{wu2016performance, wong2020lasso, kuchibhotla2021uniform, han2020high}. In the presence of change points, \cite{chen2021inference} characterise the jump in terms of vector $\ell_{\infty}$-norm, which turns a high-dimensional problem to a univariate one.  The parameter $\gamma$'s counterpart therein hence does not affect the sparsity parameter therein. 

\subsection{Limiting distributions} \label{sec:main_result}

For the true change points $\{\eta_k\}_{k = 1}^K$, in \Cref{sec:method} we proposed a two-step estimator $\{\widetilde{\eta}_k\}_{k = 1, \ldots, \widehat{K}}$.  Under the assumptions listed in \Cref{sec:assumptions}, we present the limiting distributions of $\{\widetilde{\eta}_k\}_{k = 1, \ldots, \widehat{K}}$ in two regimes, regarding the jump sizes $\{\kappa_k\}_{k = 1}^K$, defined in \eqref{eq-kappa-def}: (i) the non-vanishing regime where $\kappa_k \to \varrho_k$, as $n \to \infty$, with $\varrho_k > 0$ being an absolute constant; and (ii) the vanishing regime where $\kappa_k \to 0$ as $n \to \infty$.

Since the limiting distribution in the vanishing regime involves an unknown long-run variance, we first define the long-run variance and prove its existence.

\begin{lemma}\label{lemma:long-run_UB}
Suppose that Assumptions~\ref{assume:regression parameters}, \ref{assume:X} and \ref{assume:epsilon} hold.  For $k \in \{1, \ldots, K\}$, under the vanishing regime, the long-run variance
\begin{align}\label{eq:long-run_var}
    \sigma_{\infty}^2(k)
    = 4\lim_{n \to \infty}\var\bigg(n^{-1/2}\sum_{t = 1}^n \epsilon_tv_k^{\top}X_t  \bigg)
\end{align}
exists and satisfies that $\sigma^2_{\infty}(k) < \infty$. 
\end{lemma}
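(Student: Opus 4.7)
The plan is to reduce the lemma to a standard long-run variance computation for a stationary scalar sequence under functional dependence. Set $Z_t = \epsilon_t v_k^\top X_t$. Since $\mathbb{E}[\epsilon_t \mid X_t] = 0$, the tower property gives $\mathbb{E}[Z_t] = 0$, and $\{Z_t\}_{t \in \mathbb{Z}}$ is strictly stationary because $\{(\epsilon_t, X_t)\}_{t \in \mathbb{Z}}$ is jointly stationary. Writing the variance of the normalised partial sum as $\sum_{|h|<n}(1-|h|/n)\gamma(h)$ with $\gamma(h) := \cov(Z_0, Z_h)$, it then suffices to establish the absolute summability $\sum_{h \in \mathbb{Z}} |\gamma(h)| < \infty$; dominated convergence will then yield the existence of the limit together with the identity $\sigma_{\infty}^2(k)/4 = \sum_{h \in \mathbb{Z}} \gamma(h) < \infty$.

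The first step is to bound the functional dependence of $\{Z_t\}$ in terms of those of $\{\epsilon_t\}$ and $\{X_t\}$. Using the coupled variable $Z_{t, \{t-s\}} = \epsilon_{t, \{t-s\}} v_k^\top X_{t, \{t-s\}}$ and the algebraic decomposition
\[
    Z_t - Z_{t,\{t-s\}} = (\epsilon_t - \epsilon_{t,\{t-s\}}) v_k^\top X_t + \epsilon_{t,\{t-s\}} v_k^\top (X_t - X_{t,\{t-s\}}),
\]
the Cauchy--Schwarz inequality gives $\|Z_t - Z_{t,\{t-s\}}\|_2 \leq \delta_{s,4}^\epsilon \|v_k^\top X_1\|_4 + \|\epsilon_1\|_4 \, \delta_{s,4}^X(v_k)$. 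The sub-Weibull tails of Assumptions~\ref{assume:X}\textbf{b} and \ref{assume:epsilon}\textbf{b}, combined with $|v_k|_2 = 1$, bound $\|v_k^\top X_1\|_4$ and $\|\epsilon_1\|_4$ by absolute constants, while the supremum-over-unit-vectors definition \eqref{eq-functional-dependence-x-def-2} yields $\delta_{s,4}^X(v_k) \leq \delta_{s,4}^X$. Summing over $s$ and invoking the exponential decay in Assumptions~\ref{assume:X}\textbf{a} and \ref{assume:epsilon}\textbf{a} then gives $\Delta_{0,2}^Z \leq C(D_X + D_\epsilon) < \infty$ for an absolute constant $C$.

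The second step is Wu's martingale projection. Letting $P_s Z_t = \mathbb{E}[Z_t \mid \mathcal{F}_s] - \mathbb{E}[Z_t \mid \mathcal{F}_{s-1}]$ for the natural joint filtration, the independence of the replaced innovations in the coupling yields $P_s Z_t = \mathbb{E}[Z_t - Z_{t,\{s\}} \mid \mathcal{F}_s]$, so Jensen's inequality gives $\|P_s Z_t\|_2 \leq \delta_{t-s,2}^Z$. Orthogonality of martingale differences in $Z_t = \sum_{s \leq t} P_s Z_t$ then delivers $|\gamma(h)| \leq \sum_{j \geq 0} \delta_{j,2}^Z \delta_{j+h,2}^Z$, and summing over $h \in \mathbb{Z}$ yields $\sum_{h} |\gamma(h)| \leq 2 (\Delta_{0,2}^Z)^2 < \infty$. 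Dominated convergence applied to the Cesàro representation of $\var(n^{-1/2}\sum_t Z_t)$ closes the argument. The main obstacle is the first step: propagating the temporal dependence of $X_t$ and $\epsilon_t$ through the bilinear form $\epsilon_t v_k^\top X_t$ uniformly in the direction $v_k$, which itself may depend on $n$; the supremum-over-unit-vectors construction of $\delta_{s,q}^X$ in \eqref{eq-functional-dependence-x-def-2} is precisely designed to support this.
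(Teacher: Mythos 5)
Your proposal is correct and rests on the same technical core as the paper's proof: propagating the functional dependence of $\{\epsilon_t\}$ and $\{X_t\}$ to $Z_t = \epsilon_t v_k^\top X_t$ via the product decomposition and H\"older, then using the martingale-projection bound $\|P_s Z_t\|_2 \leq \delta_{t-s,2}^Z$ together with orthogonality to control the autocovariances. The only organizational difference is that the paper proves a uniform upper bound on the variance via Burkholder's inequality and then separately establishes convergence by showing the sequence is Cauchy (using the exponential decay $|\gamma(\ell)| \lesssim \exp(-2c|\ell|^{\gamma_1})$), whereas you establish absolute summability $\sum_h |\gamma(h)| < \infty$ directly and invoke dominated convergence on the Ces\`aro representation, obtaining existence and finiteness in one stroke — a slightly tighter packaging of the same estimates.
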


We are now ready to present the main results.     

\begin{theorem}\label{thm:asymptotics}
Given data $\{(y_t, X_t)\}_{t = 1}^n$, suppose that Assumptions~\ref{assume:regression parameters}, \ref{assume:X}, \ref{assume:epsilon} and \ref{assump-snr}\textbf{b}~hold.  Let $\{\widetilde{\eta}_k\}_{k = 1}^{\widehat{K}}$ be the change point estimators defined in \eqref{eq:refinement}, with 
\begin{itemize}
    \item the intervals $\{(s_k, e_k)\}_{k = 1}^{\widehat{K}}$ defined in \eqref{eq:corrected boundaries};
    \item the preliminary estimators $\{\widehat{\eta}_k\}_{k = 1}^{\widehat{K}}$ from $\mathrm{DPDU}(\{(y_t, X_t)\}_{t = 1}^n, \lambda, \zeta)$ detailed in \Cref{algorithm:DPDU};
    \item the $\mathrm{DPDU}$ tuning parameters $\lambda = C_\lambda \sqrt{\log(pn)}$ and $\zeta = C_\zeta\{\s \log(pn)\}^{2/\gamma - 1}$, with $C_{\lambda}, C_{\zeta} > 0$ being absolute constants; and
    \item the Lasso estimators $\{\widehat{\beta}_k\}_{k = 1}^{\widehat{K}}$ defined in \eqref{eq:interval lasso} with the intervals constructed based on $\{\widehat{\eta}_k\}_{k = 1}^{\widehat{K}}$.
\end{itemize}

\noindent \textbf{a.}~(Non-vanishing regime)  For $k \in \{1, \ldots, K\}$, if $\kappa_k \to \varrho_k$, as $n \to \infty$, with $\varrho_k > 0$ being an absolute constant, then the following results hold.
    \begin{itemize}
        \item [\textbf{a.1.}] The estimation error satisfies that $|\widetilde{\eta}_k - \eta_k| = O_p(1)$, as $n \to \infty$.
        \item [\textbf{a.2.}] When $n \to \infty$, 
        \[
            \widetilde{\eta}_k - \eta_k \overset{\mathcal{D}}{\longrightarrow} \argmin_{r \in \mathbb{Z}} P_k(r),
        \]
        where, for $r \in \mathbb{Z}$, $P_k(r)$ is a two-sided random walk defined as 
        \[
            P_k(r) = \begin{cases}
                \sum_{t = r}^{-1} \{-2\varrho_k\epsilon_t\xi_t(k) + \varrho_k^2\xi^2_t(k)\}, & r < 0,\\
                0, & r = 0,\\
                \sum_{t = 1}^r \{2\varrho_k\epsilon_t\xi_t(k) + \varrho_k^2\xi^2_t(k)\}, & r > 0,
                \end{cases}
        \]
        where we assume additionally $\{(\epsilon_t, v_k^{\top}X_t)\}_{t \in \mathbb Z} \overset{\mathcal{D}}{\longrightarrow} \{(\epsilon_t, \xi_t(k))\}_{t \in \mathbb Z}$, for $k \in \{1, \dots, K\}$, and $v_k$ is the normalised jump vector defined in \Cref{assume:regression parameters}.
    \end{itemize}

\noindent \textbf{b.}~(Vanishing regime)  For $k \in \{1, \ldots, K\}$, if $\kappa_k \to 0$, as $n \to \infty$, then the following results hold.   
    \begin{itemize}
        \item [\textbf{b.1.}] The estimation error satisfies that $|\widetilde{\eta}_k - \eta_k| = O_p(\kappa_k^{-2})$, as $n \to \infty$.
        \item [\textbf{b.2.}] When $n \to \infty$, 
        \[
            \kappa_k^{2}( \widetilde{\eta}_k - \eta_k) \overset{\mathcal{D}}{\longrightarrow} \argmin_{r \in \mathbb{R}} \left\{\frac{\varpi_k}{\sigma_{\infty}(k)}|r| + \mathbb{W}(r)\right\},
        \]
        where $\varpi_k$ is the limiting drift coefficient defined in \Cref{assume:X}\textbf{c}, and $\mathbb{W}(r)$ is a two-sided standard Brownian motion defined as 
        \begin{align*}
            \mathbb{W}(r) = \begin{cases}
                \mathbb{B}_1(-r), & r < 0,\\
                0, & r = 0,\\
                \mathbb{B}_2(r), & r > 0,
                \end{cases}
        \end{align*}
        with $\mathbb{B}_1(r)$ and $\mathbb{B}_2(r)$ being two independent standard Brownian motions.
    \end{itemize}
\end{theorem}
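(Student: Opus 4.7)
The plan is to run a single localisation--decomposition--limit argument that treats both regimes in parallel. By \Cref{theorem:DUDP}, the preliminary DPDU output satisfies $\widehat K=K$ and $\max_k|\widehat\eta_k-\eta_k|/\Delta\to 0$ with probability tending to one under \Cref{assump-snr}\textbf{b} (which strengthens \textbf{a}). Consequently, each refinement interval $(s_k,e_k)$ from \eqref{eq:corrected boundaries} contains exactly $\eta_k$, has length of order $\Delta$, and the Lasso estimates $\widehat\beta_{k-1},\widehat\beta_k$ satisfy the $\ell_1$ Lasso rate from \Cref{sec:theory_preliminary_est}, i.e.\ $|\widehat\beta_j-\beta^*_{(j)}|_1=O_p(\s\sqrt{\log(pn)/\Delta})$, where $\beta^*_{(j)}$ denotes the constant value of $\beta^*_t$ on the corresponding segment. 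I would then work on the intersection of these events and restrict $\widetilde\eta_k$ to $(s_k,e_k)$.

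For $\eta<\eta_k$ inside $(s_k,e_k)$, substituting $y_t=X_t^\top\beta^*_t+\epsilon_t$ into \eqref{eq:refinement} and cancelling the pieces that do not depend on $\eta$ gives
\[
Q_k(\eta)-Q_k(\eta_k) \;=\; \sum_{t=\eta}^{\eta_k-1}\bigl\{(X_t^\top\Psi_k)^2 + 2\epsilon_t X_t^\top\Psi_k\bigr\} + R_n(\eta),
\]
with a symmetric formula for $\eta>\eta_k$ and a remainder $R_n(\eta)$ that is linear in the Lasso errors $\widehat\beta_{k-1}-\beta^*_{\eta_k-1}$ and $\widehat\beta_k-\beta^*_{\eta_k}$. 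Writing $\Psi_k=\kappa_k v_k$, the leading drift is $\kappa_k^2\sum_t(v_k^\top X_t)^2$ and the stochastic piece is $2\kappa_k\sum_t\epsilon_t v_k^\top X_t$. Using Cauchy--Schwarz together with the new Bernstein inequality for functionally dependent sub-Weibull sums (\Cref{sec:theory_preliminary_est}) and the $\ell_1$ Lasso rate above, I would show $R_n(\eta)=o_p(1)$ uniformly over $|\eta-\eta_k|\le M\kappa_k^{-2}$ for any fixed $M$ in the vanishing regime, and uniformly over any bounded window in the non-vanishing regime.

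Tightness of $\kappa_k^2(\widetilde\eta_k-\eta_k)$ (respectively of $\widetilde\eta_k-\eta_k$) is the crux. I would peel over dyadic shells $A_j=\{\eta:2^j\le\kappa_k^2|\eta-\eta_k|<2^{j+1}\}\subset(s_k,e_k)$. On $A_j$ the drift concentrates at $\varpi_k 2^j$ via the LLN for the stationary, functionally dependent sequence $\{(v_k^\top X_t)^2\}$ combined with \Cref{assume:X}\textbf{c}, while the Bernstein maximal bound controls $\max_{\eta\in A_j}\bigl|\kappa_k\sum_t \epsilon_t v_k^\top X_t\bigr|$ at scale $O(2^{j/2}\sigma_\infty(k))$ plus lower-order sub-Weibull corrections. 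For $2^j$ beyond a fixed threshold the drift dominates the noise; a union bound over the $O(\log(\Delta\kappa_k^2))$ relevant scales then confines the minimiser to a window of the stated order. The slack that makes this drift--noise comparison work is precisely what \Cref{assump-snr}\textbf{b} provides, and this is the main obstacle of the proof.

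Given tightness, the limits follow by continuous mapping of $\argmin$. In the vanishing regime, after the rescaling $r=\kappa_k^2(\eta-\eta_k)$, ergodicity under functional dependence gives $\kappa_k^2\sum_t(v_k^\top X_t)^2\to\varpi_k|r|$, and a functional CLT for functionally dependent sums (with variance given by \Cref{lemma:long-run_UB}) yields $\kappa_k\sum_t \epsilon_t v_k^\top X_t\Rightarrow \tfrac{1}{2}\sigma_\infty(k)\mathbb{W}(r)$; applying the argmin continuous mapping theorem to the limit process $\varpi_k|r|+\sigma_\infty(k)\mathbb{W}(r)$ delivers part \textbf{b}. In the non-vanishing regime no rescaling is needed: the assumed joint convergence $\{(\epsilon_t,v_k^\top X_t)\}\overset{\mathcal{D}}{\longrightarrow}\{(\epsilon_t,\xi_t(k))\}$ together with the decomposition above produces finite-dimensional convergence of the shifted objective to $P_k(r)$, and the tightness from the previous paragraph upgrades this to convergence of the integer-valued argmin, giving part \textbf{a}.
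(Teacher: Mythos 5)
Your high-level plan — localise via DPDU, decompose the contrast $Q_k(\eta)-Q_k(\eta_k)$ into a signal term plus Lasso-error and bias remainders, establish tightness of $\kappa_k^2(\widetilde\eta_k-\eta_k)$, and finish with the argmin continuous mapping theorem plus a functional CLT under functional dependence — matches the paper's architecture. The tightness \emph{technique} differs: you propose a dyadic-shell peeling with a per-shell union bound, whereas the paper uses a global weighted-maximal inequality (\Cref{lemma:iterated log under dependence}, a law-of-iterated-logarithm type bound) that controls $\sup_{r\geq 1/\kappa_k^2} |\sum_{t}\epsilon_t v_k^\top X_t|/(\sqrt{r}\log(r\kappa_k^2))$ in a single stroke. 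Either route is viable, though note that \Cref{thm:bernstein_exp_subExp_nonlinear} alone is a pointwise tail bound — you would still need a maximal version on each shell, which is essentially what \Cref{lemma:iterated log under dependence} supplies.

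There is, however, a genuine gap in the tightness step. You only claim control of the remainder $R_n(\eta)$ — what the paper's proof labels $I,II,III,IV$ — \emph{uniformly over a bounded rescaled window} $|\eta-\eta_k|\le M\kappa_k^{-2}$. That suffices for the CMT step, but it is inadequate for tightness: the dyadic shells $A_j$ extend to $\kappa_k^2|\eta-\eta_k|$ of order $\Delta\kappa_k^2$, and on large shells the remainder can compete with the drift. Your shell argument compares only the leading drift $\varpi_k 2^j$ against the leading noise $\kappa_k\sum_t\epsilon_t v_k^\top X_t$; if $R_n$ is not shown to be $o_p(r\kappa_k^2 \vee \sqrt{r\kappa_k^2}\log(r\kappa_k^2) \vee 1)$ \emph{uniformly in $r$}, the comparison can fail. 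This is precisely where the bulk of the paper's work lies — Steps~1--4 bound $|I|,\dots,|IV|$ for all $r$ using the restricted-eigenvalue result (\Cref{lemma:RES Version II}), the uniform $\ell_\infty$ deviation bound (\Cref{lemma:lasso l infty deviations}), and the maximal iterated-logarithm lemma — and where \Cref{assump-snr}\textbf{b}'s extra factor $\alpha_n\gg\s\{\log(pn)\}^{2/\gamma}$ is actually consumed. You flag this as "the main obstacle" but leave the crucial uniform remainder bound unaddressed.

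Two smaller points. First, $R_n(\eta)$ is not "linear in the Lasso errors": expanding $(y_t-X_t^\top\widehat\beta_{k-1})^2-(y_t-X_t^\top\beta_{\mathcal I_{k-1}}^*)^2$ produces a quadratic term $\sum_t\{X_t^\top(\widehat\beta_{k-1}-\beta^*_{\mathcal I_{k-1}})\}^2$ (the paper's $I_1$), which needs the REC on the intervals to control. Second, the paper's remainder separates into Lasso-estimation errors (relative to the segment average $\beta^*_{\mathcal I_{k-1}}$) and \emph{bias} errors $\beta^*_{\mathcal I_{k-1}}-\beta^*_{\eta_{k-1}}$ coming from the preliminary estimators mislocating the change point; these have different sizes and the bias pieces must be bounded via \eqref{eq:bias_1}--\eqref{eq:bias_3} using the preliminary localisation rate, a distinction your sketch elides.
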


\Cref{thm:asymptotics} provides not only estimation error controls of the change point estimators $\{\widetilde{\eta}_k\}_{k = 1}^{\widehat{K}}$ under two scenarios of jump sizes, but more importantly their limiting distributions. We remark that the localisation rate in \Cref{thm:asymptotics} is an exact match of the minimax lower bounds derived under the temporal independence assumption \citep[e.g.][]{rinaldo2021localizing} for the high-dimensional regression model with change points, while all previous works are off by a logarithmic factor. As far as we are aware, this exact match is the first time seen in the change point literature in high-dimensional regression setting. 

In the following, we further discuss \Cref{thm:asymptotics} in more details from the aspects of tuning parameters, the sketch of proof and the intuition of the two different forms of limiting distributions.

\medskip
\noindent \textbf{Choices of the tuning parameters.}  Recall that the estimators $\{\widetilde{\eta}_k\}_{k = 1}^{\widehat{K}}$ are obtained based on the preliminary estimators $\{\widehat{\eta}_k\}_{k = 1}^{\widehat{K}}$, which are outputs of the DPDU algorithm detailed in \Cref{algorithm:DPDU}.  There are two tuning parameters involved in the DPDU step: the Lasso penalisation parameter $\lambda$ and the interval length cutoff $\zeta$, see \eqref{eq:pre_est}, \eqref{eq:interval gof 2} and \eqref{eq:interval lasso}.  
\begin{itemize}
    \item Lasso parameter $\lambda$.  It is interesting to see that despite the temporal dependence and more general tail behaviour, the Lasso parameter $\lambda$ is chosen to be of order $\log^{1/2}(np)$, which is the same as that in the independence and sub-Gaussian tail case \citep[e.g.][]{buhlmann2011statistics}.  This is because, the Lasso procedures are only conducted on the intervals of length at least $\zeta$.  To elaborate, we investigate a large-probability upper bound on the term $|\mathcal{I}|^{-1}| \sum_{t \in \mathcal{I}} X_t\epsilon_t|_{\infty}$.  Due to our new Bernstein's inequality in \Cref{thm:bernstein_exp_subExp_nonlinear}, it is shown in \Cref{lemma:lasso deviation bound 1} in the supplementary material that, with large probability,
    \[
        |\mathcal{I}|^{-1}| \sum_{t \in \mathcal{I}} X_t\epsilon_t|_{\infty} \lesssim \sqrt{\frac{\log(np)}{|\mathcal{I}|}} + \frac{\log^{1/\gamma}(np)}{|\mathcal{I}|}.
    \]
    Provided that $|\mathcal{I}| \geq \zeta \gtrsim \{\log(pn)\}^{2/\gamma - 1}$, the term $\sqrt{\log(np)/|\mathcal{I}|}$ dominates $\log^{1/\gamma}(np)/|\mathcal{I}|$.  This explains the choice of $\lambda$.
    \item Interval length cutoff $\zeta$.  In addition to facilitate the choice of $\lambda$, the choice of $\zeta$ is also to enable the application of a restricted eigenvalue condition we provide for data with functional dependence, see \Cref{theorem:RES Version II} in the supplementary material.  We would also like to point out, since this paper focuses on a more challenging case where $\max_{k = 1}^K \kappa_k \lesssim 1$, \Cref{assump-snr}\textbf{a} guarantees that 
    \begin{equation}\label{eq-delta-zeta}
        \Delta \gtrsim \kappa^{-2} \{\s\log(np)\}^{2/\gamma-1} \alpha_n \gtrsim \zeta. 
    \end{equation}
    Our Lasso procedure only considers intervals of length at least $\zeta$, implying that the change point localisation error is at least of order $\zeta$. This price is acceptable by noticing that $\Delta \gtrsim \zeta$, as demonstrated in \eqref{eq-delta-zeta}.
\end{itemize}

\medskip
\noindent \textbf{Sketch of the proof.}  The proof can be roughly modularised into three steps.  
\begin{itemize}
    \item We first show that with large probability, the preliminary estimators are good enough such that for each $k \in \{1, \ldots, K\}$, the following holds. (i) The interval $(s_k, e_k)$ defined in \eqref{eq:corrected boundaries} contains one and only one true change point $\eta_k$, which is sufficiently far away from both $s_k$ and $e_k$; and (ii) the Lasso estimators $\widehat{\beta}_k$ is a good enough estimator of the true coefficient $\beta_k$.  These two statements have been shown previously in \cite{rinaldo2021localizing} for the temporal independence and sub-Gaussian case.  In Lemmas~\ref{theorem:DUDP} and \ref{lemmma:refinement_lasso}, we provide the functional dependence and general tail behaviour counterparts for this set of results, based on our new Bernstein inequality in \Cref{thm:bernstein_exp_subExp_nonlinear}.
    
    \item Based on the preliminary estimators, we then show that $\{\kappa_k^2|\widetilde{\eta}_k - \eta_k|\}_{k = 1}^{\widehat{K}}$ are uniformly tight.  This is a stepping stone towards our final limiting distributions, but this result is interesting \textit{per se}.  The preliminary estimators satisfy that, with large probability, 
    \[
        \max_{k = 1}^K \kappa_k^2|\widehat{\eta}_k - \eta_k| \lesssim \{\s \log(np)\}^{2/\gamma - 1},
    \]
    the upper bound of which is diverging.  The final estimators, however, refine the preliminary results by constraining the localisation error to $O_p(\kappa_k^{-2})$.
    \item The limiting distributions are thus derived, by exploiting the argmax continuous mapping theorem \citep[e.g.~Theorem~3.2.2~in][]{Vaart1996weak} and the functional central limit theorem under temporal dependence \citep[e.g.~Theorem~3~in][]{wu2011asymptotic}.
\end{itemize}

\medskip
\noindent \textbf{Limiting distributions.}  The limiting distributions differ based on the limits of the jump sizes $\{\kappa_k\}_{k = 1}^K$.  To explain this, we provide more insights in the proofs.  For $k \in \{1, \ldots, K\}$, recall that $\widetilde{\eta}_k$ is the minimiser to
\[
    Q_k(\eta) = \sum_{t = s_k }^{\eta-1}(y_t - X_t^{\top}\widehat{\beta}_{k-1})^2 + \sum_{t = \eta}^{e_k-1}(y_t - X_t^{\top}\widehat{\beta}_{k})^2.
\]
This is equivalent to finding the minimiser of 
\[
    \widetilde{Q}_k(r) = Q_k(\eta_k + r) - Q_k(\eta_k) = \begin{cases}
        \sum_{t = \eta_k}^{\eta_k + r - 1} \big\{(y_t - X_t^{\top}\widehat{\beta}_{k-1})^2 - (y_t - X_t^{\top}\widehat{\beta}_{k})^2\big\}, & r \in \mathbb{Z}_+,\\
        0, & r = 0, \\
        \sum_{t = \eta_k + r + 1}^{\eta_k} \big\{(y_t - X_t^{\top}\widehat{\beta}_{k})^2 - (y_t - X_t^{\top}\widehat{\beta}_{k-1})^2\big\}, & r \in \mathbb{Z}_-.
    \end{cases}
\]
Considering only the $r > 0$ case, provided that $\widehat{\beta}_{k-1}$ and $\widehat{\beta}_{k}$ are good enough, $\widetilde{Q}_k(r)$ can be thus written as the sum of
\begin{align}
    Q^*_k(\eta_k + r) - Q^*_k(\eta_k) & = \sum_{t = \eta_k}^{\eta_k + r - 1} \big\{(y_t - X_t^{\top}\beta_{\eta_{k-1}}^*)^2 - (y_t - X_t^{\top}\beta_{\eta_{k}}^*)^2\big\} \nonumber \\
    & = \kappa_k \sum_{t = \eta_k}^{\eta_k + r - 1}(2\epsilon_tv_k^{\top}X_t) + \kappa_k^2 \sum_{t = \eta_k}^{\eta_k + r - 1} (v_k^{\top}X_t)^2 \label{eq-Q-Q-main}
\end{align} 
and the remainder term, which is of order $o_p(r^{1/2}\kappa_k \vee r\kappa_k^2 \vee 1)$. The main reason for imposing the stronger signal-to-noise ratio condition in \Cref{assump-snr}\textbf{b} is to ensure that remainder term is indeed of such order.  This consequently establishes the uniform tightness of $|\widetilde{\eta}_k - \eta_k|$.

\begin{itemize}
    \item In the non-vanishing case, all the choices of $r$ considered are finite due to the uniform tightness of $|\widetilde{\eta}_k - \eta_k|$ and $\kappa_k$ is also finite due to \Cref{assume:regression parameters}. Term \eqref{eq-Q-Q-main} thus can be approximated by a two-sided random walk distribution.  
    \item In the vanishing case, the remainder term vanishes.  Regarding the terms in \eqref{eq-Q-Q-main}, all $r$, such that $r\kappa_k^2$ are finite, are considered. Since $\kappa_k$ vanishes, the functional central limit theorem leads to the two-sided Brownian motion distribution in the limit.  
\end{itemize}

\subsection{Connections with relevant literature}\label{sec-rel-lit-result}

\Cref{thm:asymptotics} presents limiting distributions of change point estimators, under both the non-vanishing and vanishing jump size regimes, in a high-dimensional linear regression model, with nonlinear temporal dependence in both covariate and noise sequences.  Such results are novel at a few fronts, especially that this is a new addition to high-dimensional linear regression settings.  Nevertheless, limiting distributions of change point estimators have been studied previously in other models.  At a high level, all are built on different versions of argmax continuous mapping theorem, but specific work faces specific challenges brought in by specific models and temporal dependence assumptions.  

The pioneer work by \cite{yao1989least} studies a mean change problem in an independent univariate sequence.  They present a two-sided random walk-type limiting distribution in the non-vanishing regime, as a univariate mean counterpart of \Cref{thm:asymptotics}\textbf{a}.  Despite being univariate and independent, when the distribution of noise is unknown, there is no practical guidance on utilising the limiting distribution.

Fully-observed functional mean change point problems are studied in \cite{aue2009estimation} and \cite{aue2018detecting}.  Both works study one change point scenario with the minimal spacing $\Delta = cn$, where $c \in (0, 1)$ being a constant.  In both these two papers, limiting distributions of change point estimators are derived in non-vanishing and vanishing regimes.  Although this is a different model, the two limiting distributions are also in the form of two-sided random walks and Brownian motions, respectively.  \cite{aue2009estimation} consider a temporally-independent case and the noise sequence in \cite{aue2018detecting} is assumed to be a linear process.

High-dimensional mean change point problems have been recently studied in a stream of papers, including \cite{bai2010common}, \cite{wangshao2020}, \cite{chen2021inference}, \cite{kaul2021} and \cite{kaul2021inference}.  This is again a fundamentally different model, where at every time point, a high-dimensional vector is observed and the means of the vectors change at certain points. For high-dimensional mean change point problems, in terms of the jump size limits regimes, \cite{bai2010common}, \cite{wangshao2020} and \cite{chen2021inference} consider the vanishing jump size regime, providing two-sided Brownian motion type limiting distributions; while \cite{kaul2021} and \cite{kaul2021inference} consider both regimes, providing two-sided random walks and Brownian motions type limiting distributions.  In terms of the number of change points, \cite{bai2010common}, \cite{wangshao2020} and \cite{kaul2021} study one change point scenario with the minimal spacing $\Delta = cn$, where $c \in (0, 1)$ being a constant; while \cite{chen2021inference} and \cite{kaul2021inference} allow for multiple change points.  In terms of the temporal dependence assumption, \cite{wangshao2020}, \cite{kaul2021} and \cite{kaul2021inference} assume temporal independence; while \cite{bai2010common} and \cite{chen2021inference} assume the noise sequence is a linear process. 

In view of these aforementioned works, we see that our framework is by far the most general setting.  Since we allow both the high-dimensional covariate and univariate noise sequences to be temporally dependent in a nonlinear way, new techniques handling the large deviation behaviours of stationary and non-stationary sequences are developed.  This is unseen in the existing work.  Moreover, compared to the mean change problems, we need to conduct high-dimensional regression based on non-stationary and temporally-dependent sequences.  This relies on a set of new techniques, including a new restricted eigenvalue-type result, which is, among other key technical ingredients, not dealt with in the existing change point inference literature.

\subsection{Intermediate results} \label{sec:theory_preliminary_est}

We collect some important intermediate results of our analysis, which are interesting byproducts \textit{per se}.  The one at the core of the analysis is a new Bernstein inequality, controlling the large deviation bound under temporal dependence and more general tail behaviour. 

\begin{theorem}\label{thm:bernstein_exp_subExp_nonlinear}
Let $\{Z_t\}_{t \in \mathbb{Z}}$ be an $\mathbb{R}$-valued, mean-zero, possibly nonstationary process of the form~\eqref{eq:nonstationary}.  Let the cumulative functional dependence measure $\Delta^Z_{m,2}$ be defined in \eqref{eq:fdm_nonstationary}.  Assume that there exist absolute constants $\gamma_1(Z), \gamma_2(Z), c, C_{\mathrm{FDM}} > 0$ such that 
\begin{equation}\label{eq-z-bern-gamma-1}
    \sup_{m \geq 0}\exp(cm^{\gamma_1(Z)})\Delta^Z_{m,2} \leq C_{\mathrm{FDM}}    
\end{equation}
and $\sup_{t \in \mathbb{Z}}\mathbb{P}(|Z_t| > x ) \leq \exp(1 - x ^{\gamma_2(Z)})$, for any $x > 0$.  Let $\gamma(Z) = \{1/\gamma_1(Z) + 1/\gamma_2(Z)\}^{-1} < 1$. There exist absolute constants $c_1, c_2 > 0$ such that for any $x \ge 1$ and integer $m \geq 3$,
\begin{equation}\label{eq-z-bern-result}
    \mathbb{P}\left\{m^{-1/2}\left|\sum_{t = 1}^m Z_t\right| \geq x\right\} \leq m\exp\{- c_1 x^{\gamma(Z)} m^{\gamma(Z)/2}\} + 2\exp\{-c_2 x^2\}.
\end{equation}
\end{theorem}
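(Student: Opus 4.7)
The plan is to combine three ingredients: truncation against the sub-Weibull tail, an $m^*$-dependent coupling against the temporal dependence, and a big-block/small-block Bernstein bound for the resulting bounded, nearly $m^*$-dependent sequence. Three free parameters---truncation level $M$, window width $m^*$, and block size---will be calibrated at the end so that the harmonic-mean exponent $\gamma(Z) = (\gamma_1(Z)^{-1}+\gamma_2(Z)^{-1})^{-1}$ and the scaling $m^{\gamma(Z)/2}$ emerge in \eqref{eq-z-bern-result}.

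First, I would decompose $Z_t = Z_t^{\le} + Z_t^{>}$ with $Z_t^{\le} = Z_t \mathbf{1}\{|Z_t|\le M\} - \E[Z_t \mathbf{1}\{|Z_t|\le M\}]$ bounded by $2M$. The tail piece is handled by the crude union bound $\Pr(\max_{t\le m}|Z_t|>M) \le m\exp(1-M^{\gamma_2(Z)})$, together with the centering bias $|\E[Z_t \mathbf{1}\{|Z_t|>M\}]| \lesssim M^{1-\gamma_2(Z)}\exp(1-M^{\gamma_2(Z)})$ obtained from integration by parts using the sub-Weibull tail. Choosing $M^{\gamma_2(Z)} \asymp x^{\gamma(Z)} m^{\gamma(Z)/2}$ yields precisely the first term $m\exp(-c_1 x^{\gamma(Z)} m^{\gamma(Z)/2})$ in \eqref{eq-z-bern-result}. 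For the bounded piece $\sum_t Z_t^{\le}$, I would build an $m^*$-dependent proxy $\tilde{Z}_t = \E[Z_t^{\le}\mid \zeta_{t-m^*+1},\ldots,\zeta_t] - \E[Z_t^{\le}]$. Because truncation is $1$-Lipschitz, one obtains $\|Z_t^{\le}-\tilde Z_t\|_2 \lesssim \Delta^Z_{m^*,2} \le C_{\mathrm{FDM}}\exp(-c m^{*\gamma_1(Z)})$ from the dependence decay in \eqref{eq-z-bern-gamma-1}, and Chebyshev's inequality absorbs the coupling error once $m^* \asymp (x^{\gamma(Z)} m^{\gamma(Z)/2})^{1/\gamma_1(Z)}$, up to logarithmic factors. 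The sequence $\{\tilde Z_t\}$ is then $m^*$-dependent, mean zero, and bounded by $4M$; grouping indices into alternating big blocks of length $m^*$ separated by small buffer blocks of length $m^*$ makes the big-block sums exactly independent. Classical Bernstein for independent bounded summands, with the variance proxy controlled by $\sum_s \delta^Z_{s,2}^2 < \infty$ via \eqref{eq-z-bern-gamma-1}, then produces the Gaussian contribution $2\exp(-c_2 x^2)$, while the sub-exponential regime of Bernstein (where $M m^* x/\sqrt{m}$ dominates the variance) is already subsumed by the first term thanks to the calibration of $M$ and $m^*$. The small-block half of the sum is treated symmetrically.

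The main obstacle will be the simultaneous calibration of $M$, $m^*$, and the block size so that the truncation, coupling, and block-replacement errors each fall strictly below the target tail probability and the harmonic-mean exponent $\gamma(Z)$ emerges organically from the Bernstein variance-versus-maximum trade-off and the sub-Weibull/functional-dependence decay rates. Nonstationarity of $\{Z_t\}$ adds a mild complication: the variance proxy appearing in Bernstein must be uniform in $t$, which follows from the supremum formulation of $\delta^Z_{s,q}$ in \eqref{eq:fdm_nonstationary}, and the conditional-expectation operator in the coupling step must be checked to respect the truncation without inflating the bias. Once all constants are absolute, the hypotheses $x \ge 1$ and $m \ge 3$ are enough to absorb the crossover terms and to keep $c_1, c_2$ universal.
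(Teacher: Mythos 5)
Your overall architecture (truncate, decouple, block, apply Bernstein) is the natural first attempt, but the final calibration step contains a genuine gap: the three constraints you impose on the truncation level $M$ and the window width $m^*$ are mutually incompatible, and the sub-exponential branch of the classical Bernstein bound is \emph{not} subsumed by the first term. Concretely, write $\gamma = \gamma(Z)$, $\gamma_1 = \gamma_1(Z)$, $\gamma_2 = \gamma_2(Z)$ and $u = x\sqrt{m}$. Your truncation union bound forces $M \gtrsim (x^{\gamma}m^{\gamma/2})^{1/\gamma_2}$ and your Chebyshev absorption of the coupling error forces $m^* \gtrsim (x^{\gamma}m^{\gamma/2})^{1/\gamma_1}$, hence
\[
M m^* \;\gtrsim\; \bigl(x^{\gamma}m^{\gamma/2}\bigr)^{1/\gamma_1 + 1/\gamma_2} \;=\; \bigl(x^{\gamma}m^{\gamma/2}\bigr)^{1/\gamma} \;=\; x\sqrt{m}.
\]
The big blocks you form have range bounded by $B \asymp Mm^*$, so the sub-exponential branch of Bernstein for the independent block sums contributes $\exp\{-c\,u/B\} = \exp\{-c\,x\sqrt{m}/(Mm^*)\} \asymp \exp\{-c\}$, a constant that neither decays like $\exp\{-c_2x^2\}$ nor like $\exp\{-c_1x^{\gamma}m^{\gamma/2}\}$. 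To tame that branch you would instead need $Mm^* \lesssim (x\sqrt{m})^{1-\gamma}$ or $Mm^* \lesssim \sqrt{m}/x$, both of which contradict the lower bound above whenever $x \geq 1$. This tension is not an artefact of sloppy constants; it is the essential difficulty of Bernstein inequalities for sub-Weibull data with $\gamma < 1$ (it already bites in the i.i.d.\ case with $\gamma_1 = \infty$), and no single choice of $(M, m^*)$ resolves it across the whole range of $x$.

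The paper's proof circumvents exactly this obstruction by following the Merlev\`ede--Peligrad--Rio strategy rather than a one-shot blocking scheme. At the top level it does use an odd/even big-block decomposition with coupled, independent block sums, much as you propose; but the Laplace transform of each block sum of length $A$ is controlled by a recursive Cantor-like construction (Propositions~\ref{prop:bernstein_bound_cantor} and~\ref{prop:bernstein_bound_cantor_2}): the block is split dyadically with gaps $d_k \asymp c_0 A 2^{-k/\gamma}$ whose coupling cost is summable, and---crucially---the truncation level is \emph{re-adapted at every scale}, $M_k = H^{-1}(\exp\{-(A2^{-k/\gamma})^{\gamma_1}\})$, so that the range of each sub-block shrinks in step with its length. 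The resulting log-Laplace bounds of the form $\sigma^2 t^2/(1 - ct)$ are then combined across scales via the tensorisation inequality in Lemma~\ref{lemma:aggregate_log-laplace}, and a separate crude union-bound argument covers the very-large-deviation regime $u \geq m^{\gamma_1/\gamma}$. If you want to salvage your outline, the missing ingredient is precisely this multi-scale, scale-adaptive truncation inside each block; a single truncation level cannot simultaneously control the tail, the coupling and the Bernstein range term.
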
 

\Cref{thm:bernstein_exp_subExp_nonlinear} is repeatedly used in the proof of \Cref{thm:asymptotics}.  Specific forms of $\{Z_t\}_{t \in \mathbb{Z}}$ used include strictly stationary series $\{\epsilon_tX_{tj}\}$ and nonstationary series $\{\epsilon_tX_t^{\top} \beta^*_t\}_{t \in \mathbb{Z}}$.  Note that the moment condition in the cumulative functional dependence measure $\Delta^Z_{m,2}$ used in \Cref{thm:bernstein_exp_subExp_nonlinear} is 2.  This further explains why the fourth moment condition is assumed in Assumptions~\ref{assume:X} and \ref{assume:epsilon}.  In \eqref{eq-z-bern-result}, we see that the decay rate for a sum of $m$ observations is governed by the interplay of two terms.  When the sample size is large enough, the decay rate is governed by the same rate as that in the temporal independence and sub-Gaussian case \citep[e.g.][]{vershynin2018high}.  This coincides with the conventional wisdom in understanding standard Bernstein's inequality.

With the help of \Cref{thm:bernstein_exp_subExp_nonlinear}, we are able to show the properties of preliminary estimators under temporal dependence and the general tail behaviour.  The localisation error, the error bounds on Lasso estimators and the error bounds on the jump size estimators are presented in Lemmas~\ref{theorem:DUDP}, \ref{lemmma:refinement_lasso} and~\ref{lemma:consistency_jump_size_est}, respectively.

\begin{lemma}[Preliminary change point estimators] \label{theorem:DUDP}
Given data $\{(y_t, X_t)\}_{t = 1}^n$, suppose that Assumptions~\ref{assume:regression parameters}, \ref{assume:X}, \ref{assume:epsilon} and \ref{assump-snr}\textbf{a} hold.  Let $\{\widehat{\eta}_k\}_{k = 1}^{\widehat{K}}$ be the change point estimators output by $\mathrm{DPDU}(\{y_t, X_t\}_{t = 1}^n$, $\lambda$, $\zeta)$, with $\lambda = C_\lambda \sqrt{\log(np)}$ and $\zeta = C_\zeta\{\s \log(pn)\}^{2/\gamma - 1}$, $C_{\lambda}, C_{\zeta} > 0$ being absolute constants.  It then holds with probability at least $1 - (n \vee p)^{-3}$ that, with an absolute constant $C_{\mathfrak{e}} > 0$,
\[
    \widehat{K} = K \quad \mbox{and} \quad \max_{k = 1}^K \kappa_k^2|\widehat{\eta}_k - \eta_k| \leq C_{\mathfrak{e}} \{\s \log(pn)\}^{2/\gamma - 1}.
\]
\end{lemma}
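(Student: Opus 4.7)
The plan is to extend the $\ell_0$-penalised dynamic programming analysis of \cite{rinaldo2021localizing} to the functional dependence and sub-Weibull regime, with the new Bernstein inequality (\Cref{thm:bernstein_exp_subExp_nonlinear}) supplying the key concentration tool. My first step is to define a good event $\mathcal{A}$ of probability at least $1 - (n\vee p)^{-3}$ on which, uniformly over every integer interval $\I \subset [1,n]$ with $|\I|\geq \zeta$, three facts hold: (i) the Lasso score bound $|\I|^{-1}\big|\sum_{t\in \I} X_t\epsilon_t\big|_\infty \lesssim \sqrt{\log(pn)/|\I|}$ from \Cref{lemma:lasso deviation bound 1}; (ii) the restricted eigenvalue condition of \Cref{theorem:RES Version II} for $|\I|^{-1}\sum_{t\in\I} X_tX_t^\top$; (iii) a $\sqrt{|\I|\log(pn)}$-type bound on the non-stationary cross-term $\sum_{t\in\I}\epsilon_t X_t^\top \beta_t^*$, obtained by applying \Cref{thm:bernstein_exp_subExp_nonlinear} with the nonstationary measure \eqref{eq:fdm_nonstationary} to $Z_t = \epsilon_t X_t^\top \beta_t^*$. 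The cutoff $\zeta \asymp \{\s\log(pn)\}^{2/\gamma-1}$ is exactly the one which, via the balance of the two terms in \eqref{eq-z-bern-result}, makes a union bound over $O(n^2)$ intervals go through.

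On $\mathcal{A}$, I would exploit the optimality of $\widehat{\mathcal{P}}$ in \eqref{eq:pre_est}: for any admissible partition $\mathcal{P}$, $\sum_{\I\in\widehat{\mathcal{P}}}\mathcal{G}(\I) + \zeta|\widehat{\mathcal{P}}| \le \sum_{\I\in\mathcal{P}}\mathcal{G}(\I) + \zeta|\mathcal{P}|$. Applying this to the refinement $\mathcal{P}^\dagger$ obtained by inserting all true change points into $\widehat{\mathcal{P}}$, and using (i)--(iii) together with the standard Lasso prediction-error bound on each stationary interval of $\mathcal{P}^\dagger$, yields an upper bound on $\sum_{\I\in\widehat{\mathcal{P}}}\mathcal{G}(\I)$ in terms of the oracle loss $\sum_t \epsilon_t^2$ plus a fluctuation of order $K\s\log(pn)$. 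Matching this, a bias-variance lower bound on $\mathcal{G}(\I)$ for intervals of $\widehat{\mathcal{P}}$ shows that an interval containing no true change point differs from the oracle loss by at most $O(\s\log(pn))$, while an interval straddling a single change point $\eta_k$ with both endpoints at distance at least $d$ from $\eta_k$ contributes an extra $c\kappa_k^2 d$ thanks to the restricted eigenvalue bound. Combining the two rules out over-estimation ($\widehat K \leq K$) by a pruning argument: any spurious cut could be removed to save a $\zeta$ in the penalty at a merging cost of $O(\zeta)$, contradicting optimality for a sufficiently large $C_\zeta$. Under-estimation is ruled out through \Cref{assump-snr}\textbf{a}: if some $\eta_k$ had no estimator within $\Delta/4$, the straddling interval would contribute an excess of order $\kappa_k^2 \Delta \gtrsim \{\s\log(pn)\}^{2/\gamma-1}\alpha_n$, far exceeding the fluctuation budget. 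Hence $\widehat K = K$, and applying the same excess-cost calculation at the refined scale delivers $\kappa_k^2|\widehat \eta_k - \eta_k| \lesssim \{\s\log(pn)\}^{2/\gamma-1}$.

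The main obstacle is the uniform control over $O(n^2)$ intervals under functional dependence and sub-Weibull tails, where the classical sub-Gaussian concentration machinery is unavailable; this is precisely why \Cref{thm:bernstein_exp_subExp_nonlinear} is needed, and the rate $\{\log(pn)\}^{2/\gamma-1}$ in $\zeta$ arises from balancing the two terms $m\exp\{-c_1 x^{\gamma(Z)} m^{\gamma(Z)/2}\}$ and $2\exp\{-c_2 x^2\}$ in \eqref{eq-z-bern-result}, being the smallest cutoff at which the first, dependence-driven term becomes negligible relative to the Gaussian one. A secondary subtle point is that the cross-terms on intervals straddling change points are genuinely nonstationary, so the nonstationary variant \eqref{eq:fdm_nonstationary} must be invoked with $g_t$ encoding the piecewise coefficient sequence $\beta_t^*$; the uniform bound $|\beta_t^*|_2 \leq C$ implied by \Cref{assume:regression parameters}\textbf{b} together with \Cref{assume:X}\textbf{a} keeps the resulting cumulative dependence measure $\Delta^Z_{m,2}$ under control, so that \Cref{thm:bernstein_exp_subExp_nonlinear} remains applicable across the change points.
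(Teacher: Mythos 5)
Your proposal is correct and follows essentially the same route as the paper: a high-probability event built from the new Bernstein inequality (score bound, restricted eigenvalue condition, and control of the nonstationary bias terms on intervals of length at least $\zeta$), followed by optimality comparisons of $\widehat{\mathcal{P}}$ against refinements containing the true change points, excess-cost lower bounds of order $\kappa_k^2 d$ to localise, a merging argument to preclude spurious splits, and the signal-to-noise condition to preclude missed change points — exactly the structure of \Cref{prop-1} and \Cref{prop-2}. The only cosmetic difference is that the paper packages the nonstationarity through $\big|\sum_{t\in\I}X_tX_t^\top(\beta^*_\I-\beta^*_t)\big|_\infty$ with $\sum_{t\in\I}(\beta_\I^*-\beta_t^*)=0$ (\Cref{lemma: mis-specified deviation}) rather than through $\sum_{t\in\I}\epsilon_tX_t^\top\beta_t^*$, which is an equivalent way of handling the same issue.
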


\begin{lemma}[Lasso estimators] \label{lemmma:refinement_lasso}
Under the same conditions and with the same notation as that in \Cref{theorem:DUDP}, let $\{\widehat{\beta}_k\}_{k = 1}^{\widehat{K}}$ be the Lasso estimators defined in \eqref{eq:interval lasso} with the intervals constructed based on $\{\widehat{\eta}_k\}_{k = 1}^{\widehat{K}}$.  It then holds with probability at least $1 - (n \vee p)^{-3}$ that, with an absolute constant $C > 0$,
\begin{align*}
    & \max\bigg\{\max_{k = 1}^K \kappa_k^{-1}|\widehat{\beta}_k - \beta_{\eta_k}^*|_2, \quad \max_{k = 0}^{K - 1} \kappa_{k+1}^{-1}|\widehat{\beta}_k - \beta_{\eta_k}^*|_2, \\
    & \hspace{2cm} \s^{-1/2} \max_{k = 1}^K \kappa_k^{-1}|\widehat{\beta}_k - \beta_{\eta_k}^*|_1, \quad \s^{-1/2} \max_{k = 0}^{K-1} \kappa_{k+1}^{-1}|\widehat{\beta}_k - \beta_{\eta_k}^*|_1\bigg\} \leq C \alpha^{-1/2}_{n},
\end{align*}
where $\alpha_n > 0$ is any diverging sequence specified in Assumption~\ref{assump-snr}\textbf{a}.
\end{lemma}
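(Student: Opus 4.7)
The plan is to condition on the high-probability event of \Cref{theorem:DUDP}, on which $\widehat{K}=K$ and $d_j:=|\widehat{\eta}_j-\eta_j|\le C\kappa_j^{-2}\{\s\log(np)\}^{2/\gamma-1}$ for every $j$. Combined with \Cref{assump-snr}\textbf{a}, this guarantees each Lasso interval $\widehat{\mathcal{I}}_k=[\widehat{\eta}_k,\widehat{\eta}_{k+1})$ has length at least $\Delta/2$, and that on all but at most $d_k+d_{k+1}$ ``contamination'' points at the endpoints the true coefficient $\beta^*_t$ equals $\beta^*_{\eta_k}$, while on the contaminated points $\beta^*_t-\beta^*_{\eta_k}\in\{\pm\Psi_k,\pm\Psi_{k+1}\}$. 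I would then treat $\widehat{\beta}_k$ as the Lasso estimator for the misspecified model $y_t=X_t^\top\beta^*_{\eta_k}+\epsilon_t+X_t^\top(\beta^*_t-\beta^*_{\eta_k})$, and run the standard Lasso deviation argument with the bias term carried explicitly.

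Writing $\Delta_k:=\widehat{\beta}_k-\beta^*_{\eta_k}$, the basic inequality $F(\widehat{\beta}_k)\le F(\beta^*_{\eta_k})$ reads
\[
\sum_{t\in\widehat{\mathcal{I}}_k}(X_t^\top\Delta_k)^2+\lambda|\widehat{\mathcal{I}}_k|^{1/2}\bigl(|\widehat{\beta}_k|_1-|\beta^*_{\eta_k}|_1\bigr)\le 2|\Delta_k|_1\Bigl|\sum_{t\in\widehat{\mathcal{I}}_k}X_t\epsilon_t\Bigr|_\infty+2\sum_{t\in\mathrm{cont}}(X_t^\top\Psi_t^{\sharp})(X_t^\top\Delta_k),
\]
with $\Psi_t^{\sharp}\in\{\pm\Psi_k,\pm\Psi_{k+1}\}$ depending on the endpoint. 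Young's inequality absorbs the last term into $\tfrac12\sum(X_t^\top\Delta_k)^2+2\sum_{\mathrm{cont}}(X_t^\top\Psi_t^{\sharp})^2$. The three ingredients I then need are: (i) $|\sum X_t\epsilon_t|_\infty\lesssim\sqrt{|\widehat{\mathcal{I}}_k|\log(np)}$ via \Cref{thm:bernstein_exp_subExp_nonlinear} plus a union bound over coordinates, which justifies the choice $\lambda=C_\lambda\sqrt{\log(np)}$; (ii) $\sum_{\mathrm{cont}}(X_t^\top\Psi_t^{\sharp})^2\lesssim\kappa_k^2 d_k+\kappa_{k+1}^2 d_{k+1}\lesssim\{\s\log(np)\}^{2/\gamma-1}$, using $\mathbb{E}[(v_j^\top X_t)^2]\le\Lambda_{\max}(\Sigma)$ together with concentration of $\sum(v_j^\top X_t)^2$ from \Cref{thm:bernstein_exp_subExp_nonlinear}; and (iii) the restricted eigenvalue $\sum(X_t^\top\Delta_k)^2\ge c\,|\widehat{\mathcal{I}}_k|\,|\Delta_k|_2^2$ on the cone $|\Delta_{k,S^c}|_1\le 3|\Delta_{k,S}|_1$ from \Cref{theorem:RES Version II} (valid since $|\widehat{\mathcal{I}}_k|\ge\zeta$).

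The main technical hurdle will be (ii): the contamination count $d_j$ is random and determined by the preliminary estimators, so naive concentration on the contamination set is subtle. I would handle this by a union bound over all deterministic candidate endpoint windows of length at most $C\kappa_j^{-2}\{\s\log(np)\}^{2/\gamma-1}$, restricting attention to such windows via \Cref{theorem:DUDP}. Once the three ingredients are assembled, standard Lasso arithmetic together with $|\Delta_k|_1\le 4\sqrt{\s}\,|\Delta_k|_2$ yields
\[
|\Delta_k|_2^2\lesssim\frac{\s\log(np)+\{\s\log(np)\}^{2/\gamma-1}}{|\widehat{\mathcal{I}}_k|}\lesssim\frac{\{\s\log(np)\}^{2/\gamma-1}}{\Delta},
\]
since $\gamma<1$ forces $\{\s\log(np)\}^{2/\gamma-1}\ge\s\log(np)$ and $|\widehat{\mathcal{I}}_k|\asymp\Delta$. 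Invoking \Cref{assump-snr}\textbf{a} then gives $|\Delta_k|_2\lesssim\kappa\,\alpha_n^{-1/2}$, and since $\kappa\le\min(\kappa_k,\kappa_{k+1})$ both rescaled $\ell_2$ bounds follow simultaneously; the matching $\ell_1$ bounds come from the cone condition.
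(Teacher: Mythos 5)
Your overall route differs from the paper's in a way that creates a genuine gap. The paper does not run the Lasso argument against the target $\beta^*_{\eta_k}$ directly; it first compares $\widehat{\beta}_k$ to the \emph{interval average} $\beta^*_{\I_k}=|\I_k|^{-1}\sum_{t\in\I_k}\beta^*_t$ (\Cref{lemma:interval lasso}) and only afterwards bounds the deterministic bias $|\beta^*_{\I_k}-\beta^*_{\eta_k}|$ via the localisation error and \Cref{assump-snr}. The point of this centering is that the misspecification term $\sum_{t\in\I_k}X_tX_t^\top(\beta^*_t-\beta^*_{\I_k})$ has weights summing to zero, so its $\ell_\infty$ norm concentrates at the rate $\sqrt{|\I_k|\log(np)}\asymp\lambda|\I_k|^{1/2}$ (\Cref{lemma: mis-specified deviation}) and can be absorbed into the $\lambda$-penalty exactly like the noise term, preserving the clean cone condition $|(\widehat\beta_{\I}-\beta^*_{\I})_{S^c}|_1\le 3|(\widehat\beta_{\I}-\beta^*_{\I})_{S}|_1$.

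In your version the contamination is not centered, and this is where the argument breaks. After Young's inequality the basic inequality acquires the additive term $2B$ with $B=\sum_{\mathrm{cont}}(X_t^\top\Psi_t^{\sharp})^2$, whose \emph{mean} is of order $\kappa_j^2 d_j\asymp\{\s\log(np)\}^{2/\gamma-1}$ — this is not a concentration issue (your worry (ii)) but a deterministic drift. Since $2B$ is not proportional to $|\Delta_k|_1$, you only obtain the perturbed cone $|\Delta_{k,S^c}|_1\lesssim|\Delta_{k,S}|_1+B/(\lambda|\I_k|^{1/2})$, and when the second term dominates, the resulting $\ell_1$ bound is $|\Delta_k|_1\lesssim B/(\lambda\sqrt{\Delta})\lesssim\kappa\,\s^{1/\gamma-1/2}\{\log(np)\}^{1/\gamma-1}\alpha_n^{-1/2}$, which exceeds the claimed $\sqrt{\s}\,\kappa\,\alpha_n^{-1/2}$ by the diverging factor $\{\s\log(np)\}^{1/\gamma-1}$ whenever $\gamma<1$ (the regime of the paper); the inequality $|\Delta_k|_1\le 4\sqrt{\s}|\Delta_k|_2$ you invoke is then unavailable, and the RE condition of \Cref{theorem:RES Version II} no longer applies on the relevant set. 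The alternative of keeping the contamination linear, i.e.\ bounding it by $|\Delta_k|_1\,|\sum_{\mathrm{cont}}X_tX_t^\top\Psi_t^\sharp|_\infty$, fails for the same reason: that $\ell_\infty$ norm carries a drift of order $\kappa^{-1}\{\s\log(np)\}^{2/\gamma-1}$, which is not $\lesssim\lambda\sqrt{\Delta}$ under \Cref{assump-snr}\textbf{a} alone. To repair the proof you need the paper's device (or an equivalent one): take $\beta^*_{\I_k}$ as the Lasso oracle so the contamination is mean-zero, and handle $|\beta^*_{\I_k}-\beta^*_{\eta_k}|_2\lesssim\kappa\alpha_n^{-1}$ and $|\beta^*_{\I_k}-\beta^*_{\eta_k}|_1\lesssim\sqrt{\s}\,\kappa\alpha_n^{-1}$ separately by the triangle inequality.
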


\begin{lemma}[Jump size estimators] \label{lemma:consistency_jump_size_est}
Under the same conditions and with the same notation as that in \Cref{theorem:DUDP}, for each $k \in \{1, \ldots, \widehat{K}\}$, let 
\begin{equation}\label{eq-jump-size-estimator}
    \widehat{\kappa}_k = |\widehat{\beta}_k - \widehat{\beta}_{k-1}|_2
\end{equation} 
be the $k$th jump size estimator.  It then holds with probability at least $1 - (n \vee p)^{-3}$ that, with an absolute constant $C > 0$,  
\[
    \max_{k = 1}^K |\widehat{\kappa}_k/\kappa_k - 1| \leq C\alpha_n^{-1/2},
\]
where $\alpha_n > 0$ is any diverging sequence specified in Assumption~\ref{assump-snr}\textbf{a}.
\end{lemma}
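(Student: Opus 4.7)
The plan is a direct triangle-inequality argument piggybacking on Lemma~\ref{lemmma:refinement_lasso}. First, I would work on the event $\mathcal{E}$, of probability at least $1 - (n \vee p)^{-3}$, on which the conclusions of both Lemma~\ref{theorem:DUDP} and Lemma~\ref{lemmma:refinement_lasso} hold. On $\mathcal{E}$, $\widehat{K} = K$, so the indexing of $\{\widehat{\beta}_k\}_{k=0}^{\widehat{K}}$ aligns with the true change points, and I can fix $k \in \{1, \ldots, K\}$ throughout. The key identification is that, by the piecewise-constant structure \eqref{eq-model-2}, $\beta^*_{\eta_k - 1} = \beta^*_{\eta_{k-1}}$, so the target population jump can be rewritten as
\[ \kappa_k = |\Psi_k|_2 = |\beta^*_{\eta_k} - \beta^*_{\eta_{k-1}}|_2, \]
which expresses $\kappa_k$ as the $\ell_2$-distance between the two population coefficients that $\widehat{\beta}_k$ and $\widehat{\beta}_{k-1}$ are respectively estimating.

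Next, I would apply the triangle and reverse-triangle inequalities to the identity
\[ \widehat{\beta}_k - \widehat{\beta}_{k-1} = \bigl(\beta^*_{\eta_k} - \beta^*_{\eta_{k-1}}\bigr) + \bigl(\widehat{\beta}_k - \beta^*_{\eta_k}\bigr) - \bigl(\widehat{\beta}_{k-1} - \beta^*_{\eta_{k-1}}\bigr) \]
to obtain
\[ \bigl|\widehat{\kappa}_k - \kappa_k\bigr| \leq |\widehat{\beta}_k - \beta^*_{\eta_k}|_2 + |\widehat{\beta}_{k-1} - \beta^*_{\eta_{k-1}}|_2. \]
The crucial point is that the two bounds supplied by Lemma~\ref{lemmma:refinement_lasso} are scaled by the \emph{same} quantity $\kappa_k$: the first bound gives $|\widehat{\beta}_k - \beta^*_{\eta_k}|_2 \leq C \kappa_k \alpha_n^{-1/2}$, while the second bound, applied with the index shift $k \mapsto k-1$ so that $\kappa_{(k-1)+1} = \kappa_k$, yields $|\widehat{\beta}_{k-1} - \beta^*_{\eta_{k-1}}|_2 \leq C \kappa_k \alpha_n^{-1/2}$. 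Dividing the previous display by $\kappa_k > 0$ and taking the maximum over $k \in \{1, \ldots, K\}$ delivers the claimed $O(\alpha_n^{-1/2})$ control.

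I do not anticipate a serious obstacle here: the statement is engineered precisely so that both Lasso errors are scaled by the same $\kappa_k$, which is exactly what is required to force $\widehat{\kappa}_k / \kappa_k - 1$ to vanish. The only point worth flagging is the index alignment in the second bound of Lemma~\ref{lemmma:refinement_lasso}: $\widehat{\beta}_{k-1}$ must be controlled by $\kappa_{(k-1)+1} = \kappa_k$ rather than by $\kappa_{k-1}$, which is essential because otherwise the bound would be too weak whenever $\kappa_{k-1} \gg \kappa_k$. Given this matching, the whole argument reduces to the single triangle inequality above.
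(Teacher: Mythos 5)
Your proposal is correct and follows essentially the same route as the paper: the paper's proof is exactly the triangle (in fact reverse-triangle) inequality applied to $|\widehat{\beta}_k-\widehat{\beta}_{k-1}|_2 - |\beta^*_{\eta_k}-\beta^*_{\eta_{k-1}}|_2$, followed by an appeal to \Cref{lemmma:refinement_lasso}. Your observation about the index alignment (controlling $\widehat{\beta}_{k-1}$ via $\kappa_{(k-1)+1}=\kappa_k$) is precisely why \Cref{lemmma:refinement_lasso} is stated with both normalisations, so the argument goes through as you describe.
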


\section{Consistent long-run variance estimation}\label{sec:LRV}

As pointed out in existing change point inference literature, e.g.\ \cite{bai2010common}, in the non-vanishing jump size regime, change points can be estimated within a constant error rate; while in the vanishing jump size regime, the estimators have diverging localisation error rates.  In practice, it is therefore more interesting to study the limiting distributions in the vanishing regime.

To practically perform inference on change point locations based on change point estimators $\{\widetilde{\eta}_k\}_{k = 1}^{\widehat{K}}$ in the vanishing jump size regime, it is crucial to have access to consistent estimators of the long-run variances $\{\sigma^2_{\infty}(k)\}_{k = 1}^K$ and the drift coefficients $\{\varpi_k\}_{k = 1}^K$, involved in the limiting distribution in \Cref{thm:asymptotics}\textbf{b.2}.  In this section, we first propose a block-type long-run variance estimator and derive its consistency. With the help of this long-run variance estimator, we are able to provide a complete inference procedure of the vanishing regime in \Cref{sec-CI}.   
  
As we have mentioned in \Cref{sec:intro}, most of the existing literature for change point inference in high dimension lacks theoretical discussions on the long-run variance estimator.  To the best of our knowledge, \cite{chen2021inference} is one of the few existing works showing the consistency of the long-run variance estimators.  \cite{chen2021inference} studied the consistency of a block-type estimator, for linear processes.  In the following, we are to show that a block-type estimator, detailed in \Cref{alg-lrv-est}, is also consistent for the general, nonlinear, temporal dependence sequences.

\begin{algorithm}[ht]
\begin{algorithmic}
    \INPUT $\{(y_t, X_t)\}_{t = 1}^n$, $\{\widehat{\eta}_k\}_{k = 1}^{\widehat{K}}$, $\{\widehat{\beta}_k\}_{k = 1}^{\widehat{K}}$, $\{\widehat{\kappa}_k\}_{k = 1}^{\widehat{K}}$,  $\{(s_k, e_k)\}_{k = 1}^{\widehat{K}}$ and tuning parameter $R \in \mathbb{N}$
    \For{$k \in \{1, \ldots, \widehat{K}\}$}
        \For{$t \in \{s_k, \ldots, e_k-1\}$}
            \State $Z_t^{(k)} \leftarrow (y_t - X_t^{\top}\widehat{\beta}_k)X_t^{\top}(\widehat{\beta}_{k+1} - \widehat{\beta}_k)$, $Z_t^{(k+1)} \leftarrow (y_t - X_t^{\top}\widehat{\beta}_{k+1})X_t^{\top}(\widehat{\beta}_{k+1} - \widehat{\beta}_k)$
            \State $Z_t \leftarrow Z_t^{(k)} + Z_t^{(k+1)}$
        \EndFor
        \State $S \leftarrow \lfloor (e_k - s_k)/(2R)\rfloor$
        \For{$r \in \{1, \ldots, 2R\}$}
            \State $\mathcal{S}_r \leftarrow \{s_k + (r-1)S, \ldots, s_k + rS - 1\}$
        \EndFor
        \For{$r \in \{1, \ldots, R\}$}
            \State $D_r \leftarrow (2S)^{-1/2} (\sum_{t \in \mathcal{S}_{2r-1}} Z_t - \sum_{t \in \mathcal{S}_{2r}} Z_t)$
        \EndFor
        \State $\widehat{\sigma}_{\infty}^2(k) \leftarrow R^{-1} \widehat{\kappa}_k^{-2} \sum_{r = 1}^R D_r^2$
    \EndFor
    \OUTPUT $\{\widehat{\sigma}_{\infty}^2(k)\}_{k = 1}^{\widehat{K}}$
\caption{Long-run variance estimators} \label{alg-lrv-est}
\end{algorithmic}
\end{algorithm} 

\begin{theorem}\label{thm:lrv_consist}
Under all the assumptions and with all the notation in \Cref{thm:asymptotics}, let $\{\widehat{\sigma}_{\infty}^2(k)\}_{k = 1}^{\widehat{K}}$ be the output of \Cref{alg-lrv-est} with $\{\widehat{\eta}_k\}_{k = 1}^{\widehat{K}}$ output by $\mathrm{DPDU}$ in \Cref{algorithm:DPDU}, $\{\widehat{\beta}_k\}_{k = 1}^{\widehat{K}}$ constructed from \eqref{eq:interval lasso} based on the intervals with endpoints $\{\widehat{\eta}_k\}_{k = 1}^{\widehat{K}}$, $\{\widehat{\kappa}_k\}_{k = 1}^{\widehat{K}}$ defined in \eqref{eq-jump-size-estimator}, $\{(s_k, e_k)\}_{k = 1}^{\widehat{K}}$ defined in \eqref{eq:corrected boundaries} and $R \in \mathbb{N}$ satisfying $(e_k - s_k) \gg R \gg \sqrt{e_k - s_k}$, as $n$ diverges.  We have that 
\[
    \max_{k = 1}^K |\widehat{\sigma}_{\infty}^2(k) - \sigma_{\infty}^2(k)| \overset{P.}{\longrightarrow} 0, \quad n \to \infty.
\]
\end{theorem}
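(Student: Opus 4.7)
The plan is to reduce $\widehat\sigma_\infty^2(k)$ to the batched-means long-run variance estimator for the centred stationary scalar process $W_t=2\epsilon_t v_k^\top X_t$, whose long-run variance is exactly $\sigma_\infty^2(k)$ by \Cref{lemma:long-run_UB}. Substituting $y_t=X_t^\top\beta_t^*+\epsilon_t$ into the definition of $Z_t$ yields
\begin{equation*}
Z_t = 2\epsilon_t X_t^\top(\widehat\beta_{k+1}-\widehat\beta_k)+X_t^\top(\widehat\beta_{k+1}-\widehat\beta_k)\cdot X_t^\top\bigl(2\beta_t^*-\widehat\beta_k-\widehat\beta_{k+1}\bigr).
\end{equation*}
On the high-probability event of \Cref{theorem:DUDP} each interval $(s_k,e_k)$ isolates a single true change point $\eta_k^*$, and \Cref{lemmma:refinement_lasso,lemma:consistency_jump_size_est} give $\widehat\beta_{k+1}-\widehat\beta_k=\Psi_k+u_k$, $\widehat\beta_k\approx\beta_{\eta_{k-1}}^*$, $\widehat\beta_{k+1}\approx\beta_{\eta_k}^*$, with $\alpha_n^{1/2}(|u_k|_2+\s^{-1/2}|u_k|_1)\overset{P.}{\longrightarrow}0$. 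Hence for $t$ on either side of $\eta_k^*$ the second summand collapses to $\mp(X_t^\top\Psi_k)^2$ plus a remainder which, after summing over a block and normalising by $\sqrt S$, is $o_p(1)$, leading to $Z_t\approx \kappa_k W_t+\sign(t-\eta_k^*)\kappa_k^2(v_k^\top X_t)^2$. When both blocks $\mathcal S_{2r-1},\mathcal S_{2r}$ lie on the same side of $\eta_k^*$ the quadratic contributions enter $D_r$ only as a centred fluctuation of order $O_p(\kappa_k^2)$; at most one pair straddles $\eta_k^*$, whose contribution to $\widehat\sigma_\infty^2(k)$ is $O_p(\kappa_k^2 S/R)=o_p(1)$ by the rate condition $R\gg\sqrt{e_k-s_k}$ together with $\kappa_k\to 0$. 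Expanding $D_r^2=\kappa_k^2 T_r^2+O_p(\kappa_k^3)$ and invoking $\widehat\kappa_k^2/\kappa_k^2\overset{P.}{\longrightarrow}1$ from \Cref{lemma:consistency_jump_size_est}, it then suffices to show
\begin{equation*}
\frac{1}{R}\sum_{r=1}^R T_r^2\overset{P.}{\longrightarrow}\sigma_\infty^2(k),\qquad T_r=\frac{1}{\sqrt{2S}}\Biggl(\sum_{t\in\mathcal S_{2r-1}}W_t-\sum_{t\in\mathcal S_{2r}}W_t\Biggr).
\end{equation*}

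For the bias, applying Cauchy--Schwarz to coupled versions of $W_t$ gives $\delta_{m,2}^W\lesssim\|v_k^\top X_0\|_4\delta_{m,4}^\epsilon+\|\epsilon_0\|_4\delta_{m,4}^X$, inheriting the exponential decay of Assumptions~\ref{assume:X}\textbf{a} and~\ref{assume:epsilon}\textbf{a}. Writing
\begin{equation*}
E[T_r^2]=S^{-1}\var\Bigl(\textstyle\sum_{t\in\mathcal S_{2r-1}}W_t\Bigr)-S^{-1}\cov\Bigl(\textstyle\sum_{t\in\mathcal S_{2r-1}}W_t,\textstyle\sum_{t\in\mathcal S_{2r}}W_t\Bigr),
\end{equation*}
the first term converges to $\sigma_\infty^2(k)$ as $S\to\infty$ by Cesàro summation of autocovariances (equivalently, $\sigma_\infty^2(k)=\sum_{h\in\mathbb Z}\cov(W_0,W_h)$ is absolutely summable), while the second is bounded in modulus by $\Delta_{0,2}^W\Delta_{S,2}^W\to 0$. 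Thus $E[T_r^2]\to\sigma_\infty^2(k)$ uniformly in $r$.

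The main obstacle is $\var(R^{-1}\sum_r T_r^2)=o(1)$. Sub-Weibull tails (Assumptions~\ref{assume:X}\textbf{b} and~\ref{assume:epsilon}\textbf{b}) combined with Cauchy--Schwarz yield $\sup_t E[W_t^4]<\infty$, so $\var(T_r^2)=O(1)$ uniformly in $r$. For $r\ne r'$ I would bound $|\cov(T_r^2,T_{r'}^2)|$ via a coupling argument: the two block pairs are separated by at least $(2|r-r'|-1)S$ time steps, and replacing the $W_t$'s on the later pair by versions driven by fresh innovations produces a variable independent of $T_r$ with $L^2$-error on the block sum controlled by $\Delta_{(2|r-r'|-1)S,2}^W$; together with the finite fourth moments this gives $|\cov(T_r^2,T_{r'}^2)|$ decaying geometrically in $|r-r'|$, hence $\sum_{r,r'}|\cov(T_r^2,T_{r'}^2)|=O(R)$ and $\var(R^{-1}\sum_r T_r^2)=O(R^{-1})=o(1)$. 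Chebyshev together with the bias estimate then delivers $R^{-1}\sum_r T_r^2\overset{P.}{\longrightarrow}\sigma_\infty^2(k)$; a union bound over the fixed number $K$ of change points completes the proof.
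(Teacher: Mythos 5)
Your strategy is essentially the paper's: express $\widehat\sigma_\infty^2(k)$ as an averaged-squared block-mean-difference, show that the corresponding \emph{oracle} estimator (built from the underlying centred stationary sequence) is consistent via a bias/variance argument using the functional dependence decay, and then argue the discrepancy between the data-driven and oracle estimators vanishes. The paper organises this by introducing an intermediate auxiliary estimator $\check\sigma_\infty^2$ built from the unobservable $Z_t^* = 2\epsilon_t\Psi_k^\top X_t \pm (\Psi_k^\top X_t)^2$ (true jump, true coefficients), showing $\check\sigma_\infty^2\to\sigma_\infty^2$ via exactly the bias/variance steps you describe (including the observation that the $(\Psi_k^\top X_t)^2$ contribution to the long-run variance is $O(\kappa_k^2)$ and disappears), and then separately bounding $|\widehat\kappa_k^2\widehat\sigma_\infty^2-\kappa_k^2\check\sigma_\infty^2|$ via $R^{-1}\sum (D_r-D_r^*)^2$. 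You collapse these two reductions into one, going directly from $Z_t$ to the $W_t$-only estimator; the substance is the same, though the paper's staging makes the bookkeeping cleaner.

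There is one genuine gap in the sketch: the claimed rate on the remainder from substituting $\widehat\beta$ for $\beta^*$. You assert that this remainder, after block-summing and $\sqrt{S}$-normalisation, is $o_p(1)$, and then write $D_r^2 = \kappa_k^2 T_r^2 + O_p(\kappa_k^3)$. These two statements are inconsistent: if the per-block remainder $E_r$ is only $o_p(1)$, then after dividing by $\kappa_k^2$ the terms $\kappa_k^{-2}E_r^2$ and $\kappa_k^{-1}T_rE_r$ are \emph{not} $o_p(1)$ in the vanishing-jump regime where $\kappa_k\to 0$. What the argument actually needs is $E_r = o_p(\kappa_k)$, uniformly over blocks in an averaged sense. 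This does hold, because \Cref{lemmma:refinement_lasso} gives $|\widehat\beta_k-\beta^*_{\eta_k}|_2 = O_p(\kappa_k\alpha_n^{-1/2})$ (the Lasso error is proportional to the jump size), and Assumption~\ref{assump-snr}\textbf{b} makes the $\alpha_n^{-1/2}$ factor dominate the $\sqrt{\log p}$ and $\log^{1/\gamma}p/\sqrt{S}$ terms picked up by the concentration bounds on partial sums such as $\sum_{t\in\mathcal S}(X_tX_t^\top-\Sigma)v_k$ and $\sum_{t\in\mathcal S}\epsilon_t X_t$. But you have to exhibit the $\kappa_k$-proportional scaling explicitly; as stated, the $o_p(1)$ bound is too weak to close the argument, and the later $O_p(\kappa_k^3)$ expansion silently assumes the stronger rate.
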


\Cref{thm:lrv_consist} demonstrates the consistency of the long-run variance estimators $\{\widehat{\sigma}_{\infty}^2(k)\}_{k = 1}^{\widehat{K}}$, which are functions of $\{Z_t\}_{t = 1}^n$ defined in \Cref{alg-lrv-est}.  The sequence $\{Z_t\}_{t = 1}^n$ is a sample version of the unobservable sequence $\{Z_t^*\}_{t = 1}^n$, defined as
\[
    Z_t^* = \begin{cases}
        2\epsilon_t\Psi_k^{\top}X_t - (\Psi_k^{\top}X_t)^2, & t = s_k, \ldots, \eta_k-1,\\
        2\epsilon_t\Psi_k^{\top}X_t + (\Psi_k^{\top}X_t)^2, & t = \eta_k, \ldots, e_k-1.
    \end{cases}
\]
As shown from \Cref{lemma:long-run_equiv} in the supplementary material and the proof of \Cref{thm:asymptotics}, $\sigma_{\infty}^2(k)$ is, in fact, the long-run variance of the process $\{\kappa_k^{-1}Z_t^*\}_{t \in \mathbb{Z}}$. Note that $\var(Z_t^*)$ maintains the same for all $t \in (s_k,e_k]$, but $\mathbb{E}[Z_t^*]$ changes at $\eta_k$ from $-\Psi_k^{\top}\Sigma \Psi_k$ to $\Psi_k^{\top}\Sigma \Psi_k$.  This requires a robust estimation method against the mean change at the unknown change point $\eta_k$ and is materialised via the block-type estimator.  It is also possible to construct a long-run variance estimator based on the stationary process $\{2\epsilon_tv_k^{\top}X_t\}_{t \in \mathbb{Z}}$. We consider the estimator based on $\{\kappa_k^{-1}Z_t^*\}_{t \in \mathbb{Z}}$ due to consideration of finite sample performances.

For any $k \in \{1, \ldots, \widehat{K}\}$, the estimator $\widehat{\sigma}_{\infty}^2(k)$ is constructed based on data from the interval $(s_k, e_k)$.  When the data are assumed to be independent, the sample variance usually serves as a good estimator of the population variance.  To capture the temporal dependence, we proceed by constructing sample variances of means of blocks of data, with block width $S \to \infty$, as $n$ diverges.  This is done by dividing the interval $(s_k, e_k)$ into $2R$ width-$S$ intervals, with $2R = \lfloor (e_k-s_k)/S \rfloor$.  To subtract the mean when estimating the variance, we define $D_r$, $r \in \{1, \ldots, R\}$, by taking the difference of the sample means in two consecutive blocks.  As long as there is no change point in $\mathcal{S}_{2r-1} \cup \mathcal{S}_{2r}$, it roughly holds that $\mathbb{E}\{D_r\} = 0$.  With large probability, there is one and only one true change point in $(s_k, e_k)$.  This means, only one $D_r$, $r \in \{1, \ldots, R\}$, does not have mean zero, namely $D_{r^*}$.  The long-run variance can thus be estimated by the sample mean of $\{D_r^2\}_{r = 1}^R$, hedging out the effect of $D_{r^*}$, provided that $R \to \infty$, as $n$ diverges.

\subsection{Confidence interval construction}\label{sec-CI}

An important application of the consistent long-run variance estimator derived above is to construct confidence intervals for the change points.  In view of the limiting distributions
    \[
        \kappa_k^{2}( \widetilde{\eta}_k - \eta_k) \overset{\mathcal{D}}{\longrightarrow} \argmin_{r \in \mathbb{R}} \big\{\varpi_k|r| + \sigma_{\infty}(k)\mathbb{W}(r)\big\}, \quad n \to \infty, \, \forall k \in \{1, \ldots, K\},
    \]
    derived in \Cref{thm:asymptotics}\textbf{b}, aside from the long-run variance, the drift coefficients $\varpi_k = v_k^{\top}\Sigma v_k$, $k \in \{1, \ldots, K\}$, are also unknown.  In the rest of this subsection, we first propose consistent estimators of $\{\varpi_k\}_{k = 1}^K$ in \Cref{thm:quadratic_consist} and then provide a confidence interval construction strategy. 
    
\begin{proposition}[Consistent estimators of $\{\varpi_k\}_{k = 1}^K$] \label{thm:quadratic_consist}
Given data $\{(y_t, X_t)\}_{t = 1}^n$, suppose that Assumptions~\ref{assume:regression parameters}, \ref{assume:X}, \ref{assume:epsilon} and \ref{assump-snr}\textbf{a} hold.  Let $\{\widehat{\eta}_k\}_{k = 1}^{\widehat{K}}$ be the change point estimators output by $\mathrm{DPDU}(\{y_t, X_t\}_{t = 1}^n$, $\lambda$, $\zeta)$, with $\lambda = C_\lambda \sqrt{\log(np)}$ and $\zeta = C_\zeta\{\s \log(pn)\}^{2/\gamma - 1}$, $C_{\lambda}, C_{\zeta} > 0$ being absolute constants. Let $\{\widehat{\beta}_k\}_{k = 1}^{\widehat{K}}$ be the Lasso estimators defined in \eqref{eq:interval lasso} with the intervals constructed based on $\{\widehat{\eta}_k\}_{k = 1}^{\widehat{K}}$.  Let $\{\widehat{\kappa}_k\}_{k = 1}^{\widehat{K}}$ be the jump size estimators defined in \eqref{eq-jump-size-estimator}.  For $k \in \{1, \ldots, \widehat{K}\}$, define the drift estimators
\begin{align}\label{eq-drift-est}
    \widehat{\varpi}_k =  \frac{1}{n\widehat{\kappa}_k^2}\sum_{t = 1}^n(\widehat{\beta}_{k} - \widehat{\beta}_{k-1})^{\top}X_tX_t^{\top}(\widehat{\beta}_{k} - \widehat{\beta}_{k-1}).
\end{align}
It then holds that 
\[
    \max_{k = 1}^K |\widehat{\varpi}_k -\varpi_k| \overset{P.}{\longrightarrow} 0, \quad n \to \infty.
\]
\end{proposition}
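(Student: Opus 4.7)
The plan is to write the estimator in the form
\[
\widehat{\varpi}_k = \widehat{\kappa}_k^{-2}\widehat{\Psi}_k^{\top}\widehat{\Sigma}_n\widehat{\Psi}_k, \quad \widehat{\Psi}_k := \widehat{\beta}_k - \widehat{\beta}_{k-1}, \quad \widehat{\Sigma}_n := n^{-1}\sum_{t=1}^n X_t X_t^{\top},
\]
and exploit the identity $\kappa_k^{-2}\Psi_k^{\top}\Sigma \Psi_k = v_k^{\top}\Sigma v_k$, which converges to $\varpi_k$ by Assumption~\ref{assume:X}\textbf{c}. Since Lemma~\ref{lemma:consistency_jump_size_est} provides $(\kappa_k/\widehat{\kappa}_k)^2 = 1 + O_p(\alpha_n^{-1/2})$ uniformly in $k$, Slutsky's theorem reduces the problem to showing $\kappa_k^{-2}\widehat{\Psi}_k^{\top}\widehat{\Sigma}_n\widehat{\Psi}_k \overset{P.}{\longrightarrow} \varpi_k$ uniformly in $k$.

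As preparation, I would collect the deterministic bounds supplied by the preliminary step. Applying Lemma~\ref{lemmma:refinement_lasso} at indices $k$ and $k-1$ delivers, on an event of probability at least $1 - (n\vee p)^{-3}$,
\[
|\widehat{\Psi}_k - \Psi_k|_2 \lesssim \kappa_k \alpha_n^{-1/2}, \qquad |\widehat{\Psi}_k - \Psi_k|_1 \lesssim \sqrt{\mathfrak{s}}\,\kappa_k \alpha_n^{-1/2},
\]
together with $|\widehat{\Psi}_k|_1 \lesssim \sqrt{\mathfrak{s}}\,\kappa_k$ via the $2\mathfrak{s}$-sparsity of $\Psi_k$. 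Next I would establish an entrywise concentration of the sample Gram matrix via Theorem~\ref{thm:bernstein_exp_subExp_nonlinear}: for each pair $(i,j)$ the centred product $Z_t^{(i,j)} = X_{ti}X_{tj} - \Sigma_{ij}$ is a mean-zero stationary process with sub-Weibull$(\gamma_2/2)$ tail (from Assumption~\ref{assume:X}\textbf{b}) whose functional dependence inherits the decay rate $\gamma_1$ of the $X$-process, since the identity $X_{ti}X_{tj} - X_{ti,\{t-s\}}X_{tj,\{t-s\}} = X_{ti}(X_{tj} - X_{tj,\{t-s\}}) + X_{tj,\{t-s\}}(X_{ti} - X_{ti,\{t-s\}})$ combined with H\"older's inequality gives $\Delta_{m,2}^{Z^{(i,j)}} \lesssim \Delta_{m,4}^X$. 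The combined $\gamma$-parameter in Theorem~\ref{thm:bernstein_exp_subExp_nonlinear} then coincides with the $\gamma$ from \eqref{eq-gamma-def}, so a union bound over the $p^2$ entries produces, on an event of probability at least $1 - (n\vee p)^{-3}$,
\[
|\widehat{\Sigma}_n - \Sigma|_\infty \lesssim \sqrt{\log(pn)/n} + \log^{1/\gamma}(pn)/n.
\]

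With these ingredients, I would decompose
\[
\kappa_k^{-2}\widehat{\Psi}_k^{\top}\widehat{\Sigma}_n\widehat{\Psi}_k - v_k^{\top}\Sigma v_k = \underbrace{\kappa_k^{-2}\widehat{\Psi}_k^{\top}(\widehat{\Sigma}_n - \Sigma)\widehat{\Psi}_k}_{T_1} + \underbrace{\kappa_k^{-2}(\widehat{\Psi}_k - \Psi_k)^{\top}\Sigma(\widehat{\Psi}_k + \Psi_k)}_{T_2},
\]
where the residual piece $\kappa_k^{-2}\Psi_k^{\top}\Sigma\Psi_k - v_k^{\top}\Sigma v_k$ vanishes identically because $\Psi_k = \kappa_k v_k$. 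For $T_1$ I would apply the H\"older bound $|u^{\top}Mu| \leq |u|_1^2|M|_\infty$ to obtain $|T_1| \lesssim \mathfrak{s}\,|\widehat{\Sigma}_n - \Sigma|_\infty$, which is $o_p(1)$ once the concentration bound above is combined with the signal-to-noise assumption $\Delta\kappa^2 \gtrsim \{\mathfrak{s}\log(pn)\}^{2/\gamma-1}\alpha_n$ and the inclusion $n \geq \Delta$. For $T_2$ the operator-norm bound $\|\Sigma\|_{\mathrm{op}} < \infty$ (implied by Assumption~\ref{assume:X}\textbf{b}) and the $\ell_2$-error in Step~1 give $|T_2| \lesssim \alpha_n^{-1/2} = o(1)$. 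Combining with $v_k^{\top}\Sigma v_k \to \varpi_k$ (Assumption~\ref{assume:X}\textbf{c}) and the consistency of $\widehat{\kappa}_k$ via Slutsky delivers $\widehat{\varpi}_k \overset{P.}{\longrightarrow} \varpi_k$; uniformity over $k \in \{1,\ldots,K\}$ is automatic because $K$ is constant.

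The principal obstacle is the quadratic term $T_1$: the estimated jump $\widehat{\Psi}_k$ is not exactly sparse, forcing the crude $|u|_1^2|M|_\infty$ bound that carries a factor of $\mathfrak{s}$. Making this survive the normalisation by $\kappa_k^2$ requires the entrywise Bernstein rate $|\widehat{\Sigma}_n - \Sigma|_\infty$ to carry the correct combined $\gamma$-exponent, and this is the sole place where the new Bernstein inequality of Theorem~\ref{thm:bernstein_exp_subExp_nonlinear} is indispensable; every other step is routine algebra layered on top of the Lasso rates from Lemmas~\ref{lemmma:refinement_lasso} and \ref{lemma:consistency_jump_size_est}.
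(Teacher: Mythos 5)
Your overall strategy — normalize by $\widehat{\kappa}_k^2$ via Slutsky, then decompose the remaining error — matches the paper's. However, your bound on $T_1$ has a genuine gap.

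You bound $|T_1| \leq \kappa_k^{-2}|\widehat{\Psi}_k|_1^2|\widehat{\Sigma}_n - \Sigma|_\infty \lesssim \mathfrak{s}\,|\widehat{\Sigma}_n - \Sigma|_\infty$, and claim this is $o_p(1)$ under \Cref{assump-snr}\textbf{a}. Under that assumption and $\kappa \lesssim 1$ we have $n \geq \Delta \gtrsim \{\s\log(pn)\}^{2/\gamma-1}\alpha_n$, so
\[
\mathfrak{s}\sqrt{\log(pn)/n} \lesssim \mathfrak{s}^{3/2 - 1/\gamma}\{\log(pn)\}^{1 - 1/\gamma}\alpha_n^{-1/2}.
\]
Since $\gamma < 1$ the exponent $1 - 1/\gamma$ is negative, which helps — but $3/2 - 1/\gamma > 0$ whenever $\gamma > 2/3$, and in that regime a polynomially growing $\s$ (permitted by the SNR condition, which only controls $\s^{2/\gamma - 1}$) makes this quantity diverge. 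Concretely, $\gamma = 5/6$, $\s = n^{1/2}$, $\alpha_n$ polylogarithmic satisfies the SNR constraint but gives $\mathfrak{s}\sqrt{\log(pn)/n} \asymp (\log n)^{1/2} \to \infty$. So the $\ell_1$-to-$\ell_\infty$ H\"older step discards too much: it pays a factor of $\s$ that the SNR assumption cannot absorb across the whole range of $\gamma$.

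The paper avoids this by not forming a single quadratic form in $\widehat{\Psi}_k$. It splits $T_1$ into pieces centered around the \emph{population average coefficient} $\beta^*_{\I_k}$ (rather than $\beta^*_{\eta_k}$), so that the Lasso estimation error $\widehat{\beta}_k - \beta^*_{\I_k}$ lies in the cone $\mathcal{C}$ of \Cref{theorem:RES Version II}. That restricted eigenvalue bound controls $|v^\top(\widehat{\Sigma}_n - \Sigma)v|$ by $\sqrt{\s\log(pn)/n}\,|v|_2^2$ — an $\ell_2$-controlled bound, which replaces the extra $\sqrt{\s}$ your $|v|_1^2$ H\"older step incurs. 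Combined with the $\ell_2$-error rate $|\widehat{\beta}_k - \beta^*_{\I_k}|_2^2 \lesssim \s\log(pn)/\Delta$, this gives the quadratic-error piece a bound of order $\alpha_n^{-3/2}$. The term involving $v_k^\top(\widehat{\Sigma}_n - \Sigma)v_k$ is handled by a scalar Bernstein bound with $v_k$ fixed — no union over $p^2$ entries and no $\s$ factor — and the cross terms use the $\ell_1$-to-$\ell_\infty$ bound only once, against the entrywise deviation of $(\widehat{\Sigma}_n - \Sigma)v_k$ (a vector, not a matrix, so only a union over $p$ entries). To repair your argument you would need to replicate this finer decomposition and invoke \Cref{theorem:RES Version II} for the Lasso-error quadratic; the crude $|u|_1^2|M|_\infty$ bound on $T_1$ does not suffice as stated.
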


Comparing the conditions required in \Cref{thm:lrv_consist} and \Cref{thm:quadratic_consist}, we see that the slightly stronger signal-to-noise ratio condition \Cref{assump-snr}\textbf{b} is in fact needed when estimating the long-run variance.  The weaker condition \Cref{assump-snr}\textbf{a} is sufficient for \Cref{thm:quadratic_consist}, which only concerns the properties of the initial change point estimators.

For $k \in \{1, \ldots, \widehat{K}\}$, with the ingredients: change point estimator $\widetilde{\eta}_k$ defined in \eqref{eq:refinement}, local refinement interval $(s_k, e_k)$ defined in \eqref{eq:corrected boundaries}, long-run variance estimator $\widehat{\sigma}_{\infty}(k)$ output by \Cref{alg-lrv-est} and drift estimator $\widehat{\varpi}_k$ defined in \eqref{eq-drift-est}, we streamline the construction of a numerical $(1-\alpha)$-confidence interval of $\eta_k$, with a user-specified $\alpha \in (0, 1)$. 

\begin{itemize}
    \item [Step 1.] Let $B\in \mathbb{Z}_+$ and $M \in \mathbb{R}_+$.  For $b \in \{1, \ldots, B\}$, let
        \begin{align}\label{eq:simu_est}
            \widehat{u}^{(b)} = \argmin_{r \in (-M, M)} \big\{\widehat{\varpi}_k|r| + \widehat{\sigma}_{\infty}(k)\mathbb{W}^{(b)}(r)\big\},
        \end{align}
        where
        \begin{align*}
            \mathbb{W}^{(b)}(r) = \begin{cases}
                \frac{1}{\sqrt{n}}\sum_{i = \lceil nr \rceil}^{-1}z_i^{(b)}, & r < 0,\\
                0, & r = 0,\\
                \frac{1}{\sqrt{n}}\sum_{i = 1}^{\lfloor nr \rfloor}z_i^{(b)}, & r > 0, 
            \end{cases}
        \end{align*}
        the quantity $n$ is the sample size and $\{z_i^{(b)}\}_{i = -\lfloor nM \rfloor}^{\lceil nM \rceil}$ are independent standard Gaussian random variables. 
    \item [Step 2.] Let $\widehat{q}_u(1-\alpha/2)$ and $\widehat{q}_u(\alpha/2)$ be the $(1-\alpha/2)$- and $\alpha/2$-quantiles of the empirical distribution of $\{\widehat{u}^{(b)}\}_{b = 1}^B$.  Provided that $\widehat{\kappa}_k \neq 0$, a numerical $(1-\alpha)$ confidence interval of $\eta_k$ is 
    \begin{align}\label{eq-CI-cons}
        \bigg[\widetilde{\eta}_k + \frac{\widehat{q}_u(\alpha/2)}{\widehat{\kappa}_k^2}\mathbbm{1}\{\widehat{\kappa}_k \neq 0\}, \, \widetilde{\eta}_k + \frac{\widehat{q}_u(1-\alpha/2)}{\widehat{\kappa}_k^2}\mathbbm{1}\{\widehat{\kappa}_k \neq 0\} \bigg].
    \end{align}
\end{itemize}

The rationale of this procedure is that, following \Cref{thm:asymptotics}, each $\widehat{u}^{(b)}$ is a simulated $\widehat{\kappa}^2_k (\widetilde{\eta}_k - \eta_k)$.  Due to the uniform tightness, it is sufficient to restrict the optimisation \eqref{eq:simu_est} in the interval $(-M, M)$, with a sufficiently large $M > 0$.  When $B$ is large enough, the empirical distribution of $\{\widehat{u}^{(b)}\}_{b = 1}^B$ serves as a reasonable approximation of the limiting distribution of $\widehat{\kappa}^2_k (\widetilde{\eta}_k - \eta_k)$, which justifies that \eqref{eq-CI-cons} is a numerical $(1-\alpha)$ confidence interval of $\eta_k$.  This is shown in \Cref{thm:sampling_dist} below, that the set $\{\widehat{u}^{(b)}\}_{b = 1}^B$ approximates the sampling distribution of $\widehat \kappa_k^{2}(\widetilde{\eta}_k - \eta_k)$.

\begin{theorem}\label{thm:sampling_dist}
Under all the assumptions in \Cref{thm:lrv_consist}, for any $k \in \{1, \ldots, K\}$, let $\widehat{u}^{(1)}$ be defined in \eqref{eq:simu_est} with $M = \infty$ and $\widehat{\kappa}^2_k$ be the $k$th jump size estimator defined in \eqref{eq-jump-size-estimator}.  It holds that
    \begin{align*}
        \frac{\kappa_k^2}{\widehat{\kappa}_k^2}\widehat{u}^{(1)} \mathbbm{1}\{\widehat{\kappa}_k \neq 0\} \overset{\mathcal{D}}{\longrightarrow} \argmin_{r \in \mathbb{R}} \big\{\varpi_k|r| + \sigma_{\infty}(k)\mathbb{W}(r)\big\}, \quad n \to \infty.
    \end{align*}
\end{theorem}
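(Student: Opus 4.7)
The plan is to apply the argmax continuous mapping theorem to the simulated objective $\Phi_n(r) := \widehat{\varpi}_k|r| + \widehat{\sigma}_{\infty}(k)\mathbb{W}^{(1)}(r)$, whose argmin is $\widehat{u}^{(1)}$, and then to absorb the multiplicative factor $\kappa_k^2/\widehat{\kappa}_k^2$ via Slutsky's theorem. Writing $\Phi(r) := \varpi_k|r| + \sigma_{\infty}(k)\mathbb{W}(r)$ for the target limit objective, the work splits into three ingredients: (i) weak convergence of $\Phi_n$ to $\Phi$ on every compact interval, (ii) uniform tightness of $\widehat{u}^{(1)}$, and (iii) a concluding Slutsky step.

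For ingredient (i), since $\{z_i^{(1)}\}$ are iid standard Gaussian sampled independently of the data, Donsker's invariance principle yields $\mathbb{W}^{(1)} \Rightarrow \mathbb{W}$ on $D[-M, M]$ under the uniform metric for every $M > 0$. By \Cref{thm:quadratic_consist}, \Cref{thm:lrv_consist}, and \Cref{lemma:consistency_jump_size_est}, the data-dependent coefficients satisfy $\widehat{\varpi}_k \overset{P.}{\longrightarrow} \varpi_k$, $\widehat{\sigma}_{\infty}(k) \overset{P.}{\longrightarrow} \sigma_{\infty}(k)$, and $\widehat{\kappa}_k/\kappa_k \overset{P.}{\longrightarrow} 1$. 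Combining these via Slutsky together with the continuous mapping theorem applied to $(a, b, W) \mapsto a|\cdot| + bW(\cdot)$ delivers $\Phi_n \Rightarrow \Phi$ uniformly on any $[-M, M]$. For ingredient (ii), I would exploit the drift-dominance structure: $\widehat{\varpi}_k$ is bounded below away from zero with probability approaching one (since $\varpi_k > 0$) while $\widehat{\sigma}_{\infty}(k)$ is bounded above in probability. Combined with a Gaussian maximal inequality applied dyadically over radii $L = 2^j$, this implies that for any $\delta > 0$ there exists $M > 0$ with $\inf_{|r| > M}\Phi_n(r) > 0 = \Phi_n(0)$ with probability at least $1 - \delta$ for all $n$ large, forcing $\widehat{u}^{(1)} \in [-M, M]$ and hence $\widehat{u}^{(1)} = O_p(1)$.

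With weak convergence on compacts and uniform tightness, the argmax continuous mapping theorem (Theorem~3.2.2 of \cite{Vaart1996weak}) gives $\widehat{u}^{(1)} \overset{\mathcal{D}}{\longrightarrow} \argmin_{r \in \mathbb{R}}\Phi(r)$, using that this limit argmin is a.s.\ unique (the drift is strictly V-shaped and Brownian motion has no constancy intervals). Since $(\kappa_k^2/\widehat{\kappa}_k^2)\mathbbm{1}\{\widehat{\kappa}_k \neq 0\} \overset{P.}{\longrightarrow} 1$, a final Slutsky step yields the target distribution. The main obstacle is the uniform tightness step: we must argue that the simulated argmins do not escape to infinity despite random perturbations in both the drift and variance coefficients, requiring careful uniform control of the Gaussian random walk on diverging windows. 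A secondary technicality is that $\mathbb{W}^{(1)}$ is piecewise constant with jumps of uniform size $O_p(n^{-1/2})$, so the continuous-functional argmax theorem still applies on compacts after replacing $\mathbb{W}^{(1)}$ by an $o_p(1)$-close continuous interpolation.
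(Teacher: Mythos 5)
Your proposal is correct and follows essentially the same route as the paper: weak convergence of the simulated objective on compacts via Donsker plus consistency of $\widehat{\varpi}_k$ and $\widehat{\sigma}_{\infty}(k)$, uniform tightness of $\widehat{u}^{(1)}$ from drift dominance over the $O_p(\sqrt{r}\log r)$ fluctuation of the Gaussian partial sums (the paper packages your dyadic maximal-inequality step as \Cref{lemma:iterated log under dependence}, comparing the objective at $\widehat{u}^{(1)}$ with its value $0$ at $r=0$), and then the argmax continuous mapping theorem followed by Slutsky with $\widehat{\kappa}_k/\kappa_k \overset{P.}{\to} 1$. No gaps.
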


\section{Numerical experiments}\label{sec:numeric_study}
In this section, we conduct numerical studies to investigate the computational efficiency of the proposed DPDU algorithm (\Cref{algorithm:DPDU}) in \Cref{sec:comp_DPDU} and the numerical performance of change point confidence intervals \eqref{eq-CI-cons} in \Cref{sec:numeric_perform}. This section is concluded with a real data application in \Cref{sec:realdata} to showcase the practicability of our proposed methodology.

\subsection{Computational efficiency}\label{sec:comp_DPDU}

Recall that we have shown in \Cref{lemma:memory_computation_DPDU} that DPDU improves the computational efficiency of the standard dynamic programming (DP).  We refer readers to \cite{friedrich2008complexity} for a thorough discussion on a generic dynamic programming algorithm (a.k.a.~optimal partition algorithm, Algorithm~1 therein) and \cite{rinaldo2021localizing} for the deployment of DP in the high-dimensional linear regression change point localisation context.  The implementation of DP and DPDU with the same coordinate descent for the Lasso is available in the R package \texttt{changepoints}\footnote{The development version of \texttt{changepoints} is available at \url{https://github.com/HaotianXu/changepoints}.} \citep{changepoints_R} as \texttt{DP.regression} and \texttt{DPDU.regression} functions, respectively. 

In this subsection, we reinforce the computational efficiency of DPDU over DP through simulations. 
Given the data $\{(y_t, X_t)\}_{t =1}^{n} \subset \mathbb{R}\times\mathbb{R}^p$ and fixed tuning parameters, we compare DUDP and DP in the settings where $n \in \{200, 300, 400\}$ and $p \in \{30, 50, 70\}$. All the simulations are run on a machine equipped with an Apple M2 chip (8-core CPU). Figure \ref{fig:time_n} depicts the boxplots of computational cost (in seconds) over $100$ repetitions with the $x$-axis being $p$.  For each fixed $p$, as~$n$ linearly increases, it shows that the computational cost of DP increases much faster compared to that of DPDU, justifying the implication of \Cref{lemma:memory_computation_DPDU}. For each fixed $n$, as $p$ linearly increases, it shows that the increasing rates of the computational cost of DP are similar to that of DPDU.  In all considered settings, DPDU is nearly twice as fast as DP.  All findings align with \Cref{lemma:memory_computation_DPDU}.

\begin{figure}[t]
\centering
\includegraphics[width = 0.8\textwidth]{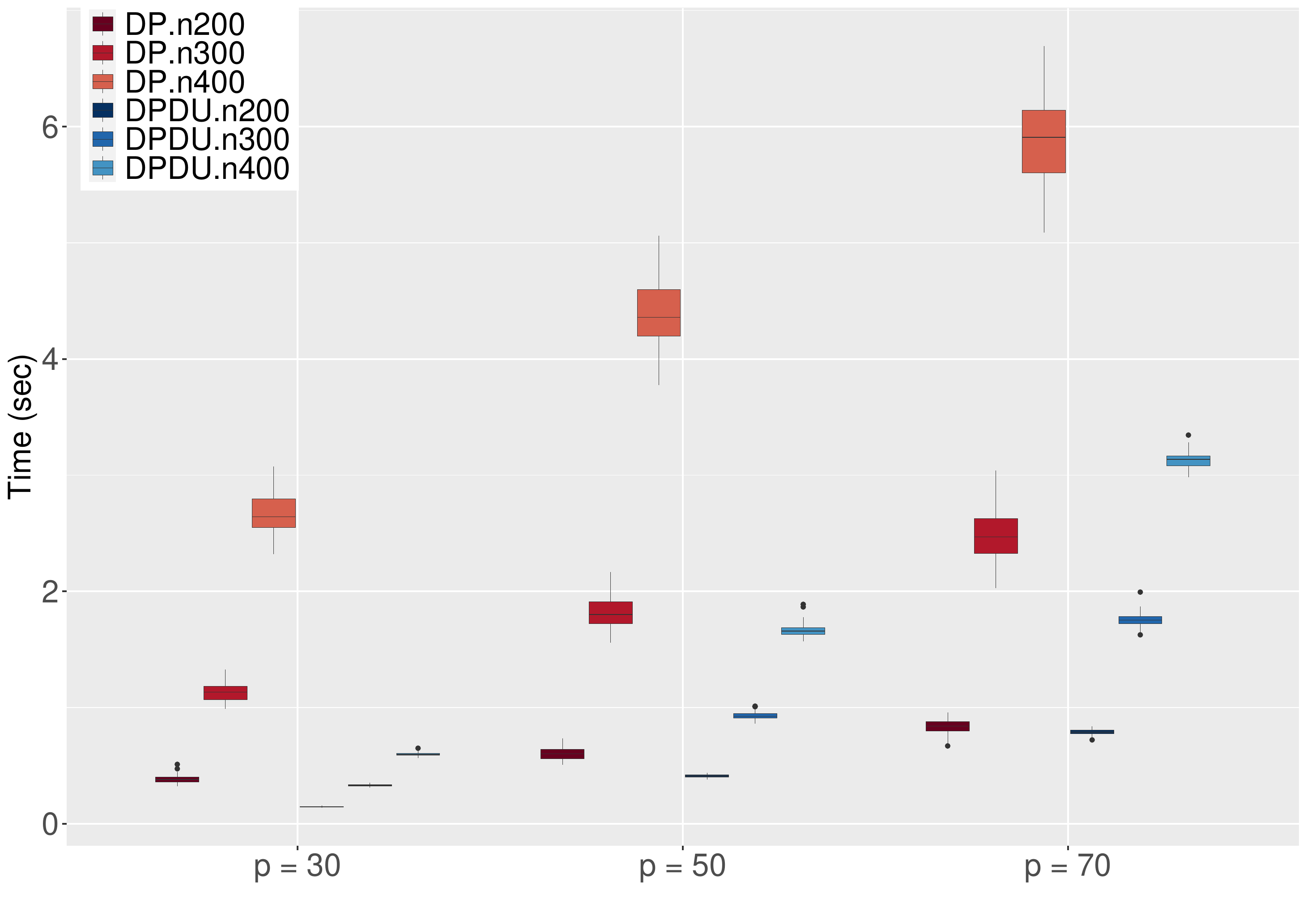}
    \caption{Boxplots of the computational cost (in seconds) of DP and DPDU over $100$ repetitions.} 
\label{fig:time_n}
\end{figure}

\subsection{Simulation studies}\label{sec:numeric_perform}

Our change point inference procedure contains three steps: (i) the preliminary estimation (see~\Cref{sec:algorithm}), (ii) the local refinement (see~\Cref{sec:local_refinement}) and (iii) the confidence interval construction (see~\Cref{sec-CI}).  To the best of our knowledge, no competitor exists for change point inference in high-dimensional regression settings. We therefore focus on investigating the performance of our proposed procedure for change point inference in different scenarios. As an important byproduct, the localisation results are also reported.

\medskip
\noindent \textbf{Settings.} 
We simulate data from the model described in \eqref{eq-model-1}, i.e.
\begin{align*}
  y_t = X_t^{\top}\beta_t^* + \epsilon_t, \quad t = 1, \dots, n.
\end{align*}
We consider temporal dependence in both $\{X_t\}_{t = 1}^n$ and $\{\epsilon_t\}_{t = 1}^n$. The covariate process $\{X_t\}_{t = 1}^n$ is generated from an autoregressive model, i.e.~$X_t = 0.3X_{t-1}+\sqrt{1-0.3^2}e_t$ with $e_t \overset{\mbox{i.i.d.}}{\sim} \mathcal{N}(0, I_p)$ and~$I_p$ being the $p$-dimensional identity matrix. The noise process $\{\epsilon_t\}_{t = 1}^n$ is generated by a moving average model, i.e.~$\epsilon_t = (e^{\prime}_{t}+0.3e^{\prime}_{t-1})/(2\sqrt{1+0.3^2})$ with $e^{\prime}_t \overset{\mbox{i.i.d.}}{\sim} \mathcal{N}(0, 1)$. Let $\beta_0 = (\beta_{0i}, i = 1, \dots, p)^{\top}$ with $\beta_{0i} = 2^{-1}\s^{-1/2}\kappa$ for $i = 1, \dots, \s$ and zero otherwise. Let $\{\eta_k\}_{k = 1}^{K}$ be the true change points.  We consider the regression coefficients
\begin{align*}
    \beta_t^* = \begin{cases}
        (-1)^0\beta_0, &t \in \{1, \dots, \eta_1-1\},\\
        (-1)^1\beta_0, &t \in \{\eta_1, \dots, \eta_2-1\},\\
        \dots \\
        (-1)^{K}\beta_0, &t \in \{\eta_{K}, \dots, n\}.
    \end{cases}
\end{align*}
In each scenario below, we simulate $500$ repetitions and fix $\s = 5$.

\medskip
\noindent \textbf{Evaluation measurements.} Let $\{\widehat{\eta}_k\}_{k = 1}^{\widehat{K}}$ and $\{\widetilde{\eta}_k\}_{k = 1}^{\widehat{K}}$ be the preliminary and final change point estimators defined in \eqref{eq:preliminary_cpt} and \eqref{eq:refinement}, respectively. To assess the performance of localisation, in Tables~\ref{tab:sce1_1}, \ref{tab:sce2_1} and \ref{tab:sce3_1}, we report (i) the proportions of the repetitions with $\widehat{K} < K$ (i.e.~underestimating the number of change points) and $\widehat{K} >K$ (i.e.~overestimating the number of change points), and (ii) means and standard deviations (in parentheses) of scaled Hausdorff distances of the preliminary and final estimators. The scaled Hausdorff distance of $\{\widehat{\eta}_k\}_{k = 1}^{K}$ is defined as
\begin{align*}
    d_{\mathrm{H}}^{\mathrm{pre}} = \frac{1}{n}\max\Big\{\max_{j = 0, \dots \widehat{K}+1}\min_{k = 0, \dots, K+1}|\widehat{\eta}_{j} - \eta_k|, \max_{k = 0, \dots , K+1} \min_{j = 0, \dots, \widehat{K}+1} |\widehat{\eta}_{j} - \eta_k| \Big\},
\end{align*}
where we set $\widehat{\eta}_0 = 1$ and $\widehat{\eta}_{\widehat{K}+1} = n+1$. We define similarly $d_{\mathrm{H}}^{\mathrm{fin}}$ for $\{\widetilde{\eta}_k\}_{k = 1}^{\widehat{K}}$.

The performance of change point inference is measured by the coverage rate of the constructed confidence interval, i.e.
\begin{equation*}
    \mathrm{cover}_k(1-\alpha)  = \mathbbm{1}\bigg\{ \eta_k \in \bigg[\Big\lfloor\widetilde{\eta}_k + \frac{\widehat{q}_u(\alpha/2)}{\widehat{\kappa}_k^2} \mathrm{1}\{\widehat{\kappa}_k \neq 0\}\Big\rfloor, \, \Big\lceil\widetilde{\eta}_k + \frac{\widehat{q}_u(1-\alpha/2)}{\widehat{\kappa}_k^2}\mathrm{1}\{\widehat{\kappa}_k \neq 0\}\Big\rceil \bigg]\bigg\},
\end{equation*}
where for $k = 1, \dots, K$, $\widehat{\kappa}_k$ is defined in \eqref{eq-jump-size-estimator}, $\widehat{q}_u(\alpha/2)$ and $\widehat{q}_u(1-\alpha/2)$ are defined in \eqref{eq-CI-cons} with $M = n$ and $B = 1000$. We remark that the floor and ceiling functions are applied respectively to the lower and upper endpoint of the confidence interval given in \eqref{eq-CI-cons}, considering that change points are integers.  We report the average coverage among all the repetitions with $\widehat{K} = K$ (out of $500$ repetitions), along with the widths of numerical ($1-\alpha$) confidence intervals, i.e.
\begin{equation*}
    \mathrm{width}_k(1-\alpha)  = \Big\lceil\widetilde{\eta}_k + \frac{\widehat{q}_u(1-\alpha/2)}{\widehat{\kappa}_k^2} \mathrm{1}\{\widehat{\kappa}_k \neq 0\} \Big\rceil - \Big\lfloor \widetilde{\eta}_k +\frac{\widehat{q}_u(\alpha/2)}{\widehat{\kappa}_k^2} \mathrm{1}\{\widehat{\kappa}_k \neq 0\} \Big\rfloor.
\end{equation*}
In Tables \ref{tab:sce1_2}, \ref{tab:sce2_2}, \ref{tab:sce3_2} and \ref{tab:sce3_3}, we report the average $\mathrm{cover}_k(1-\alpha)$ and means and standard deviations (in parentheses) of $\mathrm{width}_k(1-\alpha)$ with $\alpha \in \{1\%, 5\%\}$. 

\medskip
\noindent \textbf{Tuning parameters selection and other unknown quantities estimation.}  Three tuning parameters are involved in the proposed change point inference procedure: $\lambda$ and $\zeta$ for the DPDU algorithm (see~\Cref{algorithm:DPDU}) in the preliminary estimation and the number of pairs $R$ for long-run variance estimation (see~\Cref{alg-lrv-est}) in the confidence interval construction.  We adopt the cross-validation method proposed by \cite{rinaldo2021localizing} to select $\lambda$ and $\zeta$. Specifically, we first divide $\{(y_t, X_t)\}_{t =1}^{n}$ into training and validation sets according to odd and even indices. For each possible combination of $\lambda \in \{0.1, 0.5, 1, 2, 3\}$ and $\zeta \in \{10, 15, 20, 25\}$, we obtain the DPDU outputs ($\widehat{\mathcal{B}}$, $\widehat{K}$ and $\{\widehat{\beta}_k\}_{k = 0}^{\widehat{K}}$) based on the training set.  Using the DPDU outputs and the validation set, we compute the least squared prediction error as the validation loss.  We select the values of $\lambda$ and $\zeta$, which minimise the validation loss. The selection of $R$ is guided by \Cref{thm:lrv_consist}, i.e.~$R = \Big\lfloor  \Big(\max_{k = 1}^{\widehat{K}}\{e_k - s_k\}\Big)^{3/5} \Big\rfloor$ with $\{(s_k, e_k)\}_{k = 1}^{\widehat{K}}$ given in \eqref{eq:corrected boundaries}.  In addition, we use $\{\widehat{\kappa}_k\}_{k = 1}^{\widehat{K}}$, $\{\widehat{\sigma}^2_{\infty}(k)\}_{k = 1}^{\widehat{K}}$ and $\{\widehat{\varpi}_k\}_{k = 1}^{\widehat{K}}$, defined respectively in \eqref{eq-jump-size-estimator}, \Cref{alg-lrv-est} and \eqref{eq-drift-est}, to estimate unknown quantities in constructing confidence intervals.

\medskip
\noindent \textbf{Scenario 1: single change point.} Let $K = 1$, $\eta = n/2$ and the jump size $\kappa = 2$.  Vary $n \in \{100, 200, 300, 400\}$ and $p \in \{100, 200, 300\}$. Tables \ref{tab:sce1_1} and \ref{tab:sce1_2} respectively summarise the localisation and inference performances of $\eta$. We can see that both the preliminary and final estimators perform better as $n$ increases or $p$ decreases. Our proposed inference procedure produces good coverage in the considered settings.  As $n$ increases or $p$ decreases, the average length of our constructed confidence intervals decreases.
\begin{table}[t]
\centering
\caption{Localisation results of Scenario 1. }
\begin{tabular}{ccccc}
\hline
\multicolumn{5}{c}{$K = 1$, $\kappa = 2$} \\ 
$n$   & $\widehat{K} < K$ & $\widehat{K} > K$ & $d_{\mathrm{H}}^{\mathrm{pre}}$ & $d_{\mathrm{H}}^{\mathrm{fin}}$ \\ \hline
\multicolumn{5}{c}{$p = 100$}                                                                             \\
100 & 0.188   & 0         & 0.119 (0.183)  & 0.111 (0.186)  \\
200 & 0       & 0     & 0.007 (0.013)  & 0.003 (0.005)   \\
300 & 0       & 0     & 0.003 (0.006)  & 0.001 (0.002)   \\
400 & 0       & 0     & 0.002 (0.005)  & 0.001 (0.002)   \\
\multicolumn{5}{c}{$p = 200$}                                                                             \\
100 & 0.428   & 0         & 0.234 (0.225)  & 0.230 (0.228)   \\
200 & 0       & 0     & 0.009 (0.017)  & 0.004 (0.007)   \\
300 & 0       & 0     & 0.004 (0.006)  & 0.002 (0.003)   \\
400 & 0       & 0     & 0.002 (0.004)  & 0.001 (0.002)   \\
\multicolumn{5}{c}{$p = 300$}                                                                             \\
100 & 0.610     & 0       & 0.319 (0.218)   & 0.315 (0.221)   \\
200 & 0.004     & 0       & 0.013 (0.035)  & 0.007 (0.032)   \\
300 & 0         & 0   & 0.004 (0.008)  & 0.002 (0.003)   \\
400 & 0         & 0   & 0.003 (0.005)  & 0.001 (0.003)   \\
\hline
\end{tabular}
\label{tab:sce1_1}
\end{table}

\begin{table}[t]
\centering
\caption{Inference results of Scenario 1. }
\begin{tabular}{ccccc}
\hline
\multicolumn{5}{c}{$K = 1$, $\kappa = 2$} \\
 & \multicolumn{2}{c}{$\alpha = 0.01$} & \multicolumn{2}{c}{$\alpha = 0.05$} \\
$n$   & $\mathrm{cover}(1-\alpha)$ & $\mathrm{width}(1-\alpha)$ & $\mathrm{cover}(1-\alpha)$ & $\mathrm{width}(1-\alpha)$ \\ \hline
\multicolumn{5}{c}{$p = 100$}                                                                             \\
100 & 0.938            & 16.721 (12.351)  & 0.897            & 11.867 (8.193) \\
200 & 0.980            & 7.152 (1.913)  & 0.956    & 5.254 (1.416)  \\
300 & 0.988            & 6.072 (1.255)  & 0.980    & 4.420 (0.879) \\
400 & 0.974            & 5.270 (1.060)  & 0.958    & 4.022 (0.553) \\
\multicolumn{5}{c}{$p = 200$}                                                                             \\
100 & 0.920            & 28.972 (23.469) & 0.867 & 20.549 (16.514) \\
200 & 0.978            & 8.222 (2.655)  & 0.962  & 5.926 (1.894)  \\
300 & 0.994            & 6.284 (1.201)  & 0.968  & 4.556 (0.921)  \\
400 & 0.992            & 5.366 (1.065)  & 0.982    & 4.080 (0.599) \\
\multicolumn{5}{c}{$p = 300$}                                                                             \\
100 & 0.913            & 33.431 (22.318)   & 0.877    & 23.754 (16.469) \\
200 & 0.984            & 10.490 (5.637)  & 0.946    & 7.616 (4.165) \\
300 & 0.984            & 6.562 (1.406)  & 0.966   & 4.794 (1.055) \\
400 & 0.966            & 5.562 (1.053)  & 0.944    & 4.158 (0.680) \\ \hline
\end{tabular}
\label{tab:sce1_2}
\end{table}

\medskip
\noindent \textbf{Scenario 2: varying jump size $\kappa$.}  Consider the same model as that in Scenario~1.  Fix $(n, p) = (200, 300)$ and vary $\kappa \in \{1, 2, 3, 4\}$. The localisation performances are summarised in \Cref{tab:sce2_1}, which shows that all the estimators achieve better localisation performances as $\kappa$ increases. \Cref{tab:sce2_2} summarises the inference performances. It shows that the best performance is achieved when $\kappa$ is between $2$ and $3$. When $\kappa$ is too small -- it leads to a bad localisation performance, or too big -- it may fall in the non-vanishing jump size regime, the inference performance may deteriorate.
\begin{table}[htbp]
\centering
\caption{Localisation results of Scenario 2. }
\begin{tabular}{ccccc}
\hline
\multicolumn{5}{c}{$K = 1$, $n = 200$ $p = 300$} \\ 
$\kappa$   & $\widehat{K} < K$ & $\widehat{K} > K$ & $d_{\mathrm{H}}^{\mathrm{pre}}$ & $d_{\mathrm{H}}^{\mathrm{fin}}$ \\ \hline                        
1 & 0.532    & 0        & 0.279 (0.232)  & 0.274 (0.237)  \\
2 & 0.004    & 0        & 0.013 (0.035)  & 0.007 (0.032)   \\
3 & 0        & 0    & 0.007 (0.011)  & 0.002 (0.004)   \\
4 & 0        & 0    & 0.005 (0.008)  & 0.001 (0.003)   \\
\hline
\end{tabular}
\label{tab:sce2_1}
\end{table}

\begin{table}[htbp]
\centering
\caption{Inference results of Scenario 2. }
\begin{tabular}{ccccc}
\hline
\multicolumn{5}{c}{$K = 1$, $n = 200$, $p = 300$} \\
 & \multicolumn{2}{c}{$\alpha = 0.01$} & \multicolumn{2}{c}{$\alpha = 0.05$} \\
$\kappa$   & $\mathrm{cover}(1-\alpha)$ & $\mathrm{width}(1-\alpha)$ & $\mathrm{cover}(1-\alpha)$ & $\mathrm{width}(1-\alpha)$ \\ \hline
1 & 0.953            & 37.427 (16.649)  & 0.902            & 27.487 (11.563) \\
2 & 0.984            & 10.490 (5.637)  & 0.946    & 7.616 (4.165)  \\
3 & 0.978            & 4.546 (1.367)  & 0.960    & 3.422 (1.183)  \\
4 & 0.976            & 2.938 (1.064)  & 0.966    & 2.188 (0.594)  \\ \hline
\end{tabular}
\label{tab:sce2_2}
\end{table}

\medskip  
\noindent \textbf{Scenario 3: unequally-spaced multiple change points.} Let $K = 2$ and the unequally-spaced change points $\{\eta_1, \eta_2\} = \{n/4, 5n/8\}$. We fix $\kappa_1 = \kappa_2 = 2$, varying $n \in \{200, 400, 800\}$ and $p \in \{100, 200, 300\}$. \Cref{tab:sce3_1} shows that the localisation performances of both preliminary and final estimators improve as $n$ increases or $p$ decreases. For change point inference, the comparison between Tables \ref{tab:sce3_2} and \ref{tab:sce3_3} suggests that our inference procedure applied on $\eta_2$, the one with a larger sample size, performs better.

\begin{table}[htbp]
\centering
\caption{Localisation results of Scenario 3. }
\begin{tabular}{ccccc}
\hline
\multicolumn{5}{c}{$K = 1$, $\kappa_1 = \kappa_2 = 2$} \\ 
$n$   & $\widehat{K} < K$ & $\widehat{K} > K$ & $d_{\mathrm{H}}^{\mathrm{pre}}$ & $d_{\mathrm{H}}^{\mathrm{fin}}$ \\ \hline
\multicolumn{5}{c}{$p = 100$}                                                                             \\
200 & 0.036    & 0        & 0.034 (0.050)  & 0.026 (0.070)  \\
400 & 0        & 0    & 0.005 (0.005)  & 0.003 (0.003)   \\
800 & 0        & 0    & 0.002 (0.002)  & 0.001 (0.001)   \\
\multicolumn{5}{c}{$p = 200$}                                                                             \\
200 & 0.208    & 0.004        & 0.084 (0.110)  & 0.090 (0.142)   \\
400 & 0        & 0    & 0.007 (0.009)  & 0.002 (0.003)   \\
800 & 0        & 0    & 0.002 (0.002)  & 0.001 (0.001)   \\
\multicolumn{5}{c}{$p = 300$}                                                                             \\
200 & 0.392    & 0.004        & 0.136 (0.133)   & 0.158 (0.170)   \\
400 & 0      & 0      & 0.007 (0.009)  & 0.002 (0.003)   \\
800 & 0      & 0      & 0.002 (0.003)  & 0.001 (0.001)   \\
\hline
\end{tabular}
\label{tab:sce3_1}
\end{table}

\begin{table}[htbp]
\centering
\caption{Inference results for the first change point ($\eta_1$) in Scenario 3. }
\begin{tabular}{ccccc}
\hline
\multicolumn{5}{c}{$K = 1$, $\kappa_1 = \kappa_2 = 2$} \\
 & \multicolumn{2}{c}{$\alpha = 0.01$} & \multicolumn{2}{c}{$\alpha = 0.05$} \\
$n$   & $\mathrm{cover}_1(1-\alpha)$ & $\mathrm{width}_1(1-\alpha)$ & $\mathrm{cover}_1(1-\alpha)$ & $\mathrm{width}_1(1-\alpha)$ \\ \hline
\multicolumn{5}{c}{$p = 100$}                                                                             \\
200 & 0.927            & 12.110 (7.063)  & 0.876            & 8.654 (4.886) \\
400 & 0.984            & 6.844 (1.641)  & 0.960    & 4.954 (1.258)  \\
800 & 0.974            & 5.290 (0.923)  & 0.962    & 4.026 (0.312) \\
\multicolumn{5}{c}{$p = 200$}                                                                             \\
200 & 0.914            & 19.043 (13.639) & 0.865 & 13.668 (9.382) \\
400 & 0.978            & 7.278 (1.798)  & 0.964  & 5.286 (1.356)  \\
800 & 0.984            & 5.348 (0.908)  & 0.972  & 4.062 (0.377)  \\
\multicolumn{5}{c}{$p = 300$}                                                                             \\
200 & 0.924            & 27.298 (22.191)   & 0.884    & 19.308 (15.590) \\
400 & 0.994            & 7.792 (2.090)  & 0.982    & 5.690 (1.531) \\
800 & 0.972            & 5.352 (0.983)  & 0.966   & 4.044 (0.388) \\ \hline
\end{tabular}
\label{tab:sce3_2}
\end{table}

\begin{table}[htbp]
\centering
\caption{Inference results for the second change point ($\eta_2$) in Scenario 3. }
\begin{tabular}{ccccc}
\hline
\multicolumn{5}{c}{$K = 1$, $\kappa_1 = \kappa_2 = 2$} \\
 & \multicolumn{2}{c}{$\alpha = 0.01$} & \multicolumn{2}{c}{$\alpha = 0.05$} \\
$n$   & $\mathrm{cover}_2(1-\alpha)$ & $\mathrm{width}_2(1-\alpha)$ & $\mathrm{cover}_2(1-\alpha)$ & $\mathrm{width}_2(1-\alpha)$ \\ \hline
\multicolumn{5}{c}{$p = 100$}                                                                             \\
200 & 0.967            & 10.697 (5.545)  & 0.948            & 7.701 (3.830) \\
400 & 0.972            & 6.112 (1.181)  & 0.950    & 4.400 (0.816)  \\
800 & 0.980            & 4.926 (0.931)  & 0.976    & 4.034 (0.247) \\
\multicolumn{5}{c}{$p = 200$}                                                                             \\
200 & 0.982            & 17.244 (14.101) &    0.954 & 12.345 (9.833) \\
400 & 0.980            & 6.254 (1.202)  & 0.966  & 4.572 (0.922)  \\
800 & 0.986            & 4.994 (0.916)  & 0.976  & 3.992 (0.237)  \\
\multicolumn{5}{c}{$p = 300$}                                                                             \\
200 & 0.954            & 24.891 (17.430)   & 0.917    & 17.709 (12.593) \\
400 & 0.984            & 6.640 (1.295)  & 0.976    & 4.818 (1.011) \\
800 & 0.982            & 5.066 (0.914)  & 0.972   & 4.006 (0.224) \\ \hline
\end{tabular}
\label{tab:sce3_3}
\end{table}

\subsection{Real data application}\label{sec:realdata}
We apply the proposed procedure to the Federal Reserve Economic Database~(FRED)\footnote{The data set is publicly available at \url{https://research.stlouisfed.org/econ/mccracken/fred-databases}.} \citep{mccracken2016fred}. \cite{wang2022optimal} recently conducted a hypothesis testing on the existence of change points on this dataset. Their results suggest that there exist change points in the relationship between the monthly growth rate of the US industrial production (IP) index -- an important indicator of macroeconomic activity, and other macroeconomic variables. We consider the response variable (monthly growth rate of IP) $y_t = \log(\mathrm{IP}_t / \mathrm{IP}_{t-1})$ and the covariates with $116$ macroeconomic variables recorded for the month $t-1$ (after removing variables containing missing values).

Following the recommendation given on the FRED website, we transform the raw data into stationary time series and remove outliers using the R package \texttt{fbi}~\citep{fbi}. We consider the period from January 2000 to December 2022 with sample size $n = 275$. Tuning parameters selection and unknown quantities estimation follow the same method specified in \Cref{sec:numeric_perform}. We first perform DPDU, which outputs July 2008 and January 2020 as the initial change point estimator. We then refine the initial estimator and obtain June 2008 and January 2020 as the final estimator. The first estimated change point coincides with the beginning of the 2008 financial crisis. The $99\%$ confidence interval is [November 2007, January 2009], which covers major milestone events of the crisis, including the bankruptcy of Lehman Brothers, the acquisition of Merrill Lynch by Bank of America, the bailout and sell of Bear Stearns, among many others. The second estimated change point occurs around the beginning of the COVID-19 pandemic, with a narrow $99\%$ confidence interval of [December 2019, February 2020]. In particular, China issued the lockdown of Wuhan in January 2020, the U.S.\ imposed coronavirus‐related travel restrictions in February, 2020 and declared a nationwide emergency in March 2020.

\Cref{fig:realdata} plots the estimated coefficients of the high-dimensional linear regression on each segment. As can be seen, the estimated coefficients are indeed sparse with only a handful of non-zero entries on each segment. We note that the estimated coefficients vary in both scale and support across the three segments, which suggests the U.S. economy underwent notable changes after the financial crisis and the COVID-19 pandemic.

\begin{figure}[t]
\centering
\makebox[\textwidth][c]{\includegraphics[width=0.8\textwidth]{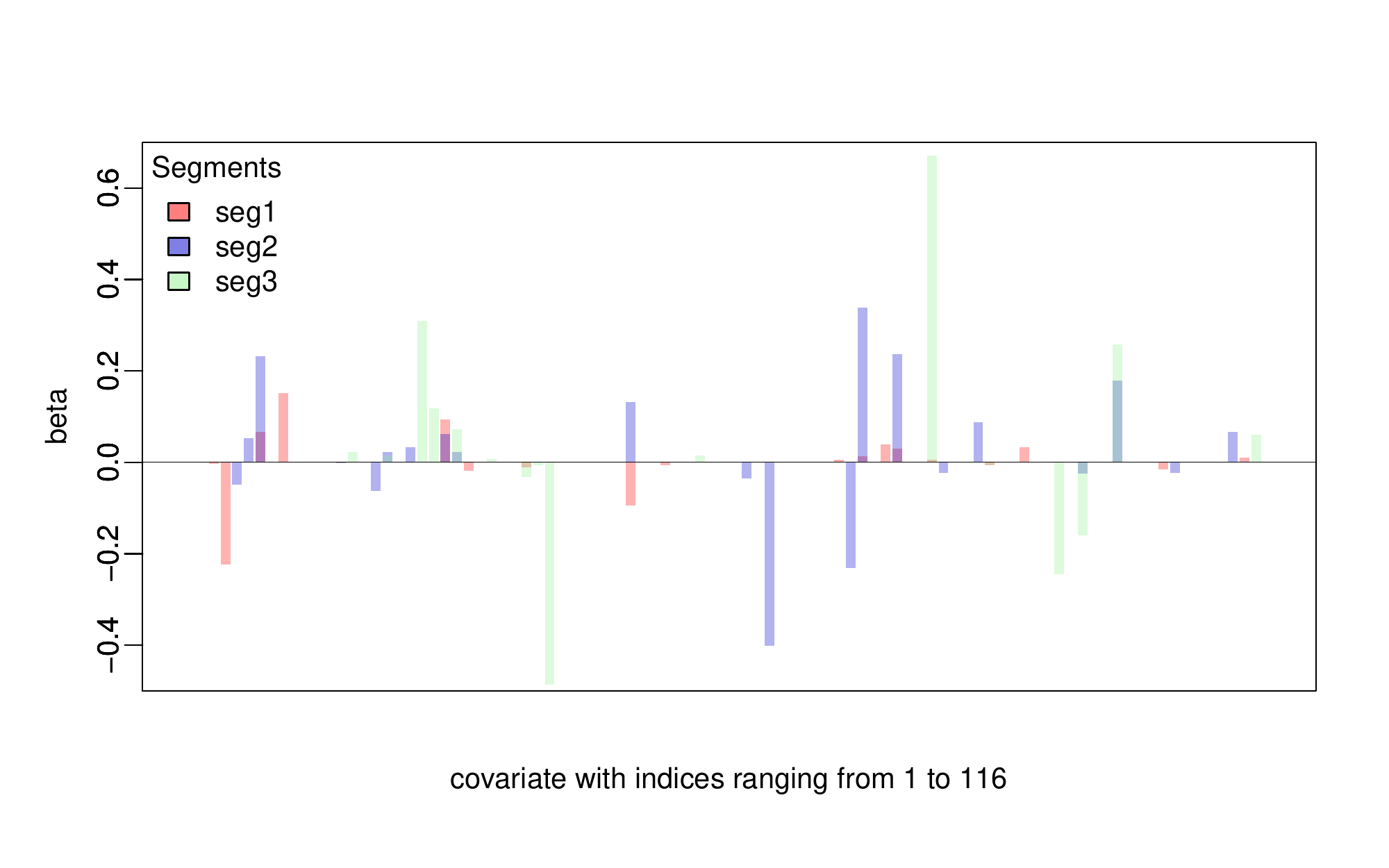}}%
\vspace{-1cm}
\caption{Estimated regression coefficients $\hat\beta$ on segments 1, 2, 3.} 
\label{fig:realdata}
\end{figure}

Note that, we have also implemented three other change point localisation methods, BSA \citep{leonardi2016computationally}, SGL \citep{zhang2015multiple} and VPC \citep{wang2021statistically}, on the same data set. BSA and SGL report no change point. \textcolor{blue}{VPC estimates two change points at June 2008 and December 2019}, which lie in our $99\%$ confidence intervals.

\section{Discussion}

In this paper, we derived limiting distributions of change point estimators, in the context of high-dimensional linear regression model, with temporal dependence in both covariate and noise sequences.  We believe such limiting distributions are first time seen in the high-dimensional linear regression change point literature, including those temporally-independent scenarios.  In addition to the limiting distributions, this paper is also a host of various interesting byproducts.

One potential extension is the implementation of the limiting distribution when the jump size is non-vanishing.  Although from a change point detection point of view, the vanishing jump size scenario is much more challenging, the limiting distribution of change point estimators when the jump size is non-vanishing lacks universality.  This is to say, as detailed in \Cref{thm:asymptotics}\textbf{a}, the limiting distribution is a function of the data generating mechanism.  This is understandable, since in this case the localisation error of order $O(1)$, and such phenomenon is observed and exists in simpler settings \citep[e.g.][]{yao1988estimating, aue2009estimation, mukherjee2022robust}.  It would be challenging and interesting to devise a practical way to distinguish the vanishing and non-vanishing jump size regimes, and further utilise the derived limiting distributions in the non-vanishing regime.


\clearpage
\bibliographystyle{apalike}
\bibliography{citation}

\clearpage
\appendix

\section*{Appendices}
We collect all the technical details in the supplementary material.  \Cref{sec-app-method} collects all the proofs of the results in \Cref{sec:method}.  \Cref{sec-app-main} collects the proofs of the main results, which are in \Cref{sec:main_result}.  \Cref{sec-app-pre} collects the proofs of the intermediate results, which are in \Cref{sec:theory_preliminary_est}, except the proof of  \Cref{thm:bernstein_exp_subExp_nonlinear}.  \Cref{sec-app-lrv} collects the proofs of the results regarding the long-run variance estimator and its application, which are in \Cref{sec:LRV}.  Some additional technical details are collected in \Cref{sec-app-add}.  \Cref{sec-proof-th4} is of its own interest, providing the proof of \Cref{thm:bernstein_exp_subExp_nonlinear}.  \Cref{sec-proof-th4} is a self-contained section and the notation therein only applies within the section.

\section{Proofs for Section 2}\label{sec-app-method}

Given data $ \{y_t, X_t\}_{t=l}^{r-1} \subset \mathbb R \times \mathbb R^p$ with any integer pair $1 \leq l < r \leq n+1$, and a tuning parameter $\lambda > 0$, let 
\[
    \widehat \beta_{[l,r)} = \argmin_{\beta \in \mathbb{R}^p } \left\{\frac{1}{ r-l }\sum_{t =l}^{r-1}  (y_t  -X_t^\top \beta) ^2 +  \frac{\lambda}{\sqrt{r-l} }  |\beta |_1\right\}.
\]
Assume for simplicity that there is no intercept.  Consider the coordinate descent algorithm.  Given $\widetilde  \beta  \in \mathbb R ^p $, the coordinate-wise update of $\widetilde \beta $ has the form 
\[
    \widetilde \beta _j  \leftarrow   S \bigg(\frac{1}{r-l}\sum_{t=l}^{r-1} X_{tj} (y_t  -\widetilde y _t^{ (j) } ) , \, \lambda \bigg), \quad j \in \{1, \ldots, p\},
\]
where  $\widetilde y_t^{(j) } = \sum_{k\not =j  } X_{tk} \widetilde \beta_k$ and   $S(z,\lambda) $ is the soft-thresholding operator $S(z, \lambda) = \mathrm{sign} (z) ( |z| - \lambda)_+$.

\begin{lemma} \label{lemma:lasso one iteration}
Given the quantities $\sum_{t=l}^{r-1} y_t X_{t} \in \mathbb R^p$ and 
$\sum_{t=l}^{r-1} X_{t} X_{t}^\top   \in \mathbb R^{p\times p}$.  Each complete iteration of the coordinate descent has computational cost of order $O(p^2)$.
\end{lemma}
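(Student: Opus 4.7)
The plan is to rewrite the coordinate-wise update in terms of the precomputed quantities $A := \sum_{t=l}^{r-1} y_t X_t \in \mathbb{R}^p$ and $M := \sum_{t=l}^{r-1} X_t X_t^\top \in \mathbb{R}^{p \times p}$, and then count elementary operations, first for a single coordinate and then for a full sweep over the $p$ coordinates.

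First, I would expand the argument of the soft-thresholding operator. Using $\widetilde{y}_t^{(j)} = \sum_{k \neq j} X_{tk}\widetilde{\beta}_k$ and swapping the order of summation,
\[
\frac{1}{r-l}\sum_{t=l}^{r-1} X_{tj}\bigl(y_t - \widetilde{y}_t^{(j)}\bigr)
= \frac{1}{r-l}\bigg(A_j - \sum_{k \neq j} M_{jk}\,\widetilde{\beta}_k\bigg),
\]
so the update becomes
\[
\widetilde{\beta}_j \;\leftarrow\; S\!\left(\frac{1}{r-l}\Big(A_j - \sum_{k \neq j} M_{jk}\,\widetilde{\beta}_k\Big),\;\lambda\right).
\]
This expression depends on the data only through the precomputed $A$ and $M$, which the statement grants us for free.

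Next I would count the cost of evaluating this expression for one fixed $j$. The inner product $\sum_{k \neq j} M_{jk}\widetilde{\beta}_k$ is a dot product between the $j$th row of $M$ (excluding the diagonal entry) and the current iterate $\widetilde{\beta}$, costing $O(p)$ multiplications and additions; subtracting from $A_j$, dividing by $r-l$, and applying $S(\cdot,\lambda)$ each cost $O(1)$. Hence updating a single coordinate costs $O(p)$.

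A complete iteration of coordinate descent is, by definition, a sweep through all $p$ coordinates $j = 1,\ldots,p$, each update costing $O(p)$ by the previous step. Summing gives total cost $O(p^2)$, which is the claimed bound. There is no real obstacle in this argument; the only thing worth double-checking is that the coordinate updates do not require revisiting the raw $(y_t, X_t)$ data (which would introduce an $(r-l)$ factor), and the rewriting above confirms that once $A$ and $M$ are available the per-iteration cost is independent of the segment length $r-l$.
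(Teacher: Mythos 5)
Your proposal is correct and follows essentially the same argument as the paper: rewrite $\sum_{t=l}^{r-1} X_{tj}(y_t - \widetilde{y}_t^{(j)})$ in terms of the stored partial sums, observe that each coordinate update then costs $O(p)$, and sum over the $p$ coordinates to obtain $O(p^2)$. No issues.
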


\begin{proof}
Let $\widetilde \beta  $ be given. Then for each fixed $j\in \{ 1,\ldots, p \}$,  note that 
$$   \sum_{t=l}^{r-1} X_{tj} (y_t  -\widetilde y _t^{ (j) } )  = \sum_{ t=l}^{r-1} X_{tj} y_t  -  \sum_{k\not = j }  \bigg( \sum_{t=l}^{r-1} X_{tj}X_{tk}\bigg) \widetilde \beta_{k} .$$
Therefore, with  $ \sum_{t=l}^{r-1} y_t X_{t}   \in \mathbb R^p$ and 
$ \sum_{t=l}^{r-1} X_{t} X_{t}^\top   \in \mathbb R^{p\times p} $ stored in the memory,
it  takes $O(p)$ operations to compute $    \sum_{t=l}^{r-1} X_{tj} (y_t  -\widetilde y _t^{ (j) } )$ and  update $\widetilde \beta_j  $. So it takes $O(p^2) $ to update each of  the $p$ coordinates once, which is   a  complete iteration of the coordinate descent.
\end{proof}

\begin{proof}[Proof of \Cref{lemma:memory_computation_DPDU}] 
For given  $r\in\{2, \ldots, n+1 \}$ and $l \in\{r-1,\ldots,1\}$,  \eqref{eq:updated matrices} indicates that $\mathcal M_{\text{temporary}} =\sum_{t = l}^{r-1} X_tX_t^\top$ and $\mathcal V_{\text{temporary}} =\sum_{t = l}^{r-1} y_tX_t ^\top$.  The claim on the memory cost follows from the observation that only $  \mathcal M_{\text{temporary}}  \in \mathbb R^{p\times p }  $,   $\mathcal V_{\text{temporary}} \in \mathbb R^{p  } $, $ \widehat B \in \mathbb R^n$  and $B \in \overline {R}^{n+1}$ are temporally stored in the memories.

As for the computational cost, from \Cref{algorithm:DPDU},  the major computational cost comes from the three sources: updating  $\mathcal{M}_{\text{temporary}} $, $\mathcal V _{\text{temporary}}$ and  $\mathcal G(\I) $. 

\noindent \textbf{Step 1.} For any fixed $r\in\{2, \ldots, n+1 \}$ and $l \in\{r-1, \ldots,1\}$,  in \eqref{eq:updated matrices}, updating $\mathcal{M}_{\text{temporary}}$   requires $O(p^2)$ operations, and
updating $\mathcal{V}_{\text{temporary}}$   requires $O(p )$ operations. Since there are at most $n^2$ many difference choices of $l$ and $r$,   it therefore requires   $O(n^2p^2)$ operations 
to update $\mathcal{M}_{\text{temporary}}$ and  $\mathcal{V}_{\text{temporary}}$ throughout the {\it entire} DUDP procedure.

\noindent \textbf{Step 2.} For any $\I =[l, r)$, given $\mathcal M_{\text{temporary}} =\sum_{t = l}^{r-1} X_tX_t^\top$ and $\mathcal V_{\text{temporary}} =\sum_{t = l}^{r-1} y_tX_t ^\top$, it requires ${\bf Lasso}(p)$ to compute $ \widehat \beta _\I $. Given $ \widehat \beta _\I $, it requires $O(p^2)$ to compute $ \mathcal G(\I)$. There are at most $n^2$ many intervals in $(0, n]$. 
So the cost of computing $ \{ \mathcal G(\I)\} _{\I \subset  (0, n] }$ via  DPDU is $O(n^2  p^2 +  n^2 {\bf Lasso}(p))$.
\end{proof}

\section{Proofs for Section 3.2} \label{sec-app-main}
\subsection[]{Proof of \Cref{lemma:long-run_UB}}

\begin{proof}[Proof of \Cref{lemma:long-run_UB}]
For any fixed $n \in \mathbb{Z}_+$, we consider the following quantity
\begin{align*}
    \var\bigg(n^{-1/2}\sum_{t = 1}^n2\epsilon_t v_k^{\top}X_t \bigg).
\end{align*}
Note that, by \Cref{assume:regression parameters}, $\mathbb E[\epsilon_t v_k^{\top}X_t] = \mathbb E\big[v_k^{\top}X_t\mathbb E[\epsilon_t|X_t ]\big] = 0$ for any $t \in \{1, \dots, n\}$. 
\\
\\
{\bf Upper bound.}
We first consider the upper bounds using the martingale decomposition and Burkholder's inequality (\Cref{lemma:Burkholder}).  
Define the projection operator $\mathcal{P}_j\cdot = \mathbb{E}[\cdot|\mathcal{F}_j] - \mathbb{E}[\cdot| \mathcal{F}_{j-1}]$ with $j \in \mathbb{Z}$.
\
\\
For any $t$, we decompose the random variable $\epsilon_t v_k^{\top}X_t$ as
    \begin{align*}
        \epsilon_tv_k^{\top}X_t = \epsilon_tv_k^{\top}X_t - \mathbb{E}[\epsilon_tv_k^{\top}X_t] =  \sum_{i = 0}^{\infty}\mathcal{P}_{t-i}(\epsilon_tv_k^{\top}X_t),
    \end{align*}
    where $\{\mathcal{P}_{t-i}(\epsilon_tv_k^{\top}X_t)\}_{t \in \mathbb{Z}}$ is a martingale difference sequence.
    It holds for any fixed $n \in \mathbb{Z}_+$ that 
    \begin{align}\label{eq:lrv_ub_part1}
        &\sqrt{\var\bigg(n^{-1/2}\sum_{t = 1}^n \epsilon_tv_k^{\top}X_t\bigg)} = \bigg\Vert n^{-1/2}\sum_{t=1}^n\big\{\epsilon_tv_k^{\top}X_t - \mathbb{E}[\epsilon_tv_k^{\top}X_t]\big\} \bigg\Vert_2 \nonumber \\
        =& \bigg\Vert n^{-1/2}\sum_{t=1}^n\sum_{i = 0}^{\infty}\mathcal{P}_{t-i}(\epsilon_tv_k^{\top}X_t) \bigg\Vert_2 \leq n^{-1/2}\sum_{i = 0}^{\infty}\bigg\Vert \sum_{t=1}^n\mathcal{P}_{t-i}(\epsilon_tv_k^{\top}X_t) \bigg\Vert_2 \nonumber \\
        \leq& n^{-1/2}\sum_{i = 0}^{\infty}\bigg( \sum_{t=1}^n\big\Vert \mathcal{P}_{t-i}(\epsilon_tv_k^{\top}X_t) \big\Vert_2^2\bigg)^{1/2} \leq \sum_{i = 0}^{\infty}\big\Vert \epsilon_iv_k^{\top}X_i - \epsilon_{i,\{0\}}v_k^{\top}X_{i,\{0\}} \big\Vert_{2} \nonumber \\
        =& \sum_{i = 0}^{\infty}\big\Vert \epsilon_iv_k^{\top}X_i - \epsilon_{i,\{0\}}v_k^{\top}X_i + \epsilon_{i,\{0\}}v_k^{\top}X_{i} -\epsilon_{i,\{0\}}v_k^{\top}X_{i,\{0\}} \big\Vert_{2} \nonumber\\
        \leq& \sum_{i = 0}^{\infty}\big\Vert (\epsilon_i-\epsilon_{i,\{0\}})v_k^{\top}X_i\big\Vert_2 + \sum_{i = 0}^{\infty}\big\Vert \epsilon_{i,\{0\}}v_k^{\top}(X_i - X_{i,\{0\}}) \big\Vert_{2} \nonumber\\
        \leq& \big\Vert v_k^{\top} X_0 \big\Vert_4\sum_{i = 0}^{\infty}\delta_{i,4}^{\epsilon} + \Vert \epsilon_0 \Vert_4\sum_{i = 0}^{\infty}\big\Vert v_k^{\top} (X_i - X_{i,\{0\}}) \big\Vert_{4} \nonumber\\
        \leq& D_{\epsilon}\sup_{|v|_2 = 1}\big\Vert v^{\top} X_0 \big\Vert_4 + D_{X}\Vert \epsilon_0 \Vert_4,
    \end{align}
    where the first inequality follows from Fubini's theorem and the triangle inequality, the second inequality follows from Burkholder's inequality, the fourth inequality follows from the triangle inequality, the fifth inequality follows from H\"older's inequality, and the sixth inequality follows from Assumptions \ref{assume:X} and \ref{assume:epsilon}. The third inequality follows from the stationarity and the fact that for any $i \geq 0$ and any random variable $Z_t$ of the form \eqref{eq:nonstationary} that
    \begin{equation*}
    \begin{aligned}
        &\Vert \mathcal{P}_{t-i}Z_t \Vert_2 = \Vert \mathbb{E}[Z_t | \mathcal{F}_{t-i}] -\mathbb{E}[Z_t | \mathcal{F}_{t-i-1}] \Vert_2\\
        =& \Vert\mathbb{E}[Z_t| \mathcal{F}_{t-i}] - \mathbb{E}[Z_{t,\{t-i\}}| \mathcal{F}_{t-i-1}]\Vert_2\\
        =& \Vert\mathbb{E}[Z_t-Z_{t,\{t-i\}}| \mathcal{F}_{t-i}]\Vert_2 \leq \Vert Z_{t}-Z_{t,\{t-i\}}\Vert_2,
    \end{aligned}
    \end{equation*}
    where the second and the third equalities follow from the definition of the coupled random variables $Z_{t,\{t-i\}}$ in \Cref{sec:background}, and the first inequality follows from Jensen's inequality.
\
\\
Letting $n \to \infty$, in the vanishing regime, it satisfies that
    \begin{align*}
       \sigma_{\infty}^2 
       \leq 4\bigg(D_{\epsilon}\sup_{|v|_2 = 1}\big\Vert v^{\top} X_0 \big\Vert_4 + D_{X}\Vert \epsilon_0 \Vert_4 \bigg)^2 < \infty.
    \end{align*}
\
\\
{\bf Convergence.} Next, we show that $\var\big(n^{-1/2}\sum_{t = 1}^n 2\epsilon_t v_k^{\top}X_t \big)$ has a limit in the vanishing regime. It is suffice to show that $\var\big(n^{-1/2}\sum_{t = 1}^n 2\epsilon_t v_k^{\top}X_t\big)$ is a Cauchy sequence.
Note that $\{\epsilon_t v_k^{\top}X_t\}_{t \in \mathbb{Z}}$ is a stationary process. Denote the lag-$\ell$ covariance as 
\begin{align*}
    \gamma(\ell) = \cov\big(\epsilon_{t-\ell} v_k^{\top}X_{t-\ell}, \epsilon_t v_k^{\top}X_{t}\big) = \mathbb E\big[\epsilon_{t-\ell} v_k^{\top}X_{t-\ell} \epsilon_t v_k^{\top}X_{t}\big].
\end{align*}
We have that for $\ell \geq 0$
\begin{align*}
    &|\gamma(\ell)| = \bigg|\mathbb{E}\Big[\Big(\sum_{k = 0}^{\infty}\mathcal{P}_{-k}\{\epsilon_tv_k^{\top}X_{t}\}\Big)\Big(\sum_{k = 0}^{\infty}\mathcal{P}_{\ell-k}\{\epsilon_{t+l}v_k^{\top}X_{t+\ell}\}\Big)\Big]\bigg|\\
    =& \bigg|\sum_{k = 0}^{\infty}\mathbb{E}\big[(\mathcal{P}_{-k}\{\epsilon_tv_k^{\top}X_{t}\})(\mathcal{P}_{-k}\{\epsilon_{t+\ell}v_k^{\top}X_{t+\ell}\})\big]\bigg|\\
    \leq& \sum_{k =     0}^{\infty}\Big|\mathbb{E}\big[(\mathcal{P}_{-k}\{\epsilon_tv_k^{\top}X_{t}\})(\mathcal{P}_{-k}\{\epsilon_{t+\ell}v_k^{\top}X_{t+\ell}\})\big]\Big| \leq \sum_{k = 0}^{\infty}\Vert \mathcal{P}_{-k}\{\epsilon_tv_k^{\top}X_{t}\} \Vert_2 \Vert \mathcal{P}_{-k}\{\epsilon_{t+\ell}v_k^{\top}X_{t+\ell}\}\Vert_2\\
    \leq& \Big(\sup_{|v|_2 = 1}\big\Vert v^{\top} X_0 \big\Vert_4\Big)^2\sum_{k = 0}^{\infty}\delta^{\epsilon}_{k,4}\delta^{\epsilon}_{k+\ell,4} + \Vert \epsilon_0 \Vert_4\sup_{|v|_2 = 1}\big\Vert v^{\top} X_0 \big\Vert_4\sum_{k = 0}^{\infty}\delta^{X}_{k,4}\delta^{\epsilon}_{k+\ell,4}\\
    &+\Vert \epsilon_0 \Vert_4\sup_{|v|_2 = 1}\big\Vert v^{\top} X_0 \big\Vert_4\sum_{k = 0}^{\infty}\delta^{\epsilon}_{k,4}\delta^{X}_{k+\ell,4} + \Vert \epsilon_0 \Vert_4^2\sum_{k = 0}^{\infty}\delta^{X}_{k,4}\delta^{X}_{k+\ell,4}\\
    \leq& \Big(\sup_{|v|_2 = 1}\big\Vert v^{\top} X_0 \big\Vert_4\Big)^2\Delta^{\epsilon}_{0,4}\Delta^{\epsilon}_{\ell,4} + \Vert \epsilon_0 \Vert_4\sup_{|v|_2 = 1}\big\Vert v^{\top} X_0 \big\Vert_4\big(\Delta^{X}_{0,4}\Delta^{\epsilon}_{\ell,4} + \Delta^{\epsilon}_{0,4}\Delta^{X}_{\ell,4}\big) + \Vert \epsilon_0 \Vert_4^2\Delta^{X}_{0,4}\Delta^{X}_{\ell,4},
\end{align*}
    where the first inequality follows from the triangle inequality and the second and fourth inequalities follow from H\"older's inequality. The second equality follows from the orthogonality of $\mathcal{P}_j\cdot$, i.e.~for $i < j$
    \begin{align*}
        \mathbb{E}[(\mathcal{P}_i\epsilon_r)(\mathcal{P}_j\epsilon_s)] = \mathbb{E}[\mathbb{E}[(\mathcal{P}_i\epsilon_r)(\mathcal{P}_j\epsilon_s)|\mathcal{F}_{i}]] = \mathbb{E}[(\mathcal{P}_i\epsilon_r)\mathbb{E}[\epsilon_s-\epsilon_s|\mathcal{F}_i]] = 0,
    \end{align*}
    and the orthogonality also holds for $i > j$ by symmetry. The third inequality follows from
    \begin{align*}
       &\Vert \mathcal{P}_{j}\{\epsilon_iv_k^{\top}X_{i}\} \Vert_2 = \Vert \mathbb{E}[\epsilon_iv_k^{\top}X_{i}| \mathcal{F}_{j}] - \mathbb{E}[\epsilon_iv_k^{\top}X_{i}| \mathcal{F}_{j-1}]\Vert_2\\
       =& \Vert\mathbb{E}[\epsilon_iv_k^{\top}X_{i}| \mathcal{F}_{j}] - \mathbb{E}[\epsilon_{i,\{j\}}v_k^{\top}X_{i,\{j\}}| \mathcal{F}_{j-1}]\Vert_2 = \Vert\mathbb{E}[\epsilon_iv_k^{\top}X_{i}-\epsilon_{i,\{j\}}v_k^{\top}X_{i,\{j\}}| \mathcal{F}_{j}]\Vert_2\\
       \leq& \Vert \epsilon_iv_k^{\top}X_{i}-\epsilon_{i,\{j\}}v_k^{\top}X_{i,\{j\}}\Vert_2 \leq \Vert (\epsilon_i - \epsilon_{i,\{j\}})v_k^{\top}X_{i}\Vert_2 + \Vert \epsilon_{i,\{j\}}v_k^{\top}(X_{i} - X_{i,\{j\}})\Vert_2\\
       \leq& \sup_{|v|_2 = 1}\big\Vert v^{\top} X_0 \big\Vert_4\delta^{\epsilon}_{i-j,4} + \Vert \epsilon_0 \Vert_4\delta^{X}_{i-j,4},
    \end{align*}
    where the second and the third equality follows the definition of the coupled random variables, the first inequality follows Jensen's inequality, and the second inequality follows the definition of the functional dependence measure. By the same arguments, we have for any $\ell < 0$ that
    \begin{align*}
    &|\gamma(\ell)|\\
    \leq& \Big(\sup_{|v|_2 = 1}\big\Vert v^{\top} X_0 \big\Vert_4\Big)^2\Delta^{\epsilon}_{0,4}\Delta^{\epsilon}_{-\ell,4} + \Vert \epsilon_0 \Vert_4\sup_{|v|_2 = 1}\big\Vert v^{\top} X_0 \big\Vert_4\big(\Delta^{X}_{0,4}\Delta^{\epsilon}_{-\ell,4} + \Delta^{\epsilon}_{0,4}\Delta^{X}_{-\ell,4}\big) + \Vert \epsilon_0 \Vert_4^2\Delta^{X}_{0,4}\Delta^{X}_{-\ell,4}.
    \end{align*}
Therefore, it follows from Assumptions \ref{assume:X} and \ref{assume:epsilon} that
\begin{align}\label{eq:autocov_abs_ub}
    \big|\gamma(\ell)\big| \leq \Big(\sup_{|v|_2 = 1}\big\Vert v^{\top} X_0 \big\Vert_4D_{\epsilon} + \Vert \epsilon_0 \Vert_4D_{X}\Big)^2\exp(-2c|\ell|^{\gamma_1}).
\end{align}
\
\\
By some calculation, we have that
\begin{align*}
    \var\bigg(n^{-1/2}\sum_{t = 1}^n 2\epsilon_t v_k^{\top}X_t\bigg) = 4\sum_{-n < \ell < n}\Big(1- \frac{|\ell|}{n}\Big)\gamma(\ell).
\end{align*}
It follows from \eqref{eq:autocov_abs_ub} that
\begin{align*}
    &\bigg|\var\bigg(n^{-1/2}\sum_{t = 1}^n 2\epsilon_t v_k^{\top}X_t\bigg) - \var\bigg(m^{-1/2}\sum_{t = 1}^m 2\epsilon_t v_k^{\top}X_t\bigg)\bigg|\\
    \leq& \frac{8}{\min\{n,m\}}\sum_{\ell = 0}^{\min\{n,m\}-1}\ell|\gamma(\ell)| + 4\sum_{\ell = \min\{n,m\}}^{\max\{n,m\}}|\gamma(\ell)|\\
    \leq& \frac{8\Big(\sup_{|v|_2 = 1}\big\Vert v^{\top} X_0 \big\Vert_4D_{\epsilon} + \Vert \epsilon_0 \Vert_4D_{X}\Big)^2}{\min\{n,m\}}\sum_{\ell = 0}^{\min\{n,m\}-1}\ell\exp(-2c|\ell|^{\gamma_1})\\
    &+ 4\Big(\sup_{|v|_2 = 1}\big\Vert v^{\top} X_0 \big\Vert_4D_{\epsilon} + \Vert \epsilon_0 \Vert_4D_{X}\Big)^2\sum_{\ell = \min\{n,m\}}^{\max\{n,m\}}\exp(-2c|\ell|^{\gamma_1}),
\end{align*}
which shows that $\var\big(n^{-1/2}\sum_{t = 1}^n 2\epsilon_t v_k^{\top}X_t\big)$ is Cauchy sequence since the sum converges. It further leads to that $\var\big(n^{-1/2}\sum_{t = 1}^n2 \epsilon_t v_k^{\top}X_t\big)$ has a limit.
Combining the upper bounds and the existence of a limit completes the proof.
\end{proof}

\begin{lemma}\label{lemma:long-run_equiv}
Suppose the same assumptions in \Cref{lemma:long-run_UB} hold. Under the vanishing regime, we have the equivalence that
\begin{align}\label{eq:long-run_var2}
    \sigma_{\infty}^2(k)
    =& 4\lim_{n \to \infty}\var\bigg(n^{-1/2}\sum_{t = 1}^n \epsilon_tv_k^{\top}X_t  \bigg) \nonumber\\
    =& \lim_{n \to \infty}\var\bigg(n^{-1/2}\sum_{t = 1}^n \kappa_k^{-1}\big\{a\epsilon_t\Psi_k^{\top}X_t + (\Psi_k^{\top}X_t)^2\big\} \bigg)
\end{align}
for any $a \in \{-2, 2\}$.
\end{lemma}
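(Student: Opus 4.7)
The plan is to reduce the claimed equality to the already-established limit in \Cref{lemma:long-run_UB} by showing that the extra quadratic term contributes only lower-order fluctuations in the vanishing regime. The key algebraic observation is that, since $\Psi_k = \kappa_k v_k$, we have
\[
\kappa_k^{-1}\bigl\{a\epsilon_t\Psi_k^{\top}X_t + (\Psi_k^{\top}X_t)^2\bigr\} = a\epsilon_t v_k^\top X_t + \kappa_k (v_k^\top X_t)^2.
\]
Writing $U_t = a\epsilon_t v_k^\top X_t$ and $V_t = \kappa_k (v_k^\top X_t)^2$, expanding the variance gives $A_n + 2B_n + C_n$, where $A_n=\var(n^{-1/2}\sum U_t)$, $B_n=\cov(n^{-1/2}\sum U_t, n^{-1/2}\sum V_t)$ and $C_n=\var(n^{-1/2}\sum V_t)$. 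Since $a^2=4$, \Cref{lemma:long-run_UB} yields $A_n\to \sigma_\infty^2(k)$, so it remains to show $B_n\to 0$ and $C_n\to 0$.

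The main work is the bound on $C_n$. I plan to mimic the martingale-projection argument in \Cref{lemma:long-run_UB}, applied to the centered stationary process $W_t=(v_k^\top X_t)^2 - \mathbb{E}[(v_k^\top X_t)^2]$. Writing $W_t=\sum_{i\ge 0}\mathcal{P}_{t-i}W_t$ and invoking Burkholder's inequality together with stationarity yields
\[
\Bigl\|n^{-1/2}\sum_{t=1}^n W_t\Bigr\|_2 \le \sum_{i=0}^{\infty}\|W_0-W_{0,\{-i\}}\|_2.
\]
Factoring $W_0-W_{0,\{-i\}} = (v_k^\top X_0 - v_k^\top X_{0,\{-i\}})(v_k^\top X_0 + v_k^\top X_{0,\{-i\}})$ and applying the Cauchy--Schwarz inequality gives $\|W_0-W_{0,\{-i\}}\|_2 \le 2\|v_k^\top X_0\|_4\,\delta^X_{i,4}$. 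The sub-Weibull tail bound in \Cref{assume:X}\textbf{b} guarantees $\|v_k^\top X_0\|_4<\infty$ uniformly in $k$, and \Cref{assume:X}\textbf{a} gives $\Delta^X_{0,4}<\infty$. Consequently $\var(n^{-1/2}\sum_t W_t) \le 4\|v_k^\top X_0\|_4^2(\Delta^X_{0,4})^2$, a constant independent of $n$, so $C_n = \kappa_k^2\cdot O(1) \to 0$ in the vanishing regime where $\kappa_k\to 0$.

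Finally, $|B_n|\le\sqrt{A_n C_n}$ by Cauchy--Schwarz, and since $A_n$ is bounded (being a convergent sequence) and $C_n\to 0$, we conclude $B_n\to 0$. Collecting the three contributions gives the claimed limit $\sigma_\infty^2(k)$, independently of whether $a=2$ or $a=-2$. The main obstacle is the uniform-in-$n$ bound on $\var(n^{-1/2}\sum_t (v_k^\top X_t)^2)$: it requires the fourth-moment functional dependence measure $\Delta^X_{\cdot,4}$ (exactly the moment assumed in \Cref{assume:X}\textbf{a}) to translate the functional dependence of $X_t$ into functional dependence for the squared process, and this is where the choice of $q=4$ in the hypotheses of the paper becomes essential.
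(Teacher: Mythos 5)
Your proof is correct, and the quadratic term $C_n$ is handled by exactly the same martingale-projection / Burkholder argument as in the paper (the paper applies the computation from the proof of \Cref{lemma:long-run_UB} to the process $\kappa_k^{-1}(\Psi_k^\top X_t)^2 = \kappa_k (v_k^\top X_t)^2$, which is your $V_t$). Where you genuinely diverge is the cross term. The paper asserts that $\mathbb{E}[\epsilon_t|X_t]=0$ makes the cross-covariance vanish \emph{identically} for every $n$, so the variance splits exactly as $A_n + C_n$. You instead keep $B_n$ and kill it in the limit via $|B_n|\le\sqrt{A_n C_n}$, using $C_n = \kappa_k^2\,O(1)\to 0$ and boundedness of $A_n$. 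Your route is cleaner and more robust: the exact-cancellation claim needs the conditional mean to annihilate not only the lag-zero term $\mathbb{E}[\epsilon_t(v_k^\top X_t)^3]$ but also the off-diagonal cross-covariances $\mathbb{E}[\epsilon_s\, v_k^\top X_s\,(v_k^\top X_t)^2]$ with $s\ne t$, which $\mathbb{E}[\epsilon_s|X_s]=0$ alone does not obviously deliver when the covariate and noise sequences are dependent. The Cauchy--Schwarz bound sidesteps that delicacy entirely, at no cost, and also makes the irrelevance of the sign of $a$ transparent (only $a^2=4$ enters through $A_n$).
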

\begin{proof}
For any fixed $n \in \mathbb{Z}_+$, we have the following decomposition that
\begin{align*}
    &\var\bigg(n^{-1/2}\sum_{t = 1}^n\kappa_k^{-1}\big\{a\epsilon_t \Psi_k^{\top}X_t + (\Psi_k^{\top}X_t)^2\big\} \bigg)\\
    =& 4\var\bigg(n^{-1/2}\sum_{t = 1}^n \epsilon_t v_k^{\top}X_t\bigg) + \kappa_k^{-2}\var\bigg(n^{-1/2}\sum_{t = 1}^n (\Psi_k^{\top}X_t)^2 \bigg),
\end{align*}
which follows from \Cref{assume:regression parameters}, i.e. $\mathbb E[\epsilon_t|X_t] = 0$ for any $t \in \{1, \dots, n\}$.
\
\\
Note that the upper bound and the convergence of $4\var\bigg(n^{-1/2}\sum_{t = 1}^n \epsilon_t v_k^{\top}X_t\bigg)$ have been studied in the proof of \Cref{lemma:long-run_UB}. It is suffice to show that
\begin{align}\label{eq:lrv_ub_part2}
    \lim_{n \to \infty}\kappa_k^{-2}\var\bigg(n^{-1/2}\sum_{t = 1}^n (\Psi_k^{\top}X_t)^2 \bigg) = 0.
\end{align}
Consider the process $\{\kappa_k^{-1}(\Psi_k^{\top}X_t)^2\}_{t \in \mathbb{Z}}$. Using the similar arguments in \eqref{eq:lrv_ub_part1} of the proof of \Cref{lemma:long-run_UB}, it holds for any fixed $n \in \mathbb{Z}_+$ that 
    \begin{align*}
        \sqrt{\var\bigg(n^{-1/2}\sum_{t = 1}^n (\Psi_k^{\top}X_t)^2 \bigg)} \leq& 2\sum_{i = 0}^{\infty}\big\Vert \Psi_k^{\top}(X_i + X_{i,\{0\}})\big\Vert_4\big\Vert \Psi_k^{\top}(X_i - X_{i,\{0\}})\big\Vert_{4} \nonumber\\
        \leq& 4\kappa_k^2 \sup_{|v|_2 = 1}\big\Vert v^{\top} X_0\big \Vert_4D_X,
    \end{align*}
which further leads to \eqref{eq:lrv_ub_part2}, since $\kappa_k \to 0$.
\end{proof}

\subsection[]{Proof of \Cref{thm:asymptotics}}
Define the event
\begin{align}\label{eq:event_DP}
    \mathcal{E}_{\mathrm{DP}} = \Big\{ \widehat K = K \;\; \text{and} \;\; \max_{ 1\le k \le K } \kappa_k^2|\eta_k-\widehat \eta_k|  \le  C_{\mathfrak e}   \big(\s \log(np) + \zeta      \big) \Big\}.
\end{align}
By \Cref{theorem:DUDP}, under Assumptions~\ref{assume:regression parameters}, \ref{assume:X} and \ref{assume:epsilon}, we have that $\mathcal{E}_{\mathrm{DP}}$ holds with probability at least $1 - cn^{-3}$.

\begin{proof}[Proof of \Cref{thm:asymptotics}]

\
\\
\textbf{Preliminary.}
Since $\mathcal{E}_{\mathrm{DP}}$ holds with probability tending to $1$, we condition on the $\mathcal{E}_{\mathrm{DP}}$ in the following proof. For $k \in \{0, 1, \dots, K\}$, let $\mathcal{I}_k = [l_k, r_k)$, where $l_k = \widehat{\eta}_{k}$, $r_k = \widehat{\eta}_{k+1}$, $\widehat{\eta}_{0} = 1$ and $\widehat{\eta}_{K+1} = n+1$. We have that
\begin{align*}
    |l_k - \eta_{k}| \leq C_{\mathfrak{e}}\frac{\s \log(p) + \zeta}{\kappa_{k}^2} \;\; \text{and} \;\; |r_k - \eta_{k+1}| \leq C_{\mathfrak{e}}\frac{\s \log(p) + \zeta }{\kappa_{k+1}^2}.
\end{align*}
It follows from \Cref{assump-snr} that
\begin{align*}
    |\mathcal{I}_k| = r_k-l_k \geq \eta_{k+1} - \eta_{k} - C_{\mathfrak{e}}\frac{\s \log(np) + \zeta }{\kappa_{k}^2} - C_{\mathfrak{e}}\frac{\s \log(np) + \zeta }{\kappa_{k+1}^2} \geq \frac{\Delta}{2} \geq \zeta,
\end{align*}
and
\begin{align*}
    |\mathcal{I}_k| \leq \eta_{k+1} - \eta_{k} + C_{\mathfrak{e}}\frac{\s \log(np) + \zeta }{\kappa_{k}^2} + C_{\mathfrak{e}}\frac{\s \log(np) + \zeta }{\kappa_{k+1}^2} \leq C_{I}(\eta_{k+1} - \eta_{k}),
\end{align*}
where $C_{I} > 1$ is some absolute constant.
Therefore, by \Cref{lemma:interval lasso}, we have that for any $k \in \{0, 1, \dots, K\}$
\begin{align}
&    | \widehat \beta_k -\beta^*_{\I_k}  | _2^2 \le \frac{C_1 \s\log(p)}{ \Delta  },
\label{eq:interval_lasso_k_1} \\
 &     | \widehat \beta_k  -\beta^*_{\I_k}  | _1  \le     C_2  \s\sqrt {  \frac{\log(p )} {\Delta } },
 \label{eq:interval_lasso_k_2}\\
 &     | (\widehat \beta_k  )_{S^c} | _1 \le  3  | (\widehat \beta _k  -\beta^*_{\I_k} )_{S }  | _1,\label{eq:interval_lasso_k_3}
\end{align}
where $\beta^*_{\I_k} = |\I_k|^{-1}\sum_{t \in \I_k}\beta_t^*$.
Moreover, by \Cref{assump-snr}\textbf{a}, we have that
\begin{align}\label{eq:min_spacing_lb}
    \Delta \ge C_{snr}\alpha_n(\s\log( np))^{2/\gamma -1} \max\big\{\kappa_k^{-2}, \kappa_{k-1}^{-1}\kappa_{k}^{-1}, \kappa_{k}^{-1}\kappa_{k+1}^{-1}\big\}.
\end{align}
\\
\\
By \Cref{prop-1}, $[l_{k-1}, r_{k}) = [l_{k-1}, r_{k-1}) \cup [l_{k}, r_{k})$ can contain $1$, $2$ or $3$ change points. One of the cases that $[l_{k-1}, r_{k})$ contains $3$ change points is illustrated in the following figure, and the biases will be analysed. The analyses of the other cases are similar and simpler, and therefore omitted.
\
\\
\begin{center}
    \begin{tikzpicture}[scale=7,decoration=brace]
\draw[-, thick] (-1.1,0) -- (1.1,0);
\foreach \x/\xtext in {-1/$l_{k-1}$,-0.9/$\eta_{k-1}$,0.05/$\eta_{k}$,0.2/$r_{k-1}(l_{k})$,0.87/$\eta_{k+1}$,1/$r_{k}$}
\draw[thick] (\x,0.5pt) -- (\x,-0.5pt) node[font = {\footnotesize}, below] {\xtext};
\end{tikzpicture}
\end{center}
In the following, we analyse three types of biases. It follows that
\begin{align*}
    &\beta_{\eta_{k-1}}^* - \beta_{\mathcal{I}_{k-1}}^* = \beta_{\eta_{k-1}}^* - \frac{\eta_{k-1}-l_{k-1}}{|\I_{k-1}|}\beta_{\eta_{k-2}}^* - \frac{\eta_{k} - \eta_{k-1}}{|\I_{k-1}|}\beta_{\eta_{k-1}}^* - \frac{r_{k-1} - \eta_{k}}{|\I_{k-1}|}\beta_{\eta_{k}}^*\\
    \leq& \frac{\eta_{k-1}-l_{k-1}}{|\I_{k-1}|}(\beta_{\eta_{k-1}}^* - \beta_{\eta_{k-2}}^*) + \frac{r_{k-1} - \eta_{k}}{|\I_{k-1}|}(\beta_{\eta_{k-1}}^* - \beta_{\eta_{k}}^*).
\end{align*}
Similarly,
\begin{align*}
    \beta_{\eta_{k}}^* - \beta_{\mathcal{I}_{k}}^* = \beta_{\eta_{k}}^* - \frac{\eta_{k+1}-l_{k}}{|\I_{k}|}\beta_{\eta_{k}}^* - \frac{r_{k} - \eta_{k+1}}{|\I_{k}|}\beta_{\eta_{k+1}}^* \leq \frac{r_{k} - \eta_{k+1}}{|\I_{k}|}(\beta_{\eta_{k}}^* - \beta_{\eta_{k+1}}^*),
\end{align*}
and
\begin{align*}
    &\beta^*_{\mathcal{I}_{k-1}} - \beta^*_{\eta_{k}}\\
    =& \frac{\eta_{k-1}-l_{k-1}}{|\I_{k-1}|}\beta_{\eta_{k-2}}^* + \frac{\eta_{k} - \eta_{k-1}}{|\I_{k-1}|}\beta_{\eta_{k-1}}^* + \frac{r_{k-1} - \eta_{k}}{|\I_{k-1}|}\beta_{\eta_{k}}^* -\beta_{\eta_{k}}^*\\
    =& \frac{\eta_{k-1}-l_{k-1}}{|\I_{k-1}|}\big(\beta_{\eta_{k-2}}^* - \beta_{\eta_{k-1}}^* + \beta_{\eta_{k-1}}^* - \beta_{\eta_{k}}^*\big) + \frac{\eta_{k} - \eta_{k-1}}{|\I_{k-1}|}\big(\beta_{\eta_{k-1}}^* - \beta_{\eta_{k}}^*\big).
\end{align*}
Therefore, we have that
\begin{align}\label{eq:bias_1}
    \big| \beta_{\eta_{k-1}}^* - \beta_{\mathcal{I}_{k-1}}^* \big|_2 \leq 2C_{\mathfrak{e}}\frac{\s \log(p) + \zeta}{\Delta\kappa_{k-1}^2}\kappa_{k-1} + 2C_{\mathfrak{e}}\frac{\s \log(p) + \zeta}{\Delta\kappa_{k}^2}\kappa_{k} \leq C_3\kappa_k\alpha_n^{-1},
\end{align}
\begin{align}\label{eq:bias_2}
    \big| \beta_{\eta_{k}}^* - \beta_{\mathcal{I}_{k}}^* \big|_2 \leq  2C_{\mathfrak{e}}\frac{\s \log(p) + \zeta}{\Delta\kappa_{k+1}^2}\kappa_{k+1} \leq C_4\kappa_{k}\alpha_n^{-1},
\end{align}
and
\begin{align}\label{eq:bias_3}
    \big| \beta^*_{\mathcal{I}_{k-1}} - \beta^*_{\eta_{k}} \big|_2 \leq  2C_{\mathfrak{e}}\frac{\s \log(p) + \zeta}{\Delta\kappa_{k-1}^2}(\kappa_{k-1} + \kappa_{k}) + \kappa_{k} \leq C_5\kappa_{k}.
\end{align}
\\
\\
\textbf{Uniform tightness of $\kappa_k^2|\widetilde{\eta}_k - \eta_k|$.}
Denote $r = \widetilde{\eta}_k - \eta_k$. Without loss of generality, suppose $r \geq 0$. Since $\widetilde{\eta}_k = \eta_k + r$, defined in \eqref{eq:refinement}, is the minimizer of $Q_k(\eta)$, it follows that
$$Q_k(\eta_k+ r) - Q_k(\eta_k) \leq 0.$$
Observe that
\begin{align*}
    &Q_k(\eta_k+ r) - Q_k(\eta_k) = \sum_{t = \eta_k}^{\eta_k + r - 1} \big\{(y_t - X_t^{\top}\widehat{\beta}_{k-1})^2 - (y_t - X_t^{\top}\widehat{\beta}_{k})^2\big\}\\ 
    =&\sum_{t = \eta_k}^{\eta_k + r - 1} \big\{(y_t - X_t^{\top}\widehat{\beta}_{k-1})^2 - (y_t - X_t^{\top}\beta^*_{\mathcal{I}_{k-1}})^2\big\} - \sum_{t = \eta_k}^{\eta_k + r - 1} \big\{(y_t - X_t^{\top}\widehat{\beta}_{k})^2 - (y_t - X_t^{\top}\beta^*_{\mathcal{I}_{k}})^2\big\}\\
    &+ \sum_{t = \eta_k}^{\eta_k + r - 1} \big\{(y_t - X_t^{\top}\beta^*_{\mathcal{I}_{k-1}})^2 - (y_t - X_t^{\top}\beta^*_{\eta_{k-1}})^2\big\} - \sum_{t = \eta_k}^{\eta_k + r - 1} \big\{(y_t - X_t^{\top}\beta^*_{\mathcal{I}_{k}})^2 - (y_t - X_t^{\top}\beta^*_{\eta_{k}})^2\big\}\\
    &+ \sum_{t = \eta_k}^{\eta_k + r - 1} \big\{(y_t - X_t^{\top}\beta^*_{\eta_{k-1}})^2 - (y_t - X_t^{\top}\beta^*_{\eta_{k}})^2\big\}\\
    =& I - II + III - IV + V.
\end{align*}
Therefore, we have that
\begin{align}\label{eq:refine_signal_noise}
    V \leq -I + II - III + IV \leq |I| + |II| + |III| + |IV|.
\end{align}
If $r \leq 1/\kappa_k^2$ or $r \leq 2$, then there is nothing to show. So for the rest of the argument, for contradiction, let us assume that
$$r \geq \frac{1}{\kappa_k^2} \;\; \text{and} \;\; r \geq 3.$$
\
\\
\textbf{Step 1: the order of magnitude of $I$.} We have that
\begin{align*}
    I =& \sum_{t = \eta_k}^{\eta_k + r - 1}(X_t^{\top}\widehat{\beta}_{k-1} - X_t^{\top}\beta^*_{\mathcal{I}_{k-1}})^2 - 2(\widehat{\beta}_{k-1} - \beta^*_{\mathcal{I}_{k-1}})^{\top}\sum_{t = \eta_k}^{\eta_k + r - 1}X_t(y_t - X_t^{\top}\beta_{\mathcal{I}_{k-1}}^*)\\
    =& \sum_{t = \eta_k}^{\eta_k + r - 1}(X_t^{\top}\widehat{\beta}_{k-1} - X_t^{\top}\beta^*_{\mathcal{I}_{k-1}})^2 - 2(\widehat{\beta}_{k-1} - \beta^*_{\mathcal{I}_{k-1}})^{\top}\sum_{t = \eta_k}^{\eta_k + r - 1}X_tX_t^{\top}(\beta_{\eta_{k}}^* - \beta_{\mathcal{I}_{k-1}}^*)\\
    &- 2(\widehat{\beta}_{k-1} - \beta^*_{\mathcal{I}_{k-1}})^{\top}\sum_{t = \eta_k}^{\eta_k + r - 1}X_t\epsilon_t\\
    =&I_1 - 2I_2 - 2I_3. 
\end{align*}
Therefore
\begin{align*}
    &|I_1| = \sum_{t = \eta_k}^{\eta_k + r - 1}(X_t^{\top}\widehat{\beta}_{k-1} - X_t^{\top}\beta^*_{\mathcal{I}_{k-1}})^2\\
    =& (\widehat{\beta}_{k-1} - \beta^*_{\mathcal{I}_{k-1}})^{\top}\bigg\{\sum_{t = \eta_k}^{\eta_k + r - 1}(X_tX_t^{\top} - \Sigma)\bigg\}(\widehat{\beta}_{k-1} - \beta^*_{\mathcal{I}_{k-1}}) + r(\widehat{\beta}_{k-1} - \beta^*_{\mathcal{I}_{k-1}})^{\top}\Sigma(\widehat{\beta}_{k-1} - \beta^*_{\mathcal{I}_{k-1}})\\
    =& O_{p}\bigg(\frac{\s\log(p)}{ \Delta  }\Big( \sqrt {  r \s\log(p) }  +  \{  \s \log(p) \} ^{1/\gamma } + r \Big)\bigg)\\
    =& O_{p}\bigg(\alpha_n^{-1}\Big\{\sqrt{r\kappa_k^{2}}(\s\log(p))^{-2/\gamma +5/2} + (\s\log(p))^{-1/\gamma +2} + r\kappa_k^{2}(\s\log(p))^{-2/\gamma +2}\Big\}\bigg),
\end{align*}
where the third equality follows from \Cref{lemma:RES Version II}, \eqref{eq:interval_lasso_k_3} and \eqref{eq:interval_lasso_k_1}, and the fourth equality follows from \eqref{eq:min_spacing_lb} and $\kappa_k < C_{\kappa}$.
Similarly
\begin{align*}
    &|I_2| = \bigg|(\widehat{\beta}_{k-1} - \beta^*_{\mathcal{I}_{k-1}})^{\top}\sum_{t = \eta_k}^{\eta_k + r-1}X_tX_t^{\top}(\beta_{\eta_{k}}^* - \beta_{\mathcal{I}_{k-1}}^*)\bigg|\\
    \leq& \big|\widehat{\beta}_{k-1} - \beta^*_{\mathcal{I}_{k-1}}\big|_1\bigg|(\beta_{\eta_{k}}^* - \beta_{\mathcal{I}_{k-1}}^*)^{\top}\sum_{t = \eta_k}^{\eta_k + r - 1}(X_tX_t^{\top} - \Sigma)\bigg|_{\infty} + r\Lambda_{\max}(\Sigma)\big|\widehat{\beta}_{k-1} - \beta^*_{\mathcal{I}_{k-1}}\big|_2\big|\beta_{\eta_{k}}^* - \beta_{\mathcal{I}_{k-1}}^*\big|_2\\
    =& O_p\bigg(\big|\beta_{\eta_{k}}^* - \beta_{\mathcal{I}_{k-1}}^*\big|_2\s\sqrt {  \frac{\log(p )} {\Delta } }\Big( \sqrt {  r\log(p) }  +  \log^{1/\gamma }(p)  \Big) + r\Lambda_{\max}(\Sigma)\sqrt{\frac{\s\log(p)}{ \Delta  }}\big|\beta_{\eta_{k}}^* - \beta_{\mathcal{I}_{k-1}}^*\big|_2\bigg)\\
    =& O_p\bigg(\alpha_n^{-1/2}\Big\{\sqrt{r\kappa_k^{2}}(\s\log(p))^{-1/\gamma +3/2} + \s^{-1/\gamma+3/2}\log(p) + r\kappa_k^{2}(\s\log(p))^{-1/\gamma +1} \Big\}\bigg),
\end{align*}
where  the second equality follows from \Cref{lemma:lasso l infty deviations} and the last equality follows from \eqref{eq:bias_1} and $\kappa_k < C_{\kappa}$. Moreover,
\begin{align*}
    &|I_3| = \bigg|(\widehat{\beta}_{k-1} - \beta^*_{\I_{k-1}})^{\top}\sum_{t = \eta_k}^{\eta_k + r - 1}X_t\epsilon_t\bigg| \leq \big| \widehat{\beta}_{k-1} - \beta^*_{\I_{k-1}} \big|_1\bigg| \sum_{t = \eta_k}^{\eta_k + r - 1}X_t\epsilon_t \bigg|_{\infty}\\
    =& O_p\bigg(\s\sqrt {  \frac{\log(p )} {\Delta } }\Big( \sqrt {  r\log(p) }  +  \log^{1/\gamma }(p)  \Big)\bigg)\\
    =& O_p\bigg(\alpha_n^{-1/2}\Big\{\sqrt{r\kappa_k^2}(\s\log(p))^{-1/\gamma +3/2} + \s^{-1/\gamma +3/2}\log(p)\Big\}\bigg),
\end{align*}
where the second equality follows from \Cref{lemma:lasso l infty deviations} and $\kappa_k < C_{\kappa}$.
Therefore,
\begin{align}\label{eq:refine_noise1}
    |I| =& O_p\bigg(\alpha_n^{-1}\Big\{\sqrt{r\kappa_k^{2}}(\s\log(p))^{-2/\gamma +5/2} + (\s\log(p))^{-1/\gamma +2} + r\kappa_k^{2}(\s\log(p))^{-2/\gamma +2}\Big\}\nonumber\\
    &\quad\quad+ \alpha_n^{-1/2}\Big\{\sqrt{r\kappa_k^2}(\s\log(p))^{-1/\gamma +3/2} + \s^{-1/\gamma +3/2}\log(p) + r\kappa_k^{2}(\s\log(p))^{-1/\gamma +1}\Big\}\bigg)\nonumber\\
    =& o_p\Big(\sqrt{r\kappa_k^2}\Big) + o_p(1) + o_p(r\kappa_k^2),
\end{align}
where the second inequality follows from \Cref{assump-snr}\textbf{b}, i.e.~$\alpha_n \gg \s\{\log(pn)\}^2$ and $\gamma < 1$.

\
\\
\textbf{Step 2: the order of magnitude of $II$.} The same arguments for the term $I$ lead to
\begin{align}\label{eq:refine_noise2}
    |II| = o_p\Big(\sqrt{r\kappa_k^2}\Big) + o_p(1) + o_p(r\kappa_k^2).
\end{align}
\
\\
\textbf{Step 3: the order of magnitude of $III$.} We have that
\begin{align*}
    III =& \sum_{t = \eta_k}^{\eta_k + r - 1}(X_t^{\top}\beta^*_{\mathcal{I}_{k-1}} - X_t^{\top}\beta^*_{\eta_{k-1}})^2 - 2(\beta^*_{\mathcal{I}_{k-1}} - \beta^*_{\eta_{k-1}})^{\top}\sum_{t = \eta_k}^{\eta_k + r - 1}X_t(y_t - X_t^{\top}\beta_{\eta_{k-1}}^*)\\
    =& \sum_{t = \eta_k}^{\eta_k + r - 1}(X_t^{\top}\beta^*_{\mathcal{I}_{k-1}} - X_t^{\top}\beta^*_{\eta_{k-1}})^2 - 2(\beta^*_{\mathcal{I}_{k-1}} - \beta^*_{\eta_{k-1}})^{\top}\sum_{t = \eta_k}^{\eta_k + r - 1}X_tX_t^{\top}(\beta_{\eta_{k}}^* - \beta_{\eta_{k-1}}^*)\\
    &- 2(\beta^*_{\mathcal{I}_{k-1}} - \beta^*_{\eta_{k-1}})^{\top}\sum_{t = \eta_k}^{\eta_k + r - 1}X_t\epsilon_t\\
    =& r(\beta^*_{\mathcal{I}_{k-1}} - \beta^*_{\eta_{k-1}})^{\top}\Sigma(\beta^*_{\mathcal{I}_{k-1}} - \beta^*_{\eta_{k-1}}) + (\beta^*_{\mathcal{I}_{k-1}} - \beta^*_{\eta_{k-1}})^{\top}\sum_{t = \eta_k}^{\eta_k + r - 1}(X_tX_t^{\top}-\Sigma)(\beta^*_{\mathcal{I}_{k-1}} - \beta^*_{\eta_{k-1}})\\
    &- 2(\beta^*_{\mathcal{I}_{k-1}} - \beta^*_{\eta_{k-1}})^{\top}\sum_{t = \eta_k}^{\eta_k + r - 1}X_t\epsilon_t - 2(\beta^*_{\mathcal{I}_{k-1}} - \beta^*_{\eta_{k-1}})^{\top}\sum_{t = \eta_k}^{\eta_k + r - 1}(X_tX_t^{\top}-\Sigma)(\beta_{\eta_{k}}^* - \beta_{\eta_{k-1}}^*)\\
    &- 2r(\beta^*_{\mathcal{I}_{k-1}} - \beta^*_{\eta_{k-1}})^{\top}\Sigma(\beta_{\eta_{k}}^* - \beta_{\eta_{k-1}}^*)\\
    =&III_1 + III_2 - 2III_3 - 2III_4 - 2III_5. 
\end{align*}
For the term $III_1$, \eqref{eq:bias_1} leads to
\begin{align*}
    |III_1| \leq r\Lambda_{\max}(\Sigma)\big|\beta^*_{\mathcal{I}_{k-1}} - \beta^*_{\eta_{k-1}} \big|_2^2 \leq C_1r\kappa_k^2\alpha_n^{-2}.
\end{align*}
For the term $III_2$, let $$z_t = \frac{1}{|\beta^*_{\mathcal{I}_{k-1}} - \beta^*_{\eta_{k-1}}|_2^2}(\beta^*_{\mathcal{I}_{k-1}} - \beta^*_{\eta_{k-1}})^{\top}\big(X_tX_t^{\top} - \Sigma\big)(\beta^*_{\mathcal{I}_{k-1}} - \beta^*_{\eta_{k-1}}).$$
Note that for some $q > 2$, we have that
\begin{align*}
    &\big\|z_t - z_{t,\{0\}}\big\|_q\\
    \leq& \bigg\| \frac{1}{|\beta^*_{\mathcal{I}_{k-1}} - \beta^*_{\eta_{k-1}}|_2^2}(\beta^*_{\mathcal{I}_{k-1}} - \beta^*_{\eta_{k-1}})^{\top}\Big\{X_t(X_t - X_{t,\{0\}})^{\top} + (X_t - X_{t,\{0\}})X_{t,\{0\}}^{\top}\Big\}(\beta^*_{\mathcal{I}_{k-1}} - \beta^*_{\eta_{k-1}}) \bigg\|_q\\
    \leq& 2\sup_{|v|_2 = 1}\big\|v^{\top}X_1\big\|_{2q}\delta^X_{t,2q},
\end{align*}
and $$\sum_{t = 1}^{\infty}\|z_t - z_{t,\{0\}}\|_q \leq 2\sup_{|v|_2 = 1}\big\|v^{\top}X_1\big\|_{2q}\Delta_{0,2q}^X < \infty.$$
It holds that
\begin{align*}
    |III_2| = (\beta^*_{\mathcal{I}_{k-1}} - \beta^*_{\eta_{k-1}})^{\top}\sum_{t = \eta_k}^{\eta_k + r - 1}\big(X_tX_t^{\top} - \Sigma\big)(\beta^*_{\mathcal{I}_{k-1}} - \beta^*_{\eta_{k-1}}) = O_p\big(\alpha_n^{-2}\sqrt{r\kappa_k^2}\{\log(r\kappa_k^2) + 1\}\big),
\end{align*}
where the second equality follows from \Cref{lemma:iterated log under dependence} with $\nu = \kappa_k^2$, \eqref{eq:bias_1} and $\kappa_k \leq C_{\kappa}$.
\
\\
Next, we consider the term $III_3$.
Let $$z_t = \frac{1}{|\beta^*_{\mathcal{I}_{k-1}} - \beta^*_{\eta_{k-1}}|_2}(\beta^*_{\mathcal{I}_{k-1}} - \beta^*_{\eta_{k-1}})^{\top}X_t\epsilon_t.$$
Note that for some $q > 2$, we have that
\begin{align*}
    &\big\|z_t - z_{t,\{0\}}\big\|_q\\
    \leq& \bigg\| \frac{1}{|\beta^*_{\mathcal{I}_{k-1}} - \beta^*_{\eta_{k-1}}|_2}(\beta^*_{\mathcal{I}_{k-1}} - \beta^*_{\eta_{k-1}})^{\top}\Big\{X_t(\epsilon_t -\epsilon_{t,\{0\}}) + (X_t - X_{t,\{0\}})\epsilon_{t,\{0\}}\Big\}\bigg\|_q\\
    \leq& \sup_{|v|_2 = 1}\big\|v^{\top}X_1\big\|_{2q}\delta^{\epsilon}_{t,2q} + \sup_{|v|_2 = 1}\big\|\epsilon_1\big\|_{2q}\delta^{X}_{t,2q},
\end{align*}
and $$\sum_{t = 1}^{\infty}\|z_t - z_{t,\{0\}}\|_q \leq \sup_{|v|_2 = 1}\big\|v^{\top}X_1\big\|_{2q}\Delta^{\epsilon}_{0,2q} + \sup_{|v|_2 = 1}\big\|\epsilon_1\big\|_{2q}\Delta^{X}_{0,2q} < \infty.$$
By \eqref{eq:bias_1} and \Cref{lemma:iterated log under dependence} and letting $\nu = \kappa_k^2$, we have that
\begin{align*}
    |III_3| = O_p\big(\alpha_n^{-1}\sqrt{r\kappa_k^2}\{\log(r\kappa_k^2) + 1\}\big).
\end{align*}
\
\\
For the term $III_4$, similar arguments for the term $III_2$ lead to
\begin{align*}
    |III_4| = O_p\big(\alpha_n^{-1}\sqrt{r\kappa_k^2}\{\log(r\kappa_k^2) + 1\}\big).
\end{align*}
\
\\
For the term $III_5$, we have that
\begin{align*}
    |III_5| \leq r\Lambda_{\max}(\Sigma)\big|\beta_{\eta_{k}}^* - \beta^*_{\eta_{k-1}}\big|_2\big|\beta^*_{\mathcal{I}_{k-1}} - \beta^*_{\eta_{k-1}}\big|_2 \leq C\alpha_n^{-1}r\kappa_k^2,
\end{align*}
where the second inequality follows from \eqref{eq:bias_1}.
Therefore, we have that
\begin{align}\label{eq:refine_noise3}
    |III| = o_p\big(\sqrt{r\kappa_k^2}\{\log(r\kappa_k^2) + 1\}\big) + o_p(r\kappa_k^2).
\end{align}
\\
\\
\textbf{Step 4: the order of magnitude of $IV$.} The same arguments for the term $III$ lead to
\begin{align}\label{eq:refine_noise4}
    |IV| = o_p\big(\sqrt{r\kappa_k^2}\{\log(r\kappa_k^2) + 1\}\big) + o_p(r\kappa_k^2).
\end{align}
\textbf{Step 5: lower bound of $V$.} Observe that
\begin{align*}
    &V = \sum_{t = \eta_k}^{\eta_k + r - 1} \big\{(y_t - X_t^{\top}\beta^*_{\eta_{k-1}})^2 - (y_t - X_t^{\top}\beta^*_{\eta_{k}})^2\big\}\\
    =& \sum_{t = \eta_k}^{\eta_k + r - 1} (X_t^{\top}\beta^*_{\eta_{k-1}} - X_t^{\top}\beta^*_{\eta_{k}})^2 - 2\sum_{t = \eta_k}^{\eta_k + r - 1}(y_t - X_t^{\top}\beta^*_{\eta_{k}})(X_t^{\top}\beta^*_{\eta_{k-1}} - X_t^{\top}\beta^*_{\eta_{k}})\\
    =& r(\beta^*_{\eta_{k-1}} - \beta^*_{\eta_{k}})^{\top}\Sigma(\beta^*_{\eta_{k-1}} - \beta^*_{\eta_{k}}) + (\beta^*_{\eta_{k-1}} - \beta^*_{\eta_{k}})^{\top}\sum_{t = \eta_k}^{\eta_k + r - 1}\big(X_tX_t^{\top} - \Sigma\big)(\beta^*_{\eta_{k-1}} - \beta^*_{\eta_{k}})\\
    &-2(\beta^*_{\eta_{k-1}} - \beta^*_{\eta_{k}})^{\top}\sum_{t = \eta_k}^{\eta_k + r - 1}\epsilon_tX_t\\
    =& V_1 + V_2 - 2V_3.
\end{align*}
Thus, we have that
\begin{align*}
    V \geq V_1 - |V_2| - 2|V_3|.
\end{align*}
\
\\
For the term $V_1$, we have that
\begin{align*}
    V_1 \geq r\Lambda_{\min}(\Sigma)\big|\beta^*_{\eta_{k-1}} - \beta^*_{\eta_{k}}\big|_2^2 \geq c_{\min}r\kappa_k^2,
\end{align*}
where the second inequality follows from \Cref{assume:X}.
\
\\
The analysis of the term $V_2$ and $V_3$ are similar to those of $III_2$ and $III_3$, and thus omitted.
It holds that
\begin{align*}
    |V_2| = O_p\big(\sqrt{r\kappa_k^2}\{\log(r\kappa_k^2) + 1\}\big) \;\; \text{and} \;\;
    |V_3| = O_p\big(\sqrt{r\kappa_k^2}\{\log(r\kappa_k^2) + 1\}\big),
\end{align*}
\
\\
Therefore, we have that
\begin{align}\label{eq:refine_signal}
    V \geq Cr\kappa_k^2 - O_p\big(\sqrt{r\kappa_k^2}\{\log(r\kappa_k^2) + 1\}\big).
\end{align}
\\
\\
Combining \eqref{eq:refine_signal_noise}, \eqref{eq:refine_noise1}, \eqref{eq:refine_noise2}, \eqref{eq:refine_noise3}, \eqref{eq:refine_noise4} and \eqref{eq:refine_signal}, we have uniformly for all $r \geq 1/\kappa_k^2$ that
\begin{align*}
    Cr\kappa_k^2 \leq O_p\big(\sqrt{r\kappa_k^2}\{\log(r\kappa_k^2) + 1\}\big) + o_p\big(\sqrt{r\kappa_k^2}\log(r\kappa_k^2) \big) + o_p\Big(\sqrt{r\kappa_k^2}\Big) + o_p(1) + o_p(r\kappa_k^2),
\end{align*}
which implies that
\begin{align*}
    r\kappa_k^2 = O_p(1),
\end{align*}
and completes the proofs of \textbf{a.1} and \textbf{b.1}.
\\
\\
\textbf{Limiting distributions.}
For any $k \in \{1, \dots, K\}$, given the end points $s_k$ and $e_k$ and the true coefficients $\beta_{\eta_{k-1}}^*$ and $\beta_{\eta_k}^*$ before and after the change point $\eta_k$, we define the function on $\eta \in (s_k, e_k)$ as  
\begin{align}\label{eq:loss_unknown}
    Q_k^*(\eta) = \sum_{t = s_k}^{\eta-1} (y_t - X_t^{\top}\beta_{\eta_{k-1}}^*)^2 + \sum_{t = \eta}^{e_k-1} (y_t - X_t^{\top}\beta_{\eta_{k}}^*)^2.
\end{align}
Note that $$V = \sum_{t = \eta_k}^{\eta_k + r - 1} \big\{(y_t - X_t^{\top}\beta^*_{\eta_{k-1}})^2 - (y_t - X_t^{\top}\beta^*_{\eta_{k}})^2\big\} = Q^*(\eta_k + r) - Q^*(\eta_k).$$
Due to the uniform tightness of $r\kappa_k^2$, we have that as $n \to \infty$
$$\big|Q(\eta_k + r) - Q(\eta_k) - (Q^*(\eta_k + r) - Q^*(\eta_k))\big| \leq |I| + |II| + |III| + |IV| \overset{p}{\to} 0.$$
Therefore, it is sufficient to show the limiting distributions of $Q^*(\eta_k + r) - Q^*(\eta_k)$ when $n \to \infty$.  Next, we consider the two regimes for $\kappa_k$ separately.
\\
\\
\textbf{Non-vanishing regime.}
Observe that for $r > 0$, we have that when $n \to \infty$
\begin{align*}
    &Q_k^*(\eta_k+ r) - Q_k^*(\eta_k) = \sum_{t = \eta_k}^{\eta_k + r - 1} \big\{(y_t - X_t^{\top}\beta_{\eta_{k-1}}^*)^2 - (y_t - X_t^{\top}\beta_{\eta_{k}}^*)^2\big\}\\
    =& \sum_{t = \eta_k}^{\eta_k + r - 1}\big\{(y_t - X_t^{\top}\beta_{\eta_{k}}^* + \Psi_k^{\top}X_t)^2 - (y_t - X_t^{\top}\beta_{\eta_{k}}^*)^2\big\}\\
    =& \sum_{t = \eta_k}^{\eta_k + r - 1}\big\{2\epsilon_t\Psi_k^{\top}X_t +  (\Psi_k^{\top}X_t)^2\big\} \overset{\mathcal{D}}{\to} \sum_{t = \eta_k}^{\eta_k + r - 1}\big\{2\varrho_k\epsilon_t\xi_t(k) +  \varrho_k^2\xi_t^2(k)\big\}.
\end{align*}
For $r < 0$, we have that  when $n \to \infty$
\begin{align*}
    & Q_k^*(\eta_k+ r) - Q_k^*(\eta_k) = \sum_{t = \eta_k + r}^{\eta_k - 1} \big\{(y_t - X_t^{\top}\beta_{\eta_{k}}^*)^2 - (y_t - X_t^{\top}\beta_{\eta_{k-1}}^*)^2\big\} \\
    = & \sum_{t = \eta_k + r}^{\eta_k - 1} \{(y_t - X_t^{\top}\beta_{\eta_{k-1}}^* - \Psi_k^{\top}X_t)^2 - (y_t - X_t^{\top}\beta_{\eta_{k-1}}^*)^2\} \\ 
    = & \sum_{t = \eta_k + r}^{\eta_k-1} \big\{-2\epsilon_t\Psi_k^{\top}X_t +  (\Psi_k^{\top}X_t)^2\big\} \overset{\mathcal{D}}{\to} \sum_{t = \eta_k + r}^{\eta_k-1}\big\{-2\varrho_k\epsilon_t\xi_t(k) +  \varrho_k^2\xi_t^2(k)\big\}.
\end{align*}
So Slutsky's theorem and the Argmax (or Argmin) continuous mapping theorem \cite[see 3.2.2 Theorem][]{Vaart1996weak} lead to
    \begin{align*}
       \widetilde{\eta}_k - \eta_k
       \overset{\mathcal{D}}{\longrightarrow} \argmin_rP_k(r),
    \end{align*}
which completes the proof of \textbf{a.2}.
\\
\\
\textbf{Vanishing regime.}
Let $m = \kappa_k^{-2}$, and we have that $m \to \infty$ as $n \to \infty$. Observe that for $r > 0$, we have that
\begin{align*}
    &Q_k^*(\eta_k+ rm) - Q_k^*(\eta_k) = \sum_{t = \eta_k}^{\eta_k + rm - 1} \big\{(y_t - X_t^{\top}\beta_{\eta_{k-1}}^*)^2 - (y_t - X_t^{\top}\beta_{\eta_{k}}^*)^2\big\}\\
    =& \sum_{t = \eta_k}^{\eta_k + rm - 1}\big\{(y_t - X_t^{\top}\beta_{\eta_{k}}^* + \Psi_k^{\top}X_t)^2 - (y_t - X_t^{\top}\beta_{\eta_{k}}^*)^2\big\}\\
    =& \frac{1}{\sqrt{m}}\sum_{t = \eta_k}^{\eta_k + rm - 1}\big\{2\epsilon_tv_k^{\top}X_t\big\} +  \frac{1}{m}\sum_{t = \eta_k}^{\eta_k + rm - 1}\big\{(v_k^{\top}X_t)^2 - v_k^{\top}\Sigma v_k\big\} + rv_k^{\top}\Sigma v_k.
\end{align*}
Note that condition of the functional CLT under functional dependence \cite[Theorem 3 in][]{wu2011asymptotic} can be verified easily under Assumptions \ref{assume:X} and \ref{assume:epsilon}, By the functional CLT, we have that when $n \to \infty$
\begin{align*}
    \frac{1}{\sqrt{m}}\sum_{t = \eta_k}^{\eta_k + rm - 1}\big\{2\epsilon_tv_k^{\top}X_t\big\} \overset{\mathcal{D}}{\to}  \sigma_{\infty}(k)\mathbb{B}(r) \;\; \text{and} \;\; \frac{1}{m}\sum_{t = \eta_k}^{\eta_k + rm - 1}\big\{(v_k^{\top}X_t)^2 - v_k^{\top}\Sigma v_k\big\} \overset{p}{\to} 0,
\end{align*}
where $\mathbb{B}(r)$ is a standard Brownian motion and $\sigma^2_{\infty}(k)$ is the long-run variance given in \eqref{eq:long-run_var}.
Therefore, it holds that when $n \to \infty$
\begin{align*}
    Q_k^*(\eta_k+ rm) - Q_k^*(\eta_k) 
    \overset{\mathcal{D}}{\to} r\varpi_k + \sigma_{\infty}(k)\mathbb{B}_1(r).
\end{align*}
Similarly, for $r < 0$, we have that  when $n \to \infty$
\begin{align*}
    Q_k^*(\eta_k+ rm) - Q_k^*(\eta_k) 
    \overset{\mathcal{D}}{\to} -r\varpi_k + \sigma_{\infty}(k)\mathbb{B}_2(-r).
\end{align*}
So Slutsky's theorem and the Argmax (or Argmin) continuous mapping theorem \cite[see 3.2.2 Theorem][]{Vaart1996weak} lead to
    \begin{align*}
       \kappa_k^2(\widetilde{\eta}_k - \eta_k)
       \overset{\mathcal{D}}{\longrightarrow} \argmin_r \big\{\varpi_k|r| + \sigma_{\infty}(k)\mathbb{W}(r)\big\},
    \end{align*}
which completes the proof of \textbf{b.2}.
\end{proof}

\section{Proofs for Section 3.3}\label{sec-app-pre}  
\subsection[]{Proof of \Cref{theorem:DUDP}}
\begin{proof}[Proof of \Cref{theorem:DUDP}]
	It follows from \Cref{prop-1} that, $K \leq |\widehat{\mathcal{P}}| \leq 3K$.  This combined with \Cref{prop-2} completes the proof.
\end{proof} 

\begin{proposition}\label{prop-1}
Under the same conditions in \Cref{theorem:DUDP} and letting $\widehat{\mathcal{P}}$ being the solution to \Cref{algorithm:DPDU}, the following events uniformly hold with probability at least $1 - cn^{-3}$. 
	\begin{itemize}
		\item [(i)] For all intervals  $\widehat \I = [s, e) \in \widehat{\mathcal{P}}$    containing one and only one true change point $\eta_k$, it must be the case that for sufficiently large constant $ C_{\mathfrak e} $, 
			\[
				\min\{e - \eta_k, \eta_k - s\} \leq C_{\mathfrak e} \left(\frac{ \s \log(pn)  + \zeta }{\kappa_k^2}\right);
			\]
		\item [(ii)] for all intervals  $\widehat \I = [s, e) \in \widehat{\mathcal{P}}$ containing exactly two true change points, say  $\eta_k < \eta_{k+1} $, it must be the case that for sufficiently large constant $ C_{\mathfrak e} $, 
			\[
				    e - \eta_{k+1}\leq  C_{\mathfrak e}  \left(\frac{\s \log(pn)  + \zeta }{\kappa_{k+1}^2}\right)    \quad \text{and} \quad 
				   \eta_k - s  \leq  C_{\mathfrak e}  \left(\frac{\s \log(pn)  + \zeta}{\kappa_{k }^2}\right);	
			\]  
		\item [(iii)] for all consecutive intervals $\widehat \I$ and $\widehat {\J}$ in $\hatp $, the interval $\widehat \I \cup \widehat{\J}$ contains at least one true change point; and
		\item [(iv)] no interval $\widehat \I \in \widehat{\mathcal{P}}$ contains strictly more than two true change points.
	 \end{itemize}
\end{proposition}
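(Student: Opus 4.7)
The plan is to leverage the optimality of $\widehat{\mathcal{P}}$ as the minimizer of the penalized objective $\sum_{\mathcal{I}\in\mathcal{P}}\mathcal{G}(\mathcal{I}) + \zeta|\mathcal{P}|$, and derive each of (i)--(iv) by a contradiction argument: for every alleged violation, exhibit a competing partition $\mathcal{P}'$ whose objective is strictly smaller with high probability. The engine behind every comparison will be a uniform two-sided deviation inequality of the form
\[
    \bigl|\mathcal{G}(\mathcal{I}) - \mathcal{G}^\ast(\mathcal{I})\bigr| \lesssim \mathfrak{s}\log(pn) + (\text{within-}\mathcal{I}\text{ bias from coefficient variation}),
\]
valid simultaneously for all intervals $\mathcal{I}$ with $|\mathcal{I}|\geq\zeta$, where $\mathcal{G}^\ast(\mathcal{I})$ is the oracle loss using the (piecewise-constant) truth $\{\beta_t^\ast\}_{t\in\mathcal{I}}$. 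The ingredients I expect to need are: the restricted eigenvalue under functional dependence (\Cref{theorem:RES Version II}), the new Bernstein inequality (\Cref{thm:bernstein_exp_subExp_nonlinear}) applied to the stationary cross-terms $X_{tj}\epsilon_t$ and to the nonstationary sequences $X_{tj}X_t^\top\beta_t^\ast$ and $\epsilon_tX_t^\top\beta_t^\ast$, together with the Lasso prediction-error bound on intervals of length $\geq\zeta$ (obtained as in Lemma~\ref{lemmma:refinement_lasso}). A union bound over the $O(n^2)$ candidate intervals costs only a $\log(n)$ factor, absorbed into $\mathfrak{s}\log(pn)$.

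For (iv) and (iii), the arguments are the standard ``refine/merge'' ones. If some $\widehat{\mathcal{I}}\in\widehat{\mathcal{P}}$ contains three or more true change points, subdividing it at an interior $\eta_k$ (producing $\mathcal{P}'$ with $|\mathcal{P}'|=|\widehat{\mathcal{P}}|+1$) reduces the aggregate loss by at least $c\,\Delta\kappa^2 \gg \zeta$ thanks to Assumption~\ref{assump-snr}\textbf{a}, contradicting optimality; conversely, if two consecutive intervals $\widehat{\mathcal{I}},\widehat{\mathcal{J}}\in\widehat{\mathcal{P}}$ contain no true change point, then on their union the truth is constant, so merging decreases $|\mathcal{P}|$ by one and changes the loss by at most $\lesssim\mathfrak{s}\log(pn)\ll\zeta$, again a contradiction.

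For (i) and (ii), I will compare $\widehat{\mathcal{P}}$ against the refinement that splits $\widehat{\mathcal{I}}=[s,e)$ at the true change point(s) inside it. In case (i) with a single $\eta_k\in\widehat{\mathcal{I}}$, the oracle-loss gain from the split is of order $\kappa_k^2\min\{\eta_k-s,\,e-\eta_k\}$ (this is the standard CUSUM-type identity: when we fit two constants on $[s,\eta_k)\cup[\eta_k,e)$ instead of one on $[s,e)$, the reduction equals the ``projected jump'' $v_k^\top\Sigma v_k\cdot\kappa_k^2\cdot(\eta_k-s)(e-\eta_k)/(e-s)$, which is at least $c\,\kappa_k^2\min\{\eta_k-s,e-\eta_k\}$). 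The competing partition pays $\zeta$ extra in penalty and at most $C[\mathfrak{s}\log(pn)+\zeta]$ in deviation noise, so optimality of $\widehat{\mathcal{P}}$ forces $\kappa_k^2\min\{\eta_k-s,e-\eta_k\}\leq C_{\mathfrak{e}}(\mathfrak{s}\log(pn)+\zeta)$, as claimed. Case (ii) with two change points $\eta_k<\eta_{k+1}$ inside $\widehat{\mathcal{I}}$ is handled analogously by splitting once at $\eta_k$ (giving the bound on $\eta_k-s$) and once at $\eta_{k+1}$ (giving the bound on $e-\eta_{k+1}$), each split controlled independently.

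The main obstacle I anticipate is the uniform control of the within-interval bias when $\widehat{\mathcal{I}}$ straddles change points: on such intervals the Lasso fits a ``wrong'' target (an average of multiple $\beta_t^\ast$), and the resulting sparsity and restricted-eigenvalue arguments must be pushed through carefully so that the penalty $\mathfrak{s}\log(pn)$ is not amplified by the bias, and so that the Bernstein-type control of the nonstationary terms $X_t^\top(\beta_t^\ast-\bar{\beta})$ exploits \eqref{eq:fdm_nonstationary} rather than the stationary measure. Once this uniform deviation bound is in hand, the four contradiction arguments are essentially bookkeeping driven by the SNR condition $\Delta\kappa^2\gg\{\mathfrak{s}\log(pn)\}^{2/\gamma-1}\alpha_n$, which is precisely the slack needed for each of (i)--(iv).
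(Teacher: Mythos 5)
Your proposal is correct and follows essentially the same route as the paper: contradiction arguments driven by the optimality of $\widehat{\mathcal{P}}$ against split/merge competitors, with deviations controlled by the restricted eigenvalue condition, the Bernstein inequality for the (non)stationary cross-terms, and the interval-Lasso error bounds, and the oracle gain from splitting quantified by the identity $\inf_\beta\{|\J_1||\beta-\beta^*_{\J_1}|_2^2+|\J_2||\beta-\beta^*_{\J_2}|_2^2\}=\kappa_k^2|\J_1||\J_2|/|\I|$. The only details you gloss over that the paper must handle explicitly are the sub-intervals of length below $\zeta$ (where $\mathcal{G}\equiv 0$ and one instead bounds $\sum_t\epsilon_t^2\lesssim\zeta$) and the three-way split in case (ii), but these are bookkeeping rather than gaps.
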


\begin{proposition}\label{prop-2}
Under the same conditions in \Cref{theorem:DUDP}, with $\widehat{\mathcal{P}}$ being the solution to \eqref{algorithm:DPDU}, satisfying $K \leq |\widehat{\mathcal{P}}| \leq 3K$, then with probability at least $1 -   cn  ^{-3}$, it holds that $|\widehat{\mathcal{P}}| = K$.	
\end{proposition}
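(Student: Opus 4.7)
The plan is a proof by contradiction: I would suppose $|\widehat{\mathcal{P}}| > K$ and construct a competitor partition $\widetilde{\mathcal{P}}$ whose penalised objective is strictly smaller than that of $\widehat{\mathcal{P}}$, contradicting the optimality of $\widehat{\mathcal{P}}$ in \eqref{eq:pre_est}. The competitor $\widetilde{\mathcal{P}}$ will be obtained by merging two consecutive intervals of $\widehat{\mathcal{P}}$, carefully chosen using the structural constraints of \Cref{prop-1}.

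First I would locate a ``clean'' interval in $\widehat{\mathcal{P}}$, i.e.\ one containing no true change point. Call an interval of $\widehat{\mathcal{P}}$ Type~$j$ if it contains exactly $j$ change points, and let $n_j$ be the number of such intervals. By \Cref{prop-1}(iv), $n_j = 0$ for $j \ge 3$, and clearly $n_1 + 2 n_2 = K$. If $|\widehat{\mathcal{P}}| > K$, then $n_0 + n_1 + n_2 > n_1 + 2 n_2$, so $n_0 \ge 1$, hence at least one clean interval $\widehat{\mathcal{I}}$ exists; property (iii) of \Cref{prop-1} forbids two clean intervals from being adjacent in $\widehat{\mathcal{P}}$, so $\widehat{\mathcal{I}}$ has at least one Type~$1$ or Type~$2$ neighbour. \Cref{prop-1}(i)--(ii) further implies that both endpoints of $\widehat{\mathcal{I}}$ lie within $C_{\mathfrak{e}} \{\s \log(pn) + \zeta\} / \kappa^2$ of the flanking true change points $\eta_k, \eta_{k+1}$.

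Next I would merge $\widehat{\mathcal{I}}$ with one of its neighbours $\widehat{\mathcal{J}}$ to form $\widetilde{\mathcal{P}}$. The change in the penalised objective equals
\[
    \mathcal{G}(\widehat{\mathcal{I}} \cup \widehat{\mathcal{J}}) - \mathcal{G}(\widehat{\mathcal{I}}) - \mathcal{G}(\widehat{\mathcal{J}}) - \zeta,
\]
and the goal is to show this is strictly negative with probability at least $1 - cn^{-3}$. The key technical step is establishing
\[
    \mathcal{G}(\widehat{\mathcal{I}} \cup \widehat{\mathcal{J}}) - \mathcal{G}(\widehat{\mathcal{I}}) - \mathcal{G}(\widehat{\mathcal{J}}) \le C_0 \{\s \log(pn)\}^{2/\gamma - 1},
\]
for some absolute $C_0$ that does not depend on $C_\zeta$; choosing $C_\zeta > C_0$ then makes the overall change strictly negative. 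The loss bound is assembled from three ingredients: (a) basic inequalities comparing the Lasso fits $\widehat{\beta}_{\widehat{\mathcal{I}}}, \widehat{\beta}_{\widehat{\mathcal{J}}}, \widehat{\beta}_{\widehat{\mathcal{I}} \cup \widehat{\mathcal{J}}}$ to the underlying piecewise-constant $\beta^*_t$; (b) the restricted eigenvalue condition \Cref{theorem:RES Version II}, which applies because all three intervals have length at least $\zeta$; and (c) the new Bernstein inequality \Cref{thm:bernstein_exp_subExp_nonlinear} to control cross-terms such as $\sum \epsilon_t X_t^{\top}(\widehat{\beta} - \beta^*)$ and the deviation of $\sum (X_t X_t^{\top} - \Sigma)$ under functional dependence.

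The main obstacle will be analysing the loss increase when $\widehat{\mathcal{J}}$ straddles one or two true change points, so that $\beta^*_t$ is not constant on $\widehat{\mathcal{I}} \cup \widehat{\mathcal{J}}$. Naively, a single Lasso on the merged interval must compromise between distinct coefficient regimes and could a priori incur a loss far exceeding the two individual fits. The resolution leverages \Cref{prop-1}(i)--(ii): the ``impure'' portion of $\widehat{\mathcal{J}}$ beyond the adjacent true change point has length at most $O(\zeta/\kappa^2)$, which is of strictly lower order than $|\widehat{\mathcal{I}} \cup \widehat{\mathcal{J}}|$, so the merged Lasso still recovers the dominant coefficient vector with accuracy comparable to that on the individual pieces, and the resulting excess loss stays within the target $O(\{\s \log(pn)\}^{2/\gamma - 1})$ bound. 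A symmetric argument handles the case when $\widehat{\mathcal{I}}$ has only one neighbour (sitting at the boundary of $\{1, \ldots, n\}$), closing the contradiction and forcing $|\widehat{\mathcal{P}}| = K$.
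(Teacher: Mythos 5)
Your proposed route --- find a clean interval, merge it with a neighbour, and argue the penalised objective drops --- is genuinely different from the paper's. The paper instead compares the penalised objectives \emph{globally}: it bounds $|S_n^* - S_n(\eta_1,\ldots,\eta_K)|$, uses minimality, and refines $\widehat{\mathcal P}$ by adding all true change points, so every interval in the comparison partition is pure. Combining these yields $(\widehat K - K)\zeta \lesssim \mathfrak s\log(pn)$, which forces $\widehat K \le K$. That bypasses the whole question of which merge is ``safe.''

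The local route as you describe it has a genuine gap. You assert that ``the impure portion of $\widehat{\mathcal J}$ beyond the adjacent true change point has length at most $O(\zeta/\kappa^2)$,'' and similarly that both endpoints of $\widehat{\mathcal I}$ lie within $O(\{\mathfrak s\log(pn)+\zeta\}/\kappa^2)$ of the flanking $\eta_k,\eta_{k+1}$. But \Cref{prop-1}(i) only controls $\min\{e-\eta_k,\,\eta_k-s\}$ for a Type~1 interval $[s,e)$: it does not tell you on which side $\eta_k$ sits. If $\widehat{\mathcal I}=[e_0,e)$ is clean and $\widehat{\mathcal J}=[e,e'')$ is Type~1 with $\eta_k$ close to $e$ (the shared boundary), then the merged interval $[e_0,e'')$ places $\eta_k$ roughly at $e$, so its pre-change chunk has length $\approx|\widehat{\mathcal I}|$ and its post-change chunk has length $\approx|\widehat{\mathcal J}|$. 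Both can be of order $\Delta$, in which case the best single fit incurs extra signal loss of order $\kappa_k^2\,\varpi_k\,\frac{|\widehat{\mathcal I}|\,|\widehat{\mathcal J}|}{|\widehat{\mathcal I}|+|\widehat{\mathcal J}|}\asymp\Delta\kappa^2\gg\zeta$ by \Cref{assump-snr}; merging would then \emph{increase} the objective and the contradiction never materialises. Rescuing this route requires a further case analysis --- e.g.\ when $\eta_k$ hugs the shared boundary you would have to merge $\widehat{\mathcal J}$ with its \emph{other} neighbour, or argue iteratively through the sequence of blocks --- and this is exactly the delicacy the paper's global comparison avoids.
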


Throughout the rest of this section, denote 
$$S  = \bigcup_{t=1}^n S_t . $$
Since there are $K$ change points in $\{\beta_t^*\}_{t=1}^n$, it follows that $| S| \le \sum_{k=0}^K | S_{\eta_k+1}|\le (K+1) \s $.

\subsubsection[]{Proof of \Cref{prop-1}}
In this subsection, we present the four cases of 
\Cref{prop-1}. Throughout this  subsection, we assume that all the conditions in \Cref{theorem:DUDP} hold.

 \begin{lemma}
 Let   $ \I=[s,e) \in \mathcal {\widehat P}  $  be such that $\I$ contains exactly  one    change point $ \eta_k $. 
 Then  with probability  at least $1- cn^{-5}$,   it holds that  for sufficiently large constant $ C_{\mathfrak e} $, 
 $$\min\{ \eta_k -s ,e-\eta_k \}  \leq  C_{\mathfrak e}  \left(\frac{ \s \log(pn)  + \zeta }{\kappa_k^2}\right)  .$$
  \end{lemma}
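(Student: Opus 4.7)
The plan is to exploit the optimality of $\widehat{\mathcal{P}}$ against the refinement that splits $\mathcal{I}$ at $\eta_k$. Write $d = \min\{\eta_k - s, e - \eta_k\}$ and, without loss of generality, suppose $d = \eta_k - s$. If $d < \zeta$, the conclusion is immediate because $\kappa_k \le C_\kappa$ gives $d < \zeta \le C(\mathfrak{s}\log(pn) + \zeta)/\kappa_k^2$. Otherwise, set $\mathcal{I}_1 = [s, \eta_k)$ and $\mathcal{I}_2 = [\eta_k, e)$; both intervals have length at least $\zeta$, so the Lasso definition of $\mathcal{G}$ applies on each. Replacing $\mathcal{I}$ by $\{\mathcal{I}_1, \mathcal{I}_2\}$ inside $\widehat{\mathcal{P}}$ produces a competing integer partition of the same domain whose cardinality is larger by one, so the minimisation in \eqref{eq:pre_est} gives
\[
\mathcal{G}(\mathcal{I}) \le \mathcal{G}(\mathcal{I}_1) + \mathcal{G}(\mathcal{I}_2) + \zeta.
\]

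I would then bound the two sides of this inequality separately. On the right, each of $\mathcal{I}_1, \mathcal{I}_2$ sees a constant signal, so standard Lasso oracle analysis --- driven by the restricted eigenvalue bound (\Cref{theorem:RES Version II}) and by an $\ell_\infty$ deviation bound on $\sum_{t \in \mathcal{I}_j}\epsilon_t X_t$ (\Cref{lemma:lasso l infty deviations}), both powered by the new \Cref{thm:bernstein_exp_subExp_nonlinear} to absorb the functional dependence and sub-Weibull tails --- yields, on an event of probability at least $1 - cn^{-5}$,
\[
\mathcal{G}(\mathcal{I}_j) + \sum_{t \in \mathcal{I}_j} y_t^2 \le \sum_{t \in \mathcal{I}_j}\epsilon_t^2 + C\,\mathfrak{s}\log(pn), \quad j \in \{1,2\}.
\]
For the left side, I rewrite $\mathcal{G}(\mathcal{I}) + \sum_{t \in \mathcal{I}} y_t^2 = \sum_{t \in \mathcal{I}}(y_t - X_t^\top \widehat{\beta}_{\mathcal{I}})^2$, expand $y_t = X_t^\top \beta^*_t + \epsilon_t$, and isolate the quadratic bias term $\sum_{t \in \mathcal{I}}(X_t^\top(\beta^*_t - \widehat{\beta}_{\mathcal{I}}))^2$. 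The deterministic inequality
\[
\sum_{t \in \mathcal{I}}(\beta^*_t - \beta)^\top \Sigma (\beta^*_t - \beta) \ge \frac{d(|\mathcal{I}| - d)}{|\mathcal{I}|}\,\Psi_k^\top \Sigma \Psi_k \ge \tfrac{1}{2}c_{\min}d\kappa_k^2,
\]
valid for every $\beta \in \mathbb{R}^p$ and minimised at the weighted oracle $\beta^*_{\mathcal{I}}$, combined with a restricted eigenvalue transfer from $\Sigma$ to the empirical Gram matrix together with the cone sparsity of $\widehat{\beta}_{\mathcal{I}} - \beta^*_{\mathcal{I}}$ (\Cref{lemma:interval lasso}), lifts this population bound into its empirical counterpart up to a fluctuation controlled by \Cref{thm:bernstein_exp_subExp_nonlinear}. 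The cross-term is handled through H\"older as $|\widehat{\beta}_{\mathcal{I}} - \beta^*_{\mathcal{I}}|_1 \cdot |\sum_{t \in \mathcal{I}}\epsilon_t X_t|_\infty = O_p(\mathfrak{s}\log(pn))$.

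Substituting both estimates into the optimality inequality and cancelling $\sum_{t \in \mathcal{I}}\epsilon_t^2$ reduces it to $c_1 d\kappa_k^2 \le C(\mathfrak{s}\log(pn) + \zeta)$, which is exactly the assertion on an event of probability at least $1 - cn^{-5}$. The main technical obstacle is the signal lower bound on $\mathcal{G}(\mathcal{I})$: because $\widehat{\beta}_{\mathcal{I}}$ is a single Lasso fit against a piecewise-constant target, the bias-variance decomposition has to be organised around the weighted oracle $\beta^*_{\mathcal{I}} = |\mathcal{I}|^{-1}\sum_{t \in \mathcal{I}}\beta^*_t$, and the resulting cross-terms must be controlled uniformly across admissible endpoints $(s,e)$ --- this is precisely the regime in which the functional-dependence Bernstein inequality of \Cref{thm:bernstein_exp_subExp_nonlinear} is indispensable.
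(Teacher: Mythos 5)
Your skeleton is the right one: the optimality inequality $\mathcal{G}(\mathcal{I}) \le \mathcal{G}(\mathcal{I}_1) + \mathcal{G}(\mathcal{I}_2) + \zeta$, Lasso oracle bounds on the constant-signal pieces for the right-hand side, and the weighted variance identity $\inf_{\beta}\sum_{t\in\I}|\beta^*_t-\beta|_\Sigma^2 = \frac{d(|\I|-d)}{|\I|}\Psi_k^\top\Sigma\Psi_k$ to extract $d\kappa_k^2$ from the signal — this is exactly the engine of the paper's argument, and the case split $d < \zeta$ vs.\ $d \ge \zeta$ is a valid substitute for the paper's contradiction set-up. The problem is in the cross term. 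You expand $\sum_{t\in\I}(y_t - X_t^\top\widehat\beta_\I)^2$ into signal plus $2\sum_{t\in\I}\epsilon_t X_t^\top(\beta^*_t - \widehat\beta_\I)$, but then only bound the piece $2\sum_{t\in\I}\epsilon_t X_t^\top(\beta^*_\I - \widehat\beta_\I) \le |\widehat\beta_\I - \beta^*_\I|_1 |\sum_{t\in\I}\epsilon_t X_t|_\infty = O_p(\s\log(pn))$. The other piece, $2\sum_{t\in\I}\epsilon_t X_t^\top(\beta^*_t - \beta^*_\I) = -2a\sum_{t\in\J_1}\epsilon_t X_t^\top\Psi_k + 2b\sum_{t\in\J_2}\epsilon_t X_t^\top\Psi_k$ (with $a = |\J_2|/|\I|$, $b = |\J_1|/|\I|$), is of order $\sqrt{|\J_\ell|\log(np)}\,\kappa_k$ — it is \emph{not} $O_p(\s\log(pn))$. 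It can only be controlled by absorbing it into the signal quadratic $\asymp d\kappa_k^2$ via an AM--GM step (the absorption works precisely because you have assumed $d\kappa_k^2 \gtrsim C_{\mathfrak{e}}(\s\log(pn)+\zeta)$), but your write-up never does this, and without it the inequality you claim does not close.

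The paper sidesteps the issue by never introducing $\beta^*_\I$ as the reference: after using \Cref{eq:regression one change deviation bound}, it rearranges the optimality inequality directly into $\sum_{\ell=1}^2\sum_{t\in\J_\ell}\{X_t^\top(\widehat\beta_\I - \beta^*_{\J_\ell})\}^2 \le 2\sum_\ell\sum_{t\in\J_\ell}\epsilon_tX_t^\top(\widehat\beta_\I - \beta^*_{\J_\ell}) + C\s\log(pn) + \zeta$, bounds each cross term through the decomposition $|\widehat\beta_\I - \beta^*_{\J_\ell}|_1 \le \sqrt{\s}|\widehat\beta_\I - \beta^*_{\J_\ell}|_2 + C\s\sqrt{\log(np)/|\I|}$, absorbs the $\sqrt{\s|\J_\ell|\log(np)}\,|\widehat\beta_\I - \beta^*_{\J_\ell}|_2$ contribution into the quadratic on the left by AM--GM (equation \eqref{eq:regression one change step 2 term 3}), applies the restricted eigenvalue bound (\Cref{theorem:RES}) piece by piece (it must be per-piece, because RE applies to a fixed vector, not the $t$-dependent $\beta^*_t - \widehat\beta_\I$ you propose to feed it), and only then invokes the weighted variance identity. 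Your plan converges to the same decomposition once you try to make the RE transfer and the absorption rigorous; what is missing in the proposal as written is the bias cross term and the explicit AM--GM absorption, and both are essential to close the argument.
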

\begin{proof}   
If either $ \eta_k -s \le  C_{\mathfrak e}  \left(\frac{ \s \log(pn)  + \zeta }{\kappa_k^2}\right)   $ or $e-\eta_k\le  C_{\mathfrak e}  \left(\frac{ \s \log(pn)  + \zeta }{\kappa_k^2}\right) $, then there is nothing to show. So assume that 
\begin{align}\label{eq:localization_error_case1_assume}
\eta_k -s  >  C_{\mathfrak e}  \left(\frac{ \s \log(pn)  + \zeta }{\kappa_k^2}\right)  \quad \text{and} \quad   e-\eta_k  >  C_{\mathfrak e}  \left(\frac{ \s \log(pn)  +  \zeta }{\kappa_k^2}\right)  .
\end{align}
For sufficiently large $ C_{\mathfrak e} $, this implies that 
$$ \eta_k-s > \zeta  \quad \text{and} \quad e - \eta_k  >  \zeta.$$
{\bf Step 1.}  Denote 
$$ \J_ 1 = [s,\eta_k) \quad \text{and} \quad \J_2 = [\eta_k, e) .$$
So $|\J_1| \ge \zeta$ and $|\J _2| \ge \zeta$. Thus 
for any $ \ell\in \{ 1, 2 \}$ 
$$ \mathcal H(\J_\ell) =\sum_{t  \in \J_\ell }(y_t  -X_t ^\top  \widehat  \beta _{\J_\ell}  ) ^2 
\le \sum_{t  \in \J_\ell  }(y_t  -X_t ^\top     \beta _{\J _\ell }^*  ) ^2 +C_1 \s\log(pn), $$
where the inequality follows from \Cref{eq:regression one change deviation bound}. 

  Since 
$ \I \in \hatp   $,   it follows that 
\begin{align}\nonumber
\sum_{t  \in \I }(y_t  - X_t ^\top \widehat \beta _\I  ) ^2 \le &  \sum_{t  \in \J_1 }(y_t  -X_t ^\top  \widehat  \beta _{\J _1}  ) ^2  + \sum_{t  \in \J _2 }(y_t  -X_t ^\top  \widehat \beta _{\J _2}  ) ^2    + \zeta  
\\
\le &  \sum_{t  \in \J _1 }(y_t  -X_t ^\top     \beta ^* _{\J _1}  ) ^2  + \sum_{t  \in \J _2 }(y_t  -X_t ^\top   \beta^*  _{\J _2}  ) ^2  +2C_1  \s \log(pn)    +  \zeta   
 \label{eq:one change point step 1}
\end{align}
\Cref{eq:one change point step 1}  gives 
\begin{align}  \nonumber 
& \sum_{t  \in \J_1 }(y_t   -X_t ^\top  \widehat \beta _\I ) ^2 +\sum_{t  \in \J_2 }(y_t  - X_t ^\top  \widehat \beta _\I  ) ^2  
\\ 
 \le \label{eq:regression step 2 first term} & \sum_{t  \in \J _1  }(y_t -  X_t ^\top    \beta _{\J  _1}  ^*    ) ^2   +\sum_{t  \in \J _2  }(y_t - X_t ^\top    \beta _{\J  _2}  ^*  ) ^2  +2 C_1      \s \log(pn)  +  \zeta  .
\end{align}
{\bf Step 2.}  \Cref{eq:regression step 2 first term} leads to 
\begin{align} \nonumber
&\sum_{t  \in \J  _1 } \big\{ X_t ^\top  (  \widehat \beta _\I  -  \beta^* _{\J_1 }) \big\}  ^2 +\sum_{t   \in \J _2  } \big\{ X_t  ^\top  (  \widehat \beta _\I  -  \beta^* _{\J_2 }) \big\}  ^2  
\\  \label{eq:regression one change step 2 term 2}
 \le  & 2 \sum_{ t  \in \J _1  } \epsilon_t   X_t ^\top (  \widehat \beta _\I  -  \beta^* _{\J _1 }) +  2 \sum_{ t  \in \J  _2  } \epsilon_t   X_t ^\top (  \widehat \beta _\I  -  \beta^* _{\J _2  })    + 2 C_1      \s \log(pn)  +   \zeta  .
\end{align} 

Note that for $\ell\in\{1,2\}$, 
\begin{align*} 
 |  ( \widehat \beta _\I  -  \beta _{\J _\ell}^* )_{S^c }  |_1 =  |  ( \widehat \beta _\I )_{S^c}  |_{1} 
   \le 3  |  ( \widehat \beta _\I -\beta _\I ^* )_{S }  |_{1}   \le C_3 \s \sqrt { \frac{ \log(np)}{| \I|} }    , 
\end{align*} 
where the last two inequalities follows from \Cref{lemma:interval lasso}.  So
\begin{align}  \label{eq:regression one change step 2 term 1}
 |   \widehat \beta _\I  -  \beta _{\J_\ell  }^*    |_1 =  |  ( \widehat \beta _\I  -  \beta _{\J_\ell }^* )_{S  }  |_1 +  |  ( \widehat \beta _\I  -  \beta _{\J_\ell }^* )_{S^c }  |_1  \le \sqrt {\s }   |  \widehat \beta _\I  -  \beta _{\J_\ell  }^*    |_2 + C_3 \s \sqrt { \frac{ \log(np)}{| \I|} } .
\end{align} 
Therefore, for any $\ell=1, 2$,
  it holds with probability at least $1 -n^{-5}$ that
\begin{align}  \nonumber  \sum_{ t  \in \J _\ell   } \epsilon_t   X_t ^\top (  \widehat \beta _\I  -  \beta^* _{\J _\ell  }) 
\le 
&
       \bigg| \sum_{ t  \in \J _\ell   } \epsilon_t   X_t ^\top \bigg| _\infty  |   \widehat \beta _\I  -  \beta^* _{\J _\ell  } | _1  
     \le C_2 \sqrt { \log(np)    |\J_\ell | }  |   \widehat \beta _\I  -  \beta _{\J_\ell  }^*    |_1 
     \\\nonumber  
      \le & C_2   \sqrt { \log(np)    |\J_\ell | }   \bigg(   \sqrt {\s }   |  \widehat \beta _\I  -  \beta _{\J_\ell }^*    |_2  +C_3 \s \sqrt { \frac{ \log(np)}{| \I|} }  \bigg) 
      \\
\le  &  \frac{ \Lambda_{\min}(\Sigma)  | \J_ \ell |}{64}  |  \widehat \beta _\I  -  \beta _{\J_\ell }^*    |_2  ^2  + C_4    \s \log(pn),
 \label{eq:regression one change step 2 term 3}
\end{align}
where the  second  inequality follows from \Cref{lemma:lasso deviation bound 1} and that  $|\J  _\ell|   \ge \zeta   $, the third inequality follows from \Cref{eq:regression one change step 2 term 1}, and  the last inequality follows from the observation     that   $ |\J_\ell| \le |\I|   $ and the inequality $2ab \leq a^2 + b^2$.

{\bf Step 3.} For any $\ell=1, 2$,
  it holds with probability at least $1 -2n^{-5}$ that
 \begin{align}\nonumber 
		&  \sum_{ t  \in \J_\ell  } \big\{ X_t ^\top  (  \widehat \beta _\I  -  \beta^* _{\J_ \ell  }) \big\}  ^2 
		 \\  \nonumber  
		  \ge&  \frac{\Lambda_{\min} (\Sigma) |\J_\ell | }{2}  |\widehat \beta _\I  -  \beta^* _{\J_ \ell  }  |_2^2 - C_5  |\J_\ell|^{1-\gamma }  \log(pn)  |\widehat \beta _\I  -  \beta^* _{\J_ \ell  }  |_1  ^2 
		 \\\nonumber  
		 \ge &  \frac{\Lambda_{\min} (\Sigma) |\J_\ell | }{2}  |\widehat \beta _\I  -  \beta^* _{\J_ \ell  }  |_2^2  -   C_6   \s  |\J_\ell|^{1-\gamma }   \log(pn)      |\widehat \beta _\I -  \beta^* _{\J_ \ell  }  |_2^2   -  C_6  |\J_\ell|^{1-\gamma } \log(np) \s^2     \frac{ \log(np)}{| \I|}   
		 \\ \nonumber 
		 \ge &  \frac{\Lambda_{\min} (\Sigma) |\J_\ell | }{4}  |\widehat \beta _\I  -  \beta^* _{\J_ \ell  }  |_2^2  -   C_6  |\J_\ell|^{1-\gamma } \log(np) \s^2     \frac{ \log(np)}{| \I|}      
		 \\ \label{eq:regression one change step 3 term 1}
		 \ge &   \frac{\Lambda_{\min} (\Sigma) |\J_\ell | }{4}  |\widehat \beta _\I  -  \beta^* _{\J_ \ell  }  |_2^2  -   C_7  \s  \log(np)      
	\end{align} 
	where the first inequality follows from   
 \Cref{theorem:RES},   the second inequality follows from \Cref{eq:regression one change step 2 term 1},     the third  inequality follows from the observation that 
 $$   |\J_\ell|   \ge \zeta   =  C_\zeta (\s\log(pn))^{2/\gamma -1 }\ge C_\zeta (\s\log(pn))^{1/\gamma   }, $$  
 and the last inequality follows from the observation that 
 $$  \frac{|\J_\ell  |^{1-\gamma}}{ |\I | }\le |\J_\ell| ^{-\gamma } \le C_\zeta^{-\gamma} (\s\log(pn))^{-1} .
 $$
{\bf Step 4.} \Cref{eq:regression one change step 2 term 2},  \Cref{eq:regression one change step 2 term 3} and \Cref{eq:regression one change step 3 term 1} together  imply that 
 $$   |\J_1  |    |\widehat \beta _\I  -  \beta^* _{\J_ 1  }  |_2^2  + |\J_2  |    |\widehat \beta _\I  -  \beta^* _{\J_ 2  }  |_2^2  \le C_8 \s \log(np)  +  \zeta  .$$
Observe  that 
$$ \inf_{  \beta  \in \mathbb R ^p   }  |\J_1 |    | \beta    -  \beta^* _{\J_ 1  }  |_2^2  + |\J_2 |  |  \beta    -  \beta^* _{\J_ 2  }  |_2^2    = \kappa_k ^2  \frac{|\J_1| |\J_2|}{| \I| }   \ge \frac{ \kappa_k^2 }{2} \min\{ |\J_1| ,|\J_2| \}    . $$ 
This leads to 
$$  \frac{ \kappa_k^2 }{2}\min\{ |\J_1| ,|\J_2| \}    \le  C_8 \s \log(np)   + \zeta   .$$
Therefore
$$  \min\{ |\J_1| ,|\J_2| \}    \le  C_{\mathfrak e}  \left(\frac{ \s \log(pn)  + \zeta }{\kappa_k^2}\right),$$
which is a contradiction with \Cref{eq:localization_error_case1_assume}. This immediate gives  the desired result.
\end{proof}

 \begin{lemma}  \label{lemma:regression two change points}
Let   $ \I=[s,e) \in \mathcal {\widehat P}  $  be such that $\I$ contains  exactly two  change points $ \eta_k,\eta_{k+1}  $.
 Then  with probability  at least $1- cn^{-5}$,   it holds that for sufficiently large constant $ C_{\mathfrak e} $, 
\begin{align*} 
 \eta_k -s     \le     C_{\mathfrak{e} }  \bigg( \frac{\s \log(np) +\zeta   }{\kappa _k^2 }  \bigg)   \quad \text{and} \quad 
 e-\eta_{k+1}   \le   C_{\mathfrak{e} }\bigg( \frac{\s \log(np) +\zeta  }{\kappa_{k+1 } ^2 }   \bigg).
 \end{align*} 
 \end{lemma}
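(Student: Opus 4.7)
The proof will follow the template of the preceding single-change-point lemma, only now $\I$ is partitioned into three pieces: $\J_1 = [s,\eta_k)$, $\J_2 = [\eta_k,\eta_{k+1})$, $\J_3 = [\eta_{k+1},e)$. Note that $|\J_2| \geq \Delta \geq \zeta$ holds unconditionally. Suppose for contradiction that $\eta_k - s$ exceeds the stated bound, so that $|\J_1| \geq \zeta$ and the Lasso sub-routine is genuinely run on each $\J_\ell$ (the case for $e-\eta_{k+1}$ is handled symmetrically and independently at the end).

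By the DP optimality of $\widehat{\mathcal P}$, refining $\widehat{\I}$ into $\{\J_1,\J_2,\J_3\}$ costs an additional $2\zeta$ in the penalty, so
\[
\mathcal G(\I) \;\leq\; \mathcal G(\J_1) + \mathcal G(\J_2) + \mathcal G(\J_3) + 2\zeta.
\]
Applying the one-interval Lasso in-sample deviation bound \eqref{eq:regression one change deviation bound} on each $\J_\ell$ (valid because none of them contains a change point) and rearranging, this yields the basic inequality
\[
\sum_{\ell=1}^3\sum_{t\in \J_\ell}\bigl\{X_t^\top(\widehat\beta_\I - \beta^*_{\J_\ell})\bigr\}^2 \;\leq\; 2\sum_{\ell=1}^3\sum_{t\in \J_\ell}\epsilon_t X_t^\top(\widehat\beta_\I - \beta^*_{\J_\ell}) + 3C_1\s\log(pn) + 2\zeta,
\]
exactly parallel to \eqref{eq:regression one change step 2 term 2}. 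The right-hand side is then controlled term by term as in Steps 2--3 above: \Cref{lemma:interval lasso} bounds $|\widehat\beta_\I - \beta^*_{\J_\ell}|_1$, \Cref{lemma:lasso deviation bound 1} bounds $|\sum_{t\in \J_\ell}\epsilon_t X_t|_\infty$, a standard $2ab \leq a^2 + b^2$ splitting trades linear-in-$|\widehat\beta_\I - \beta^*_{\J_\ell}|_2$ terms against the quadratic signal, and the restricted eigenvalue inequality \Cref{theorem:RES} supplies a matching lower bound on each of the three quadratic forms on the left (the $|\J_\ell|^{1-\gamma}\log(np)$ remainder is absorbed because $|\J_\ell| \geq \zeta = C_\zeta(\s\log(pn))^{2/\gamma - 1}$). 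Collecting everything gives
\[
\sum_{\ell=1}^3 |\J_\ell|\bigl|\widehat\beta_\I - \beta^*_{\J_\ell}\bigr|_2^2 \;\leq\; C\bigl(\s\log(pn) + \zeta\bigr).
\]

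To convert this into a bound on $|\J_1|$, I discard the $\ell=3$ term and minimise over $\widehat\beta_\I$, using the elementary two-piece identity
\[
\inf_{\beta\in\mathbb R^p}\Bigl\{|\J_1||\beta-\beta^*_{\J_1}|_2^2 + |\J_2||\beta-\beta^*_{\J_2}|_2^2\Bigr\} \;=\; \frac{|\J_1||\J_2|}{|\J_1|+|\J_2|}\kappa_k^2.
\]
A short dichotomy then finishes the argument. If $|\J_2| \geq |\J_1|$, the fraction is at least $|\J_1|/2$, giving $|\J_1|\kappa_k^2 \lesssim \s\log(pn) + \zeta$, which is the claimed bound. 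If instead $|\J_2| < |\J_1|$, the fraction is at least $|\J_2|/2 \geq \Delta/2$, forcing $\Delta \kappa_k^2 \lesssim \s\log(pn) + \zeta$; this violates \Cref{assump-snr}\textbf{a} as soon as $\alpha_n$ is sufficiently large. The bound on $e - \eta_{k+1}$ follows by the symmetric argument with the $\ell=1$ term discarded.

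The main technical obstacle is not the probabilistic analysis — every deviation inequality used is the same as in the preceding lemma, applied with no modification under functional dependence — but rather the careful bookkeeping needed to ensure that the single-change-point rate survives the presence of a third subinterval. The key device is the final dichotomy on $|\J_2|$ versus $|\J_1|$ (and symmetrically $|\J_3|$), which invokes the signal-to-noise condition \Cref{assump-snr}\textbf{a} to rule out the pathological case where one of the outer segments dominates $\J_2$, thereby recovering the sharp localisation rate interval by interval.
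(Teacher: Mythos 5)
Your decomposition, the DP-optimality basic inequality, and the final dichotomy on $|\J_1|$ versus $|\J_2|$ all mirror the paper's argument (the paper phrases the dichotomy as $\min\{|\J_1|,|\J_2|\}\leq$ bound plus $|\J_2|\geq\Delta\geq$ bound, which is equivalent to yours). However, there is a genuine gap in your treatment of $\J_3$.

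You write that the contradiction assumption $\eta_k - s > C_{\mathfrak e}(\s\log(np)+\zeta)/\kappa_k^2$ implies ``the Lasso sub-routine is genuinely run on each $\J_\ell$'', and later justify absorbing the restricted-eigenvalue remainder on all three subintervals by ``$|\J_\ell|\geq\zeta$''. Neither claim is warranted for $\ell=3$: the contradiction hypothesis controls only $|\J_1|$, and $|\J_2|\geq\Delta\geq\zeta$ unconditionally, but $|\J_3| = e-\eta_{k+1}$ is completely unconstrained when you are proving the bound on $\eta_k - s$ (the symmetric argument for $e-\eta_{k+1}$ is, as you note, run separately). If $|\J_3|<\zeta$, then (i) $\mathcal G(\J_3)=0$ by definition, so the in-sample deviation bound \eqref{eq:regression one change deviation bound} cannot be applied to $\J_3$; (ii) the Lasso tail bound $|\sum_{t\in\J_3}\epsilon_t X_t|_\infty\lesssim\sqrt{|\J_3|\log(np)}$ is no longer the dominant term in \Cref{lemma:lasso deviation bound 1}; and (iii) the restricted eigenvalue remainder $|\J_3|^{1-\gamma}\log(np)|\widehat\beta_\I-\beta^*_{\J_3}|_1^2$ is not automatically absorbed. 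Even though you ultimately discard the $\ell=3$ contribution from the display $\sum_{\ell}|\J_\ell||\widehat\beta_\I-\beta^*_{\J_\ell}|_2^2\lesssim\s\log(np)+\zeta$, you still need a lower bound on the $\ell=3$ piece of the left-hand side of your basic inequality in order to derive that display in the first place, and that lower bound is exactly what breaks when $|\J_3|<\zeta$.

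The paper handles this by a case split. When $|\J_3|\geq\zeta$, your argument goes through verbatim. When $|\J_3|<\zeta$, the paper abandons the RES route for $\J_3$ entirely and instead shows directly that
\[
\sum_{t\in\J_3}(y_t-X_t^\top\widehat\beta_\I)^2 - \sum_{t\in\J_3}(y_t-X_t^\top\beta^*_{\J_3})^2 \;\geq\; -\tfrac{\zeta}{2},
\]
which follows from $\sum_{t\in\J_3}\{X_t^\top(\widehat\beta_\I-\beta^*_{\J_3})\}^2\geq 0$ together with a separate concentration bound $\sum_{t\in\J_3}\epsilon_t^2\leq\zeta/4$ that holds because $|\J_3|\leq\zeta$ and $\zeta\gtrsim(\s\log(pn))^{1/\gamma}$. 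You should add this case split (or an equivalent direct bound on the $\J_3$ cross-term that does not rely on $|\J_3|\geq\zeta$) to make the proof complete.
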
 
\begin{proof} 
Denote 
 $$ \mathcal J_ 1= [ s, \eta_k ), \quad \J_2  =[\eta_k, \eta_{k+1}) \quad \text{and} \quad \J_3  = [\eta_{k+1 },e). $$
 By symmetry, it suffices to show that 
 $$|\J_1| \le  C_{\mathfrak{e} }    \bigg( \frac{\s \log(np) +  \zeta   }{\kappa _k^2 }  \bigg)   . $$
 If $ |\J_1| \le  \zeta $, then   for sufficiently large $ C_{\mathfrak{e} } $, it holds that
 $$  |\J_1| \le  \zeta \le   C_{\mathfrak{e} }    \bigg( \frac{\s \log(np) +  \zeta   }{\kappa _k^2 } \bigg)   .$$
So it suffices to assume 
$$|\J_1| \ge \zeta .  $$
{\bf Step 1.} 
 Since $ \I $ contains two change points, it follows that 
 $ |\I|  \ge \eta_{k+1}-\eta _k  \ge \zeta. $
 So 
 $$\mathcal H (\I) = \sum_{t\in \I } (y_t- X_t ^\top \widehat \beta_\I )^2.  $$

Since for any $\ell\in \{1,2\}$, $ |\J_ \ell | \ge \zeta   $, it follows with probability at least $1 - n^{-5}$ that
$$ \mathcal H(\J_\ell ) = \sum_{ t  \in \J_\ell  }(y_t  - X_t ^\top \widehat \beta _{\J_\ell }  ) ^2  \le \sum_{t  \in \J_ \ell   }(y_t  -X_t ^\top     \beta _{\J _ \ell }^*  ) ^2 +C_1 \s\log(pn)   $$
where the inequality follows from \Cref{eq:regression one change deviation bound}.  
 If   $ |\J_3 |  \ge   \zeta   $, then 
 $$ \mathcal H(\J_ 3) =\sum_{t  \in \J_3 }(y_t  -X_t ^\top  \widehat  \beta _{\J_3 }  ) ^2 
\le \sum_{t  \in \J_3  }(y_t  -X_t ^\top     \beta _{\J _3 }^*  ) ^2 +C_1 \s\log(pn), $$
where the inequality follows from \Cref{eq:regression one change deviation bound}.  
If $ |\J_3  |  \le  \zeta   $, then 
 $$ \mathcal H(\J_3 ) = 0 
\le \sum_{t  \in \J_ 3  }(y_t  -X_t ^\top     \beta _{\J _3  }^*  ) ^2 +C_1 \s\log(pn) . $$
So 
$$ \mathcal H(\J_3 )   
\le \sum_{t  \in \J_3   }(y_t  -X_t ^\top     \beta _{\J _3  }^*  ) ^2 +C_1 \s\log(pn) . $$

 {\bf Step 2.} Since $\I \in \widehat{\mathcal P} $, we have
 \begin{align}\label{eq:regression two change points local min} \mathcal H (\I ) \le \mathcal H (\J_1  ) +\mathcal H (\J_2 )+\mathcal H (\J_3 ) + 2 \zeta. 
 \end{align} The above display and the calculations in  {\bf Step 1}   imply that
\begin{align} 
 \sum_{ t  \in \I }(y_t  -  X_t ^\top \widehat \beta _\I ) ^2  
\le   \sum_{\ell=1}^3 \sum_{t  \in \J_ \ell  }(y_t  -  X_t ^\top    \beta^*  _{\J_\ell }  ) ^2   +3C_1 \s\log(pn)   +2 \zeta .
  \label{eq:regression two change points step 2 term 1}
\end{align}

\Cref{eq:regression two change points step 2 term 1}  gives 
\begin{align} 
 \sum_{\ell=1}^3 \sum_{t  \in \J_{\ell}  }(y_t  - X_t ^\top  \widehat \beta _\I ) ^2 
 \le& \sum_{\ell=1}^3 \sum_{t  \in \J_ \ell  }(y_t  -  X_t ^\top    \beta^*  _{\J_\ell }  ) ^2    
   +3C_1 \s\log(pn)   +2\zeta       \label{eq:regression two change points first}.
\end{align}

{\bf Step 3.} Note that  for any $\ell\in \{1,2\}$, $ |\J_\ell| \ge \zeta   $.  So it holds with probability at least $1 -3n^{-5}$ that
\begin{align}  \nonumber 
 &  \sum_{t  \in \J_{\ell }  }(y_t  - X_t ^\top  \widehat \beta _\I ) ^2 
 -   \sum_{t  \in \J_ \ell  }(y_t  -  X_t ^\top    \beta^*  _{\J_\ell   }  ) ^2   
 \\
 =&  \nonumber   \sum_{t  \in \J_{\ell  }  }  \{ X_t^\top (\widehat \beta _\I  -  \beta^*  _{\J_\ell  }  ) \} ^2 - 2  \sum_{t  \in \J_{\ell }  } \epsilon_t X_t^\top (\widehat \beta _\I  -  \beta^*  _{\J_\ell  }  ) 
 \\
 \ge&  \nonumber   \frac{\Lambda_{\min} (\Sigma) |\J_\ell  | }{4}  |\widehat \beta _\I  -  \beta^* _{\J_ \ell  }  |_2^2  -   C_2 \s  \log(np)       - \frac{ \Lambda_{\min}(\Sigma)  | \J_ \ell |}{64}  |  \widehat \beta _\I  -  \beta _{\J_\ell  }^*    |_2  ^2  - C_2'    \s \log(pn)
 \\ \label{eq:regression two change points step 3 term 1}
 \ge &  \frac{\Lambda_{\min} (\Sigma) |\J_\ell  | }{8}  |\widehat \beta _\I  -  \beta^* _{\J_ \ell  }  |_2^2  -   C_3 \s  \log(np)       ,
\end{align}
where the first inequality follows from the same arguments that lead to 
   \Cref{eq:regression one change step 3 term 1} and  \Cref{eq:regression one change step 2 term 3}.

{\bf Step 4.} 
 If $|\J_3| \ge \zeta$, then the same calculations in {\bf Step 3} give
\begin{align} \label{eq:regression two change points step 4 term 1}
   \sum_{t  \in \J_{3   }  }(y_t  - X_t ^\top  \widehat \beta _\I ) ^2 
 -   \sum_{t  \in \J_ 3    }(y_t  -  X_t ^\top    \beta^*  _{\J_3   }  ) ^2   
 \ge    \frac{\Lambda_{\min} (\Sigma) |\J_3 | }{8}  |\widehat \beta _\I  -  \beta^* _{\J_ 3  }  |_2^2  -   C_3 \s  \log(np)       .
\end{align}
Putting \Cref{eq:regression two change points step 3 term 1} and  \Cref{eq:regression two change points step 4 term 1} into \Cref{eq:regression two change points first},
it follows that 
 $$  \sum_{\ell =1}^3 \frac{ \Lambda_{\min}(\Sigma) |\J_\ell | }{8}  |\widehat \beta _\I  -  \beta^* _{\J_ \ell  }  |_2^2      \le C_4 \s \log(np)  +2\zeta  .$$
 This leads to 
  \begin{align} \label{eq:step 4 colusion two changes}    |\J_1 |    |\widehat \beta _\I  -  \beta^* _{\J_ 1  }  |_2^2     + |\J_2 |   |\widehat \beta _\I  -  \beta^* _{\J_ 2  }  |_2^2       \le  C_4 \s \log(np)  +2\zeta  .\end{align}
Observe  that 
$$ \inf_{  \beta  \in \mathbb R ^p   }  |\J_1 |   | \beta    -  \beta^* _{\J_ 1  }  |_2^2  + |\J_2 |  |  \beta    -  \beta^* _{\J_ 2  }  |_2^2    = \kappa_k   ^2  \frac{|\J_1| |\J_2|}{| \J_1| +|\J_2|  }   \ge \frac{ \kappa_k  ^2 }{2} \min\{ |\J_1| ,|\J_2| \}     . $$ 
Thus
$$    \kappa_k ^2 \min\{ |\J_1| ,|\J_2| \}    \le  C_{5}  \s \log(np)   + 2  \zeta    ,$$
which is 
$$ \min\{ |\J_1| ,|\J_2| \}   \le  \frac{ C_{5}  \s \log(np)   + 2  \zeta }{\kappa_k ^2}.$$ 
Since  $ |\J_2  | \ge \Delta \ge  \frac{ C   \s \log(np)   + 2 \alpha_n \zeta }{\kappa_k ^2}     $
 for sufficiently large constant $C$, 
 it follows that
 $$   |\J_1|    \le\frac{ C_{5}  \s \log(np)   + 2  \zeta }{\kappa_k ^2}  .$$

{\bf Step 5.}  If $|\J_{3}| \le \zeta $, note that  a similar and simpler argument of \Cref{lemma:lasso deviation bound 1}    gives 
$$ \p\bigg( \bigg| \sum_{t\in \J_3} \epsilon_t ^2 -|\J_3| \E(\epsilon_t^2) \bigg| \le  C _6  \bigg\{  \sqrt {|\J_3|   \log( n)   }  +    \log^{1/\gamma }(  n )   \bigg \}  \bigg) \le n^{-5} .   $$
So with probability at least $1-n^{-5} $, it follows that 
\begin{align}\label{eq:two change points small intervals epsilon}   \sum_{t\in \J_3} \epsilon_t ^2   \le |\J_3|\sigma_{\epsilon}^2  +
 C _6  \bigg\{  \sqrt {|\J_3|   \log( n)   }  +    \log^{1/\gamma }(  n )   \bigg \} 
 \le \frac{\zeta}{4}, \end{align} 
 where the last inequality follows from that $  |\J_3| \le  \zeta$ and $\zeta \ge  C_\zeta(\s\log(pn) )^{1/\gamma}$ for sufficient large $C_{\zeta}$. 
So
\begin{align} & \sum_{t  \in \J_3    }(y_t  - X_t ^\top  \widehat \beta _\I ) ^2  - \sum_{ t  \in \J_ 3    }(y_t  - X_t ^\top    \beta^*  _{\J_ 3 }  ) ^2 \nonumber 
=   \sum_{t  \in \J_ 3    }\big\{ X_t ^\top (\widehat \beta _\I -\beta^*  _{\J_ 3  } )\big\}^2 -2 \sum_{ t  \in \J_ 3    } \epsilon_t  X_t ^\top (\widehat \beta _\I -\beta^*  _{\J_ 3   }  )
\\  \nonumber 
\ge  & \sum_{i \in \J_ 3     }\big\{ X_ t ^\top (\widehat \beta _\I -\beta^*  _{\J_ 3   } )\big\}^2  -\frac{1}{2 }\sum_{t  \in \J_ 3     }\big\{ X_t ^\top (\widehat \beta _\I -\beta^*  _{\J_ 3   } )\big\}^2  - 2   \sum_{t  \in \J_ 3     }\epsilon_t ^2 
\\   
\ge &  \frac{1}{2 }\sum_{t  \in \J_ 3     }\big\{ X_t ^\top (\widehat \beta _\I -\beta^*  _{\J_ 3   } )\big\}^2  -\frac{\zeta}{2}
\ge -\frac{\zeta}{2},  \label{eq:regression change point step 4 last item}
\end{align} 
where the second inequality follows from \Cref{eq:two change points small intervals epsilon}. Putting \Cref{eq:regression two change points step 3 term 1} and  \Cref{eq:regression change point step 4 last item} into \Cref{eq:regression two change points first},
it follows that 
 $$  \sum_{\ell =1}^2 \frac{ \Lambda_{\min}(\Sigma) |\J_\ell | }{8}  |\widehat \beta _\I  -  \beta^* _{\J_ \ell  }  |_2^2      \le C_7 \s \log(np)  +C_8\zeta  .$$
  Similar argument for \Cref{eq:step 4 colusion two changes} leads to 
 $$   |\J_1|    \le\frac{ C_{9} \big(  \s \log(np)   +  \zeta \big)  }{\kappa_k ^2}  .$$  

 \end{proof} 

 \begin{lemma} 
With probability  at least $1- cn^{-3}$,    there is no intervals in   $  \widehat { \mathcal P}  $  containing three or more true change points.   
 \end{lemma}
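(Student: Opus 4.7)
The plan is to proceed by contradiction, following the same refinement strategy used for the one- and two-change-point lemmas. The new leverage is that when $\I \in \widehat{\mathcal{P}}$ contains three or more change points, there exist \emph{two} consecutive inner sub-intervals of length at least $\Delta$ on each of which $\beta_t^*$ is constant. Suppose for contradiction that $\I = [s,e) \in \widehat{\mathcal{P}}$ contains change points $\eta_{k} < \eta_{k+1} < \eta_{k+2}$ (the case of more than three change points is handled identically by picking any three consecutive ones). Refine $\widehat{\mathcal{P}}$ by splitting $\I$ at these three change points into $\J_0 = [s,\eta_k)$, $\J_1 = [\eta_k,\eta_{k+1})$, $\J_2 = [\eta_{k+1},\eta_{k+2})$ and $\J_3 = [\eta_{k+2},e)$. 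Crucially, under \Cref{assump-snr}\textbf{a}, $|\J_1|, |\J_2| \ge \Delta \ge \zeta$, and $\beta_t^*$ is constant on each of $\J_1$ and $\J_2$ with values $\beta^*_{\eta_k}$ and $\beta^*_{\eta_{k+1}}$, respectively.

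Since $\I \in \widehat{\mathcal{P}}$, optimality of the partition penalty gives
\begin{align*}
    \mathcal{H}(\I) \le \sum_{j=0}^{3}\mathcal{H}(\J_j) + 3\zeta.
\end{align*}
I would then mimic Steps~1--5 of \Cref{lemma:regression two change points} verbatim: upper bound $\mathcal{H}(\J_j)$ for $|\J_j| \ge \zeta$ via \Cref{eq:regression one change deviation bound} and, for the boundary sub-intervals $\J_0, \J_3$ in the regime $|\J_0|, |\J_3| < \zeta$, control $\sum_{t \in \J_j}\epsilon_t^2$ via the $\epsilon^2$-tail bound \Cref{eq:two change points small intervals epsilon}; in parallel, apply the restricted eigenvalue estimate \Cref{theorem:RES} together with \Cref{lemma:lasso deviation bound 1} to lower bound the quadratic forms $\sum_{t \in \J_j}\{X_t^\top(\widehat{\beta}_\I - \beta^*_{\J_j})\}^2$. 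Combining these ingredients yields the direct four-sub-interval analog of \Cref{eq:step 4 colusion two changes}:
\begin{align*}
    \sum_{j:\,|\J_j|\ge\zeta} |\J_j|\,|\widehat{\beta}_{\I} - \beta^*_{\J_j}|_2^2 \le C_1(\s\log(np) + \zeta).
\end{align*}

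The new feature absent in the two-change-point setting is that both $\J_1$ and $\J_2$ are inner sub-intervals of length at least $\Delta$ on each of which $\beta_t^*$ is constant, with $|\beta^*_{\J_1} - \beta^*_{\J_2}|_2 = \kappa_{k+1} \ge \kappa$. Applying the elementary inequality $|a - c|_2^2 + |b - c|_2^2 \ge \tfrac{1}{2}|a-b|_2^2$ with $c = \widehat{\beta}_\I$, $a = \beta^*_{\eta_k}$, $b = \beta^*_{\eta_{k+1}}$ and dropping all other nonnegative terms on the left-hand side therefore gives
\begin{align*}
    \tfrac{1}{2}\Delta\kappa^2 \le |\J_1|\,|\widehat{\beta}_\I - \beta^*_{\eta_k}|_2^2 + |\J_2|\,|\widehat{\beta}_\I - \beta^*_{\eta_{k+1}}|_2^2 \le C_1(\s\log(np) + \zeta),
\end{align*}
which contradicts \Cref{assump-snr}\textbf{a}, since the latter yields $\Delta\kappa^2 \ge C_{\mathrm{snr}} \alpha_n \{\s\log(pn)\}^{2/\gamma-1} \gg \s\log(np) + \zeta$ as $\alpha_n \to \infty$ and $\zeta = C_\zeta\{\s\log(pn)\}^{2/\gamma-1}$.

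I expect the main obstacle to be the careful bookkeeping needed to generalize Steps~1--5 of the two-change-point proof to four sub-intervals, in particular keeping track of absolute constants across the case split $|\J_0|, |\J_3| \gtrless \zeta$ for the boundary sub-intervals and verifying that the restricted-eigenvalue and Lasso-deviation cross terms still combine into the same right-hand side. Once this four-sub-interval analog of \Cref{eq:step 4 colusion two changes} is in hand, the contradiction is immediate from the signal-to-noise ratio comparison above.
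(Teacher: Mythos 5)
Your proposal follows essentially the same route as the paper's proof: split the offending interval at the true change points, invoke the optimality of $\widehat{\mathcal P}$ to compare losses, lower-bound the signal contributed by two consecutive \emph{inner} sub-intervals (each of length at least $\Delta$ and carrying a jump of size at least $\kappa$ between them), and contradict \Cref{assump-snr}\textbf{a}; your elementary inequality $|a-c|_2^2+|b-c|_2^2\ge \tfrac12|a-b|_2^2$ delivers exactly the same $\tfrac12\Delta\kappa^2$ lower bound that the paper obtains from the explicit infimum $\inf_\beta\{|\J_{m-1}||\beta-\beta^*_{\J_{m-1}}|_2^2+|\J_m||\beta-\beta^*_{\J_m}|_2^2\}$.

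The one point you gloss over is the case of strictly more than three change points. You split only at three consecutive change points, so when $M>3$ at least one boundary sub-interval $\J_0=[s,\eta_k)$ or $\J_3=[\eta_{k+2},e)$ contains further change points while having length at least $\Delta\ge\zeta$. In that regime \Cref{eq:regression one change deviation bound}, which you cite to upper-bound $\mathcal H(\J_0)$ by $\sum_{t\in\J_0}(y_t-X_t^\top\beta^*_{\J_0})^2+C\s\log(pn)$, is not applicable: that lemma explicitly assumes the interval contains no change points, and the subsequent lower bound on $\sum_{t\in\J_0}(y_t-X_t^\top\widehat\beta_\I)^2-\sum_{t\in\J_0}(y_t-X_t^\top\beta^*_{\J_0})^2$ would pick up uncontrolled bias terms from $\beta_t^*\ne\beta^*_{\J_0}$. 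The paper avoids this entirely by splitting at \emph{all} $M$ change points, so every sub-interval is change-point free; the penalty count then becomes $M\zeta$, and the signal lower bound is summed over all $M-1$ consecutive inner pairs so that the factor of $M$ cancels on both sides of the final comparison. Your argument is repaired the same way (and since $K=O(1)$ by \Cref{assume:regression parameters}, even a crude bound $M\le K$ suffices), but as written the claim that the $M>3$ case is "handled identically" is not justified.
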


\begin{proof} 
For  contradiction,  suppose    $ \I=[s,e) \in \mathcal {\widehat P}  $  be such that $ \{ \eta_1, \ldots, \eta_M\} \subset \I $ with $M\ge 3$. 

{\bf Step 1.}  Since $\I$ contains three change points, 
 $|\I| \ge \zeta   $ and therefore 
 $$ \mathcal H(\I) = \sum_{t \in \I} (y_t -X_t^\top \widehat \beta_\I )^2 .$$
Denote 
   $$  \J_1 =[s,  \eta_1), \ \J_m = [\eta_{m-1}, \eta_m) \;\; \text{for} \;\;  2 \le m \le M , \ \J_{M+1}  =[\eta_M, e). $$ 
Since for $m\in \{ 2,\ldots, M \}$, $ |\J_ m | \ge \zeta   $, it follows that
$$ \mathcal H(\J_m ) = \sum_{ t  \in \J_m   }(y_t  - X_t ^\top \widehat \beta _{\J_m  }  ) ^2  \le \sum_{t  \in \J_ m   }(y_t  -X_t ^\top     \beta _{\J _ m  }^*  ) ^2 +C_1 \s\log(pn),   $$
where the inequality follows from \Cref{eq:regression one change deviation bound}.  
For $\ell \in \{ 1, M+1  \}$, if  $ |\J_ \ell | \ge \zeta   $, then 
 $$ \mathcal H(\J_ \ell) =\sum_{t  \in \J_\ell }(y_t  -X_t ^\top  \widehat  \beta _{\J_\ell  }  ) ^2 
\le \sum_{t  \in \J_\ell    }(y_t  -X_t ^\top     \beta _{\J _\ell    }^*  ) ^2 +C_1 \s\log(pn), $$
where the inequality follows from \Cref{eq:regression one change deviation bound}.  
If $ |\J_\ell   |  \le  \zeta   $, then 
 $$ \mathcal H(\J_\ell  ) = 0 
\le \sum_{t  \in \J_ \ell   }(y_t  -X_t ^\top     \beta _{\J _\ell   }^*  ) ^2 +C_1 \s\log(pn) . $$
So 
$$ \mathcal H(\J_\ell   )   
\le \sum_{t  \in \J_\ell    }(y_t  -X_t ^\top     \beta _{\J _\ell   }^*  ) ^2 +C_1 \s\log(pn) . $$

 {\bf Step 2.} Since $\I \in \widehat{\mathcal P} $, we have
 \begin{align}\label{eq:regression three change points local min} \mathcal H(\I ) \le \sum_{m=1}^{M+1}\mathcal H (\J_m  )  +  M \zeta. 
 \end{align} The above display and the calculations in  {\bf Step 1} and  {\bf Step 2} implies that
\begin{align} 
 \sum_{t  \in \I }(y_t  -  X_t ^\top \widehat \beta _\I ) ^2  
\le   \sum_{m=1}^{M+1 } \sum_{t  \in \J_m   }(y_t  -   X_t ^\top \beta_t ^*   ) ^2  
 +   (M +1) C_1   \s\log(pn)  
+M  \zeta  .
  \label{eq:regression three change points step 2 term 1}
\end{align}
\Cref{eq:regression three change points step 2 term 1}  gives 
\begin{align} 
 \sum_{m=1}^{M+1}  \sum_{t  \in \J_m   }(y_t  - X_t ^\top  \widehat \beta _\I ) ^2 
 \le& \sum_{m=1}^{M+1} \sum_{t  \in \J_  m   }(y_t  -  X_t ^\top    \beta^*  _{\J_m  }  ) ^2    
 +  (M +1) C_1   \s\log(pn)  
+M  \zeta     \label{eq:regression three change points first}.   
\end{align}

 {\bf Step 3.} Observe that  $|\J_m |\ge \zeta  $ for $m\in \{ 2,\ldots, M\}$.  Using the same argument that leads to \Cref{eq:regression two change points step 3 term 1}, it follows that  
\begin{align} \label{eq:regression three change point step 3 last term}\sum_{t  \in \J_m   }(y_t  - X_t ^\top  \widehat \beta _\I ) ^2  - \sum_{t  \in \J_m   }(y_t  - X_t ^\top    \beta^*  _{\J_m }  ) ^2 
\ge \frac{\Lambda_{\min} (\Sigma) |\J_m   | }{8}  |\widehat \beta _\I  -  \beta^* _{\J_ m  }  |_2^2  -   C_3 \s  \log(np)   . 
\end{align} 
{\bf Step 4.} For  $\ell\in \{1,M+1\}$, if  $ |\J_\ell| \ge \zeta   $, then similar to \Cref{eq:regression three change point step 3 last term}, 
\begin{align*}  \sum_{t  \in \J_\ell    }(y_t  - X_t ^\top  \widehat \beta _\I ) ^2  - \sum_{t  \in \J_\ell    }(y_t  - X_t ^\top    \beta^*  _{\J_ \ell }  ) ^2 
\ge \frac{\Lambda_{\min} (\Sigma) |\J_\ell  | }{8}  |\widehat \beta _\I  -  \beta^* _{\J_ \ell  }  |_2^2  -   C_3 \s  \log(np)  
\ge   -   C_3 \s  \log(np)   . 
\end{align*} 
where the first inequality follows from the same arguments that lead to 
   \Cref{eq:regression one change step 3 term 1} and  \Cref{eq:regression one change step 2 term 3}.
   \\
    If  $ |\J_\ell| \le \zeta   $, then using the same argument that leads to \Cref{eq:regression change point step 4 last item}, it follows that 
    \begin{align*} & \sum_{t  \in \J_\ell    }(y_t  - X_t ^\top  \widehat \beta _\I ) ^2  - \sum_{ t  \in \J_ \ell    }(y_t  - X_t ^\top    \beta^*  _{\J_ \ell }  ) ^2 \nonumber
\ge -2C_4 \zeta.
\end{align*} 
Putting the two cases together, it follows that for $\ell\in \{1,M+1\}$,
\begin{align}   \sum_{t  \in \J_\ell    }(y_t  - X_t ^\top  \widehat \beta _\I ) ^2  - \sum_{ t  \in \J_ \ell    }(y_t  - X_t ^\top    \beta^*  _{\J_ \ell }  ) ^2 
\ge -C_3\s\log(pn) -2C_4 \zeta. \label{eq:regression three change point step 3 last term 2}
\end{align}  

{\bf Step 5.} 
\Cref{eq:regression three change points first}, \Cref{eq:regression three change point step 3 last term} and \Cref{eq:regression three change point step 3 last term 2} together imply  that 
\begin{align} \label{eq:regression three change points step six} \sum_{ m  =2}^M \frac{\Lambda_{\min} (\Sigma) |\J_m   | }{8}  |\widehat \beta _\I  -  \beta^* _{\J_ m  }  |_2^2      \le C_5 M (\s \log(np)   + \zeta) .
\end{align} 
 For any $  m \in\{2, \ldots, M\}$, it holds that
\begin{align}  \label{eq:regression three change points signal lower bound} \inf_{  \beta  \in \mathbb R^p   }  |\J_{m-1} |   |   \beta - \beta^* _{\J_ {m-1}   }   |  ^2 + |\J_{m} |  |   \beta - \beta^* _{\J_ m  }  |  ^2   =&     \frac{|\J_{m-1}| |\J_m|}{ |\J_{m-1}| +  |\J_m| } \kappa_m ^2  \ge \frac{1}{2} \Delta \kappa^2,
\end{align}  
where the last inequality follows from the assumptions that $\eta_k - \eta_{k-1}\ge \Delta  $ and $ \kappa_k \ge \kappa$  for all $1\le k \le K$. So
\begin{align}    \nonumber  2 \sum_{ m=2}^{M   }   |\J_m | |\widehat \beta _\I  -  \beta^* _{\J_ m  }  |_2^2   
\ge    &   \sum_{m=3}^{M}   \bigg(  |\J_{m-1}   |\widehat \beta _\I  -  \beta^* _{\J_ {m-1}   }  |_2^2      + |\J_m | |\widehat \beta _\I  -  \beta^* _{\J_ m  }  |_2^2    \bigg) 
\\
 \label{eq:regression three change points signal lower bound two} 
 \ge&   (M-2)  \frac{ 1}{2} \Delta \kappa^2  \ge \frac{M}{6} \Delta \kappa^2 ,
\end{align} 
where the second inequality follows from  \Cref{eq:regression three change points signal lower bound} and the last inequality follows from $M\ge 3$. \Cref{eq:regression three change points step six} and  \Cref{eq:regression three change points signal lower bound two} together imply that 
\begin{align}\label{eq:regression three change points signal lower bound three} 
 \frac{M}{12} \Delta \kappa^2 \le    C_5  M\big(  \s   \log(np )  
+  \zeta  \big) .
\end{align}
Note that by Assumption \ref{assume:regression parameters}, we have 
\begin{align} \label{eq:regression three change points snr}
\Delta \kappa^2 \ge C _{snr}     \big( \s \log(np) +  \zeta ).
\end{align} 
\Cref{eq:regression three change points signal lower bound three} contradicts \Cref{eq:regression three change points snr} for sufficiently large $C_{snr}$.

 \end{proof} 
 
 \begin{lemma} 
With probability  at least $1- cn^{-3}$,    there is no  two consecutive intervals $\I_1= [s,t) \in \widehat {\mathcal  P}  $,   $ \I_2=[t, e)  \in   \widehat  {\mathcal  P}  $    such that $\I_1 \cup \I_2$ contains no change points.  
 \end{lemma}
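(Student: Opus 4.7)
The plan is to argue by contradiction via the optimality of $\widehat{\mathcal{P}}$. Suppose $\I_1 = [s, t)$ and $\I_2 = [t, e)$ are consecutive in $\widehat{\mathcal{P}}$ with $\I = \I_1 \cup \I_2$ containing no change points. Replacing $\{\I_1, \I_2\}$ by $\{\I\}$ gives a competing partition in \eqref{eq:pre_est} that uses one fewer element, so optimality forces
\[
\mathcal{G}(\I_1) + \mathcal{G}(\I_2) + \zeta \leq \mathcal{G}(\I).
\]
Let $\beta_0^* = \beta_t^*$ (constant on $\I$), and for an interval $\J$ with $|\J| \geq \zeta$ let $\mathcal{H}(\J) = \sum_{t \in \J}(y_t - X_t^\top \widehat{\beta}_\J)^2$, so that $\mathcal{G}(\J) = \mathcal{H}(\J) - \sum_{t \in \J} y_t^2$. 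The goal is to show that the left-hand side exceeds the right-hand side by at most $C\s\log(pn)$, which contradicts $\zeta = C_\zeta\{\s\log(pn)\}^{2/\gamma - 1} \gg \s\log(pn)$ (since $\gamma < 1$).

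I would split into three cases according to the sizes of $\I_1, \I_2$ relative to $\zeta$. \emph{Case A} ($|\I_1|, |\I_2| \geq \zeta$): using the Lasso deviation bound already invoked repeatedly in the proof of \Cref{prop-1} (with $\I$ containing no change point) together with the bounds in \Cref{lemma:interval lasso} and \Cref{lemma:lasso l infty deviations}, one obtains $|\mathcal{H}(\J) - \sum_{t \in \J} \epsilon_t^2| \leq C_1 \s \log(pn)$ for $\J \in \{\I_1, \I_2, \I\}$ with probability at least $1 - cn^{-5}$; summing and using $\sum_{t \in \I_1}\epsilon_t^2 + \sum_{t \in \I_2}\epsilon_t^2 = \sum_{t \in \I}\epsilon_t^2$ yields $\mathcal{G}(\I) - \mathcal{G}(\I_1) - \mathcal{G}(\I_2) \leq 3C_1\s\log(pn)$, contradicting $\zeta \gg \s\log(pn)$. \emph{Case B} ($|\I_1|, |\I_2| < \zeta$): then $\mathcal{G}(\I_1) = \mathcal{G}(\I_2) = 0$ and we need $\mathcal{G}(\I) \geq \zeta$; but $\mathcal{G}(\I) \leq 0$ by plugging $\beta = 0$ into the Lasso defining $\widehat{\beta}_\I$ (or $\mathcal{G}(\I) = 0$ if $|\I| < \zeta$), giving an immediate contradiction.

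\emph{Case C} (WLOG $|\I_1| < \zeta \leq |\I_2|$, so $|\I| \geq \zeta$ and $\mathcal{G}(\I_1) = 0$): the inequality becomes $\mathcal{H}(\I) - \mathcal{H}(\I_2) - \sum_{t \in \I_1} y_t^2 \geq \zeta$. Applying the deviation bound in Case A to $\I$ and $\I_2$ gives
\[
\zeta \leq 2C_1 \s\log(pn) + \sum_{t \in \I_1}\epsilon_t^2 - \sum_{t \in \I_1} y_t^2 = 2C_1 \s\log(pn) - 2\sum_{t \in \I_1}\epsilon_t X_t^\top\beta_0^* - \sum_{t \in \I_1}(X_t^\top\beta_0^*)^2.
\]
Dropping the nonpositive quadratic term and applying the new Bernstein inequality \Cref{thm:bernstein_exp_subExp_nonlinear} to the (bounded-norm) linear process $\{\epsilon_t X_t^\top\beta_0^*\}$ gives $|\sum_{t \in \I_1}\epsilon_t X_t^\top\beta_0^*| \leq C_2(\sqrt{|\I_1|\log(pn)} + \log^{1/\gamma}(pn))$ with high probability. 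Since $|\I_1| < \zeta$, this bounds $\zeta \leq C_3(\s\log(pn) + \sqrt{\zeta\log(pn)} + \{\log(pn)\}^{1/\gamma})$, which is inconsistent with $\zeta = C_\zeta\{\s\log(pn)\}^{2/\gamma - 1}$ because $2/\gamma - 1 > 1/\gamma$ when $\gamma < 1$.

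The main obstacle is Case C, where the asymmetry between $\mathcal{G}(\I_1) = 0$ and $\mathcal{G}(\I) \neq 0$ creates a residual term $\sum_{t \in \I_1}y_t^2$ that must be controlled uniformly over all short intervals $\I_1$; this requires the heavier-tailed Bernstein bound of \Cref{thm:bernstein_exp_subExp_nonlinear} rather than the sub-Gaussian concentration available under independence, and also relies on the fact that $|\beta_0^*|_2$ is uniformly bounded (which follows from $K = O(1)$ and $\kappa_k \leq C_\kappa$ together with a fixed initial coefficient). A union bound over $O(n^2)$ possible pairs of consecutive intervals, each handled at probability $1 - cn^{-5}$, then delivers the advertised $1 - cn^{-3}$ bound.
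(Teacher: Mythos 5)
Your proposal is correct and follows the same overall strategy as the paper: contradiction via the optimality of $\widehat{\mathcal{P}}$, the per-interval deviation bound of \Cref{eq:regression one change deviation bound} (which, on a change-point-free interval, reads $|\mathcal H(\J)-\sum_{t\in\J}\epsilon_t^2|\le C\s\log(pn)$), and a case split on whether $|\I_1|,|\I_2|$ fall below the cutoff $\zeta$. Your Case A coincides with the paper's. The differences are in how the short intervals are treated, and they stem from your working with $\mathcal G$ (faithful to \eqref{eq:pre_est}) rather than with $\mathcal H(\J)=\sum_{t\in\J}(y_t-X_t^\top\widehat\beta_\J)^2$ as the paper does: in the both-short case your observation that $\mathcal G(\I)\le 0$ (by comparing $\widehat\beta_\I$ to $\beta=0$ in the Lasso objective) gives an immediate, concentration-free contradiction, which is cleaner than the paper's route through the bound $|\sum_{t\in\I_\ell}\epsilon_t^2|\le\zeta/4$ of \eqref{eq:two change points small intervals epsilon}. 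In the mixed case, however, your residual term $\sum_{t\in\I_1}(\epsilon_t^2-y_t^2)=-2\sum_{t\in\I_1}\epsilon_tX_t^\top\beta_0^*-\sum_{t\in\I_1}(X_t^\top\beta_0^*)^2$ leads you to invoke \Cref{thm:bernstein_exp_subExp_nonlinear} for the cross term, which requires $|\beta_0^*|_2=O(1)$ --- a condition \Cref{assume:regression parameters} does not literally impose (it only bounds the jump sizes; \Cref{lemma:beta bounded 1} controls differences of coefficients, not the coefficients themselves). You could avoid this entirely by noting $\sum_{t\in\I_1}(\epsilon_t^2-y_t^2)\le\sum_{t\in\I_1}\epsilon_t^2\le\zeta/4$, reusing the paper's short-interval bound on $\sum\epsilon_t^2$; this is essentially what the paper does in its Cases c and d. With that small repair your argument is complete, and the concluding union bound over $O(n^2)$ intervals matches the paper's (implicit) treatment.
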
 
 \begin{proof} 
 For contradiction, suppose that 
 $$ \I =\I_1\cup  \I_2 $$
 contains no change points. If $|\I |\le \zeta  $, then 
 $ \mathcal H(\I) =\mathcal H(\I_1)=\mathcal H(\I_2) =0$. 
 So $$ \mathcal H(\I_1) +\mathcal H(\I_2) +  \zeta  \ge \mathcal H(\I).$$
 This is a contradiction to the assumption that $\I_1 \in \hatp$ and  $\I_2 \in \hatp  $.
For the rest of the proof, assume that $|\I|\ge \zeta  $.  
Therefore,  by \Cref{eq:regression one change deviation bound}, it follows that 
 \begin{align}  \label{eq:no change points I}
 \bigg|   \mathcal H(\I  )   -  \sum_{t \in \I  } (y_t  - X_t ^\top  \beta^*  _{t } )^2 \bigg|  = \bigg|   \sum_{t \in \I  } (y_t  - X_t ^\top \widehat \beta _{\I  } )^2- \sum_{ t \in \I   } (y_t  - X_t ^\top \beta_{\I }^*   )^2    \bigg| \le  C_1 \s \log(np)  .
 \end{align}

 {\bf Case a.} Suppose $|\I_1| \ge  \zeta   $ and $|\I_2| \ge  \zeta   $. Let $\ell \in \{ 1,2\} $. 
Then 
 by \Cref{eq:regression one change deviation bound}, it follows that 
 \begin{align} \label{eq:no change points I 1}
 \bigg|   \mathcal H(\I_\ell )   -  \sum_{t \in \I_\ell } (y_t  - X_t ^\top  \beta^*  _{t } )^2 \bigg|  = \bigg|   \sum_{t \in \I_\ell } (y_t  - X_t ^\top \widehat \beta _{\I_\ell } )^2- \sum_{ t \in \I_\ell } (y_t  - X_t ^\top \beta_{\I_\ell}^*   )^2    \bigg| \le  C_1 \s \log(np)  .
 \end{align}
 So by \Cref{eq:no change points I} and \eqref{eq:no change points I 1} 
 \begin{align*}  
  \mathcal H(\I) \le \sum_{t \in \I  } (y_t  - X_t ^\top  \beta^*  _{t } )^2 + C_1 \s\log(pn)  \le \mathcal H(\I_1) + \mathcal H(\I_2) +3C_1\s \log(pn)\le \mathcal H(\I_1) + \mathcal H(\I_2) + \zeta .
 \end{align*} 
  This is a contradiction to the assumption that $\I_1 \in \hatp$ and  $\I_2 \in \hatp  $.
  
   {\bf Case b.} Suppose $|\I_1| <  \zeta   $ and $|\I_2| <  \zeta   $. 
Then 
 by \Cref{eq:regression one change deviation bound}, it follows that 
 \begin{align} \label{eq:no change points I 2}
 \bigg|   \mathcal H(\I_\ell )   -  \sum_{t \in \I_\ell } (y_t  - X_t ^\top  \beta^*  _{t } )^2 \bigg| =\bigg|   \mathcal H(\I_\ell )   -   0  \bigg|   = \bigg|     \sum_{t \in \I_\ell }  \epsilon_t ^2 \bigg|  \le \frac{\zeta}{4},
 \end{align}
where the last inequality follows from  \Cref{eq:two change points small intervals epsilon}.
So by \Cref{eq:no change points I} and \eqref{eq:no change points I 2} 
 \begin{align*}  
  \mathcal H(\I) \le \sum_{t \in \I  } (y_t  - X_t ^\top  \beta^*  _{t } )^2 + C_1 \s\log(pn)  \le \mathcal H(\I_1) + \mathcal H(\I_2) + C_1\s \log(pn) + \frac{\zeta}{2} \le \mathcal H(\I_1) + \mathcal H(\I_2) +  \zeta ,
 \end{align*} 
 where $\zeta \ge  C_\zeta ( \s \log(pn) )$ is used in the last inequality.  
  This is a contradiction to the assumption that $\I_1 \in \hatp$ and  $\I_2 \in \hatp  $. 
 
  {\bf Case c.} Suppose $|\I_1| <  \zeta   $ and $|\I_2| \ge  \zeta   $. 
Then  \Cref{eq:no change points I 1} holds for $\ell=2$ and  \Cref{eq:no change points I 2} holds for $\ell=1$. So
 \begin{align*}  
  \mathcal H(\I) \le \sum_{t \in \I  } (y_t  - X_t ^\top  \beta^*  _{t } )^2 + C_1 \s\log(pn)  \le \mathcal H(\I_1) + \mathcal H(\I_2) + 2 C_1\s \log(pn) +  \frac{\zeta}{4} \le \mathcal H(\I_1) + \mathcal H(\I_2) + \zeta.
 \end{align*} 
  This is a contradiction to the assumption that $\I_1 \in \hatp$ and  $\I_2 \in \hatp  $. 
  
   {\bf Case d.} Suppose $|\I_1| \ge  \zeta   $ and $|\I_2| <  \zeta   $.  This is the same as {\bf Case c}.
 \end{proof}

   \subsubsection[]{Proof of \Cref{prop-2}}

\begin{proof}[Proof of \Cref{prop-2}] 
Denote $S^*_n = \sum_{t=1}^n (y_t - X_t^{\top} \beta^*_t)^2$.  Given any collection $\{t_1, \ldots, t_m\}$, where $t_1 < \cdots < t_m$, and $t_1 = 0$, $t_{m+1} = n$, let 
	\begin{equation}\label{eq-sn-def}
		S_n(t_1, \ldots, t_{m}) = \sum_{k=1}^{m}  \mathcal H([t_k, t_{k+1})) . 
	\end{equation}
	For any collection of time points, when defining \eqref{eq-sn-def}, we will assume that  the time points are sorted in an increasing order.

Let $\{ \widehat \eta_{k}\}_{k=1}^{\widehat K}$ denote the change points induced by $\widehat {\mathcal P}$.  It suffices to   justify that 
	\begin{align}
		S^*_n + K \zeta   \ge  &S_n(\eta_1,\ldots,\eta_K)   + K \zeta   - C_a ( K+1)  \s \log(pn) \label{eq:K consistency step 1} \\ 
		\ge & S_n (\widehat \eta_{1},\ldots, \widehat \eta_{\widehat K } ) +\widehat  K  \zeta   - C_a(K+1) \s \log(pn)  \label{eq:K consistency step 2} \\ 
		\ge   &   S_n ( \widehat \eta_{1},\ldots, \widehat \eta_{\widehat K } , \eta_1,\ldots,\eta_K ) - C_a(K+1) \s \log(pn) + \widehat K   \zeta     - C_a(K+1) \s \log(pn)   \label{eq:K consistency step 3}
	\end{align}
	and that 
	\begin{align}\label{eq:K consistency step 4}
		S^*_n  -S_n ( \widehat \eta_{1},\ldots, \widehat \eta_{\widehat K } , \eta_1,\ldots,\eta_K ) \le   C _b(K + \widehat{K} + 2)\s\log(pn) . 
	\end{align}
	This is because if $\widehat K \ge K+1 $, then
	\begin{align*}
		C_b (4K +2 ) \s\log(pn) \ge&  C _b(K + \widehat{K} + 2) \s\log(pn)  
		\\ \geq  & S^*_n  -S_n ( \widehat \eta_{1},\ldots, \widehat \eta_{\widehat K } , \eta_1,\ldots,\eta_K ) \\
		 \geq &   - 2  C_a  (K+1) \s \log(pn) +(\widehat K -K) \zeta  
		\\
		\ge & - 2  C_a  (K+1) \s \log(pn) +   \zeta,
	\end{align*} 
where the first inequality holds because $| \hatp|  =\widehat K\le 3K $,the second inequality follows from  \Cref{eq:K consistency step 4}, the third inequality follows from  \Cref{eq:K consistency step 3}, and the last inequality holds because $\widehat K \ge K+1 $.    This would imply 
$$\zeta\le C_b(4K+2)\s\log(pn) + 2 C_a(K+1)\s\log(pn) ,$$
which is a contradiction since $\zeta   =  C_\zeta (\s\log(pn))^{2/\gamma -1 }\ge C_\zeta \s\log(pn)$ for sufficient large $C_{\zeta}$. 

{\bf Step 1.}
Note that \eqref{eq:K consistency step 1}  is implied by 
	\begin{align}\label{eq:step 1 K consistency}  
		\left| 	S^*_n   -   S_n(\eta_1,\ldots,\eta_K)    \right| \le  C_a (K  +1) \s \log(pn)  ,
	\end{align}
	which is an immediate consequence  of \Cref{eq:regression one change deviation bound} and the assumption that 
	$ \Delta \ge C_{snr}(\s\log(pn))^{2/\gamma -1 }$.  

	{\bf Step 2.}
	Since $\{ \widehat \eta_{k}\}_{k=1}^{\widehat K}$ are the change points induced by $\widehat {\mathcal P}$, \eqref{eq:K consistency step 2} holds because $\hatp$ is a minimiser.

{\bf Step 3.}
For any $\widehat \I =(s,e]\in \hatp$, denote
	\[
		\widehat \I  =  [s ,\eta_{p+1})\cup \ldots \cup  [\eta_{p+q} ,e)  = \J_1 \cup \ldots  \cup \J_{q+1},
	\]
	where $\{ \eta_{p+l}\}_{l=1}^{q+1}  =\widehat \I \ \cap \ \{\eta_k\}_{k=1}^K$. 
	Then \eqref{eq:K consistency step 3} is an immediate consequence of the following inequality
	\begin{align} \label{eq:one change point step 3 1}
		\mathcal H(\widehat \I )   \ge \sum_{l=1}^{q+1}   \mathcal  H(\J_l ) - C_2(q+1) \s\log(pn) .
	\end{align}
	Note that since $ \J_l \subset \widehat \I$, it follows that if $ |\widehat \I| \le \zeta  $, then $\mathcal H(\widehat \I ) = \mathcal H(\widehat \J_l)  =0 $ and \Cref{eq:one change point step 3 1} trivially holds. 
	Therefore it suffice to assume that 
	$$   |\widehat \I| \ge \zeta  = C_\zeta(\s\log(pn))^{2/\gamma -1}. $$
	In this case, \Cref{eq:one change point step 3 1} is implied by \begin{align} \label{eq:one change point step 3}
		\sum_{t\in \widehat \I }(y_t -X_t^{\top} \widehat \beta _{\widehat \I} )^2  \ge \sum_{l=1}^{q+1}   H(\J_l )   - C_2'(q+1) \s\log(pn) .
	\end{align}
	For any $ l\in \{ 1,\ldots, q+1\}$, there are two cases.
	\\
	{\bf Case 1.} $|\J_l|\le \zeta   $. Then 
	$$  \mathcal H(\J_l ) = 0 \le  \sum_{ t \in \widehat \I }(y_t- X_t^\top \widehat  \beta_{\widehat \I })^2 + C_3 \s\log(pn).  $$
	
	{\bf Case 2.} $|\J_l|> \zeta   $.
	Then
	\begin{align*} 
		\mathcal H(\J_l )  =   \sum_{t \in \J_l}(y_t -X_t^{\top}  \widehat \beta _{\J_l})^2 & \le   \sum_{t \in \J_l}(y_t -X_t^{\top}    \beta^*_t  )^2  +  C_4\s\log(pn)  \nonumber \\
		&    \le   \sum_{t \in \J_l}(y_t -X_t^{\top}    \widehat  \beta_{\widehat \I }   )^2  +  C_5\s\log(pn) ,
	\end{align*} 
	where  the first inequality follows from \Cref{eq:regression one change deviation bound}, and the second inequality follows from  \Cref{lem-case-5-prop-2-needed}.
	Therefore  for any $ l\in \{ 1,\ldots, q+1\}$ 
	$$  \mathcal H(\J_l )  \le  \sum_{ t \in \widehat \I }(y_t- X_t^\top \widehat  \beta_{\widehat \I })^2 + C_6\s\log(pn),  $$
	and this immediately implies  \Cref{eq:one change point step 3}.

{\bf Step 4.}
Finally, to show \eqref{eq:K consistency step 4}, observe that from \eqref{eq:step 1 K consistency}, it suffices to show that 
	\[
		S_n(\eta_1,\ldots,\eta_K)  -  S_n ( \widehat \eta_{1},\ldots, \widehat \eta_{\widehat K } , \eta_1,\ldots,\eta_K )  \le C_7 (K+\widehat K+1) \s \log(pn).
	\]
	This follows from a similar but simpler argument to {\bf Step 3}.
\end{proof}  

\subsubsection{Additional results related to DUDP}

Given regression data $\{ y_t, X_t\}_{t=1}^n$, we study the nonasymptotic properties of the lasso estimator $\widehat \beta_ \I$ defined in \eqref{eq:interval lasso} for a generic interval  $\mathcal I \subset (0, n] $.
 
\begin{lemma} \label{lemma:interval lasso} 
 Let $\widehat \beta_ \I $ be defined as in  \eqref{eq:interval lasso} with 
    $\lambda = C_\lambda \sqrt { \log(pn) }$ for some sufficiently large constant $C _ \lambda > 0$. Under Assumptions \ref{assume:regression parameters}, \ref{assume:X} and \ref{assume:epsilon}
for any $ \mathcal I  \subset (0, n] $   such that 
$ | \mathcal I | \ge   \zeta$, it holds uniformly with probability at least $1 - cn^{-5}$ that 
\begin{align} \label{eq:lemma:interval lasso term 1}
&    | \widehat \beta_\I -\beta^*_\I  | _2^2 \le \frac{C_1 \s\log(pn)}{ |\I|  },
 \\\label{eq:lemma:interval lasso term 2}
 &     | \widehat \beta_\I  -\beta^*_\I  | _1  \le     C_2  \s\sqrt {  \frac{\log(p n )} {|\I|  } },
 \\\label{eq:lemma:interval lasso term 3}
 &     | (\widehat \beta_\I  )_{S^c} | _1 \le  3  | (\widehat \beta _\I  -\beta^*_\I )_{S }  | _1,
\end{align} 
where $ \beta^*_\I  = \frac{1}{|\I|} \sum_{  t \in \I } \beta^*_t  $, $S $ is union of the  supports of $\beta_t^* $ for $t\in \I$, i.e.~$S = \bigcup_{t \in \I}S_t$, and $C_1, C_2, c > 0$ are absolute constants depending on $C_\epsilon$, $D_{\epsilon}$, $C_X$, $D_{X}$, $C_{\kappa}$ and $C_{\lambda}$.
 \end{lemma}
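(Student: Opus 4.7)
The overall strategy is a uniform-over-intervals Lasso analysis, adapted to the non-stationarity of $\{\beta_t^*\}$ on $\I$. My roadmap has four steps.

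\textbf{Step 1 (Basic inequality).} Set $\delta=\widehat\beta_\I-\beta^*_\I$. Since $(\beta^*_\I)_{S^c}=0$, optimality of $\widehat\beta_\I$ in \eqref{eq:interval lasso} and the decomposition $y_t-X_t^\top\beta^*_\I=\epsilon_t+X_t^\top(\beta_t^*-\beta^*_\I)$ yield
\[
\frac{1}{|\I|}\sum_{t\in\I}(X_t^\top\delta)^2 \;\le\; \frac{2}{|\I|}\Big|\sum_{t\in\I}\epsilon_tX_t\Big|_\infty|\delta|_1+\frac{2}{|\I|}\Big|\sum_{t\in\I}X_tX_t^\top(\beta_t^*-\beta^*_\I)\Big|_\infty|\delta|_1+\frac{\lambda}{\sqrt{|\I|}}\bigl(|\delta_S|_1-|\delta_{S^c}|_1\bigr).
\]

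\textbf{Step 2 (Uniform control of the two linear terms).} The noise term is handled directly by \Cref{lemma:lasso deviation bound 1}, which via \Cref{thm:bernstein_exp_subExp_nonlinear} gives $|\I|^{-1}|\sum_{t\in\I}\epsilon_tX_t|_\infty \lesssim \sqrt{\log(pn)/|\I|}+\log^{1/\gamma}(pn)/|\I|$ uniformly in $\I$, and the first term dominates since $|\I|\ge\zeta\gtrsim\{\log(pn)\}^{2/\gamma-1}$.

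The bias term is the main novelty. Using $\sum_{t\in\I}(\beta_t^*-\beta^*_\I)=0$, we rewrite
\[
\frac{1}{|\I|}\sum_{t\in\I}X_tX_t^\top(\beta_t^*-\beta^*_\I)=\frac{1}{|\I|}\sum_{t\in\I}\bigl(X_tX_t^\top-\Sigma\bigr)(\beta_t^*-\beta^*_\I).
\]
Fix a coordinate $j$ and set $Z_t^{(j)}=X_{tj}X_t^\top(\beta_t^*-\beta^*_\I)-\mathbb{E}[X_{tj}X_t^\top(\beta_t^*-\beta^*_\I)]$. This is a nonstationary process of the form \eqref{eq:nonstationary}. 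Because $|\beta_t^*-\beta^*_\I|_2\le 2C_\kappa$ (Assumption~\ref{assume:regression parameters}\textbf{b}) and by a triangle/H\"older decomposition of the coupled increment, $\Delta_{m,2}^{Z^{(j)}}\lesssim\Delta_{m,4}^X\cdot\sup_{|v|_2=1}\|v^\top X_1\|_4$, inheriting exponential decay at rate $\gamma_1$ from \Cref{assume:X}\textbf{a}. The product of two sub-Weibull($\gamma_2$) random variables is sub-Weibull($\gamma_2/2$), so $\gamma(Z^{(j)})=(\gamma_1^{-1}+2\gamma_2^{-1})^{-1}=\gamma$. Apply \Cref{thm:bernstein_exp_subExp_nonlinear} with $x\asymp\sqrt{\log(pn)}$: the second term of \eqref{eq-z-bern-result} gives the desired Gaussian tail, and the first term is controlled provided $|\I|^{\gamma/2}\{\log(pn)\}^{\gamma/2}\gtrsim\log(pn)$, i.e.\ $|\I|\gtrsim\{\log(pn)\}^{2/\gamma-1}\asymp\zeta$---precisely the hypothesis of the lemma. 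Union-bounding over $j\in\{1,\ldots,p\}$ and over the $O(n^2)$ intervals with $|\I|\ge\zeta$ yields, with probability $1-cn^{-5}$,
\[
\max_{|\I|\ge\zeta}\Big|\frac{1}{|\I|}\sum_{t\in\I}(X_tX_t^\top-\Sigma)(\beta_t^*-\beta^*_\I)\Big|_\infty \;\lesssim\; \sqrt{\log(pn)/|\I|}.
\]

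\textbf{Step 3 (Cone condition) and Step 4 (Restricted eigenvalue).} Choosing $C_\lambda$ large enough that $\lambda\ge 4\sqrt{|\I|}\cdot(\text{noise}+\text{bias bounds above})$, the basic inequality reduces to
\[
\frac{1}{|\I|}\sum_{t\in\I}(X_t^\top\delta)^2+\frac{\lambda}{2\sqrt{|\I|}}|\delta_{S^c}|_1\;\le\;\frac{3\lambda}{2\sqrt{|\I|}}|\delta_S|_1,
\]
which immediately yields \eqref{eq:lemma:interval lasso term 3}. The restricted eigenvalue result (\Cref{theorem:RES Version II}) applies to $\delta$ since $\delta\in\{v:|v_{S^c}|_1\le 3|v_S|_1\}$ and $|\I|\ge\zeta$, giving $|\I|^{-1}\sum(X_t^\top\delta)^2\gtrsim|\delta|_2^2$. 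Combining with $|\delta_S|_1\le\sqrt{|S|}\,|\delta|_2$ and $|S|\le(K+1)\mathfrak{s}\lesssim\mathfrak{s}$ (since $K=O(1)$) gives $|\delta|_2\lesssim\sqrt{\mathfrak{s}\log(pn)/|\I|}$, i.e.\ \eqref{eq:lemma:interval lasso term 1}. The $\ell_1$ bound \eqref{eq:lemma:interval lasso term 2} then follows from $|\delta|_1\le 4|\delta_S|_1\le 4\sqrt{|S|}\,|\delta|_2$.

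\textbf{Main obstacle.} The delicate part is Step 2(b): establishing the uniform-in-$\I$ sub-Gaussian rate for the bias vector $|\I|^{-1}\sum(X_tX_t^\top-\Sigma)(\beta_t^*-\beta^*_\I)$. One must (i) verify that the coupled $\ell_2$ increment of the nonstationary product process $X_{tj}X_t^\top(\beta_t^*-\beta^*_\I)$ inherits the functional dependence from $\{X_t\}$, (ii) correctly identify the product tail index as $\gamma_2/2$ and consequently $\gamma(Z)=\gamma$, and (iii) see that the $|\I|\ge\zeta$ condition is exactly what makes the sub-exponential first summand in \eqref{eq-z-bern-result} negligible after a $\log(n^2p)$ union bound. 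Everything else is bookkeeping on top of the standard Lasso template.
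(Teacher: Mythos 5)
Your proposal is correct and follows essentially the same route as the paper: the same basic inequality, the same treatment of the noise term via the Bernstein-based deviation bound, the same centering trick $\sum_{t\in\I}(\beta_t^*-\beta^*_\I)=0$ for the bias term (which is exactly the content of the paper's \Cref{lemma: mis-specified deviation}, proved coordinate-wise with the identical FDM-inheritance and sub-Weibull-product arguments you sketch), followed by the cone condition and the restricted eigenvalue bound of \Cref{theorem:RES Version II}. The only cosmetic differences are that you fold the union bound over intervals into this lemma rather than deferring it, and that the bound $|\beta_t^*-\beta_\I^*|_2\lesssim C_\kappa$ carries a factor $K+1$ rather than $2$, which is immaterial since $K=O(1)$.
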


\begin{lemma} \label{eq:regression one change deviation bound}
Let $\mathcal I =[s,e) $ be any generic interval.  
  If $\I$ contains no change points and that 
$|\I| \ge \zeta =  C_\zeta    ( \s \log(pn) )^{2/\gamma -1 }$, then it holds that 
$$\p \bigg( \bigg|   \sum_{ t   \in \I } (y_ t   - X_t  ^\top \widehat  \beta _\I   )^2  - \sum_{ t    \in \I } (y_t       -  X_t ^\top \beta^*_\I    )^2  \bigg|  \ge C    \s  \log(np)  \bigg)  \le  n^{-5}, $$
where $  \beta _\I^*  = \frac{1}{|\I | } \sum_{t \in \I } \beta ^* _t  $ and $C > 0$ is an absolute constant.
\end{lemma}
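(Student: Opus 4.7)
Since $\mathcal{I}$ contains no change points, $\beta^*_t = \beta^*_{\mathcal{I}}$ for every $t \in \mathcal{I}$, so $y_t - X_t^\top \beta^*_{\mathcal{I}} = \epsilon_t$. The plan is to expand the square
\[
  \sum_{t \in \mathcal{I}}(y_t - X_t^\top \widehat{\beta}_{\mathcal{I}})^2 - \sum_{t \in \mathcal{I}}(y_t - X_t^\top \beta^*_{\mathcal{I}})^2
  \;=\; \sum_{t \in \mathcal{I}}\bigl\{X_t^\top(\widehat{\beta}_{\mathcal{I}} - \beta^*_{\mathcal{I}})\bigr\}^2 \;-\; 2\sum_{t \in \mathcal{I}} \epsilon_t X_t^\top(\widehat{\beta}_{\mathcal{I}} - \beta^*_{\mathcal{I}}),
\]
and bound the two resulting terms separately, each of order $\mathfrak{s}\log(pn)$ with probability at least $1 - c n^{-5}$.

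For the quadratic (prediction) term, I would split $X_tX_t^\top = \Sigma + (X_tX_t^\top - \Sigma)$ and write
\[
  \sum_{t \in \mathcal{I}} \bigl\{X_t^\top \Delta\bigr\}^2 \;\le\; |\mathcal{I}|\,\Lambda_{\max}(\Sigma)\,|\Delta|_2^2 \;+\; \Bigl|\Delta^\top \Bigl(\sum_{t\in\mathcal{I}}(X_tX_t^\top - \Sigma)\Bigr)\Delta\Bigr|,
\]
with $\Delta = \widehat{\beta}_{\mathcal{I}} - \beta^*_{\mathcal{I}}$. By \Cref{lemma:interval lasso}, $|\Delta|_2^2 \lesssim \mathfrak{s}\log(pn)/|\mathcal{I}|$, so the first piece is $\lesssim \mathfrak{s}\log(pn)$. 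The deviation piece can be handled via \Cref{lemma:lasso l infty deviations}/\Cref{theorem:RES Version II} applied with $|\Delta|_1 \lesssim \mathfrak{s}\sqrt{\log(pn)/|\mathcal{I}|}$ (again from \Cref{lemma:interval lasso}), yielding a contribution of the same order once $|\mathcal{I}| \ge \zeta = C_\zeta\{\mathfrak{s}\log(pn)\}^{2/\gamma-1}$.

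For the cross term, I would apply Hölder's inequality to get
\[
  \Bigl|\sum_{t\in\mathcal{I}} \epsilon_t X_t^\top \Delta\Bigr| \;\le\; \Bigl|\sum_{t\in\mathcal{I}} \epsilon_t X_t\Bigr|_\infty \cdot |\Delta|_1,
\]
invoke \Cref{lemma:lasso deviation bound 1} to obtain $|\sum_{t\in\mathcal{I}}\epsilon_tX_t|_\infty \lesssim \sqrt{|\mathcal{I}|\log(pn)} + \log^{1/\gamma}(pn)$ with the stated probability, and combine with the $\ell_1$-bound on $\Delta$ to get
\[
  |\text{cross}| \;\lesssim\; \mathfrak{s}\log(pn) + \mathfrak{s}\,\frac{\log^{1/2 + 1/\gamma}(pn)}{|\mathcal{I}|^{1/2}}.
\]
The condition $|\mathcal{I}| \ge C_\zeta\{\mathfrak{s}\log(pn)\}^{2/\gamma - 1}$ is precisely what is needed to ensure the second summand is dominated by the first, so the whole bound is $\lesssim \mathfrak{s}\log(pn)$. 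Taking a union bound over the $O(n^2)$ possible intervals then adjusts the constant in front of $\mathfrak{s}\log(pn)$ but leaves the probability at $1 - n^{-5}$ by enlarging the constants in \Cref{lemma:interval lasso} and \Cref{lemma:lasso deviation bound 1}.

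The main obstacle here is not any single step — all ingredients (the Lasso $\ell_1/\ell_2$ bounds, the $\ell_\infty$-bound on $\sum \epsilon_tX_t$, and the Gram-matrix deviation from \Cref{theorem:RES Version II}) are already stated earlier — but rather the careful bookkeeping of how the threshold $\zeta = C_\zeta\{\mathfrak{s}\log(pn)\}^{2/\gamma-1}$ interacts with the tail rate $\gamma < 1$ arising from functional dependence. In particular, the additive $\log^{1/\gamma}(pn)$ term in the Bernstein-type bound for $|\sum \epsilon_tX_t|_\infty$ must be absorbed into $\sqrt{|\mathcal{I}|\log(pn)}$, and this is only possible once $|\mathcal{I}|$ is at least of order $\log^{2/\gamma - 1}(pn)$, exactly matching $\zeta$ up to the $\mathfrak{s}^{2/\gamma-1}$ factor that also appears in the Lasso error bounds.
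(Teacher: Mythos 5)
Your proposal is correct and follows essentially the same route as the paper's own proof: the identical square expansion into a quadratic term and a cross term, the quadratic term controlled via the restricted-eigenvalue result (\Cref{theorem:RES Version II}) together with the Lasso $\ell_2/\ell_1$ bounds and cone condition from \Cref{lemma:interval lasso}, and the cross term via H\"older plus the Bernstein-type $\ell_\infty$ bound of \Cref{lemma:lasso deviation bound 1}, with the threshold $|\mathcal I|\ge\zeta$ absorbing the $\log^{1/\gamma}(pn)$ term exactly as you describe. The only superfluous element is the union bound over intervals, which the paper defers to where the lemma is actually applied; the stated probability $n^{-5}$ is for a single generic interval.
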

\begin{proof} 
   Note that 
\begin{align}  \nonumber 
   \bigg|  \sum_{ t     \in \I } (y_ t      - X_t   ^\top \widehat \beta_\I   )^2  - \sum_{ t     \in \I } (y_ t      - X_t  ^\top \beta^* _t      )^2  \bigg|   
 = &\bigg| \sum_{t \in \I } \big\{  X_t  ^\top  ( \widehat \beta_\I  -   \beta^* _t  )   \big\} ^2 -  2 \sum_{i\in \I }  \epsilon_t  X_t ^\top ( \widehat \beta_\I   -  \beta^*_t  ) \bigg| 
 \\ \label{eq:regression one change point deviation bound term 1}
 \le &   \sum_{t \in \I } \big\{  X_t^\top  ( \widehat \beta_\I  -    \beta^*_\I  )   \big\}   ^2 
 \\   \label{eq:regression one change point deviation bound term 3}
 &+ 2 \bigg|    \sum_{t \in \I }  \epsilon_t X_t^\top ( \widehat \beta_\I   -  \beta^*_\I ) \bigg|  .
\end{align}

Suppose all the good events in \Cref{lemma:interval lasso} holds. 

{\bf Step 1.} By \Cref{lemma:interval lasso},   $\widehat \beta_\I  -    \beta^*_\I  $ satisfies the cone condition that 
$$  | (\widehat \beta_\I  -    \beta^*_\I  )_{S^c} |_1 \le 3  | (\widehat \beta_\I  -    \beta^*_\I )_S   |_1   .$$
It follows from \Cref{theorem:RES Version II} that with probability at least $ 1-n^{-5}$,
\begin{align*} 
 \bigg| \frac{1}{|\I| } \sum_{ t \in \I } \big\{  X_t^\top  ( \widehat \beta_\I  -    \beta^*_\I  )   \big\} ^2    - ( \widehat \beta_\I  -    \beta^*_\I    )^\top \Sigma  ( \widehat \beta_\I  -    \beta^*_\I  )  \bigg|  \le C_1  \sqrt { \frac{\s \log(pn) }{|\I| }} | \widehat \beta_\I  -    \beta^*_\I     |_2 ^2  .
\end{align*}
The above display   gives
\begin{align*}    \bigg| \frac{1}{|\I| } \sum_{t \in \I } \big\{  X_t^\top  ( \widehat \beta_\I  -    \beta^*_\I  )    \big\}^2 \bigg| \le &  \Lambda_{\max}(\Sigma)   | \widehat \beta_\I  -    \beta^*_\I     |_2 ^2   + C_1  \sqrt { \frac{\s \log(pn) }{|\I| }} | \widehat \beta_\I  -    \beta^*_\I     |_2 ^2 
\\
\le & \Lambda_{\max}(\Sigma)  | \widehat \beta_\I  -    \beta^*_\I     |_2 ^2 + C_1  \sqrt { \frac{\s \log(pn) }{C_\zeta\s \log(pn)  }} | \widehat \beta_\I  -    \beta^*_\I     |_2 ^2   
\\
\le &  \frac{ C_2\s\log(pn)}{|\I| }  , \end{align*} 
where the second inequality follows from the assumption that  $|\I| \ge  C_\zeta ( \s \log(pn) )^{2/\gamma -1 } \ge C_\zeta\s \log(pn) $ when $ \gamma < 1$, and the last inequality follows from  \Cref{eq:lemma:interval lasso term 1}  in \Cref{lemma:interval lasso} and $\Lambda_{\max}(\Sigma) < \infty$.
This gives 
$$    \bigg|   \sum_{t \in \I } \big\{  X_t^\top  ( \widehat \beta_\I  -    \beta^*_\I  )    \big\}^2 \bigg| \le  2 C_2\s\log(pn) . $$

{\bf Step 2.} It holds with probability at least $ 1-n^{-5}$ that
\begin{align*}  \nonumber  \bigg|\sum_{ t  \in \I   } \epsilon_t   X_t ^\top (  \widehat \beta _\I  -  \beta^* _{\I  }) \bigg|
\le 
&
       \bigg| \sum_{ t  \in \I   } \epsilon_t   X_t ^\top \bigg| _\infty  |   \widehat \beta _\I  -  \beta^* _{\I  } | _1 \le C_3   \sqrt { \log(np)    |\I | }  \s\sqrt {  \frac{\log(p n )} {|\I|  } } 
      \\
=  &  C_3    \s \log(pn),
\end{align*}
where the second inequality follows from \Cref{lemma:lasso deviation bound 1} with $|\I| \ge  C_\zeta ( \s \log(pn) )^{2/\gamma -1 }$ and  \Cref{eq:lemma:interval lasso term 2}  in \Cref{lemma:interval lasso}.

 The desired result follows by putting {\bf Step 1} into \Cref{eq:regression one change point deviation bound term 1} and {\bf Step 2} into \Cref{eq:regression one change point deviation bound term 3}.
\end{proof}

\begin{lemma} \label{lem-case-5-prop-2-needed}
Under the assumptions and notation in \Cref{prop-2}, suppose there exists no true change point in the interval $\I$.   For any interval $\mathcal J \supset \I$ with 
$$ | \I | \ge \zeta  = C_\zeta ( \s \log(pn) )^{2/\gamma -1 } , $$
	  it holds that with probability at least $1 -  2n^{-5}$,
	$$
		\sum_{t \in \I} (y_t - X _t^{\top} \beta^*_\I)^2 - \sum_{t \in \I} (y_t - X_t^{\top} \widehat{\beta} _\J)^2 \leq C \s \log(pn).
	$$
\end{lemma}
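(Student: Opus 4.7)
Since $\beta_t^* = \beta_\I^*$ for every $t \in \I$ (no change point in $\I$), we have $y_t - X_t^\top \beta_\I^* = \epsilon_t$, so the quantity on the left expands as
\[
\mathrm{LHS} = 2\sum_{t \in \I}\epsilon_t X_t^\top \delta - \sum_{t \in \I}(X_t^\top \delta)^2, \qquad \delta := \widehat{\beta}_\J - \beta_\I^*.
\]
I plan to decompose $\delta = \delta_1 + \delta_2$, where $\delta_1 := \widehat{\beta}_\J - \beta_\J^*$ is the lasso fluctuation and $\delta_2 := \beta_\J^* - \beta_\I^*$ is the population bias coming from change points inside $\J \setminus \I$. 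Both components lie in the span of $S := \bigcup_{t \in \J} S_t$, with $|S| \lesssim \s$, but they behave very differently: $\delta_1$ satisfies the lasso cone condition and has $|\delta_1|_2 \lesssim \sqrt{\s\log(pn)/|\J|}$ by \Cref{lemma:interval lasso}, while $\delta_2$ is deterministic with $|\delta_2|_2$ possibly of order one.

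The $\delta_1$ piece is handled by a standard H\"older argument: the concentration bound $|\sum_{t\in\I}\epsilon_t X_t|_\infty \lesssim \sqrt{|\I|\log(pn)}$ (Lemma~\ref{lemma:lasso deviation bound 1}) combined with $|\delta_1|_1 \leq C\s\sqrt{\log(pn)/|\J|}$ (\Cref{lemma:interval lasso} on $\J$) gives $2|\sum_\I \epsilon_t X_t^\top \delta_1| \lesssim \s\log(pn)\sqrt{|\I|/|\J|} \leq \s\log(pn)$, using $|\I|\leq|\J|$.

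The main obstacle is the $\delta_2$ piece, because $\delta_2$ is not sparse-in-cone and $|\delta_2|_2$ need not vanish. The key point is that, once $\I$ and $\J$ are fixed, $\delta_2$ is a deterministic vector, so one can apply \Cref{thm:bernstein_exp_subExp_nonlinear} to the scalar processes $\{\epsilon_t X_t^\top \delta_2\}_{t\in\mathbb{Z}}$ and $\{(X_t^\top \delta_2)^2 - \delta_2^\top\Sigma\delta_2\}_{t\in\mathbb{Z}}$ (both mean-zero, strictly stationary, with functional-dependence and tail parameters inherited from Assumptions~\ref{assume:X}--\ref{assume:epsilon}). This yields
\[
\Big|\sum_{t \in \I}\epsilon_t X_t^\top \delta_2\Big| \lesssim |\delta_2|_2\sqrt{|\I|\log n}, \qquad \sum_{t \in \I}(X_t^\top \delta_2)^2 \geq \tfrac{c_{\min}}{2}\,|\I|\,|\delta_2|_2^2,
\]
the second inequality using $\Lambda_{\min}(\Sigma)\geq c_{\min}$ and $|\I|\geq \zeta \gg \log n$. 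For the cross term, Cauchy--Schwarz gives $|\sum_\I (X_t^\top \delta_1)(X_t^\top\delta_2)| \leq \bigl(\sum_\J (X_t^\top\delta_1)^2\bigr)^{1/2}\bigl(\sum_\I(X_t^\top\delta_2)^2\bigr)^{1/2} \lesssim \sqrt{\s\log(pn)}\cdot|\delta_2|_2\sqrt{|\I|}$, where the first factor is controlled via \Cref{theorem:RES Version II} together with the $|\delta_1|_2^2\lesssim \s\log(pn)/|\J|$ bound.

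Combining all pieces, $\mathrm{LHS}$ is bounded by $C\s\log(pn) + A|\delta_2|_2\sqrt{|\I|} - \tfrac{c_{\min}}{2}|\I||\delta_2|_2^2$ for a constant $A \lesssim \sqrt{\s\log(pn)}$ coming jointly from the noise and cross terms. Young's inequality on the last two summands produces a residual of order $A^2/|\I| \cdot |\I|^{-1}\cdot\ldots = O(\s\log(pn))$, absorbing the $|\delta_2|_2$-linear contributions completely. This yields $\mathrm{LHS}\leq C\s\log(pn)$ on an event of probability at least $1-2n^{-5}$, where the two failure events are (i) \Cref{lemma:interval lasso} on $\J$ and (ii) the joint Bernstein bounds above. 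The conceptual crux is recognising that although $\delta_2$ is not small, fixing it makes all the probabilistic bounds one-dimensional, so a single direction of restricted eigenvalue suffices and the negative quadratic term absorbs everything through Young's inequality.
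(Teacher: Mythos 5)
Your proof is correct, and it takes a genuinely different route from the paper's. The paper works directly with the combined error $\Theta=\beta^*_\I - \widehat\beta_\J$: it lower-bounds $\sum_\I(X_t^\top\Theta)^2$ via the non-cone restricted-eigenvalue bound (\Cref{theorem:RES}, whose penalty involves $|\Theta|_1^2$), then exploits the structural identity $\Theta_{S^c}=-(\widehat\beta_\J)_{S^c}$ together with the lasso $\ell_1$ bound on $(\widehat\beta_\J)_{S^c}$ to control the $|\Theta|_1^2$ loss, and finally cancels the remaining $|\I||\Theta|_2^2$ terms between the linear (noise) and quadratic parts. You instead split $\Theta=-\delta_1-\delta_2$ with $\delta_1=\widehat\beta_\J-\beta^*_\J$ (in the lasso cone, so the cone-restricted RE \Cref{theorem:RES Version II} applies) and $\delta_2=\beta^*_\J-\beta^*_\I$ (deterministic, not small), then observe that once $\I,\J$ are fixed, $\delta_2$ is a fixed direction, so one-dimensional Bernstein bounds via \Cref{thm:bernstein_exp_subExp_nonlinear} suffice for the $\delta_2$-terms — no RE or union bound over sparse supports needed for that piece. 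Your Young's-inequality absorption of the linear-in-$|\delta_2|_2$ terms by the negative quadratic is the analogue of the paper's cancellation step. What the paper's route buys is compactness: a single RE application and no bookkeeping of cross terms. What your route buys is conceptual clarity — the "lasso error vs.\ population bias" split makes it transparent why $\delta_2$ being $\Theta(1)$ in $\ell_2$ causes no trouble — and it avoids the non-cone RE (Version I) entirely, replacing it with cheaper fixed-direction concentration. Two small cleanups: the first factor in your Cauchy–Schwarz should be $(\sum_\I(X_t^\top\delta_1)^2)^{1/2}$ (bounded by the $\sum_\J$ version since $\I\subset\J$), and the displayed "residual of order $A^2/|\I|\cdot|\I|^{-1}\cdots$" is garbled — it should simply read $A^2/(2c_{\min})=O(\s\log(pn))$ — but these do not affect validity.
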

 
\begin{proof} 
	Let $\Theta  = \beta^*_\I  -\widehat \beta _\J$. Then with probability at least $ 1-n^{-5}$, we have
	\begin{align}
		   \sum_{t \in I} (X_t^{\top} \Theta  )^2  \geq & \frac{\Lambda_{\min} (\Sigma) }{2}  |\I|   |\Theta    |_2 ^2  - C_1  \log(pn)  |\I|^{1-\gamma } |\Theta  |_1^2  \nonumber \\
		\geq &  \frac{\Lambda_{\min} (\Sigma) }{2}   |\I|  |\Theta  |_2^2  -  2C_1  \log(pn)  |\I|^{1-\gamma } |\Theta _S   |_1 ^2 - 2C_1  \log(pn)  |\I|^{1-\gamma } |\Theta _{S^c } |_1^2  \nonumber \\
		\geq &  \frac{\Lambda_{\min} (\Sigma) }{2}   |\I|  |\Theta  |_2^2  -  2C_1\s    \log(pn)  |\I|^{1-\gamma } |\Theta _S   |_2 ^2 - 2C_1   \log(pn)  |\I|^{1-\gamma } | ( \widehat \beta _{\J} )   _{S^c } |_1^2 \nonumber \\ 
		\geq &  \frac{\Lambda_{\min} (\Sigma) }{4}   |\I|  |\Theta  |_2 ^2 - 2C_1   \log(pn)  |\I|^{1-\gamma } | ( \widehat \beta _{\J} )   _{S^c } |_1^2
		\label{eq-lem16-pf-1-1}
	\end{align} 
	where the first inequality follows from \Cref{theorem:RES}, the  second inequality follows from the inequality that $(|a| + |b|)^2 \leq 2|a|^2+2|b|^2$, the third inequality follows from $(\beta_\I^* )_{S^c} = 0 $  and the  last inequality follows from $| \I | \ge  C_\zeta ( \s \log(pn) )^{2/\gamma -1 } \ge  C_\zeta(\s\log(pn) )^{1/\gamma}$ because $ \gamma<1$ and that $|\Theta_S|_2 \le |\Theta|_2$. 
Since $$|\J| \ge |\I| \ge  C_\zeta ( \s \log(pn) )^{2/\gamma -1 }, $$	
	it follows from  \Cref{lemma:interval lasso} that,
	\begin{align} | ( \widehat \beta _{\J} )   _{S^c } |_1  \le C_2 \s \sqrt { \frac{\log(pn) }{|\I| }}.\label{eq-lem16-pf-01},
	\end{align}  
So  \Cref{eq-lem16-pf-1-1} and \Cref{eq-lem16-pf-01} give  
\begin{align} \nonumber 
		   \sum_{t \in I} (X_t^{\top} \Theta  )^2  
		\geq  &  \frac{\Lambda_{\min} (\Sigma) }{4}   |\I|  |\Theta  |_2 ^2 - 2C_1   \log(pn)  |\I|^{1-\gamma }C_2^2   \s^2 \frac{\log(pn) }{|\I| }  
		\\\nonumber  
		\ge &  \frac{\Lambda_{\min} (\Sigma) }{4}   |\I|  |\Theta  |_2 ^2   -C_3  \frac{ \s^2 \log^2 (pn) }{|\I|^\gamma  }  
		\\
		\ge &   \frac{\Lambda_{\min} (\Sigma) }{4}   |\I|  |\Theta  |_2 ^2   -C_3    \s  \log  (pn)  \label{eq-lem16-pf-1},
	\end{align}  
	where the third inequality follows from $| \I | \ge  C_\zeta ( \s \log(pn) )^{2/\gamma -1 } \ge  C_\zeta(\s\log(pn) )^{1/\gamma}$. Similarly 
	\begin{align}\nonumber 
	  \sum_{t \in I} \epsilon_t X_t^{\top}\Theta  
	\leq & 2  \bigg |\sum_{t \in I }  X_t \epsilon_t   \bigg |_{\infty }  \bigg (   \sqrt {\s } | \Theta _S |_2  + |(\widehat \beta_\J)_{S^c} |_1  \bigg)   
	\\\nonumber 
	\le & C_4 \sqrt {|\I|   \log(pn)  }  \bigg (   \sqrt {\s } | \Theta   |_2  + | (\widehat \beta_\J)_{S^c} |_1  \bigg)   
	\\\nonumber 
	\le 
	&  \frac{\Lambda_{\min}(\Sigma) }{8} |\I| | \Theta|_2^2  +C_5  \s\log(pn) + C_4 \sqrt {|\I|   \log(pn)  }   |(\widehat \beta_\J)_{S^c} |_1  
	\\
	\le & \frac{\Lambda_{\min}(\Sigma) }{16} |\I| | \Theta|_2^2  +C_5  \s\log(pn) + C_6\s\log(pn),\label{eq-lem16-pf-2} 
	\end{align} 
	where the second inequality follows from \Cref{lemma:lasso deviation bound 1} and that $|\Theta_S|_2 \le |\Theta|_2$, the third inequality follows from the inequality $2ab \leq a^2 + b^2$, and the last inequality follows from  \Cref{eq-lem16-pf-01}. 
	 We then have from \Cref{eq-lem16-pf-1} and \Cref{eq-lem16-pf-2} that 
	\begin{align*}
	& \sum_{t \in \I} (y_t - X _t^{\top} \beta^*_\I)^2 - \sum_{t \in \I} (y_t - X_t^{\top} \widehat{\beta} _\J)^2 
	\\  =&  2 \sum_{t \in \I} \epsilon_t X_t^{\top}\Theta  - \sum_{t \in \I} (X_t^{\top}\Theta )^2 \\
	\leq & \frac{\Lambda_{\min}(\Sigma) }{8} |\I| | \Theta|_2^2  + 2 C_5  \s\log(pn) + 2 C_6\s\log(pn) - \frac{\Lambda_{\min} (\Sigma) }{4}   |\I|  |\Theta  |_2 ^2   +C_3    \s  \log  (pn) 
	\\
	\le &  C_7 \s\log(pn),
	\end{align*}
	 as desired. 
\end{proof}

\subsection[]{Proof of \Cref{lemmma:refinement_lasso}}
\begin{proof}[Proof of \Cref{lemmma:refinement_lasso}]
Since with probability at least $1 - cn^{-3}$, $\mathcal{E}_{\mathrm{DP}}$ defined in \eqref{eq:event_DP} holds, we condition on the $\mathcal{E}_{\mathrm{DP}}$ in the following proof. 
Let $l_k = \widehat{\eta}_{k}$ and $r_k = \widehat{\eta}_{k+1}$. We have that
\begin{align}\label{eq:local_err_preliminary}
    |l_k - \eta_{k}| \leq C_{\mathfrak{e}}\frac{\s \log(np) + \zeta}{\kappa_{k}^2} \;\; \text{and} \;\; |r_k - \eta_{k+1}| \leq C_{\mathfrak{e}}\frac{\s \log(np) + \zeta }{\kappa_{k+1}^2}.
\end{align}
By the definitions of the preliminary estimators given in \Cref{sec:algorithm}, we have
\begin{align*}
   \widehat  \beta_k  = \arg\min_{\beta \in \mathbb R^p }  \frac{1}{  l_k - r_k }\sum_{t =l_k }^ { r_k-1 }   (y_t  -X_t^\top \beta) ^2 +  \frac{\lambda}{\sqrt {l_k - r_k}  }  |\beta |_1.
\end{align*}
By \Cref{assume:regression parameters} and \eqref{eq:local_err_preliminary}, it follows that
\begin{align}\label{eq:interv_leng_preliminary}
    l_k-r_k \geq \eta_{k+1} - \eta_{k} - C_{\mathfrak{e}}\frac{\s \log(np) + \zeta }{\kappa_{k}^2} - C_{\mathfrak{e}}\frac{\s \log(np) + \zeta }{\kappa_{k+1}^2} \geq \frac{\Delta}{2} \geq \zeta.
\end{align}
Therefore, by \eqref{eq:lemma:interval lasso term 1} in \Cref{lemma:interval lasso},
\begin{align}\label{eq:lasso_err_preliminary_1}
    \bigg| \widehat \beta_k - \frac{1}{l_k-r_k}\sum_{t = l_k}^{r_k-1}\beta_t^* \bigg|_2^2 \leq C_3\frac{\s\log(pn)}{l_k-r_k} \leq 2C_3\frac{\s\log(pn)}{\Delta} \leq C_4\alpha_n^{-1}\kappa_{k+1}^2,
\end{align}
and also
\begin{align}\label{eq:lasso_err_preliminary_3}
    \bigg| \widehat \beta_k - \frac{1}{r_k-l_k}\sum_{t = l_k}^{r_k-1}\beta_t^* \bigg|_2^2 \leq C_5\alpha_n^{-1}\kappa_{k}^2.
\end{align}
Moreover, by \eqref{eq:lemma:interval lasso term 2} in \Cref{lemma:interval lasso},
\begin{align}\label{eq:lasso_err_preliminary_l1_1}
    \bigg| \widehat \beta_k - \frac{1}{r_k-l_k}\sum_{t = l_k}^{r_k-1}\beta_t^* \bigg|_1 \leq C_6\s\sqrt{\frac{\log(pn)}{r_k-l_k}} \leq \sqrt{2}C_6\s\sqrt{\frac{\log(pn)}{\Delta}} \leq C_7\s^{1/2}\alpha_n^{-1/2}\kappa_{k+1},
\end{align}
and also
\begin{align}\label{eq:lasso_err_preliminary_l1_3}
    \bigg| \widehat \beta_k - \frac{1}{r_k-l_k}\sum_{t = l_k}^{r_k-1}\beta_t^* \bigg|_1 \leq C_8\s^{1/2}\alpha_n^{-1/2}\kappa_{k}.
\end{align}
Note that $[l_k,r_k)$ can contain $0$, $1$ or $2$ change points. The case that $[l_k, r_k)$ contains $2$ change points is analysed. The other cases are similar and simpler, and therefore omitted. Let $\eta_{k}$ and $\eta_{k+1}$ be the two change points contained in $[l_k,r_k)$, it follows that
\begin{align*}
    \beta_{\eta_k}^* - \frac{1}{r_k-l_k}\sum_{t = l_k}^{r_k-1}\beta_t^* =&  \beta_{\eta_k}^* - \frac{1}{r_k-l_k}\bigg(\sum_{t = l_k}^{\eta_{k}-1}\beta_{\eta_{k-1}}^* + \sum_{t = \eta_{k}}^{\eta_{k+1}-1}\beta_{\eta_{k}}^* + \sum_{t = \eta_{k+1}}^{r_k-1}\beta_{\eta_{k+1}}^*\bigg)\\
    =& \frac{\eta_{k-1}-l_k}{r_k-l_k}(\beta_{\eta_{k}}^* - \beta_{\eta_{k-1}}^*) + \frac{r_k-\eta_k}{r_k-l_k}(\beta_{\eta_{k}}^* - \beta_{\eta_{k+1}}^*).
\end{align*}
Therefore
\begin{align}\label{eq:lasso_err_preliminary_2}
    \bigg|\beta_{\eta_k}^* - \frac{1}{r_k-l_k}\sum_{t = l_k}^{r_k-1}\beta_t^*\bigg|_2^2 
    \leq& 2\frac{(\eta_{k-1}-l_k)^2}{(r_k-l_k)^2}(\beta_{\eta_{k}}^* - \beta_{\eta_{k-1}}^*)^2 + 2\frac{(r_k - \eta_{k})^2}{(r_k-l_k)^2}(\beta_{\eta_{k}}^* - \beta_{\eta_{k+1}}^*)^2 \nonumber\\
    \leq& 8\frac{(\s \log(np) + \zeta)^2}{\kappa_{k}^4\Delta^2}\kappa_{k}^2 + 8\frac{(\s \log(np) + \zeta)^2}{\kappa_{k+1}^4\Delta^2}\kappa_{k+1}^2 \nonumber\\
    \leq& 16\frac{(\s \log(np) + \zeta)^2}{C_{snr}^2\alpha_n^2(\s\log( np))^{4/\gamma -2}}\kappa_{k+1}^2\nonumber\\
    =& C_9\alpha_n^{-2}\kappa_{k+1}^2,
\end{align}
where the second inequality follows from \eqref{eq:local_err_preliminary} and \eqref{eq:interv_leng_preliminary}, and the third inequality follows from \Cref{assume:regression parameters}. More specifically, by \Cref{assume:regression parameters},
$$\Delta \ge C_{snr} \max_{k \in \{1, \dots, K\}}\frac{\alpha_n(\s\log( np))^{2/\gamma -1}}{\kappa_k^2},$$
which further leads to that
$$\Delta^2 \ge C_{snr}^2\frac{\alpha_n^2(\s\log( np))^{4/\gamma -2}}{\kappa_{k+1}^2\kappa_{k}^2} \;\; \text{and} \;\; \Delta^2 \ge C_{snr}^2\frac{\alpha_n^2(\s\log( np))^{4/\gamma -2}}{\kappa_{k+1}^4}.$$
By symmetry, we also have 
\begin{align}\label{eq:lasso_err_preliminary_4}
    \bigg|\beta_{\eta_k}^* - \frac{1}{r_k-l_k}\sum_{t = l_k}^{r_k-1}\beta_t^*\bigg|_2^2 
    \leq C_{10}\alpha_n^{-2}\kappa_{k}^2,
\end{align}
Note that combining \eqref{eq:lasso_err_preliminary_1} and \eqref{eq:lasso_err_preliminary_2} and combining \eqref{eq:lasso_err_preliminary_3} and \eqref{eq:lasso_err_preliminary_4} immediately lead to (i).

Similarly, for the bias, we have that
\begin{align}\label{eq:lasso_err_preliminary_l1_2}
    \bigg|\beta_{\eta_k}^* - \frac{1}{r_k-l_k}\sum_{t = l_k}^{r_k-1}\beta_t^*\bigg|_1 
    \leq& \frac{\eta_{k-1}-l_k}{r_k-l_k}\big|\beta_{\eta_{k}}^* - \beta_{\eta_{k-1}}^*\big|_1 + 2\frac{r_k - \eta_{k}}{r_k-l_k}\big|\beta_{\eta_{k}}^* - \beta_{\eta_{k+1}}^*\big|_1 \nonumber\\
    \leq& 2\sqrt{2}\frac{\s \log(np) + \zeta}{\kappa_{k}^2\Delta}\s^{1/2}\kappa_{k} + 2\sqrt{2}\frac{\s \log(np) + \zeta}{\kappa_{k+1}^2\Delta}\s^{1/2}\kappa_{k+1} \nonumber\\
    \leq& 4\sqrt{2}\frac{\s \log(np) + \zeta}{C_{snr}\alpha_n(\s\log( np))^{2/\gamma -1}}\s^{1/2}\kappa_{k+1}\nonumber\\
    =& C_{11}\alpha_n^{-1}\s^{1/2}\kappa_{k+1},
\end{align}
By symmetry, we also have 
\begin{align}\label{eq:lasso_err_preliminary_l1_4}
    \bigg|\beta_{\eta_k}^* - \frac{1}{r_k-l_k}\sum_{t = l_k}^{r_k-1}\beta_t^*\bigg|_1 
    \leq C_{12}\alpha_n^{-1}\s^{1/2}\kappa_{k},
\end{align}
Note that combining \eqref{eq:lasso_err_preliminary_l1_1} and \eqref{eq:lasso_err_preliminary_l1_2} and combining \eqref{eq:lasso_err_preliminary_l1_3} and \eqref{eq:lasso_err_preliminary_l1_4} immediately lead to (ii).
\end{proof}

\subsection[]{Proof of \Cref{lemma:consistency_jump_size_est}}
\begin{proof}[Proof of \Cref{lemma:consistency_jump_size_est}]
By definition,
\begin{align*}
    \frac{\widehat{\kappa}_k}{\kappa_k} - 1 =& \kappa_k^{-1} \big|\widehat{\beta}_{k} - \widehat{\beta}_{k-1}\big|_2 - \kappa_k^{-1}\big|\beta^*_{\eta_{k}} - \beta^*_{\eta_{k-1}}\big|_2 \leq \kappa_k^{-1} \big|\widehat{\beta}_{k} - \beta^*_{\eta_{k}}\big|_2 + \kappa_k^{-1}\big|\widehat{\beta}_{k-1} - \beta^*_{\eta_{k-1}}\big|_2,
\end{align*}
where the inequality follows from the triangle inequality. Applying \Cref{lemmma:refinement_lasso} immediately lead to the desired result.
\end{proof}

\subsection[]{Proof of \Cref{lemma:interval lasso}}

\begin{lemma} \label{lemma:beta bounded 1}
Denote $ \beta^*_\I  = \frac{1}{|\I|} \sum_{  i \in \I } \beta^*_i  $. Suppose   \Cref{assume:regression parameters} holds. Let $\I \subset (0,n] $.  Then 
   $$ |\beta^*_\I - \beta^*_ i |_2    \le CC_\kappa,$$
  for some absolute constant $C$ independent of $n$.
\end{lemma}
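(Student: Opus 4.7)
The plan is to prove this bound by an elementary telescoping argument, exploiting the piecewise-constant structure of $\{\beta^*_t\}$ together with the two pieces of Assumption~\ref{assume:regression parameters}\textbf{b}, namely that each individual jump satisfies $\kappa_k \le C_\kappa$ and that the total number of change points $K$ is an absolute constant.

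First I would fix an arbitrary $i \in \{1, \dots, n\}$ and write
\[
    \beta^*_\I - \beta^*_i = \frac{1}{|\I|}\sum_{t \in \I}(\beta^*_t - \beta^*_i),
\]
so that by the triangle inequality
\[
    |\beta^*_\I - \beta^*_i|_2 \le \frac{1}{|\I|}\sum_{t \in \I}|\beta^*_t - \beta^*_i|_2 \le \max_{t \in \I}|\beta^*_t - \beta^*_i|_2.
\]
It therefore suffices to control $|\beta^*_t - \beta^*_i|_2$ uniformly in $t$.

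Next, for any pair $(t,i)$, the coefficient sequence between them changes exactly at those $\eta_k$ lying in the half-open interval with endpoints $t$ and $i$. Writing the difference as a telescoping sum of the jump vectors $\Psi_k = \beta^*_{\eta_k} - \beta^*_{\eta_k-1}$ over the change points encountered and applying the triangle inequality, I would obtain
\[
    |\beta^*_t - \beta^*_i|_2 \le \sum_{k : \eta_k \text{ between } t \text{ and } i}|\Psi_k|_2 = \sum_{k : \eta_k \text{ between } t \text{ and } i}\kappa_k \le K\,C_\kappa,
\]
where the last inequality invokes both parts of Assumption~\ref{assume:regression parameters}\textbf{b}: each $\kappa_k \le C_\kappa$ and there are at most $K$ terms in the sum. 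Combining the two displays yields $|\beta^*_\I - \beta^*_i|_2 \le K\,C_\kappa$, so the claimed bound holds with $C = K$, which is absolute by \Cref{assume:regression parameters}.

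There is no genuine obstacle here; the argument is purely combinatorial and relies only on the piecewise-constant structure of $\beta^*_t$ and the two explicit constants in \Cref{assume:regression parameters}\textbf{b}. The only mild subtlety is ensuring that the absorbing constant $C$ in the statement genuinely depends on nothing beyond $K$, which is guaranteed because $K$ is treated as an absolute constant throughout the paper.
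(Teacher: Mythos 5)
Your proposal is correct and follows essentially the same route as the paper: both arguments reduce the averaged difference to a pointwise bound on $|\beta^*_t - \beta^*_i|_2$ via the triangle inequality and then telescope through the at most $K$ intervening jumps, each of size at most $C_\kappa$, yielding a constant of order $KC_\kappa$. Your version is if anything slightly cleaner, as it handles a general interval $\I$ and reference index $i$ directly rather than treating the case $\I=[1,n]$, $i=1$ and asserting the general case is similar.
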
 
 \begin{proof}
 It suffices to consider $\I =[1,n]$  and  $\beta^*_ i= \beta^*_1 $ as the general case is similar. Let $\Delta_k = \eta_{k}-\eta_{k-1}$. Observe that  
 \begin{align*}
   | \beta^*_{[1,n]}  - \beta_ 1^*   |_2  =& 
  \bigg |  \frac{1}{n} \sum_{t =1}^n \beta_t  ^*  - \beta_1^*  
   \bigg |_2 
 = \bigg |  \frac{1}{n} \sum_{k=0}^{K} \Delta_k \beta_{\eta_k+ 1} ^*   -    \frac{1}{n}\sum_{k=0}^{K} \Delta_k  \beta_1 ^*
  \bigg |_2  
  \\
 \le 
 &    \frac{1}{n} \sum_{k=0}^{K}  | \Delta_k  (\beta_{\eta_k+ 1} ^* -\beta_1 ^* )  |_2    \le 
 \frac{1}{n}\sum_{k=0}^{K} \Delta_k  (K+1)  C_\kappa   \le  (K+1)C_\kappa . 
  \end{align*}
  By \Cref{assume:regression parameters}, both $C_\kappa  $ and $K   $ are finite.
 \end{proof}
\begin{proof}[Proof of \Cref{lemma:interval lasso}]   It follows from the definition of $\widehat{\beta}_\I$ that 
	\[
	 \frac{1}{|\I | }	\sum_{  t  \in I} (y_ t  - X_t ^{\top}\widehat{\beta}_\I )^2 + \frac{ \lambda}{ \sqrt{  |\I|  }}     |\widehat{\beta} _\I |_1 \leq  
	 \frac{1}{|\I | } \sum_{t \in \I} (y_t  - X_t ^{\top}\beta^*_\I )^2 + \frac{ \lambda}{ \sqrt{  |\I|  }}   |\beta^*_\I   |_1.
	\]
	This gives 
	\begin{align*}
		 \frac{1}{|\I | } \sum_{t  \in \I} \bigl\{X_t  ^{\top}(\widehat{\beta}_\I - \beta^*_\I )\bigr\}^2 +    \frac{ 2 }{|\I | }   \sum_{t  \in \I}(y_t  - X_t ^{\top}\beta^*_\I)X_t ^{\top}(\beta^*_\I  - \widehat{\beta}_\I )  +  \frac{ \lambda}{ \sqrt{  |\I|  }}   \bigl |\widehat{\beta}_\I \bigr |_1 
		\leq  \frac{ \lambda}{ \sqrt{  |\I|  }}    \bigl |\beta^*_\I \bigr |_1,
	\end{align*}
	and therefore
	\begin{align}
		& \frac{1}{|\I | }	  \sum_{ t   \in \I} \bigl\{X_t  ^{\top}(\widehat{\beta} _\I  - \beta^*_\I )\bigr\}^2 + \frac{ \lambda}{ \sqrt{  |\I|  }} \bigl |\widehat{\beta}_\I \bigr |_1  \nonumber \\
		\nonumber
		 \leq  &  \frac{ 2}{|\I | }	 \sum_{ t  \in  \I } \epsilon_t   X_t  ^{\top}(\widehat{\beta}_\I  - \beta^*_\I ) 
+ 2   (\widehat{\beta}_\I  - \beta^*_\I )^{\top}\frac{  1 }{|\I | }	 \sum_{t \in   \I} X_ t   X_t ^{\top}(    \beta^*_t  -\beta^*_\I   )		 
		+  \frac{ \lambda}{ \sqrt{  |\I|  }}    \bigl |\beta^*_\I \bigr |_1  
		\\
		\le &  2 \bigg  | \frac{1}{|\I| }\sum_{t\in \I }  \epsilon_t X_t \bigg |_\infty  | \widehat \beta_\I -\beta^*_\I | _1  + 2 \bigg|  \frac{1}{|\I| }\sum_{ t   \in \I} X_ t  X_t  ^\top (\beta^*  _\I  -\beta^*_t  )   \bigg|_\infty | \widehat \beta_\I -\beta^*_\I | _1 +  \frac{ \lambda}{ \sqrt{  |\I|  }}    \bigl |\beta^*_\I \bigr |_1   
		 .	 \label{eq-lem10-pf-2}
	\end{align}
	By \Cref{lemma:lasso deviation bound 1}, with probability at least  $1-n^{-5}$, it holds that 
	$$ \bigg | \frac{1}{|\I | }\sum_{t\in |\I | }  X_{t } \epsilon_t  \bigg |_\infty  \le  C _3 \bigg (   \sqrt { \frac{\log(pn) }{|\I| }  }  +   \frac{ \log^{1/\gamma }(np) }{|\I| }  \bigg)  \le C_3' \sqrt { \frac{\log(pn) }{|\I| }  }  \le \frac{\lambda }{ 8 \sqrt {|\I| }} ,$$
	where the  second   inequality follows from 
	$| \mathcal I | \ge  C_{\zeta}   (\s \log(pn)) ^{2/ \gamma  - 1}  $, and the third  inequality follows from the choice of $\lambda$. 
Observe that $\sum_{t\in \I} (\beta^*_\I  - \beta^*_t )=0 $  and that by \Cref{lemma:beta bounded 1},
$ | \beta^*_\I -\beta^* _t |_2 \le C_4 C_\kappa$.
	Therefore by \Cref{lemma: mis-specified deviation}, 
	with probability at east $1-n^{-5}$,
	$$\bigg|  \frac{1}{|\I| }\sum_{ t   \in \I} X_ t  X_t  ^\top (\beta^*  _\I  -\beta^*_t  )   \bigg|_\infty \le  C _4 \bigg (   \sqrt { \frac{\log(pn) }{|\I| }  }  +   \frac{ \log^{1/\gamma }(np) }{|\I| }  \bigg)  \le C_4' \sqrt { \frac{\log(pn) }{|\I| }  }   \le \frac{\lambda }{ 8 \sqrt {|\I| }}, $$ where the  second   inequality follows from 
	$| \mathcal I | \ge  C_{\zeta}   (\s \log(pn)) ^{2/ \gamma  - 1}  $, and the third  inequality follows from the choice of $\lambda$.  
 \\
	\\
	So   \eqref{eq-lem10-pf-2}  gives 
	\begin{align*}
		  \frac{1}{|\I | }	  \sum_{ t   \in \I} \bigl\{X_t  ^{\top}(\widehat{\beta} _\I  - \beta^*_\I )\bigr\}^2 + \frac{ \lambda}{ \sqrt{  |\I|  }} \bigl |\widehat{\beta}_\I \bigr |_1  
		 \leq  \frac{\lambda}{2\sqrt { |\I| } }    |\widehat{\beta}_\I  - \beta^*_\I  |_1  
		+  \frac{ \lambda}{ \sqrt{  |\I|  }}    \bigl |\beta^*_\I \bigr |_1   .	 
	\end{align*} 
 The above inequality and the fact that $|\beta | _1 =|\beta_S|_1 + |\beta_{S^c} |_1 $ for any $\beta \in \mathbb R^p$  imply that 
\begin{align}
\label{eq:two sample lasso deviation 1} \frac{1}{|\I|} \sum_{i \in \I } \big\{  X_t^\top   (\widehat  \beta _\I   - \beta^*_\I  )  \big\} ^2 + \frac{ \lambda}{2\sqrt{  |\I|  } } |  (\widehat \beta _\I)   _{ S ^c} |_1  
 \le & \frac{3\lambda}{2\sqrt{  |\I|  }  }  | ( \widehat  \beta _\I  -  \beta^*_\I  )    _{S}  | _1  .
 \end{align} 
 Let $\Theta = \widehat  \beta _\I   - \beta^*_\I   $.   \Cref{eq:two sample lasso deviation 1} and the observation $(\beta_t^*)_{S^c}  = 0  $
 also imply that 
$$ \frac{\lambda }{2} | \Theta _{S^c}   |_1 = \frac{ \lambda}{2 } |   (\widehat \beta_\I)  _{ S ^c} |_1    \le  
\frac{3\lambda}{2 }  |  ( \widehat  \beta _\I   - \beta^* _\I  ) _{S}  | _1  = \frac{3\lambda}{2 }  |  \Theta  _{S}  | _1 . $$
The above inequality gives 
$$ |\Theta_{S^c} |_1 \le 3 | \Theta _{S}|_1 .$$ Thus \Cref{theorem:RES Version II} gives 
$$ \frac{1}{|\I| } \sum_{i \in \I }  \left( X_t^\top  \Theta \right)^2   
\ge 
 \frac{\Lambda_{\min} (\Sigma) } {2 }  | \Theta|_2^2     .$$
Therefore
\begin{align}
\label{eq:two sample lasso deviation 2} \frac{\Lambda_{\min} (\Sigma) } {2 }  | \Theta|_2^2    + \frac{ \lambda}{2\sqrt{  |\I|  }  } | \Theta  _{ S ^c} |_1    =   \frac{\Lambda_{\min} (\Sigma) } {2 }  | \Theta|_2^2    + \frac{ \lambda}{2\sqrt{  |\I|  }  } | ( \widehat \beta_\I    ) _{ S ^c} |_1    
 \le    \frac{3\lambda}{2\sqrt{  |\I|  }  }  | \Theta_{S}  | _1  \le \frac{3\lambda \sqrt \s }{2 \sqrt{  |\I|  } }  | \Theta  | _2 ,
 \end{align}  
 where the first  equality follows from $ \Theta  _{ S ^c} = ( \widehat \beta_\I    ) _{ S ^c}  $, the first inequality follows from  \Cref{eq:two sample lasso deviation 1}, and the second inequality follows from the observation that $|S|\le K \s $ and $K<\infty$.  Note  that  \Cref{eq:two sample lasso deviation 2}  leads to 
\begin{align}
\label{eq:two sample lasso deviation 3} |\Theta |_2  \le   \frac{C _5 \lambda \sqrt  \s}{\sqrt{| \I |} } . \end{align} 
 This  and the fact that  $ | \Theta_{S}  | _1 \le \sqrt {|S|}   |\Theta|_2 \le \sqrt{K  \s } |\Theta|_2   $   also implies that 
 $   | \Theta_{S}  | _1 \le  \frac{C_6\lambda \s}{\sqrt{|\I| }}. $ 
 Since 
  $  | \Theta_{S^c }  | _1 \le 3  | \Theta_{S}  | _1 ,$ 
it also holds that 
$$ | \Theta   | _1 =  | \Theta_{S  }  | _1 +  | \Theta_{S^c }  | _1  \le 4  | \Theta_{S}  | _1 \le  \frac{4C_6\lambda \s}{\sqrt{|\I|} } .$$
  \end{proof}

\section{Proof for Section 4}\label{sec-app-lrv}
For $t \in [s_k, e_k)$, the corrected interval given in \eqref{eq:corrected boundaries}, recall that
\begin{align*}
    Z_t^*
    = \begin{cases}
        2\epsilon_t\Psi_k^{\top}X_t - (\Psi_k^{\top}X_t)^2, \quad& s_k \leq t \leq \eta_k-1,\\
        2\epsilon_t\Psi_k^{\top}X_t + (\Psi_k^{\top}X_t)^2, \quad& \eta_k \leq t \leq e_k-1.
    \end{cases}
\end{align*}
Given the unobserved process $\{Z_t^*\}_{t = s_k}^{e_k-1}$, we define the auxiliary long-run variance estimator as follows.
\begin{enumerate}
    \item[\textbf{Step 1.}] Divide interval $[s_k, e_k)$ into $2R$ integer intervals with a size $S$, and denote the $2R$ intervals as $\mathcal{S}_1, \dots, \mathcal{S}_{2R}$.
    \item[\textbf{Step 2.}] Compute the rescaled mean differences of $Z_t^*$'s from the odd interval $\mathcal{S}_{2i-1}$ and the even interval $\mathcal{S}_{2i}$, defined by
\begin{align*}
    D_i^* = \frac{1}{\sqrt{2S}}\bigg\{\sum_{t \in \mathcal{S}_{2i-1}}Z_t^* - \sum_{t \in \mathcal{S}_{2i}}Z_t^*\bigg\}.
\end{align*}
    \item[\textbf{Step 3.}] Compute the auxiliary long-run variance estimator based on the unobservable process $\{Z_t^*\}_{s_k+1}^{e_k}$, which is defined as
 \begin{align}\label{eq:lrv_auxiliary}
    \check{\sigma}_{\infty}^2 = \frac{1}{R}\sum_{i = 1}^R(\kappa_k^{-1}D_i^*)^2.
 \end{align}
\end{enumerate}

\begin{lemma}\label{lemma:auxiliary_lrv_consist}
    Let $\check{\sigma}_{\infty}^2$ be the auxiliary long-run variance estimator defined in \eqref{eq:lrv_auxiliary}. Suppose assumptions \ref{assume:regression parameters}, \ref{assume:X}, \ref{assume:epsilon} hold, and assume $S, R \to \infty$ and $R \gg S$ as $n \to \infty$. It holds that
\begin{align*}
    \check{\sigma}_{\infty}^2 \overset{P.}{\to} \sigma_{\infty}^2,
\end{align*}
where $\sigma_{\infty}^2$ is given in \eqref{eq:long-run_var} of \Cref{lemma:long-run_UB}.
\end{lemma}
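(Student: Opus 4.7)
The plan is to decompose $Z_t^* = \kappa_k W_t + \psi_t U_t$, where $W_t = 2\epsilon_t v_k^{\top} X_t$ is the mean-zero process whose long-run variance was analysed in \Cref{lemma:long-run_UB}, $U_t = (\Psi_k^{\top} X_t)^2 = \kappa_k^2 (v_k^{\top} X_t)^2$ is a stationary quadratic form, and $\psi_t = 2\mathbbm{1}\{t \geq \eta_k\} - 1$ records the deterministic mean jump at $\eta_k$. Writing $\kappa_k^{-1} D_i^* = A_i + B_i$ with
\[
    A_i = (2S)^{-1/2}\Bigl(\sum_{t \in \mathcal{S}_{2i-1}} W_t - \sum_{t \in \mathcal{S}_{2i}} W_t\Bigr),
\]
\[
    B_i = (\kappa_k\sqrt{2S})^{-1}\Bigl(\sum_{t \in \mathcal{S}_{2i-1}} \psi_t U_t - \sum_{t \in \mathcal{S}_{2i}} \psi_t U_t\Bigr),
\]
I would split $\check\sigma_{\infty}^2 = R^{-1}\sum_i A_i^2 + 2 R^{-1}\sum_i A_i B_i + R^{-1}\sum_i B_i^2$ and show that the first term converges in probability to $\sigma_{\infty}^2$ while the other two are $o_p(1)$.

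The main step is $R^{-1}\sum_i A_i^2 \overset{P.}{\to} \sigma_{\infty}^2$. By stationarity,
\[
    \mathbb{E}[A_i^2] = S^{-1}\var\Bigl(\sum_{t \in \mathcal{S}_{2i-1}} W_t\Bigr) - S^{-1}\cov\Bigl(\sum_{t \in \mathcal{S}_{2i-1}} W_t,\, \sum_{t \in \mathcal{S}_{2i}} W_t\Bigr),
\]
and the first term tends to $\sigma_{\infty}^2$ by the functional-dependence calculation in the proof of \Cref{lemma:long-run_UB}, while the cross-block covariance vanishes because its summands involve indices separated by at least one full block and the autocovariance of $W_t$ decays as $\exp(-2c|\ell|^{\gamma_1})$, exactly as in \eqref{eq:autocov_abs_ub}. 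I would then use Chebyshev, expanding $\var(R^{-1}\sum_i A_i^2) = R^{-2}\sum_{i,j}\cov(A_i^2, A_j^2)$, and bound $\cov(A_i^2, A_j^2)$ via a martingale-projection expansion of the $W_t$'s mirroring the argument in \Cref{lemma:long-run_UB}, using that $A_i$ and $A_j$ draw on $W_t$'s separated by at least $(|i-j|-1)S$ time steps together with the sub-Weibull fourth-moment controls from Assumptions~\ref{assume:X}\textbf{b} and \ref{assume:epsilon}\textbf{b}. This should yield $\sum_{i,j}|\cov(A_i^2, A_j^2)| = O(R)$ and hence $\var(R^{-1}\sum_i A_i^2) = O(R^{-1}) \to 0$.

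For the $B_i^2$ piece, at most one block-pair straddles $\eta_k$; on every other pair $\psi_t$ is constant, so $B_i = \pm \kappa_k (2S)^{-1/2}\bigl[\sum_{\mathcal{S}_{2i-1}}\tilde U_t - \sum_{\mathcal{S}_{2i}}\tilde U_t\bigr]$ with the centered variable $\tilde U_t = (v_k^{\top}X_t)^2 - v_k^{\top}\Sigma v_k$. A direct application of the long-run-variance bound of \Cref{lemma:long-run_UB} to $\tilde U_t$ then gives $\mathbb{E}[B_i^2] = O(\kappa_k^2)$, whence $R^{-1}\sum_i B_i^2 = O_p(\kappa_k^2) + O_p(1/R) = o_p(1)$ in the vanishing regime. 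The cross term is controlled by Cauchy--Schwarz, $|R^{-1}\sum_i A_i B_i| \leq (R^{-1}\sum_i A_i^2)^{1/2}(R^{-1}\sum_i B_i^2)^{1/2} = o_p(1)$, once the two preceding bounds are in hand.

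The principal obstacle is the Chebyshev variance bound for $R^{-1}\sum_i A_i^2$: it requires extending the second-moment martingale-projection computation behind \Cref{lemma:long-run_UB} to fourth-order cross-covariances of block-sum squares, and carefully tracking how the functional-dependence decay $\exp(-cm^{\gamma_1})$ interacts with the block width $S$ and the block count $R$ under the scaling $R \gg S$. Once this is established, the lone block-pair containing $\eta_k$ contributes only $O_p(1/R)$ to $\check\sigma_{\infty}^2$ and is absorbed into the error.
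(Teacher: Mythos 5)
Your strategy is sound and would go through, but it is organised differently from the paper's proof, so a comparison is worth recording. The paper does not split $\kappa_k^{-1}D_i^*$ into a linear part $A_i$ and a quadratic part $B_i$; it keeps $Z_t^*$ whole, computes $\mathbb{E}[(D_i^*)^2]$ exactly in terms of the lag-$\ell$ autocovariances of $F_t = v_k^{\top}X_tX_t^{\top}v_k - v_k^{\top}\Sigma v_k$ and $G_t = \epsilon_t v_k^{\top}X_t$ (treating the one change-point-straddling pair by hand), and controls the bias by the decay $|\gamma(\ell)| \lesssim \exp(-c\ell^{\gamma_1})$ together with $R \gg S$. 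For the concentration step, instead of your direct bound on $\sum_{i,j}\cov(A_i^2,A_j^2)$, the paper computes the functional dependence measure of the \emph{block} process $\{\kappa_k^{-1}D_i^*\}_{i}$ (showing $\sum_{l\geq 0}\|\kappa_k^{-1}D_i^* - \kappa_k^{-1}D_{i,\{i-l\}}^*\|_4 \leq C$, which requires upgrading the $\Delta_{\cdot,4}$ assumptions to $\Delta_{\cdot,8}$ via the equivalence property of exponentially decaying dependence measures) and then applies the projection-plus-Burkholder argument once to $\sum_i (\kappa_k^{-1}D_i^*)^2$, obtaining $\var(\check\sigma_\infty^2) = O(R^{-1}) + O(S/R)$. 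Your route buys a cleaner separation of the vanishing quadratic contribution (your $B_i$ term, which the paper instead absorbs through \Cref{lemma:long-run_equiv} and the autocovariances of $F_t$) and avoids the paper's orthogonality claim $\mathbb{E}[F_{t_1}G_{t_2}] = 0$ by using Cauchy--Schwarz on the cross term; the paper's route buys a single reusable machine (dependence measure of derived block processes) in place of bespoke fourth-order cross-covariance bounds.

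Two points in your sketch need repair, though neither is fatal. First, the within-pair covariance $S^{-1}\cov\bigl(\sum_{t\in\mathcal{S}_{2i-1}}W_t, \sum_{t\in\mathcal{S}_{2i}}W_t\bigr)$ does \emph{not} vanish because the indices are "separated by at least one full block" --- $\mathcal{S}_{2i-1}$ and $\mathcal{S}_{2i}$ are adjacent, so the nearest pairs have lag one; it vanishes because at most $\min(\ell,S)$ pairs have lag $\ell$, so the sum is bounded by $S^{-1}\sum_{\ell\geq 1}\ell|\gamma(\ell)| \to 0$ (the same Ces\`aro weighting that appears in the paper's bias bound). Separation by full blocks is only available between distinct pairs $i \neq j$, where you correctly use it. Second, the block pair straddling $\eta_k$ contributes a deterministic drift of order $\sqrt{S}\,\kappa_k$ to $\kappa_k^{-1}D_{r^*}^*$, so its contribution to $\check\sigma_\infty^2$ is $O(S\kappa_k^2/R)$, not $O(1/R)$; this is precisely where the hypothesis $R \gg S$ is consumed, so you should state it as such rather than as $O_p(1/R)$.
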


\begin{proof}[Proof of \Cref{lemma:auxiliary_lrv_consist}]

{\bf Step 1: Bias.}
We consider the rescaled process $\{\kappa_k^{-1}D_i^*\}_{i \in \mathbb Z}$.
For notational convenience, denote
\begin{align*}
    F_t = v_k^{\top}X_tX_t^{\top}v_k - v_k^{\top}\Sigma v_k \;\; \text{and} \;\; G_t = \epsilon_tv_k^{\top}X_t.
\end{align*}
Note that $\{F_t\}_{t \in \mathbb{Z}}$ and $\{G_t\}_{t \in \mathbb{Z}}$ are stationary, and $\mathbb{E}[F_t] = 0$, $\mathbb{E}[G_t] = 0$ and $\mathbb{E}[F_{t_1}G_{t_2}] = 0$ for any $t_1, t_2 \in \mathbb{Z}$. For $l \in \mathbb{Z}$, denote the lag-$l$ autocovariances of $\{F_t\}_{t \in \mathbb{Z}}$ and $\{G_t\}_{t \in \mathbb{Z}}$ respectively as
\begin{align*}
    \gamma_{l}^F = \mathbb{E}[F_{t-l}F_t] \;\; \text{and} \;\; \gamma_{l}^G = \mathbb{E}[G_{t-l}G_t].
\end{align*}
Note that $\gamma_{l}^F = \gamma_{-l}^F$ and $\gamma_{l}^G = \gamma_{-l}^G$.
\
\\
\textbf{Case 1}.
If there is no change point in $\mathcal{S}_{2i-1}$ and $\mathcal{S}_{2i}$, the mean can be fully cancelled out. More specifically, for any $i$ such that $s_k + 2iS < \eta_k$, it follows that
\begin{align*}
   D_i^*
   =& -\frac{\kappa_k^2}{\sqrt{2S}}\bigg\{\sum_{t \in \mathcal{S}_{2i-1}}F_t - \sum_{t \in \mathcal{S}_{2i}}F_t\bigg\}+\sqrt{\frac{2}{S}}\kappa_k\bigg\{\sum_{t \in \mathcal{S}_{2i-1}}G_t - \sum_{t \in \mathcal{S}_{2i}}G_t\bigg\},
\end{align*}
and for any $i$ such that $s_k + (2i-1)S \geq \eta_k$ that
\begin{align*}
   D_i^* =& \frac{\kappa_k^2}{\sqrt{2S}}\bigg\{\sum_{t \in \mathcal{S}_{2i-1}}F_t - \sum_{t \in \mathcal{S}_{2i}}F_t\bigg\}+\sqrt{\frac{2}{S}}\kappa_k\bigg\{\sum_{t \in \mathcal{S}_{2i-1}}G_t - \sum_{t \in \mathcal{S}_{2i}}G_t\bigg\}.
\end{align*}
For both of the two above settings, we have that
\begin{align*}
    \mathbb{E}[(D_i^*)^2] =& \kappa_k^4\gamma^F_0 + \kappa_k^4\sum_{l = 1}^{S}\Big(2-\frac{3l}{S}\Big)\gamma_l^F - \kappa_k^4\sum_{l = S+1}^{2S-1}\Big(2-\frac{l}{S}\Big)\gamma_l^F\\
    &+ 4\kappa_k^2\gamma^G_0 + 4\kappa_k^2\sum_{l = 1}^{S}\Big(2-\frac{3l}{S}\Big)\gamma_l^G - 4\kappa_k^2\sum_{l = S+1}^{2S-1}\Big(2-\frac{l}{S}\Big)\gamma_l^G.
\end{align*}
\
\\
\textbf{Case 2}.
If there is a change point $\eta_k \in \mathcal{S}_{2i-1}$ or $\eta_k \in \mathcal{S}_{2i}$, the mean cannot be fully cancelled out, but as we required $R \gg S$ as $n \to \infty$, the bias due to change point is negligible. If $\eta_k \in \mathcal{S}_{2i-1}$, we have that
\begin{align*}
   D_i^*
   =& -\frac{\kappa_k^2}{\sqrt{2S}}\bigg\{\sum_{t \in [s_k+\{2i-2\}S, \eta_k)}F_t - \sum_{t \in [\eta_k, s_k+\{2i-1\}S)}F_t + \sum_{t \in \mathcal{S}_{2i}}F_t\bigg\}\\
   & +\sqrt{\frac{2}{S}}\kappa_k\bigg\{\sum_{t \in \mathcal{S}_{2i-1}}G_t - \sum_{t \in \mathcal{S}_{2i}}G_t\bigg\} - \frac{\sqrt{2}\{\eta_k - s_k-(2i-2)S\}}{\sqrt{S}}\kappa_k^2v_k^{\top}\Sigma v_k.
\end{align*}
If $\eta_k \in \mathcal{S}_{2i}$, we have that
\begin{align*}
   D_i^* 
   =& -\frac{\kappa_k^2}{\sqrt{2S}}\bigg\{\sum_{t \in \mathcal{S}_{2i-1}}F_t - \sum_{t \in [s_k+\{2i-1\}S, \eta_k)}F_t + \sum_{t \in [\eta_k, s_k+2iS)}F_t\bigg\}\\
   & +\sqrt{\frac{2}{S}}\kappa_k\bigg\{\sum_{t \in \mathcal{S}_{2i-1}}G_t - \sum_{t \in \mathcal{S}_{2i}}G_t\bigg\} - \frac{\sqrt{2}(\eta_k - s_k - 2iS)}{\sqrt{S}}\kappa_k^2v_k^{\top}\Sigma v_k.
\end{align*}

Without loss of generality, we focus on the \textbf{Case 2}, i.e.~the case that $\eta_k \in \mathcal{S}_{2j}$ for some $j \in \{1, \dots, R\}$. Let $h_1 = \eta_k - s_k - (2j-1)S$ and $h_2 = s_k+2jS - \eta_k$. Assume $h_2 \geq h_1$. The following figure depicts the definitions of $h_1$ and $h_2$.
\begin{center}
    \begin{tikzpicture}[scale=7,decoration=brace]
\draw[-, thick] (-0.1,0) -- (1.1,0);
\foreach \x/\xtext in {0/$s_k+2(j-1)S$,0.5/$s_k+(2j-1)S$,0.7/$\eta_k$,1/$s_k+2jS$}
    \draw[thick] (\x,0.5pt) -- (\x,-0.5pt) node[font = {\footnotesize}, below] {\xtext};
\draw[decorate, thick, yshift=0.6ex]  (0,0) -- node[font = {\footnotesize}, above=0.2ex] {$\mathcal{S}_{2j-1}$}  (0.5,0);
\draw[decorate, thick, yshift=0.6ex]  (0.5,0) -- node[font = {\footnotesize}, above=0.2ex] {$\mathcal{S}_{2j}$}  (1,0);
\draw (0.6,0.5pt) node[font = {\footnotesize}, above] {$h_1$};
\draw (0.85,0.5pt) node[font = {\footnotesize}, above] {$h_2$};
\end{tikzpicture}
\end{center}
It follows that
\begin{align*}
    \mathbb{E}[(D_j^*)^2] =& \kappa_k^4\gamma^F_0 + \kappa_k^4\sum_{l = 1}^{h_1}\Big(2-\frac{5l}{S}\Big)\gamma_l^F + \kappa_k^4\sum_{l = h_1+1}^{h_2}\Big(2-\frac{4h_1+l}{S}\Big)\gamma_l^F + \kappa_k^4\sum_{l = h_2+1}^{S+h_1}\frac{l - 2h_1}{S}\gamma_l^F\\
    &+ \kappa_k^4\sum_{l = S+h_1+1}^{2S-1}\Big(2-\frac{l}{S}\Big)\gamma_l^F + 4\kappa_k^2\gamma^G_0 + 4\kappa_k^2\sum_{l = 1}^{S}\Big(2-\frac{3l}{S}\Big)\gamma_l^G - 4\kappa_k^2\sum_{l = S+1}^{2S-1}\Big(2-\frac{l}{S}\Big)\gamma_l^G\\
    &+ \frac{2h_2^2}{S}\kappa_k^2\big(v_k^{\top}\Sigma v_k\big)^2\\
    =& \kappa_k^4\gamma^F_0 + \kappa_k^4\sum_{l = 1}^{S}\Big(2-\frac{3l}{S}\Big)\gamma_l^F - \kappa_k^4\sum_{l = S+1}^{2S-1}\Big(2-\frac{l}{S}\Big)\gamma_l^F + 4\kappa_k^2\gamma^G_0 + 4\kappa_k^2\sum_{l = 1}^{S}\Big(2-\frac{3l}{S}\Big)\gamma_l^G\\
    &- 4\kappa_k^2\sum_{l = S+1}^{2S-1}\Big(2-\frac{l}{S}\Big)\gamma_l^G - \kappa_k^4\sum_{l = 1}^{h_1}\frac{2l}{S}\gamma_l^F - \kappa_k^4\sum_{l = h_1+1}^{h_2}\frac{4h_1-2l}{S}\gamma_l^F + \kappa_k^4\sum_{l = h_2+1}^{S}\Big(\frac{4l - 2h_1}{S} - 2\Big)\gamma_l^F\\
    &+ \kappa_k^4\sum_{l = S+1}^{S+h_1}\Big(2-\frac{ 2h_1}{S}\Big)\gamma_l^F + \kappa_k^4\sum_{l = S+h_1+1}^{2S-1}\Big(4-\frac{2l}{S}\Big)\gamma_l^F  + \frac{2h_2^2}{S}\kappa_k^2\big(v_k^{\top}\Sigma v_k\big)^2.
\end{align*}
For any $l > 0$, we construct the coupled random variables, which are independence of $F_{t-l}$ and $G_{t-l}$, as 
\begin{align*}
    F_{t, \{t-l, - \infty\}} = v_k^{\top}X_{t, \{t-l, - \infty\}}X_{t, \{t-l, - \infty\}}^{\top}v_k - v_k^{\top}\Sigma v_k
\end{align*}
and
\begin{align*}
    G_{t, \{t-l, - \infty\}} = \epsilon_{t, \{t-l, - \infty\}}v_k^{\top}X_{t, \{t-l, - \infty\}}.
\end{align*}
Hence, it follows that
\begin{align*}
    |\gamma_l^F| =& \big| \mathbb{E}[F_{t-l}(F_{t} - F_{t,\{t-l,-\infty\}})] \big| \leq \big\|(v_k^{\top}X_{t-l})^2\big\|_2\big\|(v_k^{\top}X_{t})^2 - (v_k^{\top}X_{t,\{t-l,-\infty\}})^2\big\|_2\\
    \leq& 2\big\|v_k^{\top}X_{t}\big\|_4^3\big\|v_k^{\top}(X_{t} - X_{t,\{t-l,-\infty\}})\big\|_4 \leq 2\sup_{|v|_2 = 1}\|v^{\top}X_{t}\|_4^3D_X\exp(-cl^{\gamma_1})\\
    =& K_F\exp(-cl^{\gamma_1}),
\end{align*}
where the first two inequalities follow from the triangle and H\"older's inequalities and the third inequality follows from \Cref{assume:X}. Similarly, by Assumptions \ref{assume:X} and \ref{assume:epsilon}, we have that
\begin{align*}
    |\gamma_l^G| =& \big| \mathbb{E}[G_{t-l}(G_{t} - G_{t,\{t-l,-\infty\}})] \big| \leq \big\|\epsilon_{t}v_k^{\top}X_{t}\big\|_2\big\|\epsilon_{t}v_k^{\top}X_{t} - \epsilon_{t, \{t-l, - \infty\}}v_k^{\top}X_{t, \{t-l, - \infty\}}\big\|_2\\
    \leq& \|\epsilon_t\|_4\big\|v_k^{\top}X_{t}\big\|_4\big(\|\epsilon_t - \epsilon_{t, \{t-l,-\infty\}}\|_4\big\|v_k^{\top}X_{t}\big\|_4 +\|\epsilon_t\|_4\big\|v_k^{\top}(X_{t} - X_{t,\{t-l,-\infty\}})\big\|_4\big)\\
    \leq& \|\epsilon_t\|_4\sup_{|v|_2 = 1}\|v^{\top}X_{t}\|_4\Big(\|\epsilon_t\|_4D_{X}+\sup_{|v|_2 = 1}\|v^{\top}X_{t}\|_4D_{\epsilon}\Big)\exp(-cl^{\gamma_1})\\
    =& K_G\exp(-cl^{\gamma_1}).
\end{align*}
Thus,
 \begin{align}\label{eq:LRV_L1}
    &\kappa_k^2\big|\mathbb{E}[\check{\sigma}_{\infty}^2 - \sigma_{\infty}^2]\big| = \bigg|\frac{1}{R}\sum_{i \neq j}\mathbb{E}[(D_i^*)^2] + \frac{\mathbb{E}[(D_j^*)^2]}{R} - \kappa_k^4\sum_{l = -\infty}^{\infty}\gamma_l^F - 4\kappa_k^2\sum_{l = -\infty}^{\infty}\gamma_l^G\bigg| \nonumber\\
    =&\bigg|-\kappa_k^4\sum_{l = 1}^{S}\frac{3l}{S}\gamma_l^F - \kappa_k^4\sum_{l = S+1}^{2S-1}\Big(4-\frac{l}{S}\Big)\gamma_l^F - 4\kappa_k^2\sum_{l = 1}^{S}\frac{3l}{S}\gamma_l^G - 4\kappa_k^2\sum_{l = S+1}^{2S-1}\Big(4-\frac{l}{S}\Big)\gamma_l^G\nonumber \\
    &- \kappa_k^4\sum_{l = 1}^{h_1}\frac{2l}{RS}\gamma_l^F - \kappa_k^4\sum_{l = h_1+1}^{h_2}\frac{4h_1-2l}{RS}\gamma_l^F + \kappa_k^4\sum_{l = h_2+1}^{S}\Big(\frac{4l - 2h_1}{RS} - \frac{2}{R}\Big)\gamma_l^F + \kappa_k^4\sum_{l = S+1}^{S+h_1}\Big(\frac{2}{R}-\frac{2h_1}{RS}\Big)\gamma_l^F \nonumber\\
    &+ \kappa_k^4\sum_{l = S+h_1+1}^{2S-1}\Big(\frac{4}{R}-\frac{2l}{RS}\Big)\gamma_l^F  + \frac{2h_2^2}{RS}\kappa_k^2\big(v_k^{\top}\Sigma v_k\big)^2 - 2\kappa_k^4\sum_{l \geq 2S}\gamma_l^F - 8\kappa_k^2\sum_{l \geq 2S}\gamma_l^G \bigg| \nonumber\\
    \leq& \frac{3\kappa_k^4}{S}\sum_{l = 1}^{S}l|\gamma_l^F| + 3\kappa_k^4\sum_{l = S+1}^{2S-1}|\gamma_l^F| + \frac{12\kappa_k^2}{S}\sum_{l = 1}^{S}l|\gamma_l^G| + 12\kappa_k^2\sum_{l = S+1}^{2S-1}|\gamma_l^G| + \frac{2\kappa_k^4}{RS}\sum_{l = 1}^{h_1}l|\gamma_l^F| + \frac{2\kappa_k^4}{R}\sum_{l = h_1+1}^{2S-1}|\gamma_l^F| \nonumber\\
    &+ \frac{2S}{R}\kappa_k^2\big(v_k^{\top}\Sigma v_k\big)^2 + 2\kappa_k^4\sum_{l \geq 2S}|\gamma_l^F| + 8\kappa_k^2\sum_{l \geq 2S}|\gamma_l^G| \nonumber\\
    \leq& \frac{3}{S}\kappa_k^4\sum_{l = 1}^{S}l|\gamma_l^F| + 3\kappa_k^4\sum_{l = S+1}^{\infty}|\gamma_l^F| + \frac{2\kappa_k^4}{R}\sum_{l = 1}^{2S-1}|\gamma_l^F| + \frac{12\kappa_k^2}{S}\sum_{l = 1}^{S}l|\gamma_l^G| + 12\kappa_k^2\sum_{l = S+1}^{\infty}|\gamma_l^G| + \frac{2S}{R}\kappa_k^2(\Lambda_{\max}(\Sigma))^2 \nonumber\\
    \leq& \frac{3K_F\kappa_k^4 + 12K_G\kappa_k^2}{S}\sum_{l = 1}^{S}l\exp(-cl^{\gamma_1}) + (3K_F\kappa_k^4+12K_G\kappa_k^2)\sum_{l = S+1}^{\infty}\exp(-cl^{\gamma_1}) + \frac{2K_F\kappa_k^4}{R}\sum_{l = 1}^{2S-1}\exp(-cl^{\gamma_1}) \nonumber\\
    &+ \frac{2S}{R}\kappa_k^2(\Lambda_{\max}(\Sigma))^2 \nonumber\\
    \leq& C_1\frac{K_F\kappa_k^4 + 4K_G\kappa_k^2}{S} + C_2\frac{K_F\kappa_k^4}{R} + \frac{2S}{R}\kappa_k^2(\Lambda_{\max}(\Sigma))^2,
 \end{align}
where the last inequality follows from the fact that, for any $\nu > 0$,
\begin{align*}
    \sup_{S \geq 1}S^{\nu}\sum_{l = S}^{\infty}\exp(-cl^{\gamma_1}) \leq \sum_{l = 0}^{\infty}l^{\nu}\exp(-cl^{\gamma_1}) < \infty.
\end{align*}
Therefore, let $S, R \to \infty$ and $R \gg S$ as $n \to \infty$, it holds that
\begin{align}\label{eq:LRV_L1_conv}
    \big|\mathbb{E}[\check{\sigma}_{\infty}^2 - \sigma_{\infty}^2]\big| \to 0.
\end{align}

{\bf Step 2: Variance.}
    We start with the calculation of the functional dependence measure of the process $\{\kappa_k^{-1}D_i^*\}_{i \in \mathbb{Z}}$. Recall that for $i \in \mathbb{Z}$
    \[
    \kappa_k^{-1}D_i^* = \frac{1}{\kappa_k\sqrt{2S}}\bigg\{\sum_{t \in \mathcal{S}_{2i-1}}Z_t^* - \sum_{t \in \mathcal{S}_{2i}}Z_t^*\bigg\}.
    \]
    For any $l \geq 0$, define the coupled version of $D_i^*$ as
    \[
    \kappa_k^{-1}D_{i,\{i-l\}}^* = \frac{1}{\kappa_k\sqrt{2S}}\bigg\{\sum_{t \in \mathcal{S}_{2i-1}}Z_{t, \{\mathcal{S}_{2(i-l)-1} \cup \mathcal{S}_{2(i-l)}\}}^* - \sum_{t \in \mathcal{S}_{2i}}Z_{t, \{\mathcal{S}_{2(i-l)-1} \cup \mathcal{S}_{2(i-l)}\}}^*\bigg\}.
    \]
    \\
    \textbf{Case 1}: $l = 0$. We have that
    \begin{align}\label{eq:D_i^*_Lq}
        &\left\|\kappa_k^{-1}D_{i}^* - \kappa_k^{-1}D_{i, \{i\}}^*\right\|_{4} \leq 2\left\|\kappa_k^{-1}(D_{i}^* - \mathbb{E}[D_i^*])\right\|_{4} = \frac{\sqrt{2}}{\kappa_k\sqrt{S}}\left\|\sum_{t \in \mathcal{S}_{2i-1} \cup \mathcal{S}_{2i}}\sum_{r = 0}^{\infty}\mathcal{P}_{t-r}\{a_tZ_{t}^*\}\right\|_{4} \nonumber\\
        \leq& \frac{\sqrt{2}}{\kappa_k\sqrt{S}}\sum_{r = 0}^{\infty}\left\|\sum_{t \in \mathcal{S}_{2i-1} \cup \mathcal{S}_{2i}}\mathcal{P}_{t-r}\{a_tZ_{t}^*\}\right\|_{4} \leq 2\kappa_k^{-1}\sum_{r = 0}^{\infty}\left\|a_tZ_{t}^* - a_tZ_{t, \{t-r\}}^*\right\|_{4} \nonumber\\
        \leq& 2\sum_{r = 0}^{\infty}\left\{\|v_k^{\top}X_1\|_{8}\delta_{r, 8}^{\epsilon} + \|\epsilon_1\|_{8}\delta_{r, 8}^{X}\right\} + 2\kappa_k\|v^{\top}_kX_1\|_{8}\sum_{r = 0}^{\infty}\delta_{r, 8}^X \nonumber\\
        \leq& 2\|v_k^{\top}X_1\|_{8}D_{\epsilon}^{\prime} + 2\|\epsilon_1\|_{8}D_{X}^{\prime} + 2\kappa_k\|v^{\top}_kX_1\|_{8}D_X^{\prime} \equiv C_1,
    \end{align}
    where the first inequality follows from the triangle inequality and the stationarity. Then we construct a martingale difference sequence using the projection operator $\mathcal{P}_t\cdot$. The third inequality follows from Burkholder's inequality, the fourth inequality follows from the upper bound of the functional dependence measure of $\{Z_t^*\}_{t \in \mathbb{Z}}$ (see the proof of \Cref{thm:asymptotics}), and the last inequality follows from Assumptions \ref{assume:X} and \ref{assume:epsilon}. In fact, for the covariate sequence, \Cref{assume:X}\textbf{a} and \textbf{b} lead to 
    $\sup_{m \geq 0}\exp(cm^{\gamma_1})\Delta_{m,8}^X \leq D_X^{\prime} < \infty$.  This fact is due to the equivalence property of the exponentially decay functional dependence measure (see Lemma 1 in \cite{wu2004limit} or Lemma 2 in \cite{wu2005linear}). Specifically, for $\{X_{t}\}_{t \in \mathbb{Z}}$ with $q$th moment $\Vert X_t \Vert_q < \infty$, if \eqref{eq:X temporal dependence} holds for an $\alpha \in (0, q)$, then \eqref{eq:X temporal dependence} holds for all $\beta \in (0, q)$.  Similarly, \Cref{assume:epsilon}\textbf{a} and \textbf{b} lead to 
    $\sup_{m \geq 0}\exp(cm^{\gamma_1})\Delta_{m, 8}^\epsilon \leq D_\epsilon^{\prime} < \infty$.
    \\
    \textbf{Case 2}: $l \geq 1$. Note that for any $t \in \mathcal{S}_{2i-1} \cup \mathcal{S}_{2i}$, any $a \in \{1,-1\}$ and any $S \in \mathbb{N}_+$, it follows similarly that
    \begin{align*}
        &\left\|aZ_{t}^* - aZ_{t, \{\mathcal{S}_{2(i-l)-1} \cup \mathcal{S}_{2(i-l)}\}}^*\right\|_{4} \leq \sum_{m = 0}^{2S}\delta^{Z^*}_{t-\max\{\mathcal{S}_{2(i-l)}\}+m, 4}\\
        \leq& 2\kappa_k\sum_{m = 0}^{2S}\left\{\|v_k^{\top}X_1\|_{8}\delta_{t-\max\{\mathcal{S}_{2(i-l)}\}+m, 8}^{\epsilon} + \|\epsilon_1\|_{8}\delta_{t-\max\{\mathcal{S}_{2(i-l)}\}+m, 8}^{X}\right\}\\
        &+ 2\kappa_k^2\sum_{m = 0}^{2S}\|v^{\top}_kX_1\|_{8}\delta_{t-\max\{\mathcal{S}_{2(i-l)}\}+m, 8}^X\\
        \leq& 2\kappa_k\big\{\|v_k^{\top}X_1\|_{8}D_{\epsilon}^{\prime} + \|\epsilon_1\|_{8}D_X^{\prime} + \kappa_k\|v_k^{\top}X_1\|_{8}D_X^{\prime}\big\}\exp\big(-c(t- \max\{\mathcal{S}_{2(i-l)}\})^{\gamma_1}\big).
    \end{align*}
    By the triangle inequality, we have that for any $l \geq 1$
    \begin{align*}
        &\left\|\kappa_k^{-1}D_{i}^* - \kappa_k^{-1}D_{i, \{i-l\}}^*\right\|_{4} \leq \frac{1}{\kappa_k\sqrt{2S}}\sum_{t \in \mathcal{S}_{2i-1} \cup \mathcal{S}_{2i}}\left\|aZ_{t}^* - aZ_{t, \{\mathcal{S}_{2(i-l)-1} \cup \mathcal{S}_{2(i-l)}\}}^*\right\|_{4}\\
        \leq& \sqrt{\frac{2}{S}}\big\{\|v_k^{\top}X_1\|_{8}D_{\epsilon}^{\prime} + \|\epsilon_1\|_{8}D_X^{\prime} + \kappa_k\|v_k^{\top}X_1\|_{8}D_X^{\prime}\big\}\sum_{t \in \mathcal{S}_{2i-1} \cup \mathcal{S}_{2i}}\exp\big(-c(t- \max\{\mathcal{S}_{2(i-l)}\})^{\gamma_1}\big)\\
        =& \sqrt{\frac{2}{S}}\big\{\|v_k^{\top}X_1\|_{8}D_{\epsilon}^{\prime} + \|\epsilon_1\|_{8}D_X^{\prime} + \kappa_k\|v_k^{\top}X_1\|_{8}D_X^{\prime}\big\}\sum_{m = 1}^{2S}\exp\big(-c(2Sl-2S+m)^{\gamma_1}\big).
    \end{align*}
    Thus,
    \begin{align}\label{eq:sum_fdm_D_i^*}
        &\sum_{l = 0}^{\infty}\left\|\kappa_k^{-1}D_{i}^* - \kappa_k^{-1}D_{i, \{i-l\}}^*\right\|_{4} \nonumber\\
        \leq& 2\|v_k^{\top}X_1\|_{8}D_{\epsilon}^{\prime} + 2\|\epsilon_1\|_{8}D_{X}^{\prime} + 2\kappa_k\|v^{\top}_kX_1\|_{8}D_X^{\prime} \nonumber\\
        &+ \sqrt{\frac{2}{S}}\big\{\|v_k^{\top}X_1\|_{8}D_{\epsilon}^{\prime} + \|\epsilon_1\|_{8}D_X^{\prime} + \kappa_k\|v_k^{\top}X_1\|_{8}D_X^{\prime}\big\}\sum_{l = 1}^{\infty}\sum_{m = 1}^{2S}\exp\big(-c(2Sl-2S+m)^{\gamma_1}\big) \nonumber\\
        =& 2\|v_k^{\top}X_1\|_{8}D_{\epsilon}^{\prime} + 2\|\epsilon_1\|_{8}D_{X}^{\prime} + 2\kappa_k\|v^{\top}_kX_1\|_{8}D_X^{\prime} \nonumber\\
        &+ \sqrt{\frac{2}{S}}\big\{\|v_k^{\top}X_1\|_{8}D_{\epsilon}^{\prime} + \|\epsilon_1\|_{8}D_X^{\prime} + \kappa_k\|v_k^{\top}X_1\|_{8}D_X^{\prime}\big\}\sum_{r = 1}^{\infty}\exp\big(-cr^{\gamma_1}\big) \nonumber\\
        =& 2\|v_k^{\top}X_1\|_{8}D_{\epsilon}^{\prime} + 2\|\epsilon_1\|_{8}D_{X}^{\prime} + 2\kappa_k\|v^{\top}_kX_1\|_{8}D_X^{\prime} \nonumber\\
        &+ \sqrt{\frac{2}{S}}\big\{\|v_k^{\top}X_1\|_{8}D_{\epsilon}^{\prime} + \|\epsilon_1\|_{8}D_X^{\prime} + \kappa_k\|v_k^{\top}X_1\|_{8}D_X^{\prime}\big\}D \nonumber\\
        \equiv& C_2,
    \end{align}
    where $D = \sum_{r = 1}^{\infty}\exp\big(-cr^{\gamma_1}\big) < \infty$.
    \\
    \\
    Recall that
    $\check{\sigma}_{\infty}^2 = \frac{1}{R}\sum_{i = 1}^R(\kappa_k^{-1}D_i^*)^2$.
    It follows that
    \begin{align*}
        &\sqrt{\var(\check{\sigma}_{\infty}^2)} = \frac{1}{R}\left\|\sum_{i = 1}^R\left\{(\kappa_k^{-1}D_i^*)^2 - \mathbb{E}[(\kappa_k^{-1}D_i^*)^2]\right\}\right\|_2 = \frac{1}{R}\left\|\sum_{i = 1}^R\sum_{l = 0}^{\infty}\mathcal{P}_{i-l}(\kappa_k^{-1}D_i^*)^2\right\|_2\\
        \leq& \frac{1}{R}\sum_{l = 0}^{\infty}\left\|\sum_{i = 1}^R\mathcal{P}_{i-l}(\kappa_k^{-1}D_i^*)^2\right\|_2 \leq \frac{1}{\sqrt{R}}\sum_{l = 0}^{\infty}\left\|(\kappa_k^{-1}D_i^*)^2 - (\kappa_k^{-1}D_{i, \{i-l\}}^*)^2\right\|_2\\
        \leq& \frac{1}{\sqrt{R}}\left\|\kappa_k^{-1}D_i^* + \kappa_k^{-1}D_{i, \{i-l\}}^*\right\|_4\sum_{l = 0}^{\infty}\left\|\kappa_k^{-1}D_i^* - \kappa_k^{-1}D_{i, \{i-l\}}^*\right\|_4\\
        \leq& \frac{2}{\sqrt{R}}\left\|\kappa_k^{-1}(D_i^* - \mathbb{E}[D_i^*])\right\|_4\sum_{l = 0}^{\infty}\left\|\kappa_k^{-1}D_i^* - \kappa_k^{-1}D_{i, \{i-l\}}^*\right\|_4 + \frac{2}{\sqrt{R}}\kappa_k^{-1}|\mathbb{E}[D_i^*]|\sum_{l = 0}^{\infty}\left\|\kappa_k^{-1}D_i^* - \kappa_k^{-1}D_{i, \{i-l\}}^*\right\|_4\\
        \leq& \frac{1}{\sqrt{R}}C_1C_2 + 2\sqrt{\frac{S}{R}}\kappa_pC_2v^{\top}\Sigma v \overset{n \to \infty}{\longrightarrow} 0
    \end{align*}
    \\
    Combining {\bf Step 1.} and {\bf Step 2.} concludes the proof.
\end{proof}

\begin{proof}[Proof of \Cref{thm:lrv_consist}]
Since with probability at least $1 - cn^{-3}$, the event $\mathcal{E}_{\mathrm{DP}}$ defined in \eqref{eq:event_DP} holds, we condition on $\mathcal{E}_{\mathrm{DP}}$ in the following proof. 
It follows by definition that
\begin{align*}
    &Z_t - Z_t^*\\
    =& \begin{cases}
         -2\widehat{ \nu}_{k+1}^{\top}X_tX_t^{\top}\Psi_k + (\widehat{ \nu}_k -\widehat{ \nu}_{k+1})^{\top}X_tX_t^{\top}(\widehat{ \nu}_k +\widehat{ \nu}_{k+1}) + 2\epsilon_t(\widehat{ \nu}_{k+1} - \widehat{ \nu}_k)^{\top}X_t, \quad& s_k \leq t \leq \eta_k-1,\\
        -2\widehat{ \nu}_k^{\top}X_tX_t^{\top}\Psi_k  + (\widehat{ \nu}_k -\widehat{ \nu}_{k+1})^{\top}X_tX_t^{\top}(\widehat{ \nu}_k +\widehat{ \nu}_{k+1}) + 2\epsilon_t(\widehat{ \nu}_{k+1} - \widehat{ \nu}_k)^{\top}X_t, \quad& \eta_k \leq t \leq e_k-1,
    \end{cases}
\end{align*}
where we denote $\widehat{ \nu}_{k} = \widehat{\beta}_k - \beta_{\eta_k}^*$ and $\widehat{ \nu}_{k+1} = \widehat{\beta}_{k+1} - \beta_{\eta_{k+1}}^*$.
\\
\\
Recall the estimated jump size $\widehat{\kappa}_k$ defined in \eqref{eq-jump-size-estimator}. Some algebra shows that 
\begin{align}\label{eq:weighted_lrv_diff}
    |\widehat{\kappa}_k^2\widehat{\sigma}_{\infty}^2 - \kappa_k^2\check{\sigma}_{\infty}^2| \leq& \bigg|\frac{1}{R}\sum_{i = 1}^R\big\{D_i^2 - (D_i^*)^2\big\}\bigg| \leq \frac{1}{R}\sum_{i = 1}^R(D_i - D_i^*)^2 + \bigg|\frac{2}{R}\sum_{i = 1}^R D_i^*(D_i - D_i^*)\bigg|\nonumber\\
    \leq& \frac{1}{R}\sum_{i = 1}^R(D_i - D_i^*)^2 + 2\kappa_k\check{\sigma}_{\infty}\sqrt{\frac{1}{R}\sum_{i = 1}^R(D_i - D_i^*)^2}.
\end{align}
By \Cref{lemma:auxiliary_lrv_consist}, we have that $\check{\sigma}_{\infty}^2 \overset{P.}{\to} \sigma_{\infty}^2 \in [0, \infty)$. Next, we will focus on upper bounding $|D_i - D_i^*|$.
Without loss of generality, we consider the case that $\eta_k \in \mathcal{S}_{2j}$ for some $j \in \{1, \dots, R\}$. Let $h_1 = \eta_k - s_k - (2j-1)S$ and $h_2 = s_k+2jS - \eta_k$. Assume $h_2 \geq h_1$. The following figure depicts the definitions of $h_1$ and $h_2$.
\begin{center}
    \begin{tikzpicture}[scale=7,decoration=brace]
\draw[-, thick] (-0.1,0) -- (1.1,0);
\foreach \x/\xtext in {0/$s_k+2(j-1)S$,0.5/$s_k+(2j-1)S$,0.7/$\eta_k$,1/$s_k+2jS$}
    \draw[thick] (\x,0.5pt) -- (\x,-0.5pt) node[font = {\footnotesize}, below] {\xtext};
\draw[decorate, thick, yshift=0.6ex]  (0,0) -- node[font = {\footnotesize}, above=0.2ex] {$\mathcal{S}_{2j-1}$}  (0.5,0);
\draw[decorate, thick, yshift=0.6ex]  (0.5,0) -- node[font = {\footnotesize}, above=0.2ex] {$\mathcal{S}_{2j}$}  (1,0);
\draw (0.6,0.5pt) node[font = {\footnotesize}, above] {$h_1$};
\draw (0.85,0.5pt) node[font = {\footnotesize}, above] {$h_2$};
\end{tikzpicture}
\end{center}
\textbf{Case 1}: $i < j$.
It follows from \Cref{lemmma:refinement_lasso} that
\begin{align*}
    &\sqrt{2S}|D_i - D_i^*| = \bigg|\sum_{t \in \mathcal{S}_{2i-1}}(Z_t - Z_t^*) - \sum_{t \in \mathcal{S}_{2i}}(Z_t - Z_t^*)\bigg|\\
    =& \bigg|\sum_{t \in \mathcal{S}_{2i-1}}\big(-2\widehat{ \nu}_{k+1}^{\top}X_tX_t^{\top}\Psi_k + (\widehat{ \nu}_k -\widehat{ \nu}_{k+1})^{\top}X_tX_t^{\top}(\widehat{ \nu}_k +\widehat{ \nu}_{k+1}) + 2\epsilon_t(\widehat{ \nu}_{k+1} - \widehat{ \nu}_k)^{\top}X_t\big)\\
    &- \sum_{t \in \mathcal{S}_{2i}}\big(-2\widehat{ \nu}_{k+1}^{\top}X_tX_t^{\top}\Psi_k + (\widehat{ \nu}_k -\widehat{ \nu}_{k+1})^{\top}X_tX_t^{\top}(\widehat{ \nu}_k +\widehat{ \nu}_{k+1}) + 2\epsilon_t(\widehat{ \nu}_{k+1} - \widehat{ \nu}_k)^{\top}X_t\big)\bigg|\\
    \leq& 2\kappa_k|\widehat{ \nu}_{k+1}|_1\bigg|\sum_{t \in \mathcal{S}_{2i-1}}(X_tX_t^{\top} - \Sigma)v_k - \sum_{t \in \mathcal{S}_{2i}}(X_tX_t^{\top} - \Sigma)v_k\bigg|_{\infty}\\
    &+ 2\big|\widehat{ \nu}_k\big|_1^2\bigg|\sum_{t \in \mathcal{S}_{2i-1}}(X_tX_t^{\top} - \Sigma) - \sum_{t \in \mathcal{S}_{2i}}(X_tX_t^{\top} - \Sigma)\bigg|_{\infty} + 2|\widehat{ \nu}_{k}|_1\bigg|\sum_{t \in \mathcal{S}_{2i-1}}\epsilon_tX_t - \sum_{t \in \mathcal{S}_{2i}}\epsilon_tX_t\bigg|_{\infty}\\
    & + 2\big|\widehat{ \nu}_{k+1}\big|_1^2\bigg|\sum_{t \in \mathcal{S}_{2i-1}}(X_tX_t^{\top} - \Sigma) - \sum_{t \in \mathcal{S}_{2i}}(X_tX_t^{\top} - \Sigma)\bigg|_{\infty} + 2|\widehat{ \nu}_{k+1}|_1\bigg|\sum_{t \in \mathcal{S}_{2i-1}}\epsilon_tX_t - \sum_{t \in \mathcal{S}_{2i}}\epsilon_tX_t\bigg|_{\infty}.
\end{align*}
By \Cref{theorem:DUDP} and \Cref{lemmma:refinement_lasso}, it follows with probability at least $1 - c(n^{-3} + S^{-5})$ that
\begin{align*}
    |D_i - D_i^*| \leq C_1\big(\kappa_k^2\s^{1/2}\alpha_n^{-1/2} + \kappa_k^2\s\alpha_n^{-1} + \kappa_k\s^{1/2}\alpha_n^{-1/2}\big)\bigg( \sqrt{\log(p)} + \frac{\log^{1/\gamma}(p)}{\sqrt{S}} \bigg).
\end{align*}
\\
\\
\textbf{Case 2}: $i > j$. Similarly, it follows with probability at least $1 - c(n^{-3} + S^{-5})$ that
\begin{align*}
    |D_i - D_i^*| \leq C_2\big(\kappa_k^2\s^{1/2}\alpha_n^{-1/2} + \kappa_k^2\s\alpha_n^{-1} + \kappa_k\s^{1/2}\alpha_n^{-1/2}\big)\bigg( \sqrt{\log(p)} + \frac{\log^{1/\gamma}(p)}{\sqrt{S}} \bigg).
\end{align*}
\\
\\
\textbf{Case 3}: $i = j$. We have that
\begin{align*}
    &\sqrt{2S}|D_j - D_j^*| = \bigg|\sum_{t \in \mathcal{S}_{2j-1}}(Z_t - Z_t^*) - \sum_{t \in \mathcal{S}_{2j}}(Z_t - Z_t^*)\bigg|\\
    =& \bigg|\sum_{t \in \mathcal{S}_{2j-1}}\big(-2\widehat{ \nu}_{k+1}^{\top}X_tX_t^{\top}\Psi_k + (\widehat{ \nu}_k -\widehat{ \nu}_{k+1})^{\top}X_tX_t^{\top}(\widehat{ \nu}_k +\widehat{ \nu}_{k+1}) + 2\epsilon_t(\widehat{ \nu}_{k+1} - \widehat{ \nu}_k)^{\top}X_t\big)\\
    &- \sum_{t \in [s_k+\{2j-1\}S, \eta_k)}\big(-2\widehat{ \nu}_{k+1}^{\top}X_tX_t^{\top}\Psi_k + (\widehat{ \nu}_k -\widehat{ \nu}_{k+1})^{\top}X_tX_t^{\top}(\widehat{ \nu}_k +\widehat{ \nu}_{k+1}) + 2\epsilon_t(\widehat{ \nu}_{k+1} - \widehat{ \nu}_k)^{\top}X_t\big)\\
    &- \sum_{t \in [\eta_k, s_k+2jS)}\big(-2\widehat{ \nu}_{k}^{\top}X_tX_t^{\top}\Psi_k + (\widehat{ \nu}_k -\widehat{ \nu}_{k+1})^{\top}X_tX_t^{\top}(\widehat{ \nu}_k +\widehat{ \nu}_{k+1}) + 2\epsilon_t(\widehat{ \nu}_{k+1} - \widehat{ \nu}_k)^{\top}X_t\big)\bigg|\\
    \leq& 2\kappa_k|\widehat{ \nu}_{k+1}|_1\bigg|\sum_{t \in \mathcal{S}_{2i-1}}(X_tX_t^{\top} - \Sigma)v_k - \sum_{t \in (s_k+\{2j-1\}S, \eta_k]}(X_tX_t^{\top} - \Sigma)v_k\bigg|_{\infty}\\
    &+ 2\kappa_k|\widehat{ \nu}_{k}|_1\bigg|\sum_{t \in (\eta_k, s_k+2jS]}(X_tX_t^{\top} - \Sigma)v_k\bigg|_{\infty} + 2(S-h_1)\kappa_k| \widehat{\nu}_{k+1}|_2 \Lambda_{\max} + 2h_2\kappa_k| \widehat{\nu}_{k}|_2 \Lambda_{\max}\\
    &+ 2\big|\widehat{ \nu}_k\big|_1^2\bigg|\sum_{t \in \mathcal{S}_{2i-1}}(X_tX_t^{\top} - \Sigma) - \sum_{t \in \mathcal{S}_{2i}}(X_tX_t^{\top} - \Sigma)\bigg|_{\infty} + 2|\widehat{ \nu}_{k}|_1\bigg|\sum_{t \in \mathcal{S}_{2i-1}}\epsilon_tX_t - \sum_{t \in \mathcal{S}_{2i}}\epsilon_tX_t\bigg|_{\infty}\\
    & + 2\big|\widehat{ \nu}_{k+1}\big|_1^2\bigg|\sum_{t \in \mathcal{S}_{2i-1}}(X_tX_t^{\top} - \Sigma) - \sum_{t \in \mathcal{S}_{2i}}(X_tX_t^{\top} - \Sigma)\bigg|_{\infty} + 2|\widehat{ \nu}_{k+1}|_1\bigg|\sum_{t \in \mathcal{S}_{2i-1}}\epsilon_tX_t - \sum_{t \in \mathcal{S}_{2i}}\epsilon_tX_t\bigg|_{\infty}.
\end{align*}
By \Cref{theorem:DUDP} and \Cref{lemmma:refinement_lasso}, it follows with probability at least $1 - c(n^{-3} + S^{-5})$ that
\begin{align*}
    |D_j - D_j^*| \leq C_3\big(\kappa_k^2\s^{1/2}\alpha_n^{-1/2} + \kappa_k^2\s\alpha_n^{-1} + \kappa_k\s^{1/2}\alpha_n^{-1/2}\big)\bigg( \sqrt{\log(p)} + \frac{\log^{1/\gamma}(p)}{\sqrt{S}} \bigg) + C_4\sqrt{S}\kappa_k^2\alpha_n^{-1/2}.
\end{align*}
\\
\\
Combining all the above cases, it follows with probability at least $1 - c(n^{-2} + RS^{-5})$ that
\begin{align}\label{eq:conditional_exp_UB}
    &\frac{1}{R\kappa_k^2}\sum_{i = 1}^R(D_i - D_i^*)^2 = \frac{1}{R\kappa_k^2}\sum_{i \neq j}(D_i - D_i^*)^2 + \frac{1}{R\kappa_k^2}(D_j - D_j^*)^2\nonumber\\
    \leq& C_5\big(\kappa_k\s^{1/2}\alpha_n^{-1/2} + \kappa_k\s\alpha_n^{-1} + \s^{1/2}\alpha_n^{-1/2}\big)^2\bigg( \sqrt{\log(p)} + \frac{\log^{1/\gamma}(p)}{\sqrt{S}} \bigg)^2 + C_6\frac{S\kappa_k^2\alpha_n^{-1}}{R}.
\end{align}
Combining \eqref{eq:weighted_lrv_diff} and \eqref{eq:conditional_exp_UB}, we have that with probability at least $1 - cn^{-3}$
\begin{align*}
    \bigg|\frac{\widehat{\kappa}_k^2}{\kappa_k^2}\widehat{\sigma}_{\infty}^2 - \check{\sigma}_{\infty}^2\bigg| \leq C_7\frac{\s}{\alpha_n}\bigg( \sqrt{\log(p)} + \frac{\log^{1/\gamma}(p)}{\sqrt{S}} \bigg)^2.
\end{align*}
Under \Cref{assump-snr}\textbf{b}, applying \Cref{lemma:consistency_jump_size_est}, \Cref{lemma:auxiliary_lrv_consist} and Slutsky's theorem, we have that as $n \to \infty$,
\begin{align*}
    \big|\widehat{\sigma}_{\infty}^2 - \sigma_{\infty}^2\big| \overset{P.}{\longrightarrow} 0.
\end{align*}
\end{proof}

\begin{proof}[Proof of \Cref{thm:quadratic_consist}]
This proof uses the results in the \textbf{Preliminary.} of the proof of \Cref{thm:asymptotics}, which are omitted here for brevity.  Note that
\begin{align*}
    &\widehat{\varpi}_k - \varpi_k = \frac{1}{n\widehat{\kappa}_k^2}\sum_{t = 1}^n \widehat{\Psi}_k^{\top}X_tX_t^{\top}\widehat{\Psi}_k - v_k^{\top}\Sigma v_k\\
    =& \frac{\kappa_k^2}{\widehat{\kappa}_k^2}\frac{1}{n\kappa_k^2}\sum_{t = 1}^n \big\{\widehat{\Psi}_k^{\top}X_tX_t^{\top}\widehat{\Psi}_k - \Psi_k^{\top}X_tX_t^{\top}\Psi_k\big\} + \bigg(\frac{\kappa_k^2}{\widehat{\kappa}_k^2}-1\bigg)\frac{1}{n}\sum_{t = 1}^n  v_k^{\top}X_tX_t^{\top}v_k\\
    &+ \frac{1}{n}\sum_{t = 1}^n\big\{ v_k^{\top}X_tX_t^{\top}v_k - v_k^{\top}\Sigma v_k\big\}\\
    =& I + II + III.
\end{align*}
\\
\\
\textbf{Step 1.} We consider the term $I$. It follows that
\begin{align*}
    &\frac{1}{n\kappa_k^2}\sum_{t = 1}^n\big\{\widehat{\Psi}_k^{\top}X_tX_t^{\top}\widehat{\Psi}_k - \Psi_k^{\top}X_tX_t^{\top}\Psi_k\big\}\\
    =& \frac{1}{n\kappa_k^2}\sum_{t = 1}^n(\widehat{\beta}_{k} - \widehat{\beta}_{k-1} - \beta_{\mathcal{I}_k}^* + \beta_{\mathcal{I}_{k-1}}^*)^{\top}(X_tX_t^{\top} - \Sigma)(\widehat{\beta}_{k} - \widehat{\beta}_{k-1}- \beta_{\mathcal{I}_k}^* + \beta_{\mathcal{I}_{k-1}}^*)\\
    &- \frac{1}{n\kappa_k^2}\sum_{t = 1}^n(\beta_{\eta_k}^* - \beta_{\eta_{k-1}}^* - \beta_{\mathcal{I}_k}^* + \beta_{\mathcal{I}_{k-1}}^*)^{\top}(X_tX_t^{\top} - \Sigma)(\beta_{\eta_k}^* - \beta_{\eta_{k-1}}^*- \beta_{\mathcal{I}_k}^* + \beta_{\mathcal{I}_{k-1}}^*)\\
    &+ \frac{2}{n\kappa_k^2}\sum_{t = 1}^n(\widehat{\beta}_{k} - \widehat{\beta}_{k-1} - \beta_{\eta_k}^* + \beta_{\eta_{k-1}}^*)^{\top}(X_tX_t^{\top} - \Sigma)( \beta_{\mathcal{I}_k}^* - \beta_{\mathcal{I}_{k-1}}^*)\\
    &+ \frac{1}{\kappa_k^2}(\widehat{\beta}_{k} - \widehat{\beta}_{k-1} - \beta_{\mathcal{I}_k}^* + \beta_{\mathcal{I}_{k-1}}^*)^{\top}\Sigma(\widehat{\beta}_{k} - \widehat{\beta}_{k-1}- \beta_{\mathcal{I}_k}^* + \beta_{\mathcal{I}_{k-1}}^*)\\
    &- \frac{1}{\kappa_k^2}(\beta_{\eta_k}^* - \beta_{\eta_{k-1}}^* - \beta_{\mathcal{I}_k}^* + \beta_{\mathcal{I}_{k-1}}^*)^{\top}\Sigma(\beta_{\eta_k}^* - \beta_{\eta_{k-1}}^*- \beta_{\mathcal{I}_k}^* + \beta_{\mathcal{I}_{k-1}}^*)\\
    &+ \frac{2}{\kappa_k^2}(\widehat{\beta}_{k} - \widehat{\beta}_{k-1} - \beta_{\eta_k}^* + \beta_{\eta_{k-1}}^*)^{\top}\Sigma( \beta_{\mathcal{I}_k}^* - \beta_{\mathcal{I}_{k-1}}^*)\\
    =& I_1 - I_2 + 2I_3 + I_4 - I_5 + 2I_6.
\end{align*}
\
\\
\textbf{Term $I_1$.} It holds that
\begin{align*}
    |I_1| \leq& \frac{2}{n\kappa_k^2}\sum_{t = 1}^n(\widehat{\beta}_{k} - \beta_{\mathcal{I}_k}^*)^{\top}(X_tX_t^{\top} - \Sigma)(\widehat{\beta}_{k} - \beta_{\mathcal{I}_k}^*) + \frac{2}{n\kappa_k^2}\sum_{t = 1}^n( \widehat{\beta}_{k-1} -  \beta_{\mathcal{I}_{k-1}}^*)^{\top}(X_tX_t^{\top} - \Sigma)( \widehat{\beta}_{k-1}- \beta_{\mathcal{I}_{k-1}}^*),
\end{align*}
where the inequality follows from the Cauchy–Schwarz inequality and the symmetry of $X_tX_t^{\top} - \Sigma$.
It follows from \Cref{theorem:RES Version II}\textbf{b}, \eqref{eq:interval_lasso_k_1} and \eqref{eq:interval_lasso_k_3} that with probability at least $ 1-n^{-5}$,
\begin{align*} 
 |I_1|  \le& C_1 \sqrt { \frac{\s \log(pn) }{\Delta }} \frac{\s\log(pn)}{ \Delta \kappa_k^2 } \leq C_2 \alpha_n^{-3/2}.
\end{align*}
\
\\
\textbf{Term $I_2$.} It holds that 
\begin{align*}
    |I_2| \leq& \frac{2}{n\kappa_k^2}\sum_{t = 1}^n(\beta_{\eta_k}^* - \beta_{\mathcal{I}_k}^*)^{\top}(X_tX_t^{\top} - \Sigma)(\beta_{\eta_k}^* - \beta_{\mathcal{I}_k}^*) + \frac{2}{n\kappa_k^2}\sum_{t = 1}^n( \beta_{\eta_{k-1}^*} -  \beta_{\mathcal{I}_{k-1}}^*)^{\top}(X_tX_t^{\top} - \Sigma)(\beta_{\eta_{k-1}}^*- \beta_{\mathcal{I}_{k-1}}^*).
\end{align*}
Let $$z_t = \frac{1}{|\beta^*_{\mathcal{I}_{k-1}} - \beta^*_{\eta_{k-1}}|_2^2}(\beta^*_{\mathcal{I}_{k-1}} - \beta^*_{\eta_{k-1}})^{\top}\big(X_tX_t^{\top} - \Sigma\big)(\beta^*_{\mathcal{I}_{k-1}} - \beta^*_{\eta_{k-1}}).$$
Note that
\begin{align*}
    &\big\|z_t - z_{t,\{0\}}\big\|_2\\
    \leq& \bigg\| \frac{1}{|\beta^*_{\mathcal{I}_{k-1}} - \beta^*_{\eta_{k-1}}|_2^2}(\beta^*_{\mathcal{I}_{k-1}} - \beta^*_{\eta_{k-1}})^{\top}\Big\{X_t(X_t - X_{t,\{0\}})^{\top} + (X_t - X_{t,\{0\}})X_{t,\{0\}}^{\top}\Big\}(\beta^*_{\mathcal{I}_{k-1}} - \beta^*_{\eta_{k-1}}) \bigg\|_2\\
    \leq& 2\sup_{|v|_2 = 1}\big\|v^{\top}X_1\big\|_{4}\delta^X_{t,4}.
\end{align*}
By \Cref{assume:X} $$\sup_{m \geq 0}\exp(cm^{\gamma_1(X)})\sum_{s = m}^{\infty}\big\|z_s - z_{s,\{0\}}\big\|_2 \leq 2D_X\sup_{|v|_2 = 1}\big\|v^{\top}X_1\big\|_{4},$$
and by \Cref{lemma:subweibull},  there exists $C_1$  depending on $C_X$ such that
 $$\p ( |z_t| \ge \tau) \le 2 \exp(- (\tau/C_1)^ {\gamma_2/2}).$$
Then, by \Cref{thm:bernstein_exp_subExp_nonlinear}, we have with probability at least $1 - n^{-5}$ that
\begin{align*}
  \bigg | \frac{1}{n}\sum_{t=1}^n z_t  \bigg |  \le  C_2 \bigg\{  \sqrt { \frac{\log(n) }{n}  }  +   \frac{ \log^{1/\gamma }( n) }{n} \bigg \}.
\end{align*}
Consequently,
\begin{align*}
    |I_2| \leq& C_3\kappa_k^{-2}\big( |\beta^*_{\mathcal{I}_{k}} - \beta^*_{\eta_{k}}|_2^2 + |\beta^*_{\mathcal{I}_{k-1}} - \beta^*_{\eta_{k-1}}|_2^2\big)\bigg\{  \sqrt { \frac{\log(n) }{n}  }  +   \frac{ \log^{1/\gamma }( n) }{n} \bigg \}\\
    \leq& C_4\alpha_n^{-2}\bigg\{  \sqrt { \frac{\log(n) }{n}  }  +   \frac{ \log^{1/\gamma }( n) }{n} \bigg \} \leq C_4\alpha_n^{-2},
\end{align*}
where the second inequality follows from \eqref{eq:bias_1} and \eqref{eq:bias_2}.
\
\\
\textbf{Term $I_3$.} It holds that 
\begin{align*}
    |I_3| \leq& \frac{1}{n\kappa_k^2}\big(|\widehat{\beta}_{k} - \beta_{\eta_k}^*|_1 +  |\widehat{\beta}_{k-1} - \beta_{\eta_{k-1}}^*|_1\big) \bigg|\sum_{t = 1}^n(X_tX_t^{\top} - \Sigma)(\beta_{\mathcal{I}_k}^* - \beta_{\mathcal{I}_{k-1}}^*)\bigg|_{\infty}.
\end{align*}
For any $j \in \{1, \dots, p\}$, let $$z_{tj} = \frac{1}{|\beta^*_{\mathcal{I}_{k}} - \beta^*_{\mathcal{I}_{k-1}}|_2}\big(X_{tj}X_t^{\top} - \Sigma_{j\cdot}\big)(\beta^*_{\mathcal{I}_{k-1}} - \beta^*_{\mathcal{I}_{k-1}}).$$
Similarly, we have for any $j \in \{1, \dots, p\}$ that
\begin{align*}
    \big\|z_{tj} - z_{tj,\{0\}}\big\|_2 \leq 2\sup_{|v|_2 = 1}\big\|v^{\top}X_1\big\|_{4}\delta^X_{t,4}.
\end{align*}
Under \cref{assume:X}, we can verify that
$$\sup_{m \geq 0}\exp(cm^{\gamma_1(X)})\sum_{s = m}^{\infty}\big\|z_{sj} - z_{sj,\{0\}}\big\|_2 \leq 2D_X\sup_{|v|_2 = 1}\big\|v^{\top}X_1\big\|_{4},$$
and 
 $$\p ( |z_{tj}| \ge \tau) \le 2 \exp(- (\tau/C_5)^ {\gamma_2/2}).$$
Then, by \Cref{thm:bernstein_exp_subExp_nonlinear} and the union bound, we have with probability at least $1 - n^{-5}$ that
\begin{align*}
  \max_{1 \leq j \leq p}\bigg | \frac{1}{n}\sum_{t=1}^n z_{tj}  \bigg |  \le  C_6 \bigg\{  \sqrt { \frac{\log(pn) }{n}  }  +   \frac{ \log^{1/\gamma }( pn) }{n} \bigg \}.
\end{align*}
Consequently,
\begin{align*}
    |I_3| \leq& C_7\kappa_k^{-2}\big(|\widehat{\beta}_{k} - \beta_{\eta_k}^*|_1 +  |\widehat{\beta}_{k-1} - \beta_{\eta_{k-1}}^*|_1\big)|\beta^*_{\mathcal{I}_{k}} - \beta^*_{\mathcal{I}_{k-1}}|_2\bigg\{  \sqrt { \frac{\log(pn) }{n}  }  +   \frac{ \log^{1/\gamma }( pn) }{n} \bigg \}\\
    \leq& C_8\alpha_n^{-1/2}\bigg\{  \sqrt { \frac{\s\log(pn) }{n}  }  +   \frac{\s^{1/2} \log^{1/\gamma }( pn) }{n} \bigg \}\\
    \leq& C_8\alpha_n^{-1/2}\bigg\{  \sqrt { \frac{\s\log(pn) }{\Delta}  }  +   \frac{\s^{1/2} \log^{1/\gamma }( pn) }{\Delta} \bigg \}\\
    \leq& C_9\alpha_n^{-1},
\end{align*}
where the second inequality follows from \Cref{lemmma:refinement_lasso}, \eqref{eq:bias_2} and \eqref{eq:bias_3}, and the last inequality follows from \eqref{eq:SNR_cond}.
\
\\
\textbf{Terms $I_4$, $I_5$ and $I_6$.} By \eqref{eq:interval_lasso_k_1}, it holds that
\begin{align*}
    |I_4| \leq 2\kappa_k^{-2}\Lambda_{\max}(\Sigma)\big(| \widehat \beta_k -\beta^*_{\I_k}  | _2^2 + | \widehat \beta_{k-1} -\beta^*_{\I_{k-1}}  | _2^2\big) \leq C_{10}\frac{\s\log(pn)}{\Delta\kappa_k^2} \leq C_{11}\alpha_n^{-1}.
\end{align*}
By \eqref{eq:bias_1} and \eqref{eq:bias_2}
\begin{align*}
    |I_5| \leq 2\kappa_k^{-2}\Lambda_{\max}(\Sigma)\big(| \beta^*_{\eta_k} -\beta^*_{\I_k}  | _2^2 + | \beta^*_{\eta_{k-1}} -\beta^*_{\I_{k-1}}  | _2^2\big) \leq C_{12}\alpha_n^{-2}.
\end{align*}
By \Cref{lemmma:refinement_lasso}, \eqref{eq:bias_2} and \eqref{eq:bias_3}
\begin{align*}
    |I_6| \leq \kappa_k^{-2}\Lambda_{\max}(\Sigma)\big(| \widehat \beta_k -\beta^*_{\eta_k}  | _2 + | \widehat \beta_{k-1} -\beta^*_{\eta_{k-1}}  | _2\big)|\beta^*_{\I_{k}} - \beta^*_{\I_{k-1}}|_2 \leq C_{13}\alpha_n^{-1/2}.
\end{align*}
Combining the upper bounds of $|I_i|$ for $i = 1, \dots, 6$, we have that
\begin{align*}
    \frac{1}{n\kappa_k^2}\sum_{t = 1}^n\big\{\widehat{\Psi}_k^{\top}X_tX_t^{\top}\widehat{\Psi}_k - \Psi_k^{\top}X_tX_t^{\top}\Psi_k\big\} = o_p(1).
\end{align*}
Since $\frac{\kappa_k}{\widehat \kappa_k} - 1 = o_p(1)$ due to \Cref{lemma:consistency_jump_size_est}, by Slutsky's theorem, we have that
$$I = o_p(1).$$
\\
\\
\textbf{Step 2.} We consider the terms $II$ and $III$. Using the similar arguments as for the term $I_2$, we have that with probability at least $1 - n^{-5}$ that
\begin{align*}
  \bigg | \frac{1}{n}\sum_{t=1}^n v_k^{\top}(X_tX_t^{\top} - \Sigma)v_k  \bigg |  \le  C_2 \bigg\{  \sqrt { \frac{\log(n) }{n}  }  +   \frac{ \log^{1/\gamma }( n) }{n} \bigg \}.
\end{align*}
It follows that
\begin{align*}
    II = o_p(1),
\end{align*}
since $\frac{\kappa_k}{\widehat \kappa_k} - 1 = o_p(1)$ due to \Cref{lemma:consistency_jump_size_est}. Moreover,
\begin{align*}
    III = o_p(1).
\end{align*}
Therefore, we have that as $n \to \infty$
$$\widehat{\varpi}_k \overset{P.}{\longrightarrow} \varpi_k.$$
\end{proof}

\begin{proof}[Proof of \Cref{thm:sampling_dist}]
Note that when $M = \infty$, \eqref{eq:simu_est} can be equivalently written as   
        \begin{align*}
            \widehat{u}^{(b)} = \argmin_{r \in \mathbb{R}} \big\{\widehat{\varpi}_k|r| + \widehat{\sigma}_{\infty}(k)\mathbb{W}^{(b)}(r)\big\},
        \end{align*}
Denote $\widehat u =  \widehat u^{(b)} $ and $z_i = z_i^{(b)}$ for simplicity.  
\\
\\
{\bf Step 1.}  We show $\widehat u$ is uniformly tight.  A similar argument was  used in  the proof of \Cref{thm:asymptotics}. 
Without loss of generality, assume that 
$\widehat u >0$. If $\widehat u  \le 1 $, there is nothing to show. 
Suppose $\widehat u\ge 1$. 
 By  \Cref{lemma:iterated log under dependence} with $\nu = 1$, uniformly for all $r\ge 1 $ and $n \in \mathbb Z_+ $,
$$    \frac{1}{\sqrt { nr }  \log ( r )    }\sum_{i=1}^{nr } z_i      =O_p(1) .  $$
Since 
$\widehat \sigma_{\infty}(k) - \sigma_{\infty}(k) =o_p(1   ) $ and 
 $\widehat \varpi_k - \varpi_k =o_p(1 ) $, it follows with probability approaching to 1, $$ 0< \varpi_k/2 \le \widehat \varpi_k \;\; \text{and} \;\;  \widehat \sigma_{\infty}(k) \in [0, \infty).  $$
Since $\widehat u$ is the minimizer, 
$$ \widehat u  \widehat \varpi_k  + \widehat \sigma_{\infty}(k) \frac{1}{\sqrt { n   } } \sum_{i=1}^{n \widehat u   } z_i \le 0  . $$ 
Therefore,
$$ \widehat u   \varpi_k /2  \le \widehat u  \widehat \varpi_k  \le  - \widehat \sigma_{\infty}(k) \frac{1}{\sqrt { n   } } \sum_{i=1}^{n \widehat u   } z_i    = O_p( \sqrt { \widehat u}    \log(\widehat u ) ).$$
This implies that $\widehat u =O_p(1). $
\\
\\
{\bf Step 2.} We show that for any fixed $M>0$,   
$$\widehat{\varpi}_k|r| + \widehat{\sigma}_{\infty}(k)\mathbb{W}^{(b)}(r) \overset{\mathcal{D}} {\longrightarrow}  \varpi_k|r| + \sigma_{\infty}(k)\mathbb{W}(r) \;\; \text{for} \;\; |r| \le M  .$$  
Uniformly for all $r\le M$, the functional CLT for i.i.d. random variables leads to
$$  \frac{1}{\sqrt { n   } } \sum_{i=1}^{nr  } z_i \overset{\mathcal{D}}{\longrightarrow } \mathbb B_1(r) .$$
Since 
$\widehat \sigma_{\infty}(k) - \sigma_{\infty}(k) =o_p(1   ) $ and 
 $\widehat \varpi_k - \varpi_k =o_p(1 ) $, the Argmax (or Argmin) continuous mapping theorem and Slutsky's theorem lead to
    \begin{align*}
        \widehat{u} \overset{\mathcal{D}}{\longrightarrow} \argmin_r \big\{\varpi_k|r| + \sigma_{\infty}(k)\mathbb{W}(r)\big\}.
    \end{align*}
Finally, \Cref{lemma:consistency_jump_size_est} and Slutsky's theorem directly give the desired result.
\end{proof}

\section{Additional Technical Lemmas}\label{sec-app-add} 
     \begin{lemma}[Burkholder's inequality]\label{lemma:Burkholder}
     Let $q>1$, $q^{\prime} = \min\{q,2\}$ and $K_q = \max\{(q-1)^{-1}, \sqrt{q-1}\}$. Let $\{ X_t\}_{t=1}^n $ be a martingale difference sequence with $\| X_i \|_{q} < \infty$. That is, there exists a martingale
     $ \{S_k\}_{k=0}^n$ such that $X_k =S_k-S_{k-1} $.
     Then 
     $$\| S_n\|_q^{q^{\prime}} \le K_q^{q^{\prime}}\sum_{t=1}^n \|X_t\|_q^{q^{\prime}}. $$

     \end{lemma}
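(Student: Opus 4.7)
The plan is to prove the bound by splitting into the two regimes $q\in(1,2]$ (where $q'=q$ and $K_q=(q-1)^{-1}$) and $q>2$ (where $q'=2$ and $K_q=\sqrt{q-1}$), and invoking in each case a sharp square-function or Marcinkiewicz-Zygmund-type bound for martingales. In both regimes the strategy is the same at a conceptual level: dominate $\|S_n\|_q$ by a norm of the quadratic variation $[S]_n:=\sum_{t=1}^n X_t^2$, and then pass from the quadratic variation to $\sum_t\|X_t\|_q^{q'}$ by a convexity/subadditivity argument tailored to the regime.

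For $q\in(1,2]$, I would first invoke the sharp Burkholder square-function inequality for martingales: with $q^{*}:=\max\{q,q/(q-1)\}$,
\[
\|S_n\|_q \;\le\; (q^{*}-1)\,\bigl\|[S]_n^{1/2}\bigr\|_q,
\]
and observe that $q^{*}-1=1/(q-1)=K_q$ throughout this range. Since $q/2\le 1$, the elementary subadditivity $(\sum_t a_t)^{q/2}\le\sum_t a_t^{q/2}$ for nonnegative reals gives
\[
\bigl\|[S]_n^{1/2}\bigr\|_q^{q} \;=\; \mathbb{E}\Bigl[\bigl(\textstyle\sum_{t=1}^n X_t^2\bigr)^{q/2}\Bigr] \;\le\; \sum_{t=1}^n\mathbb{E}|X_t|^q \;=\; \sum_{t=1}^n\|X_t\|_q^{q},
\]
so raising Burkholder's bound to the $q$th power closes this case with constant $(q-1)^{-q}=K_q^{q'}$.

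For $q>2$, the naive combination of Burkholder (whose constant is now $q-1$) with Minkowski's inequality in $L^{q/2}$ would only deliver $K_q=q-1$, which is not sharp enough for the claim. Instead, I would invoke Rio's martingale Marcinkiewicz–Zygmund inequality,
\[
\|S_n\|_q^{2} \;\le\; (q-1)\sum_{t=1}^n\|X_t\|_q^{2} \qquad (q\ge 2),
\]
which is exactly the desired bound with $K_q^{2}=q-1$. Combined with the first case, this covers all $q>1$.

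The only genuine obstacle is the $q>2$ regime: the improvement of the constant from $q-1$ down to $\sqrt{q-1}$ is precisely the content of Rio's inequality, and it cannot be extracted from the classical Burkholder–Davis–Gundy route. Its proof proceeds by induction on $n$ that exploits orthogonality of martingale differences together with an optimised Young-type convexity step. I would therefore take Rio's inequality as a black-box citation, and write out explicitly only the short Burkholder-plus-subadditivity derivation for $q\in(1,2]$, after which the two displayed bounds combine into the single statement $\|S_n\|_q^{q'}\le K_q^{q'}\sum_t\|X_t\|_q^{q'}$.
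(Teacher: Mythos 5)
Your proposal is correct and is essentially the paper's approach: the paper's proof of this lemma consists only of citations, namely Rio (2009, Theorem 2.1) for $q > 2$ and Burkholder (1988) for $1 < q \leq 2$, exactly the two black boxes you identify. Your additional derivation of the $1 < q \leq 2$ case — the sharp square-function bound $\|S_n\|_q \leq (q^{*}-1)\,\|[S]_n^{1/2}\|_q$ (which is indeed available, e.g.\ via Burkholder's sharp inequality for $\ell^2$-valued martingale transforms, with $q^{*}-1 = (q-1)^{-1}$ in this range) followed by subadditivity of $x \mapsto x^{q/2}$ — is a sound and standard way to obtain what the paper simply cites.
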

     \begin{proof}
      See \cite{rio2009moment} for $q > 2$ and \cite{burkholder1988sharp} for $1 < q \leq 2$.
 
 \end{proof}	
 
\begin{lemma}
Suppose $\{Z_t\}_{t\in \mathbb Z}$ is a (possibly) nonstationary process in the form of \eqref{eq:nonstationary}. Then
$$\| \mathcal P_{t-i} Z_t\|_q = \| \E(Z_t | \mathcal F_{t-i} ) - \E(Z_t | \mathcal F_{t-i-1} )\|_ q  \le \| Z_t-Z_{t, \{t-i\}} \|_q  . $$

\end{lemma}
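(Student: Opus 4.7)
The plan is to rewrite $\mathcal{P}_{t-i}Z_t$ as a single conditional expectation of the coupled difference $Z_t - Z_{t,\{t-i\}}$, and then apply the conditional Jensen's inequality. The key structural input is the representation $Z_t = g_t(\mathcal{F}_t^{\zeta})$ with i.i.d.\ innovations $\{\zeta_s\}$: by definition $Z_{t,\{t-i\}}$ is built from the same innovations as $Z_t$ except that $\zeta_{t-i}$ is replaced by an independent copy $\zeta_{t-i}^*$.

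First I would establish two identities. Because $Z_{t,\{t-i\}}$ does not depend on $\zeta_{t-i}$, and $\zeta_{t-i}$ is independent of all other coordinates of $\mathcal{F}_t^{\zeta}$ used in the construction, we have
\begin{equation*}
\mathbb{E}[Z_{t,\{t-i\}}\mid\mathcal{F}_{t-i}] \;=\; \mathbb{E}[Z_{t,\{t-i\}}\mid\mathcal{F}_{t-i-1}].
\end{equation*}
Second, since $(\zeta_{t-i},\zeta_{t-i+1},\dots,\zeta_t)$ and $(\zeta_{t-i}^*,\zeta_{t-i+1},\dots,\zeta_t)$ have the same joint law and are both independent of $\mathcal{F}_{t-i-1}$, the random variables $Z_t$ and $Z_{t,\{t-i\}}$ have the same conditional distribution given $\mathcal{F}_{t-i-1}$, so
\begin{equation*}
\mathbb{E}[Z_{t,\{t-i\}}\mid\mathcal{F}_{t-i-1}] \;=\; \mathbb{E}[Z_t\mid\mathcal{F}_{t-i-1}].
\end{equation*}
Combining these two displays yields
\begin{equation*}
\mathcal{P}_{t-i} Z_t \;=\; \mathbb{E}[Z_t\mid\mathcal{F}_{t-i}] - \mathbb{E}[Z_t\mid\mathcal{F}_{t-i-1}] \;=\; \mathbb{E}\bigl[Z_t - Z_{t,\{t-i\}}\,\big|\,\mathcal{F}_{t-i}\bigr].
\end{equation*}

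Applying the conditional Jensen's inequality to the convex map $x\mapsto |x|^q$ and then taking total expectation gives
\begin{equation*}
\mathbb{E}\bigl|\mathcal{P}_{t-i}Z_t\bigr|^q \;\le\; \mathbb{E}\bigl[\,\mathbb{E}[\,|Z_t - Z_{t,\{t-i\}}|^q\mid\mathcal{F}_{t-i}\,]\,\bigr] \;=\; \mathbb{E}\bigl|Z_t - Z_{t,\{t-i\}}\bigr|^q,
\end{equation*}
and taking $q$th roots delivers the claim. There is no genuine obstacle here; the only care needed is a clean justification of the two conditional-expectation identities above, which is a routine consequence of the i.i.d.\ structure of $\{\zeta_s\}$ and the measurable-function form of $g_t$.
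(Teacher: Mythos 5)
Your proof is correct and matches the paper's argument exactly: both establish the chain $\E(Z_t \mid \mathcal F_{t-i-1}) = \E(Z_{t,\{t-i\}}\mid\mathcal F_{t-i-1}) = \E(Z_{t,\{t-i\}}\mid\mathcal F_{t-i})$ to rewrite $\mathcal P_{t-i}Z_t$ as $\E(Z_t - Z_{t,\{t-i\}}\mid\mathcal F_{t-i})$, and then apply conditional Jensen. You simply spell out the justification of the two coupling identities more explicitly than the paper does.
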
 
 \begin{proof}
 Note  that 
 $$\E(Z_t | \mathcal F_{t-i-1} ) =  \E(Z_{ t,\{ t-i\}}| \mathcal F_{t-i-1} ) =\E(Z_{ t,\{ t-i\}}| \mathcal F_{t-i} ) .$$
 Therefore
 \begin{align*}
 \| \mathcal P_{t-i} Z_t\|_q  = &\| \E(Z_t | \mathcal F_{t-i} ) - \E(Z_t | \mathcal F_{t-i-1} )\|_q  
 \\= &\| \E (Z_t | \mathcal F_{t-i}) -\E(Z_{ t,\{ t-i\}}| \mathcal F_{t-i-1} )\|_ q  
 \\ 
 = & \| \E (Z_t-Z_{ t,\{ t-i\}}  | \mathcal F_{t-i})  \|_ q  
 \\
    \le &  \| Z_t-Z_{t, \{t-i\}} \| _q 
 \end{align*}
 where the last inequality follows from Jensen's inequality. 
 \end{proof}
 
 \begin{lemma}\label{lemma:subweibull}
 Suppose $Z_1$ and $Z_2$ are two random variables such that 
 \begin{align} \label{eq:Z subweibull} \p ( |Z_i| \ge \tau) \le 2 \exp(- (\tau/C_i)^ {\gamma_2} ).
 \end{align}
 Then there exists a constant $C$ depending on only $C_1$ and $C_2$ such that 
 $$ \p ( |Z_1Z_2 -\E(Z_1Z_2) | \ge \tau) \le 2 \exp(- (\tau/C)^ {\gamma_2 /2}) .$$
 \end{lemma}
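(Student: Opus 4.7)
The lemma is a routine closure property: sub-Weibull$(\gamma_2)$ random variables are closed under products, with the shape parameter halving to $\gamma_2/2$. My plan is to handle $|Z_1Z_2|$ first by a union bound on the ``one factor is large'' event, bound $|\E(Z_1Z_2)|$ by a crude moment estimate, and then absorb every prefactor into the final constant $C$.

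The first step uses the elementary inclusion $\{|Z_1Z_2|\ge\tau\}\subseteq\{|Z_1|\ge\sqrt{\tau}\}\cup\{|Z_2|\ge\sqrt{\tau}\}$ for any $\tau>0$. A union bound combined with \eqref{eq:Z subweibull} gives
\begin{align*}
    \p(|Z_1Z_2|\ge\tau)
    \le 2\exp\{-(\sqrt{\tau}/C_1)^{\gamma_2}\}+2\exp\{-(\sqrt{\tau}/C_2)^{\gamma_2}\}
    \le 4\exp\{-(\tau/C_0^2)^{\gamma_2/2}\},
\end{align*}
with $C_0=\max\{C_1,C_2\}$. The second step controls the centring: \eqref{eq:Z subweibull} yields $\|Z_i\|_q<\infty$ for every $q>0$ via the layer-cake identity $\E|Z_i|^q=\int_0^\infty q\tau^{q-1}\p(|Z_i|>\tau)\,d\tau$, and Cauchy--Schwarz then gives $|\E(Z_1Z_2)|\le\|Z_1\|_2\|Z_2\|_2\le K$ for some constant $K=K(C_1,C_2,\gamma_2)$.

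The third step assembles these two ingredients and tunes $C$. For $\tau\ge 2K$, the triangle inequality gives $\{|Z_1Z_2-\E(Z_1Z_2)|\ge\tau\}\subseteq\{|Z_1Z_2|\ge\tau/2\}$, so the first step supplies a bound of the form $4\exp\{-(\tau/(2^{1/\gamma_2}C_0^2))^{\gamma_2/2}\}$. For $\tau<2K$, the target inequality is trivial provided $C$ is large enough that $2\exp\{-(2K/C)^{\gamma_2/2}\}\ge 1$, i.e., $(2K/C)^{\gamma_2/2}\le\log 2$. Finally, the leading constant $4$ in the $\tau\ge 2K$ range is converted to $2$ at the cost of doubling the exponent constant, using $4\exp(-x)\le 2\exp(-x/2)$ whenever $x\ge 2\log 2$ and the trivial bound $2\exp(-x/2)\ge 1$ otherwise. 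Choosing $C$ large enough to satisfy all these thresholds simultaneously yields the stated bound. There is no genuine analytic obstacle here---the whole argument is a bookkeeping exercise in constants---so the only thing to be careful about is making the dependence of $C$ on $(C_1,C_2,\gamma_2)$ explicit and independent of any other parameters.
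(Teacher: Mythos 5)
Your proof is correct, but it is worth noting that the paper does not actually prove this lemma at all: it simply remarks that this is a well-known closure property of sub-Weibull random variables and cites \cite{wong2020lasso}. Your argument is the standard self-contained route to that closure property: the inclusion $\{|Z_1Z_2|\ge\tau\}\subseteq\{|Z_1|\ge\sqrt{\tau}\}\cup\{|Z_2|\ge\sqrt{\tau}\}$ together with the identity $(\sqrt{\tau}/C_i)^{\gamma_2}=(\tau/C_i^2)^{\gamma_2/2}$ delivers the halved shape parameter, the layer-cake bound on $\|Z_i\|_2$ handles the centring, and the case split at $\tau=2K$ plus the $4\exp(-x)\le 2\exp(-x/2)$ trick cleans up the constants. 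The only blemish is cosmetic: after replacing $\tau$ by $\tau/2$ in the product tail bound, the exponent is $(\tau/(2C_0^2))^{\gamma_2/2}$ rather than $(\tau/(2^{1/\gamma_2}C_0^2))^{\gamma_2/2}$; for $\gamma_2>1$ your written constant is slightly too small, but since you ultimately take $C$ large enough to dominate every threshold this has no effect on the conclusion. What your approach buys over the paper's is self-containedness and an explicit (in principle trackable) dependence of $C$ on $C_1$, $C_2$ and $\gamma_2$, at the cost of a page of bookkeeping that the reference already packages.
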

 \begin{proof}
This  is a well-known property for  sub-Weibull random variables. See \cite{wong2020lasso}  for a detailed introduction of sub-Weibull random variables. 
 \end{proof}

\subsection{A maximal inequality for weighted partial sums under dependence}
Consider a stationary process $\{Z_t\}_{t \in \mathbb{Z}} \subset \mathbb{R}$ with $\mathbb{E}[Z_t] = 0$ and the functional dependence measure $\delta_{s,q}$ defined similarly in \eqref{eq-functional-dependence-eps-def}. Let $S_n = \sum_{k = 1}^nZ_k$ and $S^*_n = \max_{1 \leq i \leq n}|S_i|$. The following lemma is a   maximal inequality  for weighted partial sums.

\begin{lemma} \label{lemma:iterated log under dependence}
Assume $\sum_{s=1}^\infty   \delta _{s,q  } < \infty$ for some $q > 2$. Let  $\nu>0$ be given.  For any $ 0<a<1$,  it holds that
$$\p\bigg( \bigg| \sum_{i=1}^{ r  } Z _i  \bigg| \le  \frac{C }{  a } \sqrt r  \big \{   \log(  r\nu) +1   \big \}      \text{ for all } r\ge 1/\nu \bigg)  \ge 1- a^2 , $$
where $C > 0$ is an absolute constant.
\end{lemma}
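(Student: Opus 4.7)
\medskip
\noindent\textbf{Proof proposal for \Cref{lemma:iterated log under dependence}.}

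The plan is a dyadic peeling argument driven by an $L^q$ maximal inequality under functional dependence. The first step is to establish the bound
\[
\Bigl\|\max_{1\le r\le n}|S_r|\Bigr\|_q \;\le\; C_0\sqrt{n},
\]
where $C_0$ depends only on $q$ and $\Delta_{0,q}=\sum_{s\ge 0}\delta_{s,q}$. This is done via Wu's projection-operator decomposition $Z_t=\sum_{i\ge 0}\mathcal P_{t-i}Z_t$, Minkowski's inequality in the index $i$, Doob's $L^q$ maximal inequality applied to each martingale $r\mapsto\sum_{t=1}^{r}\mathcal P_{t-i}Z_t$, Burkholder's inequality (\Cref{lemma:Burkholder}), and finally $\|\mathcal P_{t-i}Z_t\|_q\le\delta_{i,q}$, giving $\|\max_r|\sum_{t\le r}\mathcal P_{t-i}Z_t|\|_q\lesssim\sqrt n\,\delta_{i,q}$. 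Summing over $i$ and using $\Delta_{0,q}<\infty$ yields the claim.

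With this maximal bound in hand, set $n_0=\lceil 1/\nu\rceil$ and $R_k=2^{k}n_0$ for $k\ge 0$. For any integer $r$ with $r\ge 1/\nu$, there is a unique $k\ge 0$ with $r\in[R_k,R_{k+1})$, on which
\[
\sqrt r\,\bigl(\log(r\nu)+1\bigr)\;\ge\;\sqrt{R_k}\,\bigl(k\log 2+1\bigr)\;\ge\;c_0\sqrt{R_k}\,(k+1)
\]
for a universal $c_0>0$. Therefore it suffices to control, for each $k$, the bad event
\[
A_k \;=\; \Bigl\{\max_{1\le r\le R_{k+1}}|S_r|\;>\;\tfrac{C}{a}\,c_0\sqrt{R_k}\,(k+1)\Bigr\}.
\]
By Markov's inequality and the maximal bound from the first step,
\[
\p(A_k)\;\le\;\frac{C_0^{\,q}\,R_{k+1}^{q/2}}{\bigl(\tfrac{C}{a}c_0\sqrt{R_k}\,(k+1)\bigr)^q}
\;=\;\Bigl(\tfrac{\sqrt 2\,C_0\,a}{c_0\,C}\Bigr)^{\!q}\,(k+1)^{-q}.
\]

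Finally, union-bounding over $k\ge 0$ and using $q>2$ to ensure $\sum_{k\ge 0}(k+1)^{-q}\le\zeta(q)<\infty$, we obtain
\[
\p\Bigl(\bigcup_{k\ge 0}A_k\Bigr)\;\le\;\Bigl(\tfrac{\sqrt 2\,C_0}{c_0\,C}\Bigr)^{\!q}\zeta(q)\,a^{q}.
\]
Choose $C$ large enough (depending only on $q$ and $\Delta_{0,q}$) so that the prefactor is $\le 1$. Since $a\in(0,1)$ and $q>2$, we have $a^q\le a^2$, and the proof is complete.

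The main technical obstacle is the first step: producing a maximal $L^q$ bound with the correct $\sqrt n$ scaling (rather than the trivial $n$ that a naive union bound would give). This is precisely where the interplay between Doob's inequality applied inside each stationary martingale component $\{\sum_t\mathcal P_{t-i}Z_t\}$ and Minkowski across the projection index $i$ becomes essential; the rest is a standard peeling argument that crucially exploits $q>2$ to make $\sum_k(k+1)^{-q}$ summable and to upgrade $a^q$ to $a^2$.
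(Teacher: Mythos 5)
Your proof is correct, and it reaches the same conclusion by a parallel but genuinely different route. The paper proves the maximal bound by citing a Rosenthal-type inequality under functional dependence (\Cref{lemma:sup of functional dependence}, from \cite{liu2013probability}) and then reduces to a weighted maximum over a dyadic block in \Cref{lemma:sup is stopping time}; crucially, it uses only the $L^2$ version $\|S_N^*\|_2 \lesssim \sqrt N$, so Chebyshev gives $\p(\text{bad at scale } s) \lesssim x^{-2}$, and the threshold $x \propto (s+1)/a$ is calibrated so that $\sum_s (s+1)^{-2}$ sums to exactly $a^2$. You instead build the $L^q$ maximal bound from scratch via Wu's projection decomposition, Minkowski in the projection index, Doob's $L^q$ maximal inequality on each martingale component, and Burkholder (\Cref{lemma:Burkholder}); then Markov in $L^q$ yields bad-event probabilities $\asymp a^q (k+1)^{-q}$, the assumption $q>2$ is used both for the Riemann-zeta summability $\sum_k (k+1)^{-q} < \infty$ and for the final downgrade $a^q \le a^2$. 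The paper's argument is a touch leaner in that $L^2$ suffices for the peeling step and delivers $a^2$ on the nose, whereas your version makes active use of $q>2$ in two places; on the other hand, your derivation is self-contained and avoids invoking the Liu--Xiao--Wu Rosenthal inequality as a black box. Both are valid under the stated hypothesis $\sum_{s\ge 1}\delta_{s,q}<\infty$ (plus the implicit $\sup_t\|Z_t\|_q<\infty$, which gives $\delta_{0,q}<\infty$ and hence $\Delta_{0,q}<\infty$), and the constant $C$ you obtain depends only on $q$ and $\Delta_{0,q}$, matching the paper's.
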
   
   \begin{proof}
   Let $ s\in \mathbb Z_+$ and $\mathcal T_s=[ 2^s /\nu, 2^{s+1}/\nu] $. By \Cref{lemma:sup is stopping time}, for all $x>0 $, 
   $$ \p \bigg( \sup_{r \in  \mathcal T_s } \frac{ \big | \sum_{i=1}^r Z _i\big | }{ \sqrt {r }  } \ge  x    \bigg)  \le 
   C_1x^{-2}  . $$
   Therefore  by a  union bound,  for any $0<a<1$, 
  \begin{align} \label{eq:sub-gaussian dependence sup of random walk}
    \p \bigg(\exists s \in \mathbb Z_+ :   \sup_{r \in  \mathcal T_s } \frac{ \big| \sum_{i=1}^r Z _i \big| }{ \sqrt {r }} \ge    \frac{\sqrt {  C_1 }\pi }{\sqrt{6}a   }  ( s +1)  \bigg)    
  \le     \sum_{s=0}^\infty  \frac{6a^2 }{ \pi^2( s+1)^2 }  = a^2.
  \end{align}
   For any $r \in \mathcal{T}_s$, $s \le \log(r \nu)/\log(2), $ and therefore 
   \begin{align} \label{eq:sub-gaussian dependence sup of random walk 2}
    \p \bigg(\exists s \in \mathbb Z_+ :   \sup_{r \in  \mathcal T_s } \frac{ \big| \sum_{i=1}^r Z _i \big| }{ \sqrt {r }} \ge    \frac{\sqrt C_1 \pi}{\sqrt{6}a }   \bigg \{  \frac{\log(r\nu) }{ \log(2) } +1 \bigg \}   \bigg)    
  \le     a ^2 .
  \end{align} 
  \Cref{eq:sub-gaussian dependence sup of random walk 2} directly gives 
   \begin{align*} 
  & \p \bigg(  \sup_{r \in  \mathcal T_s } \frac{ \big| \sum_{i=1}^r Z _i \big| }{ \sqrt {r }}   \le   \frac{C}{  a }  \big \{   \log(  r\nu) +1   \big \}   \text{ for all } s \le \log(r \nu)/\log(2)  \bigg)  
  \ge  1-    a ^2   .
  \end{align*}
   \end{proof} 

The proof of \Cref{lemma:iterated log under dependence} relies on the following two lemmas. The next lemma is a slight modification of the Rosenthal-type inequality given in Theorem 1 of \cite{liu2013probability}.
 
\begin{lemma} \label{lemma:sup of functional dependence}
It holds for $q > 2$ that
  \begin{align*}
  \| S^*_N\|_p  \le  
    & N^{1/2 }\bigg\{\frac{87p }{\log(p)} \sum_{j=1}^N \delta_{j,2} + 3(p-1)^{1/2}\sum_{j=1}^\infty \delta_{j,p} +\frac{29p }{ \log(p)} \| Z_1\|_2  \bigg\}
  \\ 
  &+ N^{1/2} \bigg\{ \frac{87p (p-1)^{1/2}}{\log(p)}  \sum_{j=1}^N \delta_{j, p} + \frac{29p}{\log(p)}\|Z_1\|_p \bigg\}  .
  \end{align*}

   \end{lemma}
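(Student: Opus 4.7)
The plan is to derive this bound as a ``slight modification'' of Theorem~1 in Liu, Xiao, and Wu (2013), re-stated with the explicit numerical constants and with an asymmetric splitting between $\delta_{j,2}$ and $\delta_{j,p}$. The backbone of the argument is the orthogonal martingale decomposition
\[
    Z_t = \sum_{i=0}^{\infty} \mathcal{P}_{t-i} Z_t, \qquad \mathcal{P}_j X = \mathbb{E}[X\mid \mathcal{F}_j] - \mathbb{E}[X\mid \mathcal{F}_{j-1}],
\]
so that for each fixed $i \ge 0$ the process $\{\mathcal{P}_{t-i} Z_t\}_t$ is a stationary martingale difference adapted to $\{\mathcal{F}_{t-i}\}_t$, and the earlier projection inequality (a consequence of Jensen's) gives $\|\mathcal{P}_{t-i}Z_t\|_q \le \delta_{i,q}$ for every $q \ge 1$.

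First I would apply Minkowski's inequality to move the $L^p$ norm through the infinite sum,
\[
    \|S_N^*\|_p \;\le\; \sum_{i=0}^{\infty} \Big\| \max_{1 \le n \le N} \Big|\sum_{t=1}^{n} \mathcal{P}_{t-i}Z_t\Big| \Big\|_p,
\]
so that the problem reduces, for each $i$, to bounding the $L^p$ norm of a maximal martingale. Doob's $L^p$ inequality contributes the factor $p/(p-1)$, after which Burkholder's inequality (\Cref{lemma:Burkholder}) turns the terminal-time norm into a quadratic-variation bound of size $(p-1)^{1/2} N^{1/2} \|\mathcal{P}_{t-i}Z_t\|_p$. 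Summing the geometric tail produces, on its own, the $\delta_{j,p}$-type contribution $N^{1/2}(p-1)^{1/2} \sum_j \delta_{j,p}$ — this is the second bracketed group of terms in the stated inequality.

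The main obstacle is recovering the sharper first bracket, in which $\delta_{j,2}$ (rather than $\delta_{j,p}$) appears alongside the $p/\log p$ prefactor. For this a truncation step is required: decompose each martingale difference $D_{i,t} := \mathcal{P}_{t-i}Z_t$ at a level $\tau_i$ (still to be chosen) as
\[
    D_{i,t} = \bigl(D_{i,t}\mathbbm{1}\{|D_{i,t}| \le \tau_i\}\bigr)_{\text{centered}} + \bigl(D_{i,t}\mathbbm{1}\{|D_{i,t}| > \tau_i\}\bigr)_{\text{centered}},
\]
treat the bounded piece with Burkholder in the $L^2$ regime (whose contribution is controlled by $\sum_j \delta_{j,2}$ via the variance), treat the tail piece with Burkholder in the $L^p$ regime (whose contribution is controlled by $\sum_j \delta_{j,p}$ via Markov's inequality), and then optimize $\tau_i$ in the style of Nagaev/Rio. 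Balancing the two contributions produces the characteristic $p/\log p$ prefactor and the explicit constants $87,\,29,\,3$ after tracking the sharp Doob and Burkholder constants through the optimization.

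Finally, the boundary terms $\|Z_1\|_2$ and $\|Z_1\|_p$ (as opposed to $\delta_{0,q}$) are produced by separating the $i=0$ projection in the sum above: by stationarity and the triangle inequality, $\|\mathcal{P}_t Z_t\|_q \le 2\|Z_t\|_q = 2\|Z_1\|_q$, which lets the dependence sums $\sum_j \delta_{j,q}$ be written starting from $j=1$, matching the statement. Taken together, these three pieces — martingale decomposition, Doob plus Burkholder with an optimized truncation, and separation of the $i=0$ term — yield the asserted inequality with the stated constants.
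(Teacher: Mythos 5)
Your proposal and the paper's proof are not doing the same thing. The paper's proof of this lemma is essentially two lines: it quotes Theorem~1 of \cite{liu2013probability} verbatim, which gives
\[
\| S^*_N\|_p \le N^{1/2}\Big\{\tfrac{87p}{\log p}\sum_{j=1}^N\delta_{j,2}+3(p-1)^{1/2}\sum_{j=1}^\infty\delta_{j,p}+\tfrac{29p}{\log p}\|Z_1\|_2\Big\}+N^{1/p}\Big\{\tfrac{87p(p-1)^{1/2}}{\log p}\sum_{j=1}^N j^{1/2-1/p}\delta_{j,p}+\tfrac{29p}{\log p}\|Z_1\|_p\Big\},
\]
and then applies the elementary bound $j^{1/2-1/p}\le N^{1/2-1/p}$ for $j\le N$, so that $N^{1/p}\cdot N^{1/2-1/p}=N^{1/2}$, which is exactly the ``slight modification'' the statement records. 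You instead set out to re-derive the cited theorem from scratch, and this is where the gap lies: the decisive step in your plan --- producing the first bracket, in which $\sum_j\delta_{j,2}$ carries the $p/\log p$ prefactor, together with the explicit constants $87$, $29$ and $3$ --- is asserted rather than proved. The route you describe (Minkowski over the projections $i$, then Doob plus Burkholder applied to each martingale $\sum_t\mathcal{P}_{t-i}Z_t$) by itself only yields a bound of the form $N^{1/2}\tfrac{p}{p-1}(p-1)^{1/2}\sum_i\delta_{i,p}$, i.e.\ a pure $\delta_{\cdot,p}$ bound with constant $\sqrt{p}$ rather than the sharper mixed $(\delta_{\cdot,2},\delta_{\cdot,p})$ bound with the Johnson--Schechtman--Zinn type constant $p/\log p$. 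Your appeal to ``truncation and Nagaev/Rio optimization'' gestures at the right circle of ideas, but obtaining those specific numerical constants is the entire technical content of the Liu--Xiao--Wu proof (which proceeds via $m$-dependence approximation and a sharp martingale Rosenthal inequality), and you give no computation that would reproduce them; nor does your treatment of the $i=0$ term explain why $\|Z_1\|_2$ and $\|Z_1\|_p$ appear with coefficient $29p/\log p$ rather than the Doob--Burkholder constants.

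A secondary omission: you never address the one genuinely new ingredient of the lemma relative to the cited theorem, namely the replacement of $N^{1/p}\sum_j j^{1/2-1/p}\delta_{j,p}$ by $N^{1/2}\sum_j\delta_{j,p}$. If you cite Liu--Xiao--Wu (as the paper does), that single inequality $j^{1/2-1/p}\le N^{1/2-1/p}$ is all that needs to be checked, and your write-up should say so explicitly rather than attempting to rebuild the Rosenthal-type inequality itself.
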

   \begin{proof} By Theorem 1 of  \cite{liu2013probability},
   \begin{align*}
  \| S^*_N\|_p  \le & N^{1/2 }\bigg\{\frac{87p }{\log(p)} \sum_{j=1}^N \delta_{j,2} + 3(p-1)^{1/2}\sum_{j=1}^\infty \delta_{j,p} +\frac{29p }{ \log(p)} \| Z_1\|_2  \bigg\}
  \\ 
  &+ N^{1/p} \bigg\{ \frac{87p (p-1)^{1/2}}{\log(p)} \sum_{j=1}^N j^{1/2-1/p}\delta_{j, p} + \frac{29p}{\log(p)}\|Z_1\|_p \bigg\}
  \\
  \le & N^{1/2 }\bigg\{\frac{87p }{\log(p)} \sum_{j=1}^N \delta_{j,2} + 3(p-1)^{1/2}\sum_{j=1}^\infty \delta_{j,p} +\frac{29p }{ \log(p)} \| Z_1\|_2  \bigg\}
  \\ 
  &+ N^{1/p} \bigg\{ \frac{87p (p-1)^{1/2}}{\log(p)} N^{1/2-1/p}\sum_{j=1}^N \delta_{j, p} + \frac{29p}{\log(p)}\|Z_1\|_p \bigg\} 
  \\
  \le   & N^{1/2 }\bigg\{\frac{87p }{\log(p)} \sum_{j=1}^N \delta_{j,2} + 3(p-1)^{1/2}\sum_{j=1}^\infty \delta_{j,p} +\frac{29p }{ \log(p)} \| Z_1\|_2  \bigg\}
  \\ 
  &+ N^{1/2} \bigg\{ \frac{87p (p-1)^{1/2}}{\log(p)}  \sum_{j=1}^N \delta_{j, p} + \frac{29p}{\log(p)}\|Z_1\|_p \bigg\}  .
  \end{align*} 
   
   \end{proof}

The key to proving \Cref{lemma:iterated log under dependence} is applying the uniform bound on a series of maximal inequalities for the weighted partial sums of dependent random variables within a progressive block. The following lemma gives such maximal inequality.
\begin{lemma} \label{lemma:sup is stopping time}
Assume $\sum_{s=1}^\infty   \delta _{s,q  } < \infty$ for some $q > 2$. Then, it follows that for any $d \in \mathbb{Z}_+$, $\alpha>0$ and $ x>0$, 
$$  \p \bigg (  \max_{ k \in [d, (\alpha+1) d ]}    \frac{| \sum_{i=1}^k    Z_i  | }{ \sqrt { k  }  }  \ge   x \bigg )\le Cx^{-2} ,    $$
where $C > 0$ is an absolute constant .
\end{lemma}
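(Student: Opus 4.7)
The plan is to reduce this weighted-supremum tail bound to an unweighted $L^2$ maximal inequality that is already available through \Cref{lemma:sup of functional dependence}. The key observation is that on the block $k\in[d,(\alpha+1)d]$ one has $\sqrt{k}\ge \sqrt{d}$, so
\[
\max_{k\in[d,(\alpha+1)d]}\frac{|S_k|}{\sqrt{k}}\ge x
\quad\Longrightarrow\quad
S^*_{(\alpha+1)d}:=\max_{1\le k\le (\alpha+1)d}|S_k|\ge x\sqrt{d},
\]
which reduces the problem to controlling $S^*_{(\alpha+1)d}$ in $L^2$.

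First, I would invoke \Cref{lemma:sup of functional dependence} with some fixed $p\in(2,q]$. Under the hypothesis $\sum_{s=1}^\infty \delta_{s,q}<\infty$ (which also implies $\sum_{s=1}^\infty \delta_{s,2}<\infty$ by Jensen's inequality, since $\delta_{s,2}\le \delta_{s,q}$), the bound in \Cref{lemma:sup of functional dependence} yields $\|S^*_N\|_p\le C_0 N^{1/2}$ for an absolute constant $C_0>0$ depending only on $p$, $\|Z_1\|_p$ and $\sum_{s\ge 1}\delta_{s,p}$. Jensen's inequality then gives
\[
\mathbb{E}\bigl[(S^*_N)^2\bigr]\le \|S^*_N\|_p^2\le C_0^2\,N.
\]

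Second, applying Markov's inequality (in its $L^2$ form) to the displayed implication,
\[
\p\!\left(\max_{k\in[d,(\alpha+1)d]}\frac{|S_k|}{\sqrt{k}}\ge x\right)
\le \p\bigl(S^*_{(\alpha+1)d}\ge x\sqrt{d}\bigr)
\le \frac{\mathbb{E}\bigl[(S^*_{(\alpha+1)d})^2\bigr]}{x^2 d}
\le \frac{C_0^2(\alpha+1)d}{x^2 d}=\frac{C_0^2(\alpha+1)}{x^2},
\]
which is the desired bound (with the constant in the statement understood as depending on $\alpha$; note that the only place \Cref{lemma:sup is stopping time} is actually invoked in the proof of \Cref{lemma:iterated log under dependence} is on dyadic blocks $\mathcal T_s$ where $\alpha=1$, so the $(\alpha+1)$ factor can safely be absorbed into the absolute constant $C_1$ used there).

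The one mild subtlety — and the only real obstacle — is that \Cref{lemma:sup of functional dependence} is stated for $p>2$, whereas the $x^{-2}$ rate we need is inherently an $L^2$ control. Pushing straight to $p=2$ in that lemma is delicate because the leading coefficient $p/\log p$ blows up as $p\downarrow 2$; the clean fix is to take any fixed $p\in(2,q]$, obtain an $L^p$ bound of order $\sqrt N$, and then downgrade to $L^2$ by Jensen as above. No further estimate is required.
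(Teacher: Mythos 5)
Your proof is correct and follows essentially the same route as the paper's: reduce the weighted maximum over $[d,(\alpha+1)d]$ to the unweighted maximum $S^*_{(\alpha+1)d}$, bound its second moment via \Cref{lemma:sup of functional dependence}, and finish with Chebyshev's inequality, absorbing the $(\alpha+1)$ factor into the constant. The only difference is that the paper invokes \Cref{lemma:sup of functional dependence} directly at the exponent $2$ to get $\|S^*_N\|_2\le C_1N^{1/2}$, whereas you apply it at a fixed $p\in(2,q]$ and downgrade to $L^2$ by Jensen --- a slightly more careful reading of that lemma, but not a different argument.
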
   
\begin{proof} 
Let $N = (\alpha+1 )d $ and recall that
$$ S_N^*=  \max_{k=1,\ldots, N} \bigg|  \sum_{i=1}^k    Z_i \bigg|  .   $$
\
\\
Since $\sum_{s=1}^\infty   \delta _{s,2  } < \infty$, it follows from \Cref{lemma:sup of functional dependence} that
 \begin{align*}  
    \quad  \| S^*_N\|_2  \le   C _1 N^{1/2}  .
   \end{align*} 
 Therefore   it holds that 
$$ \p \bigg (  \bigg | \frac{ S^*_N}{\sqrt N } \bigg |\ge x \bigg ) =
\p \bigg (  \bigg | \frac{ S^*_N}{\sqrt N } \bigg |^2 \ge x ^2  \bigg ) \le C_1^2x^{-2}  . $$
Observe  that 
$$  \frac{ |S_N^* | }{\sqrt N }=  \max_{k=1,\ldots, N}  \frac{ | \sum_{i=1}^k    Z_i  | }{ \sqrt N }    \ge  \max_{ k \in [d, (\alpha+1) d ]}    \frac{| \sum_{i=1}^k    Z_i  | }{ \sqrt N  }  \ge  \max_{ k \in [d, (\alpha+1) d ]}    \frac{| \sum_{i=1}^k    Z_i  | }{ \sqrt { (\alpha + 1 ) k  }  }  .  $$ 
Therefore 
$$ \p \bigg (  \max_{ k \in [d, (\alpha+1) d ]}    \frac{| \sum_{i=1}^k    Z_i  | }{ \sqrt { k  }  }  \ge  \sqrt { (\alpha + 1 ) }x \bigg ) \le  \p \bigg (  \bigg | \frac{ S^*_N}{\sqrt N } \bigg |\ge x \bigg ) \le  C_1^2x^{-2}   , $$
which gives 
$$ \p \bigg (  \max_{ k \in [d, (\alpha+1) d ]}    \frac{| \sum_{i=1}^k    Z_i  | }{ \sqrt { k  }  }  \ge   x \bigg )\le  C_2x^{-2}  . $$ 
\end{proof}

 \subsection{Pointwise deviation bounds under dependence}
 
\begin{lemma}\label{lemma:lasso deviation bound 1}Let 
 $1/\gamma  = 1/\gamma_1  +  2/\gamma_2>1$.
Under \Cref{assume:X} and \Cref{assume:epsilon},  it holds for any integer $n \geq 3$ that 
\begin{align*}
  \p\bigg(   \bigg | \frac{1}{n}\sum_{t=1}^n X_{t } \epsilon_t  \bigg |_\infty  \ge  C  \bigg\{  \sqrt { \frac{\log(pn) }{n}  }  +   \frac{ \log^{1/\gamma }( pn ) }{n} \bigg \}  \bigg)   \leq   n^{-5}  
\end{align*}  
for some sufficiently large constant $C$ depending on $C_\epsilon$, $D_{\epsilon}$, $C_X$ and $D_{X}$. Thus when $ n \ge C_\zeta( \s \log(pn) )^{2/\gamma-1}$,
\begin{align*}
  \p\bigg(   \bigg | \frac{1}{n}\sum_{t=1}^n X_{t } \epsilon_t  \bigg |_\infty  \ge  2 C     \sqrt { \frac{\log(pn) }{n}  }     \bigg)   \leq   n^{-5}  
\end{align*}  
\end{lemma}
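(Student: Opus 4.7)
The plan is to apply the new Bernstein inequality \Cref{thm:bernstein_exp_subExp_nonlinear} coordinate-wise to the sequence $\{X_{tj}\epsilon_t\}_{t \in \mathbb{Z}}$ for each $j \in \{1, \ldots, p\}$, and then take a union bound over $j$.

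First, I would fix $j$ and set $Z_t = X_{tj}\epsilon_t / C_*$ for a suitable absolute constant $C_* > 0$ to be chosen, so that the rescaled process satisfies the hypotheses of \Cref{thm:bernstein_exp_subExp_nonlinear}. Note that $\mathbb{E}[Z_t] = \mathbb{E}[X_{tj}\mathbb{E}[\epsilon_t \mid X_t]] = 0$ by \Cref{assume:regression parameters}, so the sequence is mean-zero; moreover, it is stationary and of the functional form \eqref{eq:nonstationary} with input $\{\mathcal{X}_s, \varepsilon_s\}_{s \leq t}$. The two ingredients I must verify are a decay rate for the cumulative functional dependence measure and a sub-Weibull tail.

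For the functional dependence, by the triangle inequality and H\"older's inequality,
\[
    \|X_{tj}\epsilon_t - X_{tj,\{0\}}\epsilon_{t,\{0\}}\|_2 \leq \|X_{tj}\|_4\,\delta^\epsilon_{t,4} + \|\epsilon_t\|_4\,\delta^X_{t,4},
\]
where $\delta^X_{t,4} \leq \delta^X_{t,4} := \sup_{|v|_2=1}\|v^\top X_t - v^\top X_{t,\{0\}}\|_4$ (take $v = e_j$). Assumptions~\ref{assume:X} and \ref{assume:epsilon} then give $\sup_{m \geq 0}\exp(cm^{\gamma_1})\Delta^Z_{m,2} \leq C_{\mathrm{FDM}}$ for some absolute $C_{\mathrm{FDM}}>0$, so the functional dependence decay index of $Z_t$ is $\gamma_1(Z) = \gamma_1$. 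For the tail, the union bound yields $\mathbb{P}(|X_{tj}\epsilon_t| > \tau) \leq \mathbb{P}(|X_{tj}| > \sqrt\tau) + \mathbb{P}(|\epsilon_t| > \sqrt\tau) \leq 4\exp\{-(\sqrt\tau/C_X \wedge C_\epsilon)^{\gamma_2}\}$, so after absorbing the factor $4$ into the rescaling constant $C_*$, the rescaled $Z_t$ satisfies $\mathbb{P}(|Z_t|>x) \leq \exp(1 - x^{\gamma_2/2})$, i.e.\ $\gamma_2(Z) = \gamma_2/2$. Consequently $\gamma(Z) = (1/\gamma_1 + 2/\gamma_2)^{-1} = \gamma < 1$, matching the parameter in the statement.

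\Cref{thm:bernstein_exp_subExp_nonlinear} then yields, for any $x \geq 1$,
\[
    \mathbb{P}\left\{n^{-1/2}\Big|\textstyle\sum_{t=1}^n X_{tj}\epsilon_t\Big| \geq C_*\, x\right\} \leq n\exp\{-c_1 x^{\gamma} n^{\gamma/2}\} + 2\exp\{-c_2 x^2\}.
\]
Choose $x = C'\bigl\{\sqrt{\log(pn)} + \log^{1/\gamma}(pn)/\sqrt{n}\bigr\}$ with $C'$ absolute and sufficiently large. Then $c_2 x^2 \geq 6\log(pn) + \log 2$ and $c_1 x^\gamma n^{\gamma/2} \geq 6\log(pn)$, so each coordinate's contribution is bounded by $(pn)^{-6}$ after absorbing the leading $n$ factor. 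Dividing by $n$ and taking the union bound over $j \in \{1,\ldots,p\}$ produces the desired probability bound $n^{-5}$. The second conclusion is immediate from the first: when $n \geq C_\zeta (\s\log(pn))^{2/\gamma-1}$, one has $n \geq \log^{2/\gamma-1}(pn)$, so $\log^{1/\gamma}(pn)/n \leq \sqrt{\log(pn)/n}$, and the two terms collapse.

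The only non-routine point is the verification that the rescaling of $\{X_{tj}\epsilon_t\}$ can be done uniformly in $j$ with an absolute constant, which follows because Assumptions~\ref{assume:X}\textbf{b} and \ref{assume:epsilon}\textbf{b} provide $\sup_j \|X_{tj}\|_{\psi_{\gamma_2}} \leq C_X$ and $\|\epsilon_t\|_{\psi_{\gamma_2}} \leq C_\epsilon$; all other steps are book-keeping.
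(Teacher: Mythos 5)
Your proof is correct and follows essentially the same route as the paper's: apply \Cref{thm:bernstein_exp_subExp_nonlinear} coordinate-wise to $\{X_{tj}\epsilon_t\}$ after verifying the sub-Weibull tail of index $\gamma_2/2$ and the order-$2$ cumulative functional dependence bound via the triangle and H\"older inequalities, then take a union bound over $j$ and tune the threshold so the two exponential terms are each $O((pn)^{-6})$. The only nit is that in your product tail bound the constant should be $C_X \vee C_\epsilon$ rather than $C_X \wedge C_\epsilon$ for the displayed inequality to be a valid upper bound; this does not affect the argument.
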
  
 \begin{proof} 
  For any $j \in \{1,\ldots,p\} $,
 let $ Z_t  = X_{t j}\epsilon_t $. Note that $\{Z_t\}_{t \in \mathbb{Z}}$ is a stationary process.     
 Observe that by \Cref{lemma:subweibull},  there exists $C_1$  depending on $C_X$ and $ C_\epsilon$ such that
 $$\p ( |Z_t| \ge \tau) \le 2 \exp(- (\tau/C_1)^ {\gamma_2/2}).$$
For any $ t\in \mathbb Z$, we have that
 \begin{align*} 
 \delta_{s, 2}^Z  =& \|Z_t - Z_{t,\{ t-s\}} \|_2 = \|X_{t j}\epsilon_t - X_{t j,\{ t-s\} }\epsilon_{t,\{ t-s\}}    \|_2
 \\
 \le & \| X_{t j} (\epsilon_t -\epsilon_{t,\{ t-s\}}  )\|_2 +  \|  (X_{t j} -X_{t j, \{ t-s\}}  ) \epsilon_{t, \{ t-s\} }  \|_2
 \\
 \le & \|  X_{t j}\|_4  \| \epsilon_t -\epsilon_{t,\{ t-s\}}   \|_4 +  \|   X_{t j} -X_{t j, \{ t-s\}}   \|_4 \| \epsilon_{t, \{ t-s\} }  \|_4
 \\
 \le & \delta^{\epsilon}_{s ,4}\max_{1 \leq j \leq p}\|X_{0j}\|_4   + \delta^X_{s ,4}  \|\epsilon_0\|_{4}  = \delta^{\epsilon}_{s ,4}\|X_{0\cdot}\|_4   + \delta^X_{s ,4}  \|\epsilon_0\|_{4}.
 \end{align*}
 As a result, it holds that
\begin{align*}
    \Delta_{m, 2} ^Z   =    \sum_{s = m}^{\infty}\delta ^Z   _{s, 2}  \le \|X_{0\cdot}\|_4  \Delta ^\epsilon_{m ,4}  
    +\|\epsilon_0\|_{4}   \Delta ^X_{m ,4}   .
\end{align*}
Consequently, 
 \begin{align*} 
    \sup_{m \geq 0} \exp(cm^{\gamma_1}) \Delta_{m,2} ^Z    \leq&    \|X_{0\cdot}\|_4     \sup_{m \geq 0}\exp(cm^{\gamma_1}) \Delta ^\epsilon_{m ,4}  
    +\|\epsilon_0\|_{4}  \sup_{m \geq 0}\exp(cm^{\gamma_1})  \Delta ^X_{m ,4} \\
    \le &    2\max\{\|\epsilon_0\|_{4}D_X,  \|X_{0\cdot}\|_4D_\epsilon\} < \infty.
\end{align*}  

By \Cref{thm:bernstein_exp_subExp_nonlinear}, it  holds that for any $\tau \ge 1$ and interger $n \geq 3$, 
\begin{align*}
  \p\bigg(   \bigg| \sum_{t=1}^n X_{tj} \epsilon_t  \bigg|  \ge \tau   \bigg) =   \mathbb{P}\Big( \Big|\sum_{ t  = 1}^nZ _t \Big| \geq \tau  \Big) \leq& n\exp\Big(-c_1  \tau ^{\gamma} \Big) + 2 \exp\Big(-\frac{c_2 \tau ^2}{ n   }\Big).
\end{align*}  
By the union bound argument, it follows that for any $\tau \ge 1$,
\begin{align*}
  \p\bigg(   \bigg  | \sum_{t=1}^n X_{t } \epsilon_t  \bigg  |_\infty  \ge \tau   \bigg)  \leq  n p\exp\Big(-c_1  \tau ^{\gamma} \Big) + 2 p\exp\Big(-\frac{c_2 \tau ^2}{ n   }\Big).
\end{align*}  
Therefore, with sufficiently large constant $C_2 $,  
\begin{align*}
  \p\bigg(   \bigg | \frac{1}{n}\sum_{t=1}^n X_{t } \epsilon_t  \bigg |_\infty  \ge  C_2 \bigg\{  \sqrt { \frac{\log(pn) }{n}  }  +   \frac{ \log^{1/\gamma }( pn) }{n} \bigg \}  \bigg)   \leq   n^{-5} .
\end{align*} 
 \end{proof}

\begin{lemma} \label{lemma: mis-specified deviation}
Suppose  \Cref{assume:X}  holds and that $ \{ \alpha_t \}_{t=1}^n \in \mathbb R^p$ is a collection of deterministic vector such that 
$$ \max_{1\le t \le n }  | \alpha_t |_2 \le 1  \quad \text{and that} \quad 
\sum_{t=1}^n \alpha_t = 0. $$ Then it holds for any integer $n \geq 3$ that 
\begin{align*}
  \p\bigg(   \bigg | \frac{1}{n}\sum_{t=1}^n X_{t } X^\top _{t}\alpha_t  \bigg |_\infty  \ge  C  \bigg\{  \sqrt { \frac{\log(pn) }{n}  }  +   \frac{ \log^{1/\gamma }(np) }{n} \bigg \}  \bigg)   \leq   n^{-5},
\end{align*}
for some sufficiently large constant $C$ depending on $C_X$ and $D_{X}$.
Thus when $ n \ge C_\zeta( \s \log(pn) )^{2/\gamma-1}$,
\begin{align*}
  \p\bigg(   \bigg | \frac{1}{n}\sum_{t=1}^n X_{t } X^\top _{t}\alpha_t  \bigg |_\infty  \ge  2C     \sqrt { \frac{\log(pn) }{n}  }   \bigg)   \leq   n^{-5}  .
\end{align*}   
\end{lemma}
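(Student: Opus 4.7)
\medskip
\noindent\textbf{Proof proposal for \Cref{lemma: mis-specified deviation}.}

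The plan is to mimic the argument used for \Cref{lemma:lasso deviation bound 1}, reducing the claim to a componentwise application of the nonstationary Bernstein inequality \Cref{thm:bernstein_exp_subExp_nonlinear}, followed by a union bound over the $p$ coordinates. For each coordinate $j\in\{1,\ldots,p\}$, set
\[
    W_t^{(j)} = X_{tj}X_t^\top \alpha_t - \alpha_t^\top \Sigma e_j, \qquad t \in \{1,\ldots,n\},
\]
so that $\mathbb{E}[W_t^{(j)}]=0$ and, crucially, $\sum_{t=1}^n \alpha_t^\top \Sigma e_j = (\Sigma \sum_{t=1}^n\alpha_t)_j = 0$ by hypothesis. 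Hence $\sum_{t=1}^n X_{tj}X_t^\top\alpha_t = \sum_{t=1}^n W_t^{(j)}$. Although $\{W_t^{(j)}\}_{t\in\mathbb Z}$ is nonstationary (through the time index $t$ in $\alpha_t$), it is of the form \eqref{eq:nonstationary} with $g_t$ depending on $t$ only via the deterministic factor $\alpha_t$, so \Cref{thm:bernstein_exp_subExp_nonlinear} applies.

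The next step is to verify the two hypotheses of \Cref{thm:bernstein_exp_subExp_nonlinear} uniformly in $t$. For the tail behaviour, write $W_t^{(j)} + \alpha_t^\top\Sigma e_j = |\alpha_t|_2\, X_{tj}\cdot (\alpha_t/|\alpha_t|_2)^\top X_t$ and apply \Cref{lemma:subweibull} together with \Cref{assume:X}\textbf{b} to obtain a sub-Weibull bound with shape $\gamma_2/2$ and a constant depending only on $C_X$, uniformly in $t$ and $j$, since $|\alpha_t|_2\le 1$. For the functional dependence measure, for any $s\ge 0$ the triangle and H\"older inequalities give
\begin{align*}
    \|W_t^{(j)} - W_{t,\{t-s\}}^{(j)}\|_2
    &\le \|X_{tj}(X_t-X_{t,\{t-s\}})^\top\alpha_t\|_2 + \|(X_{tj}-X_{tj,\{t-s\}})X_{t,\{t-s\}}^\top\alpha_t\|_2\\
    &\le \|X_{tj}\|_4\,\|(X_t-X_{t,\{t-s\}})^\top\alpha_t\|_4 + \|X_{tj}-X_{tj,\{t-s\}}\|_4\,\|X_{t,\{t-s\}}^\top\alpha_t\|_4.
\end{align*}
Using $|\alpha_t|_2\le 1$ together with the uniform definition of $\delta^X_{s,4}$ in \eqref{eq-functional-dependence-x-def-2} and stationarity of $\{X_t\}$, each factor is bounded by $2\sup_{|v|_2=1}\|v^\top X_1\|_4\cdot \delta^X_{s,4}$ uniformly in $t$ and $j$. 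Summing in $s$ and invoking \Cref{assume:X}\textbf{a} yields $\sup_{m\ge 0}\exp(cm^{\gamma_1})\Delta^{W^{(j)}}_{m,2}\le C'$ for an absolute constant $C'$ depending only on $C_X$ and $D_X$.

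With these two uniform bounds in hand, the remaining computation is identical to that in the proof of \Cref{lemma:lasso deviation bound 1}: \Cref{thm:bernstein_exp_subExp_nonlinear} gives, for any $\tau\ge 1$ and integer $n\ge 3$,
\[
    \mathbb{P}\Big(\Big|\sum_{t=1}^n X_{tj}X_t^\top\alpha_t\Big|\ge \tau\Big) \le n\exp(-c_1\tau^\gamma) + 2\exp(-c_2\tau^2/n),
\]
with $\gamma=(\gamma_1^{-1}+2\gamma_2^{-1})^{-1}$ as in \eqref{eq-gamma-def}. A union bound over $j\in\{1,\ldots,p\}$ and the choice $\tau = C(\sqrt{n\log(pn)} + \log^{1/\gamma}(np))$ for sufficiently large $C$ yield the first displayed inequality. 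The second follows immediately because the condition $n\ge C_\zeta(\s\log(pn))^{2/\gamma-1}$ ensures the second term is dominated by the first.

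I expect no serious obstacle: the only subtle point is that $\{W_t^{(j)}\}$ is nonstationary but the functional-dependence and tail bounds are both $t$-uniform thanks to $|\alpha_t|_2\le 1$, and the hypothesis $\sum_t\alpha_t=0$ is precisely what is needed so that the centering constants $\alpha_t^\top\Sigma e_j$ cancel in the sum. Otherwise one would have had a nonzero mean of size $O(\sum_t \alpha_t^\top\Sigma e_j)$ and the deviation bound would fail to center at $0$.
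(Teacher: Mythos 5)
Your proposal is correct and follows essentially the same route as the paper's proof: centre each coordinate $X_{tj}X_t^\top\alpha_t$ by its mean (which sums to zero by $\sum_t\alpha_t=0$), verify the sub-Weibull tail with shape $\gamma_2/2$ via \Cref{lemma:subweibull} and the uniform functional-dependence bound via the triangle and H\"older inequalities, then apply \Cref{thm:bernstein_exp_subExp_nonlinear} coordinatewise with a union bound over $j$ and the stated choice of $\tau$. No gaps.
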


\begin{proof} Since $\sum_{t=1}^n \alpha_t = 0$, we have
$$ \E \bigg( \sum_{t=1}^n X_{t } X^\top _{t}\alpha_t  \bigg) =\sum_{t=1}^n  \Sigma \alpha_t  = 0. $$
 For any $j \in \{1,\ldots,p\} $,
 let $ Z_t  = X_{t j} X_{t}^\top \alpha_t  -\E( X_{t j} X_{t}^\top \alpha_t)  $.    
By \Cref{lemma:subweibull},  there exists $C_1$  depending on $ C_X$   such that
 $$\p ( |Z_t | \ge \tau) \le 2 \exp(- (\tau/C_1)^ {\gamma_2/2}).$$
For any $ t\in \mathbb Z$, we have that
 \begin{align*} 
 \delta_{s, 2}^Z = & \|Z_t - Z_{t,\{ t-s\}} \|_2 =  \|X_{t j} X_{t}^\top \alpha _t - X_{t j,\{ t-s\} }X^\top  _{t,\{ t-s\}}  \alpha_t    \|_2
 \\
 \le & \| ( X_{t j} - X_{t j , \{ t-s\}} ) X_t^\top \alpha_t \|_2 
 +  \|  X_{t j , \{ t-s\}}  (X_{t  } ^\top \alpha_t  - X_{t j, \{ t-s\}}^\top \alpha_t   )    \|_2
 \\
 \le & \|   X_{t j} - X_{t j , \{ t-s\}}  \|_4 \|  X_t^\top \alpha_t \|_4 
 +  \|  X_{t j , \{ t-s\}} \|_4 \|    X_{t  } ^\top \alpha_t  - X_{t j, \{ t-s\}}^\top \alpha_t      \|_4
 \\
 \le & 2\delta  ^X_{s ,4}   \sup_{|v|_2 = 1}\|v^{\top}X_{0}\|_4.
 \end{align*}
 As a result, it holds that
\begin{align*}
    \Delta_{m, 2} ^Z   = \sum_{s = m}^\infty  \delta_{s, 2}^Z \leq 2\Delta  ^X_{s ,4}   \sup_{|v|_2 = 1}\|v^{\top}X_{0}\|_4.
\end{align*}
Consequently, 
 \begin{align*} 
    \sup_{m \geq 0}\exp(cm^{\gamma_1}) \Delta_{m,2} ^Z    \leq     2D_X\sup_{|v|_2 = 1}\|v^{\top}X_{0}\|_4 < \infty.
\end{align*}  

By \Cref{thm:bernstein_exp_subExp_nonlinear}, it  holds that for any $\tau \ge 1$ and integer $n \geq 3$, 
\begin{align*}
  \p\bigg(   \bigg|  \sum_{t=1}^n  X_{tj } X^\top _{t}\alpha_t   \bigg|  \ge \tau   \bigg) =   \mathbb{P}\Big( \Big|\sum_{t  = 1}^n Z _i    \Big| \geq \tau  \Big) \leq& n\exp\Big(-c_1  \tau ^{\gamma} \Big) + 2 \exp\Big(-\frac{c_2 \tau ^2}{ n   }\Big),
\end{align*}
where  $ \E (\sum_{t  = 1}^nZ_t)=0$ is used in the  equality.   
By the union bound argument, it follows that for any $\tau \ge 1$ and integer $n \geq 3$,
\begin{align*}
  \p\bigg(   \bigg | \sum_{t=1}^n X_{t } X^\top _{t}\alpha_t  \bigg |_\infty  \ge \tau   \bigg)  \leq  n p\exp\Big(-c_1  \tau ^{\gamma} \Big) + 2 p\exp\Big(-\frac{c_2 \tau ^2}{ n   }\Big).
\end{align*}  
Therefore  with a sufficiently large constant $C_2 $,  
\begin{align*}
  \p\bigg(   \bigg | \frac{1}{n}\sum_{t=1}^n  X_{t } X^\top _{t}\alpha_t   \bigg |_\infty  \ge  C_2 \bigg\{  \sqrt { \frac{\log(pn) }{n}  }  +   \frac{ \log^{1/\gamma }(np) }{n} \bigg \}  \bigg)   \leq   n^{-5} .
\end{align*}  
\end{proof} 
 
Throughout the rest of this subsection, denote
\begin{align*} 
\mathbb B_0( a )  = \{ v \in \mathbb R^p:  |v |_0\le a  \} ,
\quad 
\mathbb B_1 ( a )  = \{ v \in \mathbb R^p:  |v |_1\le a  \}  
\quad \text{and} \quad 
\mathbb B_2(a)   = \{ v \in \mathbb R^p :  |v |_2\le a   \}.
\end{align*} 
\begin{theorem}[REC Version I]\label{theorem:RES}
Denote $\widehat \Sigma = \frac{1}{n} \sum_{t=1}^n X_t X_t^\top $. Under   \Cref{assume:X},
  it holds that for $n\ge 3$,
$$  \p \bigg(    v^\top   \widehat \Sigma    v  \ge \frac{\Lambda_{\min } (\Sigma )}{2   }     |v  |_2^2  - C    \frac{\log(pn)}{n^\gamma  }       |v  |_1^2   \quad \text{for all } v \in \mathbb R^p \bigg) \ge 1- 2n^{-5},$$  
 where $C > 0$ is an absolute constant.

 \end{theorem}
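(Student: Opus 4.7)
The plan is to prove this restricted eigenvalue condition via the standard reduction to entry-wise concentration of $\widehat{\Sigma}-\Sigma$, powered by the Bernstein inequality in \Cref{thm:bernstein_exp_subExp_nonlinear}. I would begin by decomposing
\[
v^\top \widehat{\Sigma} v = v^\top \Sigma v + v^\top(\widehat{\Sigma} - \Sigma) v,
\]
lower-bounding the first term by $\Lambda_{\min}(\Sigma)|v|_2^2$ using \Cref{assume:X}\textbf{c}, and using the elementary duality $|v^\top A v| \leq |A|_\infty |v|_1^2$ (valid for any symmetric $A\in \mathbb{R}^{p\times p}$) to reduce the remaining task to controlling $|\widehat{\Sigma} - \Sigma|_\infty$.

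To control that max norm, fix $(j,k)\in \{1,\ldots,p\}^2$ and set $Z_t = X_{tj}X_{tk} - \Sigma_{jk}$. Two preparatory facts are needed to feed $\{Z_t\}$ into \Cref{thm:bernstein_exp_subExp_nonlinear}. First, \Cref{lemma:subweibull} combined with \Cref{assume:X}\textbf{b} yields the sub-Weibull tail $\mathbb{P}(|Z_t|\geq \tau) \leq 2\exp(-(\tau/C_0)^{\gamma_2/2})$. Second, the telescoping identity
\[
X_{tj}X_{tk} - X_{tj,\{t-s\}}X_{tk,\{t-s\}} = (X_{tj}-X_{tj,\{t-s\}})X_{tk} + X_{tj,\{t-s\}}(X_{tk}-X_{tk,\{t-s\}})
\]
combined with H\"older's inequality gives $\delta_{s,2}^Z \leq 2\sup_{|v|_2=1}\|v^\top X_1\|_4\,\delta_{s,4}^X$, so that $\Delta_{m,2}^Z$ inherits the exponential decay rate $\exp(-cm^{\gamma_1})$ of \Cref{assume:X}\textbf{a} (exactly as in the proofs of \Cref{lemma:lasso deviation bound 1} and \Cref{lemma: mis-specified deviation}). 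The hypotheses of \Cref{thm:bernstein_exp_subExp_nonlinear} are thereby verified with $\gamma_1(Z)=\gamma_1$, $\gamma_2(Z)=\gamma_2/2$, and $\gamma(Z)=(\gamma_1^{-1}+2\gamma_2^{-1})^{-1}=\gamma$.

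Invoking \Cref{thm:bernstein_exp_subExp_nonlinear} with the deviation level chosen so that both tail terms in \eqref{eq-z-bern-result} are at most $p^{-2}n^{-7}$, one obtains
\[
\mathbb{P}\bigg(|\widehat{\Sigma}_{jk} - \Sigma_{jk}| \geq C_1\sqrt{\tfrac{\log(pn)}{n}} + C_1\tfrac{(\log(pn))^{1/\gamma}}{n}\bigg) \leq 2p^{-2}n^{-7}.
\]
A union bound over the $p^2$ pairs then yields
\[
|\widehat{\Sigma}-\Sigma|_\infty \leq C_1\sqrt{\tfrac{\log(pn)}{n}} + C_1\tfrac{(\log(pn))^{1/\gamma}}{n}
\]
with probability at least $1-2n^{-5}$; for $n\geq 3$, both summands can be absorbed into $C\log(pn)/n^\gamma$ after adjusting the absolute constant, which combined with the first step produces the stated inequality. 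The argument is otherwise routine; I expect the only delicate piece to be the propagation of functional dependence through the entry-wise product $X_{tj}X_{tk}$ uniformly over $(j,k)$, which is handled by the H\"older decomposition above together with the higher-moment control implicit in \Cref{assume:X}\textbf{b}.
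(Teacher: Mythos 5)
Your reduction $|v^\top(\widehat\Sigma-\Sigma)v|\le|\widehat\Sigma-\Sigma|_\infty|v|_1^2$ followed by entry-wise Bernstein and a union bound is the naive $\ell_\infty$ route to restricted-eigenvalue control, and the Bernstein/functional-dependence verification for $Z_t=X_{tj}X_{tk}-\Sigma_{jk}$ is carried out correctly (indeed it mirrors the paper's own Lemma~\ref{lemma: mis-specified deviation}). The gap is the final absorption step. Your entry-wise bound delivers $|\widehat\Sigma-\Sigma|_\infty\lesssim\sqrt{\log(pn)/n}+(\log(pn))^{1/\gamma}/n$, and you claim both pieces fit under $C\log(pn)/n^\gamma$. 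But for the first piece, $\sqrt{\log(pn)/n}\le C\log(pn)/n^\gamma$ is equivalent to $n^{\gamma-1/2}\le C\sqrt{\log(pn)}$, which fails as $n\to\infty$ whenever $\gamma>1/2$ (take $p$ fixed or polynomial in $n$). The paper's regime explicitly permits $\gamma>1/2$, e.g.\ sub-Gaussian tails ($\gamma_2=2$) with any $\gamma_1>1$ gives $\gamma=(\gamma_1^{-1}+1)^{-1}>1/2$; $\gamma=2/3$ is a perfectly admissible instance. So the proposal proves a strictly weaker inequality, with $|v|_1^2$-coefficient $\asymp\sqrt{\log(pn)/n}$, not $\log(pn)/n^\gamma$; this weaker rate would also break the downstream applications in Lemmas~\ref{eq:regression one change deviation bound} and~\ref{lem-case-5-prop-2-needed}, which use the $|\mathcal I|^{1-\gamma}\log(pn)$ form together with the cutoff $|\mathcal I|\ge\zeta\asymp(\s\log(pn))^{2/\gamma-1}$.

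What the paper does instead (\Cref{lemma:RES step 2} plus the proof of \Cref{theorem:RES}) is the standard sparse-covering argument: a union bound over $p^s21^s$ points of a $1/10$-net of the $s$-sparse unit ball controls $\sup_{v\in\mathcal K(2s)}|v^\top(\widehat\Sigma-\Sigma)v|$, then Lemma~12 of \cite{loh2012high} extends this to all $v\in\mathbb R^p$ with a trade-off between the $|v|_2^2$ and $|v|_1^2$ coefficients. Optimizing the sparsity level $s\asymp n^\gamma/\log(pn)$ puts the $|v|_2^2$ coefficient at $\Lambda_{\min}(\Sigma)/2$ and the $|v|_1^2$ coefficient at $\asymp\log(pn)/n^\gamma$. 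The sharper $n^{-\gamma}$ rate is bought precisely by that optimization over $s$; it cannot be recovered from the entry-wise $\ell_\infty$ bound, because the latter implicitly fixes $s=1$. If you want to keep a proof close to yours you would have to insert the covering lemma over $\mathcal K(s)$ and carry the free parameter $s$ through, recovering the paper's argument.
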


\begin{proof}  
 
By \Cref{lemma:RES step 2}
   for any integer $s\ge 1$, with $\mathcal K(s) =\mathbb  B_0(s) \cap \mathbb B_2(1) $, there exist  sufficiently large constants
    $C_1, C_2$ such that  
$$ \p\bigg\{  \sup_{v \in \mathcal K(2s) } \bigg| v^\top (\widehat \Sigma - \Sigma ) v\bigg| \ge   \frac{ \tau}{n }  \bigg\} 
\le    \exp\Big(-c_1  \tau ^{\gamma} +\log(n)+C _1s\log(p) \Big) + 2 \exp\Big(-\frac{c_2 \tau ^2}{ n   }  +C_2 s\log(p)  \Big) .
$$ 
  By Lemma 12 in the  supplementary material  of   \cite{loh2012high}, it follows that  
  for any $\tau\ge 1$,
\begin{align*}
  &\p \bigg (   \big | v^\top  (\widehat \Sigma -\Sigma )v\big | \le   \frac{ C_3\tau}{n }   \big  \{   |v  |_2^2 + \frac{1}{s}  |v  |_1^2 \big \}  \quad \text{for all } v \in \mathbb R^p  \bigg)\\
  \ge& 1 - \exp\Big(-c_1  \tau ^{\gamma} +\log(n)+C _1s\log(p) \Big) - 2 \exp\Big(-\frac{c_2 \tau ^2}{ n   }  +C_2 s\log(p)  \Big)   .
 \end{align*}
For $s $  to be chosen later, set
$$\tau = \tau_s  = C_4\bigg\{  \sqrt { n s   \log(pn)    }  +     ( s \log(pn ) )^{1/\gamma }  \bigg \}     ,$$ for some sufficiently large constant  $C_4$. 
Since $ \tau_s\ge 1$, 
with probability at least $ 1-2n^{-5}$, it follows for all $v \in \mathbb R^p$ that
$$     \big | v^\top  (\widehat \Sigma -\Sigma )v\big | \le   C_5  \bigg  \{  \sqrt { \frac{ s\log(pn)   }{n } }    +   \frac{  ( s \log(pn ) )^{1/\gamma } }{ n }\bigg \}  |v  |_2^2   +  C_5 \bigg\{ \sqrt { \frac{  \log(pn)   }{s n } }   + \frac{  ( s \log(pn ) )^{1/\gamma } }{ sn } \bigg \}  |v  |_1^2.$$
Set   
$$s =  \min \bigg\{ \bigg\lfloor \frac {  \Lambda_{\min}^2 (\Sigma) n}{ 16 C_5^2\log(pn)}  \bigg \rfloor  , \bigg\lfloor \frac {  \Lambda_{\min}^\gamma  (\Sigma) n^\gamma }{ 4^\gamma C_5^\gamma   \log(pn)}  \bigg \rfloor   \bigg \}  . $$  
Since $ \gamma<1$, $ s\asymp ( \frac{ n^\gamma}{ \log(pn) })$.
With probability at least 
 $ 1-2  n^{-5}  $, it holds for all $v \in \mathbb R^p$ that 
$$  \bigg| v^\top  (\widehat \Sigma -\Sigma )v\bigg| \le \frac{\Lambda_{\min } (\Sigma )}{2   }     |v |_2^2 + C_6  \bigg( \frac{\log(pn)}{n^{(1+\gamma)/2} } + \frac{\log(pn) }{n^\gamma }\bigg)   |v  |_1^2  \le \frac{\Lambda_{\min } (\Sigma )}{2   }     |v |_2^2 + C_7    \frac{\log(pn) }{n^\gamma }    |v  |_1^2  . $$ 
The desired result follows from the assumption that $\gamma<1  $ and the observation that 
$ v^\top \Sigma v \ge \Lambda_{\min } |v|_2^2  $ for all $v\in \mathbb R^p$.
\end{proof}

 \begin{theorem}[REC Version II] \label{theorem:RES Version II} 
Let $\alpha >1$. 
Denote 
$$ \mathcal C = \{ v \in \mathbb R^p :  | v_{U^c}   |_1 \le  \alpha    | v_{U  } |_1 \quad \text{ for any }  U\subset [1,\ldots,p] \text{ such that } |U|  =  \s   \} . $$
Suppose    \Cref{assume:X} holds.
\
\\
{\bf a.}
There exists an absolute constant $C > 0$ such that for any integer $n\ge 3$, 
\begin{align*}
 \p\bigg\{  \sup_{ v\in \mathcal C } \bigg| v^\top  (\widehat \Sigma -\Sigma )v\bigg|        \ge   C |v|_2^2  \bigg( \sqrt { \frac{\s\log(np)}{ n}}  + \frac{\{ \s \log(np) \} ^{1/\gamma }}{n } \bigg)  \bigg\}    \le n^{-5}.
\end{align*}  
{\bf b.} Suppose in addition that 
$$ n \ge \zeta = C_\zeta\{ \s \log(pn)\}^{2/ \gamma  - 1},   $$ for sufficiently large $C_\zeta > 0$. Then 
there exists an absolute constant $C > 0$ such that 
\begin{align*}
 \p\bigg\{  \sup_{ v\in \mathcal C } \bigg| v^\top  (\widehat \Sigma -\Sigma )v\bigg|        \ge  C |v|_2^2 \sqrt { \frac{\s\log(pn)}{ n}}    \bigg\}    \le n^{-5}.
\end{align*} 
Consequently,
it holds that 
  \begin{align} \label{eq:RES lower bound}
 \p\bigg\{     v^\top  \widehat \Sigma   v     \ge   \frac{ \Lambda_{\min }(\Sigma) }{2}| v|_2^2 \quad \text{for all }  v\in \mathcal C   \bigg\}    \ge 1- n^{-5}.
 \end{align}  
\end{theorem}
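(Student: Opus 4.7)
The plan is to derive both parts as essentially a corollary of Theorem~\ref{theorem:RES}, whose proof already establishes a two-term uniform deviation bound valid for all of $\mathbb{R}^p$. Specifically, inspection of that proof shows that for any integer $s \ge 1$, with probability at least $1 - 2n^{-5}$,
\[
|v^\top(\widehat\Sigma - \Sigma)v| \le C_5\bigg\{\sqrt{\tfrac{s\log(pn)}{n}} + \tfrac{(s\log(pn))^{1/\gamma}}{n}\bigg\}|v|_2^2 + C_5\bigg\{\sqrt{\tfrac{\log(pn)}{sn}} + \tfrac{(s\log(pn))^{1/\gamma}}{sn}\bigg\}|v|_1^2
\]
uniformly in $v \in \mathbb{R}^p$. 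All the heavy lifting (covering arguments over sparse unit vectors, Bernstein's inequality under functional dependence) is already encapsulated in Lemma~\ref{lemma:RES step 2} and the proof of Theorem~\ref{theorem:RES}; what remains is a specialization to vectors in the cone $\mathcal{C}$.

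For Part~\textbf{a}, I would take $s = \s$ in the above bound. For any $v \in \mathcal{C}$, picking a witnessing set $U$ with $|U| = \s$, the cone inequality yields
\[
|v|_1 = |v_U|_1 + |v_{U^c}|_1 \le (1+\alpha)|v_U|_1 \le (1+\alpha)\sqrt{\s}\,|v_U|_2 \le (1+\alpha)\sqrt{\s}\,|v|_2,
\]
so $|v|_1^2 \le (1+\alpha)^2 \s\,|v|_2^2$. Substituting this into the second coefficient multiplies it by $\s$, turning $\sqrt{\log(pn)/(\s n)}$ into $\sqrt{\s\log(pn)/n}$ and $(\s\log(pn))^{1/\gamma}/(\s n)$ into $(\s\log(pn))^{1/\gamma}/n$, which exactly matches the first term up to the constant $(1+\alpha)^2$. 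Combining gives Part~\textbf{a} with a redefined absolute constant $C$.

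For Part~\textbf{b}, the hypothesis $n \ge C_\zeta\{\s\log(pn)\}^{2/\gamma-1}$ rearranges to
\[
\frac{\{\s\log(pn)\}^{1/\gamma}}{n} \le C_\zeta^{-1/2}\sqrt{\frac{\s\log(pn)}{n}},
\]
so the second term of the rate in Part~\textbf{a} is absorbed into the first, yielding the stated bound. The consequent lower bound \eqref{eq:RES lower bound} then follows from the decomposition $v^\top\widehat\Sigma v = v^\top\Sigma v + v^\top(\widehat\Sigma - \Sigma)v$, Assumption~\ref{assume:X}\textbf{c} giving $v^\top\Sigma v \ge \Lambda_{\min}(\Sigma)|v|_2^2$, and choosing $C_\zeta$ large enough that $C\sqrt{\s\log(pn)/n} \le \Lambda_{\min}(\Sigma)/2$ (this is consistent with the hypothesis since $\gamma < 1$ implies $2/\gamma - 1 > 1$, hence $\{\s\log(pn)\}^{2/\gamma - 1} \ge \s\log(pn)$).

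There is no real obstacle here; the one point requiring mild care is bookkeeping the probabilities and ensuring the constant $C_\zeta$ simultaneously handles the rate simplification in Part~\textbf{b} and the absorption of $C\sqrt{\s\log(pn)/n}$ by $\Lambda_{\min}(\Sigma)/2$ for the corollary. Both requirements reduce to taking $C_\zeta$ sufficiently large relative to the absolute constant $C$ appearing in Part~\textbf{b} and to $\Lambda_{\min}(\Sigma)^{-1}$.
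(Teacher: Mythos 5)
Your proof is correct, but it transfers the sparse-vector deviation bound to the cone by a genuinely different mechanism than the paper. Both arguments start from the same covering bound (\Cref{lemma:RES step 2}) on $\sup_{v\in\mathcal K(2\s)}\big|v^\top(\widehat\Sigma-\Sigma)v\big|$. You then invoke the Loh--Wainwright extension (Lemma~12 in the supplement of \cite{loh2012high}, already used in the proof of \Cref{theorem:RES}) to obtain a bound valid on all of $\mathbb R^p$ with an extra $|v|_1^2/s$ penalty at $s=\s$, and you neutralise that penalty on the cone via $|v|_1\le(1+\alpha)\sqrt{\s}\,|v|_2$, which indeed converts the $|v|_1^2$ coefficient into exactly the $|v|_2^2$ rate. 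The paper instead works directly with the supremum over $\mathcal C\cap\mathbb B_2(1)$ and uses two convex-geometry containments from \cite{basu2015regularized} (their Lemmas F.1 and F.3): $\mathcal C\cap\mathbb B_2(1)\subset(\alpha+2)\CL\{\conv(\mathcal K(\s))\}$ and $\sup_{v\in\CL\{\conv(\mathcal K(\s))\}}|v^\top Dv|\le 3\sup_{v\in\mathcal K(2\s)}|v^\top Dv|$, yielding the factor $3(\alpha+2)^2$ directly without the $\ell_1$ detour. The two routes give the same rate; yours buys economy (reusing the \Cref{theorem:RES} machinery wholesale) at the price of a slightly larger constant, while the paper's buys a cleaner constant at the price of importing two external lemmas. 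Your treatment of Part~\textbf{b} and of \eqref{eq:RES lower bound} --- absorbing $\{\s\log(pn)\}^{1/\gamma}/n$ into the square-root term and the deviation into $\Lambda_{\min}(\Sigma)/2$ using $2/\gamma-1>1$ and $\Lambda_{\min}(\Sigma)\ge c_{\min}$ --- matches the paper's. The only bookkeeping point: the intermediate display you quote from the proof of \Cref{theorem:RES} is stated there with probability $1-2n^{-5}$, so to land on the claimed $1-n^{-5}$ you should enlarge the constant in $\tau_s$ so that the union-bounded failure probability drops to $n^{-5}$ (the paper does the analogous adjustment when it bounds $2\exp(-5\s\log(pn))\le n^{-5}$); this is routine and not a gap.
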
 
\begin{proof}
For any set $A \subset \mathbb R^p$, let $\CL(A) $ denote the closure of $A$ and $\conv(A) $ denote the convex hull of $A$. Let  
 $$\mathcal K(\s) = \mathbb B_0(\s) \cap \mathbb B_2(1)  .$$ 
By \Cref{lemma:RES step 2}
   for any $\tau\ge  1$, there exist  sufficiently large constants
    $C_1, C_2$ such that  
$$ \p\bigg\{  \sup_{v \in \mathcal K(2\s) } \bigg| v^\top (\widehat \Sigma - \Sigma ) v\bigg| \ge     \frac{\tau}{n }  \bigg\} 
\le    \exp\Big(-c_1  \tau ^{\gamma} +\log(n)+C _1\s\log(p) \Big) + 2 \exp\Big(-\frac{c_2 \tau ^2}{ n   }  +C_2 \s\log(p)  \Big) .
$$ 
 
  By Lemma F.1  in the supplementary of   \cite{basu2015regularized}
 $$  \mathcal C \cap   \mathbb  B_2(1)   \subset (\alpha +2) \CL \{ \conv ( \mathcal K (\s) )\} ,$$
 it follows that
 $$ \sup_{ v\in \mathcal C \cap \mathbb B_2(1) } \bigg| v^\top  (\widehat \Sigma -\Sigma )v\bigg|   
 \le 
\sup_{ v\in (\alpha +2) \CL \{ \conv ( \mathcal K (\s) )\}  }   \bigg| v^\top  (\widehat \Sigma -\Sigma )v\bigg|   .$$
By Lemma F.3  in the supplementary of   \cite{basu2015regularized}
$$ \sup_{ v \in \CL \{ \conv (\mathcal K(\s) )\} } | v^\top D v|  \le 3 \sup_{ v \in \mathcal K(2\s)} |v^\top D v| . $$
Therefore 
$$ 
\sup_{ v\in (\alpha +2) \CL \{ \conv ( \mathcal K (\s) )\}  }   \bigg| v^\top  (\widehat \Sigma -\Sigma )v\bigg|   
\le 3 (\alpha +2)^2 \sup_{ v \in \mathcal K(2\s)} |v^\top D v|  .$$
So
 $$ 
 \frac{1}{ 3(\alpha +2 )^2 }\sup_{  v\in \mathcal C \cap \mathbb B_2(1)   }   \bigg| v^\top  (\widehat \Sigma -\Sigma )v\bigg|   
\le  \sup_{ v \in \mathcal K(2\s)} |v^\top D v| .$$

  It follows that   
\begin{align*}
 \p\bigg\{  \sup_{ v\in \mathcal C \cap \mathbb B_2(1) } \bigg| v^\top  (\widehat \Sigma -\Sigma )v\bigg|    \ge C _1   \frac{\tau}{n}\bigg\} 
\le   \exp\Big(-c_1  \tau ^{\gamma} +\log(n)+C _1\s\log(p) \Big) + 2 \exp\Big(-\frac{c_2 \tau ^2}{ n   }  +C_2 \s\log(p)  \Big) .
\end{align*}
where 
$C_1  = 3  (\alpha +2)^2.   $
For some sufficiently large $C_2$, let
$$ \frac{\tau}{n} =   C_2 \bigg\{  \sqrt {  \frac{\s \log(pn) }{n}  }  +   \frac{ \{ \s \log(np)\} ^{1/\gamma } }{n} \bigg \},  $$
Therefore
\begin{align*}
 \p\bigg\{  \sup_{ v\in \mathcal C \cap \mathbb B_2(1) } \bigg| v^\top  (\widehat \Sigma -\Sigma )v\bigg|    \ge 2 C_2 \bigg(  \sqrt {  \frac{\s \log(pn) }{n}  }  +   \frac{ \{ \s \log(np)\} ^{1/\gamma } }{n} \bigg )  \bigg\} 
\le 2 \exp \bigg ( -5 \s\log(pn)  \bigg) \le n^{-5}.
\end{align*} 
\\
\\
In addition, if $ n\ge C_{\zeta} \{ \s \log(pn)\}^{2/ \gamma  - 1}$, then
$$ \frac{\tau}{n} =   C_2 \bigg\{  \sqrt {  \frac{\s \log(pn) }{n}  }  +   \frac{ \{ \s \log(np)\} ^{1/\gamma } }{n} \bigg \}  \le 2 C_2   \sqrt {  \frac{\s \log(pn) }{n}  } ,  $$
and therefore
\begin{align}\label{eq:RES version 2 step 1}
 \p\bigg\{  \sup_{ v\in \mathcal C \cap \mathbb B_2(1) } \bigg| v^\top  (\widehat \Sigma -\Sigma )v\bigg|    \ge 2 C _2\sqrt { \frac{\s\log(pn)}{ n}}  \bigg\} 
\le 2 \exp \bigg ( -5 \s\log(pn)  \bigg) \le n^{-5}.
\end{align} 
Finally, \Cref{eq:RES lower bound} is a direct consequence of \Cref{eq:RES version 2 step 1} and the fact that
 $$n\ge C_{\zeta} \{ \s \log(pn)\}^{2/ \gamma  - 1}   \ge  C_{\zeta} \s \log(pn) .$$ 
\end{proof}

\begin{definition}
Let $V $ be a subset of $\mathbb R^p$. The  $\epsilon$-net  $\mathcal N_\epsilon $ of  $V $ is a subset of $V$ such that for any $v \in V$, there exists a vector $w\in \mathcal N_\epsilon $ such that 
$$  | v-w |_2 \le \epsilon. $$

\end{definition}

\begin{lemma} \label{lemma:RES step 1}
Let 
 $\frac{ 1}{\gamma}  = \frac{1}{\gamma_1}  +  \frac{ 2} { \gamma_2}$.
Under   \Cref{assume:X},
   for any $\tau\ge  1$  and any non-random $v\in \mathbb R ^p $, it holds that 
$$ \p\bigg\{   \big | v^\top (\widehat \Sigma - \Sigma ) v\big | \ge      \frac{\tau}{n}  \bigg\} 
\le   n  \exp\Big(-c_1  \tau ^{\gamma} \Big) + 2 \exp\Big(-\frac{c_2 \tau ^2}{ n   }\Big) .
$$ 
\end{lemma}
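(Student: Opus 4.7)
The plan is to view the quadratic form $v^\top(\widehat\Sigma-\Sigma)v = \frac{1}{n}\sum_{t=1}^n\{(v^\top X_t)^2 - \mathbb E[(v^\top X_t)^2]\}$ as a centered partial sum of the scalar stationary sequence $Z_t := (v^\top X_t)^2 - \mathbb E[(v^\top X_t)^2]$, and then invoke the new Bernstein inequality for dependent data in \Cref{thm:bernstein_exp_subExp_nonlinear}. Without loss of generality, I normalise $|v|_2=1$ (the general case follows by rescaling $\tau$ and absorbing $|v|_2^2$ into the constants as in the other pointwise lemmas above).

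First, I would verify the two hypotheses of \Cref{thm:bernstein_exp_subExp_nonlinear} for $\{Z_t\}$. For the tail behaviour, \Cref{assume:X}\textbf{b} asserts that $v^\top X_1$ is sub-Weibull with exponent $\gamma_2$, so $(v^\top X_1)^2$ is sub-Weibull with exponent $\gamma_2/2$, and, after centring, so is $Z_1$ (by the closure property recorded in \Cref{lemma:subweibull}). Thus $\gamma_2(Z)=\gamma_2/2$. For the functional dependence measure, factoring $a^2-b^2=(a-b)(a+b)$ and applying H\"older's inequality gives
\begin{align*}
\delta^{Z}_{m,2} &= \|(v^\top X_1)^2 - (v^\top X_{1,\{1-m\}})^2\|_2 \\
&\leq \|v^\top X_1 + v^\top X_{1,\{1-m\}}\|_4 \cdot \|v^\top X_1 - v^\top X_{1,\{1-m\}}\|_4 \\
&\leq 2\sup_{|w|_2=1}\|w^\top X_1\|_4 \cdot \delta^{X}_{m,4}.
\end{align*}
Summing over $m$ and invoking \Cref{assume:X}\textbf{a} then yields $\sup_{m\geq 0}\exp(cm^{\gamma_1})\Delta^{Z}_{m,2} \leq 2\sup_{|w|_2=1}\|w^\top X_1\|_4 \cdot D_X < \infty$, so $\gamma_1(Z)=\gamma_1$. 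Combining the two rates gives $\gamma(Z)=(1/\gamma_1+1/(\gamma_2/2))^{-1}=(1/\gamma_1+2/\gamma_2)^{-1}=\gamma$, matching the $\gamma$ in the statement.

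Second, I apply \Cref{thm:bernstein_exp_subExp_nonlinear} with $m=n$ and $x=\tau/\sqrt{n}$. The event $\{m^{-1/2}|\sum_{t=1}^n Z_t|\geq x\}$ is identical to $\{|v^\top(\widehat\Sigma-\Sigma)v|\geq \tau/n\}$, and plugging into the conclusion of \Cref{thm:bernstein_exp_subExp_nonlinear} produces exactly
$$\mathbb P\{|v^\top(\widehat\Sigma-\Sigma)v|\geq \tau/n\} \leq n\exp(-c_1\tau^\gamma) + 2\exp(-c_2\tau^2/n),$$
as desired. \Cref{thm:bernstein_exp_subExp_nonlinear} requires $x\geq 1$, i.e.\ $\tau\geq\sqrt n$; for $1\leq\tau<\sqrt n$ the bound is vacuous provided the absolute constant $c_2$ is chosen small enough that $2\exp(-c_2)\geq 1$, which is already implicit in the convention that $c_1,c_2$ are absolute constants to be determined.

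The only real work is the verification of the functional-dependence and tail hypotheses for the squared sequence $\{Z_t\}$; the substantive probabilistic content is entirely deferred to \Cref{thm:bernstein_exp_subExp_nonlinear}, so no additional obstacle is expected.
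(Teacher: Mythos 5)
Your proof is correct and takes essentially the same route as the paper: set $Z_t = (v^\top X_t)^2 - v^\top\Sigma v$, verify the sub-Weibull tail via \Cref{lemma:subweibull} and the FDM bound via the $a^2-b^2=(a-b)(a+b)$ factorisation and H\"older, then feed these into \Cref{thm:bernstein_exp_subExp_nonlinear} with $m=n$ and $x=\tau/\sqrt{n}$ so that $x^{\gamma}m^{\gamma/2}=\tau^\gamma$. The paper's proof is terser (it omits the final application by citing \Cref{lemma: mis-specified deviation}), and you are somewhat more careful than the paper in noting that the $x\ge 1$ restriction in \Cref{thm:bernstein_exp_subExp_nonlinear} forces $\tau\ge\sqrt n$, which must be discharged by choosing $c_2$ small enough for the conclusion to be vacuous in the range $1\le\tau<\sqrt n$; note however that your side remark about "absorbing $|v|_2^2$ into the constants" for non-unit $v$ cannot be literally true with absolute constants, and like the paper's proof the statement should be read with an implicit $|v|_2\le1$ (which is how it is used in \Cref{lemma:RES step 2}).
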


\begin{proof} 
Note   that 
$$v^\top (\widehat \Sigma -\Sigma )v  = \frac{1}{n} \sum_{t =1}^n ( v^\top X_t  )^2 -v^\top \Sigma v .  $$
Let $Z_t =  ( v^\top X_t  )^2 -v^\top \Sigma v$.
Then 
$\E(Z_t) =0$.  
By \Cref{lemma:subweibull},  there exists $C_1$  depending on $ C_X$   such that
 $$\p ( |Z_t  | \ge \tau) \le 2 \exp(- (\tau/C_1)^ {\gamma_2/2}).$$
For any $ t\in \mathbb Z$, we have that
 \begin{align*} 
 \|Z_t - Z_{t,\{ t-s\}} \|_2 =  &  \| ( v^\top X_t  )^2  - ( v^\top X_{t, \{t-s\}}   )^2    \|_2
 \\
 \le &  \big  ( \|   v^\top X_t   \|_4  +  \| v^\top X_{t, \{t-s\}}       \|_4  \big ) \|   v^\top X_t  - v^\top X_{t, \{t-s\}}       \|_4
 \\
 \le & 2 \delta  ^X_{s ,4}   \sup_{|v|_2 = 1}\|v^{\top}X_{0}\|_4.
 \end{align*}
The rest of the proof is similar to the proof of  \Cref{lemma: mis-specified deviation} and is omitted for brevity. 
\end{proof}

\begin{lemma}\label{lemma:RES step 2}For any integer $s\in \mathbb Z_+$,  denote  
  $   \mathcal K(s) = B_0(s) \cap B_2(1) .   $
  Under   \Cref{assume:X},
   for any $\tau\ge  1$ and  any   $s\in \mathbb Z_+ $    , there exist  sufficiently large constants
    $C_1, C_2$ such that  
$$ \p\bigg\{  \sup_{v \in \mathcal K(s) } \bigg| v^\top (\widehat \Sigma - \Sigma ) v\bigg| \ge     \frac{\tau}{n }  \bigg\} 
\le    \exp\Big(-c_1  \tau ^{\gamma} +\log(n)+C _1s\log(p) \Big) + 2 \exp\Big(-\frac{c_2 \tau ^2}{ n   }  +C_2 s\log(p)  \Big)  ,
$$ 

\end{lemma}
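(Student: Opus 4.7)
The plan is to combine the pointwise deviation bound from Lemma \ref{lemma:RES step 1} with a standard epsilon-net plus union-bound argument over the sparse sphere $\mathcal{K}(s) = B_0(s) \cap B_2(1)$. Observe that $\mathcal{K}(s)$ is the union, over all supports $U \subset \{1,\ldots,p\}$ with $|U| = s$, of the unit balls of the $s$-dimensional subspaces indexed by $U$, so the problem reduces to controlling a supremum over a finite-dimensional sphere after fixing the support, and then paying an enumeration cost of $\binom{p}{s} \leq e^{s \log(ep)}$ for the support.

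First I would fix $U$ with $|U| = s$ and let $S^U$ denote the unit sphere of the subspace supported on $U$. By a standard volumetric argument there is a $(1/4)$-net $\mathcal{N}_U$ of $S^U$ with $|\mathcal{N}_U| \leq 9^s$. For any symmetric matrix $M$, the discretization identity
\[
  \sup_{v \in S^U} |v^\top M v| \leq 2 \sup_{v \in \mathcal{N}_U} |v^\top M v|
\]
applied to $M = \widehat{\Sigma} - \Sigma$ reduces the supremum over $S^U$ to a maximum over a finite set. Taking the union of these nets across all supports $U$ produces a net $\mathcal{N}$ of $\mathcal{K}(s)$ of cardinality at most $9^s \binom{p}{s} \leq \exp(C_0 s \log p)$ for an absolute constant $C_0$, satisfying
\[
  \sup_{v \in \mathcal{K}(s)} |v^\top (\widehat{\Sigma} - \Sigma) v| \leq 2 \max_{v \in \mathcal{N}} |v^\top (\widehat{\Sigma} - \Sigma) v|.
\]

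Next I would apply Lemma \ref{lemma:RES step 1} to each fixed $v \in \mathcal{N}$ (note that $|v|_2 = 1$ so the constants in that lemma are uniform over the net) at level $\tau/2$ in place of $\tau$, obtaining
\[
  \mathbb{P}\left\{|v^\top (\widehat{\Sigma} - \Sigma) v| \geq \frac{\tau}{2n}\right\} \leq n \exp(-c_1' \tau^\gamma) + 2 \exp(-c_2' \tau^2/n).
\]
A union bound over $\mathcal{N}$ inflates the exponents by $\log |\mathcal{N}| \leq C_0 s \log p$, which absorbs into the pre-existing rates by choosing $C_1, C_2$ sufficiently large, giving exactly the target bound
\[
  \exp(-c_1 \tau^\gamma + \log n + C_1 s \log p) + 2 \exp(-c_2 \tau^2/n + C_2 s \log p).
\]

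There is no substantive obstacle here; the only care required is to verify that the constants produced by the discretization step (the factor $2$ in front of $\max_{v \in \mathcal{N}}$) and by rescaling $\tau \mapsto \tau/2$ in Lemma \ref{lemma:RES step 1} can be absorbed into a redefinition of $c_1, c_2, C_1, C_2$ without disturbing the stated form. The argument relies crucially on the combinatorial fact that both the enumeration of supports and the volumetric covering of each $s$-sphere contribute entropy of order $s \log p$, which matches the two penalty terms $C_1 s \log p$ and $C_2 s \log p$ appearing in the bound.
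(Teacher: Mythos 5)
Your argument is correct and follows the same three-stage strategy as the paper's proof: fix the support $U$, reduce the supremum over the $s$-dimensional sphere $S_U$ to a finite net via a quadratic-form discretization bound, apply \Cref{lemma:RES step 1} pointwise, and union-bound over both the net points and the $\binom{p}{s}\leq p^s$ supports, absorbing the combined entropy $\asymp s\log p$ into $C_1$ and $C_2$. The cosmetic differences — you invoke the standard fact that a $(1/4)$-net of the sphere gives $\sup_{v\in S_U}|v^\top M v|\le 2\max_{v\in\mathcal{N}_U}|v^\top M v|$ with $|\mathcal{N}_U|\le 9^s$, whereas the paper uses a $(1/10)$-net with factor $3$ and cardinality $21^s$ and proves the discretization inequality from scratch — do not affect the result, since both entropy contributions are $O(s\log p)$. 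One small point you silently pass over, which the paper also glosses over: \Cref{lemma:RES step 1} is stated for $\tau\ge 1$, so replacing $\tau$ by $\tau/2$ (or $\tau/3$ in the paper's version) strictly requires $\tau\ge 2$ (resp.\ $\tau\ge 3$); for the remaining range $1\le\tau<2$ the claimed bound is trivially $\ge 1$ once $C_1,C_2$ are chosen large enough, so nothing is lost, but this should be noted for completeness.
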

\begin{proof} Denote $ D= \widehat \Sigma - \Sigma .$

{\bf Step 1.}   
  For any vector $v$, let  $ \supp(v)$ be the support of $v$.  Let $U \subset[1,\ldots, p] $ be such that $|U| =  s$.  
Denote 
$$ S_U = \{ v \in \mathbb R^p:  |v |_2 \le 1, \supp(v) \subset U  \}.$$ 
Let $\mathcal N_{1/10}$ be the $1/10$-net of $S_U$. Then by standard covering lemma, $|\mathcal N_{1/10} | \le 21^s $. For every 
$v\in S_U$, there exists some $u_i\in \mathcal N_{1/10}$ such that $ |  v- u_i  |_2 \le 1/10 $.
Denote 
$$ \varphi = \sup_{v \in S_U} | v^\top D v|.$$
Then for  any
$v\in S_U$, we have 
\begin{align*}
 v^\top Dv = (v -u_i+u_i)^\top D(v -u_i+u_i)  =    u_i^\top D u_i  +2     u_i^\top D (v - u_i)    +  (v - u_i) ^\top D  (v - u_i)  .
\end{align*}
and therefore 
\begin{align*}
| v^\top Dv| \le& | u_i^\top D u_i |+2  |  u_i^\top D (v - u_i) |  + |(v - u_i) ^\top D  (v - u_i) |.
\end{align*}
Note that 
$$
  | u_i^\top D u_i | \le \sup_{ u \in  \mathcal N_{1/10 }} |u^\top D u|  .$$
Since 
$ 10 (v-u_i ) \in S_U$, it follows that 
$$|(v - u_i) ^\top D  (v - u_i) |\le \varphi/100. $$
In addition, denote $\Delta v  = 10( v - u_i) $. Then   $  \Delta v \in S_U$. 
\begin{align*}
   2\bigg| u_i^\top D \Delta v   \bigg|  =  &  \bigg|    ( \Delta v +u_i)   ^\top D   (\Delta v+u_i)      -  u_i ^\top Du_i  -  \Delta v ^\top D \Delta v \bigg| 
  \\= &  \bigg|  4  (  \frac{ \Delta v +u_i}{2 })   ^\top D   ( \frac{ \Delta v +u_i}{2 } )  \bigg|     + \bigg|   u_i ^\top Du_i   \bigg|  +  \bigg|   \Delta v ^\top D \Delta v   \bigg| 
  \\
  \le & 4 \varphi + \sup_{ u \in  \mathcal N_{1/10 }} |u^\top D u|   + \varphi, 
\end{align*}
where $ \frac{ \Delta v + u_i}{2 }\in S_U  $ because $ | \Delta v+ u_i   |_2\le 2 $.
So 
\begin{align*}
 2|u_i^\top D (v-u_i) | = \frac{1}{10}   \bigg| u_i^\top D \Delta v   \bigg|  
  \le &  \frac{4}{10} \varphi +  \frac{\sup_{ u \in  \mathcal N_{1/10 }} |u^\top D u| }{10 }  + \frac{\varphi}{10}. 
\end{align*}
Putting these calculations together,
it holds that for any $ v \in S_U$,
\begin{align*}
  | v^\top Dv| \le& | u_i^\top D u_i |+2  |  u_i^\top D (v - u_i) |  + |(v - u_i) ^\top D  (v - u_i) | 
\\
\le &  \sup_{ u \in  \mathcal N_{1/10 }} |u^\top D u|  + \frac{4}{10} \varphi +  \frac{\sup_{ u \in  \mathcal N_{1/10 }} |u^\top D u| }{10 }  + \frac{\varphi}{10} + \varphi/100.
\end{align*}
Taking the sup over  all $v \in S_U$, 
$$\varphi = \sup_{v \in S_U }  | v^\top Dv|  \le  \sup_{ u \in  \mathcal N_{1/10 }} |u^\top D u|  + \frac{4}{10} \varphi +  \frac{\sup_{ u \in  \mathcal N_{1/10 }} |u^\top D u| }{10 }  + \frac{\varphi}{10} + \varphi/100 . $$
Rearranging this gives 
$$\varphi  (1- \frac{4}{10} -\frac{1}{10} -\frac{1}{100}) \le  \frac{11}{10} \sup_{ u \in  \mathcal N_{1/10 }} |u^\top D u|     , $$
which leads to 
$$ \sup_{v \in S_U} | v^\top D v| = \varphi \le 3   \sup_{ u \in  \mathcal N_{1/10 }} |u^\top D u|  .$$

{\bf Step 2.} It suffices to consider 
$$ \sup_{ u \in  \mathcal N_{1/10 }} |u^\top D u|.$$
Note that by union bound, 
\begin{align*}
\p\bigg( \sup_{ u \in  \mathcal N_{1/10 }} |u^\top D u| \ge \tau  \bigg)
\le& |\mathcal N_{1/10}| \sup_{ u  \in \mathcal N_{1/10 }  } \p (|u^\top D u| \ge \tau  )
\\
\le&   n 21^s  \exp\Big(-c_1  \tau ^{\gamma} \Big) +  2 \cdot  21^s   \exp\Big(-\frac{c_2 \tau ^2}{ n   }\Big) ,
 \end{align*}
 where the second inequality follows from \Cref{lemma:RES step 1}.  It follows from  {\bf Step 1} that 
 \begin{align*}
\p\bigg( \sup_{v \in S_U} | v^\top D v|
  \ge 3\tau  \bigg)
\le  n 21^s  \exp\Big(-c_1  \tau ^{\gamma} \Big) +  2 \cdot  21^s   \exp\Big(-\frac{c_2 \tau ^2}{ n   }\Big) ,
 \end{align*}

{\bf Step 3.} Note that 
$$ \mathcal K(s) = \bigcup_{|U| =  s } S_U . $$
  There are at most 
$ \binom{p}{s} \le p^s $ choices of $U$.
So 
taking another union bound, 
we have  
\begin{align*}
\p\bigg( \sup_{ v  \in   \mathcal K(s) } |u^\top D u| \ge 3\tau  \bigg)
\le    n p^s 21^s  \exp\Big(-c_1  \tau ^{\gamma} \Big) +  2  p^s 21^s   \exp\Big(-\frac{c_2 \tau ^2}{ n   }\Big) .
 \end{align*}
 This immediately leads to the desired result.
\end{proof}

\subsection{Uniform deviation bounds under dependence}
Without loss of generality, assume  that 
$ p\ge n^\alpha $ for some $\alpha>0$.
\begin{lemma} \label{lemma:lasso l infty deviations}
Suppose  \Cref{assume:X} and \Cref{assume:epsilon} hold. 
Then  
\begin{align*} 
  \p\bigg(    \bigg  |   \sum_{t =1}^r X_t \epsilon_t    \bigg |_\infty \ge C_2  \bigg( \sqrt {  r \log(p )  }  +   \log ^{1/\gamma }(p )  \bigg)   \text{ for all } 3\le r\le n  \bigg)\le n^{-4} 
 \end{align*}
Suppose in addition $u\in \mathbb R^p$ is a deterministic vector such that $ |u|_2 = 1 $.
Then it holds that 
 \begin{align*} 
 &\p\bigg(  \bigg| \sum_{t=1}^r  u^\top X_t X_{t j} - u^\top \Sigma_{ \cdot j }  \bigg|    \ge C_1  \bigg( \sqrt { r  \log(np )  }  +  \log ^{1/\gamma }(np )  \bigg)    \text{ for all } 3\le r\le n  \bigg)\le n^{-4} .
 \end{align*}  
\end{lemma}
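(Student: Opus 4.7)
The plan is to upgrade the pointwise bounds of Lemma \ref{lemma:lasso deviation bound 1} and Lemma \ref{lemma: mis-specified deviation} to uniform-in-$r$ statements by a union bound over $r\in\{3,\ldots,n\}$, after sharpening the constants at each $r$ so the extra factor of $n$ (and, for the first claim, the factor $p$ from the coordinate max) is absorbed. The workhorse is Theorem \ref{thm:bernstein_exp_subExp_nonlinear}, applied to the two stationary scalar processes already identified in those pointwise proofs.

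For the first claim, fix $j\in\{1,\ldots,p\}$ and set $Z_t=X_{tj}\epsilon_t$. As shown in the proof of Lemma \ref{lemma:lasso deviation bound 1}, $\{Z_t\}$ is mean-zero and stationary, Lemma \ref{lemma:subweibull} yields sub-Weibull tails of index $\gamma_2/2$, and the triangle inequality combined with Assumptions \ref{assume:X}\textbf{a}, \ref{assume:epsilon}\textbf{a} yields an exponentially decaying cumulative functional dependence measure $\Delta_{m,2}^Z$ with rate $\gamma_1$; hence Theorem \ref{thm:bernstein_exp_subExp_nonlinear} applies with $\gamma(Z)=\gamma$. Applying it to $\sum_{t=1}^r Z_t$ at the threshold $\tau_r = C_2(\sqrt{r\log p}+\log^{1/\gamma}(p))$ with $C_2$ sufficiently large makes the two Bernstein pieces $r\exp\{-c_1 \tau_r^{\gamma} r^{-\gamma/2}\}$ and $2\exp\{-c_2 \tau_r^2/r\}$ each at most $(np)^{-6}$. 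A union bound over $j\in\{1,\ldots,p\}$ and $r\in\{3,\ldots,n\}$ costs a factor $np$, so the claimed probability $n^{-4}$ follows, using the blanket convention $p\ge n^{\alpha}$.

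For the second claim, fix $j$ and set $W_t=u^\top X_t X_{tj}-u^\top\Sigma_{\cdot j}$. The analysis in the proof of Lemma \ref{lemma: mis-specified deviation} shows that $W_t$ is mean-zero, stationary, sub-Weibull with index $\gamma_2/2$ (Lemma \ref{lemma:subweibull}), and
\begin{align*}
\|W_t-W_{t,\{t-s\}}\|_2
\;\le\;\|u^\top(X_t-X_{t,\{t-s\}})\|_4\|X_{tj}\|_4+\|u^\top X_{t,\{t-s\}}\|_4\|X_{tj}-X_{tj,\{t-s\}}\|_4
\;\le\; 2\sup_{|v|_2=1}\|v^\top X_0\|_4\,\delta_{s,4}^X,
\end{align*}
so $\Delta_{m,2}^W$ decays exponentially at rate $\gamma_1$. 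Applying Theorem \ref{thm:bernstein_exp_subExp_nonlinear} to $\sum_{t=1}^r W_t$ at the threshold $\tau_r=C_1(\sqrt{r\log(np)}+\log^{1/\gamma}(np))$ with $C_1$ sufficiently large and union bounding over $r\in\{3,\ldots,n\}$ gives the second claim by the same computation.

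The one nuisance is the bookkeeping in choosing $C_1,C_2$: the $\sqrt{r\log p}$ term in $\tau_r$ has to absorb the $\exp\{-c_2\tau_r^2/r\}$ contribution (which dominates when $r$ is large), while the additive $\log^{1/\gamma}(p)$ term has to absorb the heavier $r\exp\{-c_1\tau_r^{\gamma}r^{-\gamma/2}\}$ contribution (which dominates when $r$ is small), and both must survive the $(np)$-fold union bound. This is exactly the trade-off that appeared in the pointwise lemmas and is handled in the same way; no new idea is needed beyond making the constants large enough.
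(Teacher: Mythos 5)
Your proposal follows the paper's approach exactly: compute the functional dependence measure of the scalar processes $Z_t = X_{tj}\epsilon_t$ and $W_t = u^\top X_tX_{tj} - u^\top\Sigma_{\cdot j}$, verify the sub-Weibull tails via \Cref{lemma:subweibull}, apply \Cref{thm:bernstein_exp_subExp_nonlinear}, and union bound over $j$ and $r$ (the paper's proof of the first claim simply cites \Cref{lemma:lasso deviation bound 1}, which internally does the same computation). One small slip to fix: after substituting $m=r$ and $x=\tau_r/\sqrt{r}$ into \Cref{thm:bernstein_exp_subExp_nonlinear}, the first Bernstein piece is $r\exp\{-c_1\tau_r^{\gamma}\}$, not $r\exp\{-c_1\tau_r^{\gamma}r^{-\gamma/2}\}$ — the $m^{\gamma/2}$ in the theorem's exponent exactly cancels the $r^{-\gamma/2}$ coming from the normalization of $x$. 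With the extraneous $r^{-\gamma/2}$ present, the tail would not be polynomially small in $p$ for large $r$; after removing it, $\tau_r\gtrsim\log^{1/\gamma}(p)$ does control this piece uniformly in $r$, as you intend.
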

\begin{proof} For the first probability bound, by \Cref{lemma:lasso deviation bound 1}, for any $r\in\{3, 4, \ldots, n\}$,
\begin{align*} 
  \p\bigg(    \bigg  |   \sum_{i=1}^r X_i\epsilon_i   \bigg |_\infty \ge C_2  \bigg( \sqrt {  r \log(p )  }  +   \log ^{1/\gamma }(p )  \bigg)     \bigg)\le n^{-5} 
 \end{align*}    
So by a union bound, 
\begin{align*} 
  \p\bigg(    \bigg  |   \sum_{i=1}^r X_i\epsilon_i   \bigg |_\infty \ge C_2  \bigg( \sqrt {  r \log(p )  }  +   \log ^{1/\gamma }(p )  \bigg)   \text{ for all } 3\le r\le n  \bigg)\le n^{-4} 
 \end{align*} 
\\
\\
For the second probability bound,    for fixed $ j \in [ 1,\ldots, p]$, let
 $$ W_t  = u^\top X_t X_{t j} - u^\top \Sigma_{ \cdot j },$$
 where $\Sigma_{ \cdot j }$ denote the $j$-th column of $\Sigma$.
For any $ t\in \mathbb Z$, we have that
 \begin{align*} 
  |W_t - W _{t,\{ t-s\}}  |_2 =  &   |u^\top X_t X_{t j}  - u^\top \Sigma_{\dot j } - \big\{  u^\top X_{t, \{ t-s\}}  X_{t j, \{ t-s\} }  - u^\top \Sigma_{\dot j }     \big\}   |_2
 \\
 \le &  |u^\top X_t X_{t j}  -  u^\top X_{t, \{ t-s\}}  X_{t j  }  |_2 -   |  u^\top X_{t, \{ t-s\}}  X_{t j  }  -  u^\top X_{t, \{ t-s\}}  X_{t j, \{ t-s\} }   |_2
 \\
 \le &  |  X_{t j} |_4  | u^\top X_t   -u^\top X_{t, \{ t-s\}}    |_4 +   |   X_{t j} -X_{t j, \{ t-s\}}   |_4  | u^\top X_{t  , \{ t-s\} }    |_4
 \\
 \le & \Delta ^X_{0 ,4}  \delta^{X}_{s ,4}   + \delta^X_{s ,4}  \Delta ^{X }_{0 ,4}  .
 \end{align*}
 As a result, it holds that
\begin{align*}
  \delta_{s, 2}^W   : = \sup_{t \in \mathbb{Z}} |W_t - W _{t,\{ t-s\}}  |_2   \le \Delta ^X_{0 ,4}  \delta^{X}_{s ,4}   + \delta^X_{s ,4} \Delta ^{X}_{0 ,4}  
\end{align*}
and  that 
\begin{align*}
    \Delta_{m, 2} ^W    =    \sum_{s = m}^{\infty}\delta ^W   _{s, 2}  \le \Delta ^X_{0 ,4}   \sum_{s=m}^\infty\delta^{X}_{s ,4}  +   \Delta ^X_{0 ,4} \sum_{s=m}^\infty\delta^{X}_{s ,4}  
      =    2 \Delta ^X_{0 ,4}  \Delta ^X _{m ,4}  .
\end{align*}
Consequently, 
 \begin{align*} 
    \sup_{m \geq 0} \exp(cm^{\gamma_1}) \Delta_{m,2} ^Z    \leq   2  \Delta ^X_{0 ,4}     \sup_{m \geq 0}\exp(cm^{\gamma_1}) \Delta ^X_{m ,4}       
    \le     2D_X ^2 .
\end{align*}  
\
\\ 
Let 
 $\frac{ 1}{\gamma}  = \frac{1}{\gamma_1}  +  \frac{ 2} { \gamma_2}$. By \Cref{thm:bernstein_exp_subExp_nonlinear}, it  holds that for any $\tau \ge 1$ and $r\in\{ 3,4,\ldots, n\}$, 
\begin{align*}
  \p\bigg(   \bigg| \sum_{t=1}^r  u^\top X_t X_{t j} - u^\top \Sigma_{ \cdot j }  \bigg|  \ge \tau   \bigg) =   \mathbb{P}\Big( \Big|\sum_{ t  = 1}^r  Z _t \Big| \geq \tau  \Big) \leq& r \exp\Big(-c_1  \tau ^{\gamma} \Big) + 2 \exp\Big(-\frac{c_2 \tau ^2}{ r    }\Big).
\end{align*}  
By   union bound, it follows that for any $\tau \ge 1$,
\begin{align*}
   \p\bigg(   \bigg| \sum_{t=1}^r  u^\top X_t X_{t j} - u^\top \Sigma_{ \cdot j }  \bigg|  \ge \tau   \bigg)  \leq  r p\exp\Big(-c_1  \tau ^{\gamma} \Big) + 2 p\exp\Big(-\frac{c_2 \tau ^2}{ r    }\Big).
\end{align*}  
Therefore, with sufficiently large constant $C_2 $,  
\begin{align*}
  \p\bigg(  \bigg| \sum_{t=1}^r  u^\top X_t X_{t j} - u^\top \Sigma_{ \cdot j }  \bigg|  _\infty  \ge  C_2 \bigg\{  \sqrt {  r  \log(pn)    }  +    \log^{1/\gamma }( pn)   \bigg \}  \bigg)   \leq   n^{-5} .
\end{align*}   
 By  another  union bound,
 \begin{align*} 
 &\p\bigg(  \bigg| \sum_{t=1}^r  u^\top X_t X_{t j} - u^\top \Sigma_{ \cdot j }  \bigg|    \ge C_1  \bigg( \sqrt { r  \log(np )  }  +  \log ^{1/\gamma }(np )  \bigg)    \text{ for all } 3\le r\le n  \bigg)\le n^{-4} .
 \end{align*} 
\end{proof}

 \begin{lemma}  \label{lemma:RES Version II} 
Suppose    \Cref{assume:X} holds. Let $\alpha >1$ and 
$$ \mathcal C = \{ v \in \mathbb R^p :  | v_{U^c}   |_1 \le  \alpha    | v_{U  } |_1 \quad \text{ for any }  U\subset [1,\ldots,p] \text{ such that } |U|  =  \s   \} . $$ 
Then there exists an absolute constant $C > 0$ such that 
\begin{align*}
 \p\bigg\{    \bigg| v^\top  \bigg\{  \sum_{i=1}^r X_iX_i^\top  -\Sigma\bigg \} v\bigg|        \ge   C  |v|_2^2   \bigg( \sqrt {  r \s\log(np) }  +  \{  \s \log(np) \} ^{1/\gamma }  \bigg)   \ 
 \forall  r \in\{3, \ldots, n\}  \ \forall v\in\mathcal C  \bigg\}    \le n^{-4}.
\end{align*}  
 
\end{lemma}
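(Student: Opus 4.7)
My plan is to obtain this as a uniform-in-$r$ extension of Theorem~\ref{theorem:RES Version II}\textbf{a}. The proof of that theorem (via Lemma~\ref{lemma:RES step 2} and the convex-hull/cone reduction of Basu–Michailidis) actually yields the parameterised tail bound
\[
\mathbb{P}\Bigl\{\sup_{v\in \mathcal C\cap \mathbb B_2(1)} \bigl|v^\top (\widehat \Sigma_r-\Sigma)v\bigr| \ge C_1 \tfrac{\tau}{r}\Bigr\}
\le \exp\bigl(-c_1\tau^\gamma+\log r+C_1\s\log p\bigr) + 2\exp\bigl(-\tfrac{c_2 \tau^2}{r}+C_2\s\log p\bigr),
\]
for every $\tau\ge 1$ and every sample size $r\ge 1$, where $\widehat \Sigma_r = r^{-1}\sum_{i=1}^r X_iX_i^\top$. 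This is exactly the inequality displayed in the proof of Theorem~\ref{theorem:RES Version II}\textbf{a}, with $n$ replaced by $r$; nothing in Lemma~\ref{lemma:RES step 2} depends on the sample size being $n$ specifically.

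The main idea is then to pick $\tau$ as a function of $r$ so that the two exponential terms each drop to $n^{-6}$, rather than the $n^{-5}$ level targeted in Theorem~\ref{theorem:RES Version II}\textbf{a}. Concretely, I will set
\[
\tau_r := C_\star\Bigl(\sqrt{r\,\s\log(np)} + \{\s\log(np)\}^{1/\gamma}\Bigr)
\]
for a sufficiently large absolute constant $C_\star$. Using $r\le n$, this choice yields both $c_1\tau_r^\gamma \ge 6\log n + \log r + C_1\s\log p$ and $c_2\tau_r^2/r \ge 6\log n + C_2\s\log p$, hence the tail bound is at most $3 n^{-6}$ for each fixed $r$.

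After multiplying through by $r$ inside the probability, this rewrites as
\[
\Bigl|v^\top \Bigl(\textstyle\sum_{i=1}^r X_iX_i^\top - r\Sigma\Bigr)v\Bigr| \le C_1 C_\star\Bigl(\sqrt{r\,\s\log(np)} + \{\s\log(np)\}^{1/\gamma}\Bigr)
\]
uniformly over $v\in \mathcal C\cap \mathbb B_2(1)$, failing with probability at most $3n^{-6}$ for each $r$. A union bound over $r\in\{3,\ldots,n\}$ then gives a simultaneous statement with exceptional probability at most $3n^{-5}\le n^{-4}$. Finally, extending from $\mathcal C\cap \mathbb B_2(1)$ to all $v\in \mathcal C$ is immediate because $\mathcal C$ is a cone: for $v\neq 0$, $v/|v|_2 \in \mathcal C\cap \mathbb B_2(1)$, and the quadratic form scales as $v^\top A v = |v|_2^2\,(v/|v|_2)^\top A (v/|v|_2)$, producing the $|v|_2^2$ factor in the statement.

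I do not foresee a real obstacle; the argument recycles the cone-truncation and $\epsilon$-net machinery already packaged in Lemma~\ref{lemma:RES step 2} and in the proof of Theorem~\ref{theorem:RES Version II}\textbf{a}. The only bookkeeping point requiring care is the per-$r$ tuning of $C_\star$, which must be taken slightly larger than in the $r=n$ statement so that the extra factor of $n$ from the union bound over $r$ is absorbed while still leaving room to reach the announced $n^{-4}$ level.
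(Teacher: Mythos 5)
Your proposal is correct and matches the paper's approach: the paper also applies the fixed-sample-size bound from \Cref{theorem:RES Version II}\textbf{a} (via \Cref{lemma:RES step 2} and the Basu--Michailidis cone reduction), keeping $\log(pn)$ rather than $\log(pr)$ in the threshold so each fixed $r$ still yields a $n^{-5}$-level tail, then union-bounds over $r\in\{3,\ldots,n\}$. Your slightly more generous targeting of $n^{-6}$ per $r$ before the union bound is a cosmetic difference; the mechanism is the same.
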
 
 \begin{proof}
 By \Cref{theorem:RES Version II}, 
 \begin{align*}
 \p\bigg\{  \sup_{ v\in \mathcal C  } \bigg| v^\top  \bigg\{  \sum_{i=1}^r X_iX_i^\top  -\Sigma\bigg \} v\bigg|        \ge   C  |v|_2^2   \bigg( \sqrt {  r \s\log(np) }  +  \{  \s \log(np) \} ^{1/\gamma }    \bigg)  \ \forall v\in\mathcal C     \bigg\}    \le n^{-5}.
\end{align*}    
The desired result follows from a union bound. 
 \end{proof}

\section{Proof of a Bernstein inequality under temporal dependence (Theorem 4)}\label{sec-proof-th4}

\subsection{Introduction of nonstationary processes}
Let $\{Z_t\}_{t = \mathbb{Z}}$ be an $\mathbb{R}$-valued (possibly) nonstationary process, such that
    \begin{equation}\label{eq:nonstationary-2}
        Z_t = g_t(\mathcal{F}_t),
    \end{equation}
where $\{g_t(\cdot)\}_{t \in \mathbb{Z}}$ are time-dependent $\mathbb{R}$-valued measurable functions and $\mathcal{F}_{t} = \{\epsilon_s\}_{s \leq t}$ for any $t \in \mathbb{Z}$ and $\mathcal{F}_{-\infty} = \{\emptyset, \Omega\}$. Let $s_1, s_2 \in \mathbb{Z}$, such that $s_2 \leq s_1 \leq t$. Let $\epsilon_t^*$ be an independent copy of $\epsilon_t$. For notational convenience, let $\{\epsilon_{s_1+1}, \dots, \epsilon_t\} = \emptyset$ when $s_1+1 > t$. Define $Z_{t, \{s_1, s_2\}} = g_t(\mathcal{F}_{t,\{s_1, s_2\}})$ with $\mathcal{F}_{t,\{s_1, s_2\}} = \{\dots, \epsilon_{s_2-1}, \epsilon_{s_2}^*, \dots, \epsilon_{s_1}^*, \epsilon_{s_1+1}, \dots, \epsilon_t\}$. Write $Z_{t, \{s,s\}} = Z_{t,\{s\}}$. Under the general nonstationary representation \eqref{eq:nonstationary-2}, the functional dependence measure and its tail cumulative version are defined respectively as
\begin{equation*}
  \delta_{s, q} = \sup_{t \in \mathbb{Z}}\Vert Z_{t} - Z_{t,\{t-s\}} \Vert_{q} \;\;\text{and}\;\; \Delta_{m, q} = \sum_{s = m}^{\infty}\delta_{s, q}.
\end{equation*}
Note that in the special case when $g_t(\cdot) = g(\cdot)$, for all $t = 1, \dots, n$, the representation \eqref{eq:nonstationary-2} reduces to that of a stationary time series
\begin{equation*}
    Z_t = g(\mathcal{F}_t),
\end{equation*}
and the functional dependence measure and related conditions can be defined accordingly.

\subsection{Auxiliary lemmas}
The first lemma can be used to relate the Laplace transform of a partial sum to the products of the Laplace transforms of each individual summand.
\begin{lemma}\label{lemma:diff_laplace_joint_marginal}
Let $\{Z_i\}_{i \in \mathbb{Z}}$ be an $\mathbb{R}$-valued possibly nonstationary process of form \eqref{eq:nonstationary-2}. Assume there exists a positive $M$ such that $|Z_i| \leq M$ for any $i \in \mathbb{Z}$. Then for any $a > 0$, we have
\begin{equation*}
\begin{aligned}
    \bigg| \mathbb{E}\Big[\exp\Big(a\sum_{i = 1}^n Z_i \Big) \Big] - \prod_{i = 1}^n \mathbb{E}[\exp(aZ_i)] \bigg| \leq& a\exp(anM)\sum_{i = 2}^n\Vert Z_i - Z_{i, \{i-1, -\infty\}} \Vert_2\\
    \leq& a\exp(anM)(n-1) \Delta_{1,2}.
\end{aligned}
\end{equation*}
\end{lemma}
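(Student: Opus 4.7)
The plan is to prove the bound by a standard telescoping of the Laplace transform, using the coupling built into the functional dependence framework. The crucial observation is that the coupled variable $\widetilde{Z}_k := Z_{k,\{k-1,-\infty\}}$ replaces every innovation $\epsilon_s$ with $s \leq k-1$ by an independent copy, so $\widetilde{Z}_k$ is independent of the $\sigma$-field generated by $Z_1,\ldots,Z_{k-1}$, while $\widetilde{Z}_k$ and $Z_k$ have the same marginal law. This is what allows us to swap one $Z_k$ at a time for an independent copy inside the joint Laplace transform without changing it too much.

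Concretely, I would introduce the hybrid quantities
\begin{equation*}
    A_k := \E\Bigl[\exp\Bigl(a\sum_{i=1}^k Z_i\Bigr)\Bigr]\prod_{i=k+1}^n \E[\exp(aZ_i)], \quad k \in \{1,\ldots,n\},
\end{equation*}
so that $A_n = \E[\exp(aS_n)]$ and $A_1 = \prod_{i=1}^n \E[\exp(aZ_i)]$. For each $k \geq 2$, independence of $\widetilde{Z}_k$ from $Z_1,\ldots,Z_{k-1}$ together with $\widetilde{Z}_k \overset{\mathcal{D}}{=} Z_k$ yields
\begin{equation*}
    A_k - A_{k-1} = \prod_{i=k+1}^n \E[\exp(aZ_i)] \cdot \E\Bigl[\exp\Bigl(a\sum_{i=1}^{k-1} Z_i\Bigr)\bigl(e^{aZ_k} - e^{a\widetilde{Z}_k}\bigr)\Bigr].
\end{equation*}
Using $|Z_i|,|\widetilde{Z}_i| \leq M$, $|e^x - e^y| \leq e^{\max(|x|,|y|)}|x-y|$, and Cauchy--Schwarz ($\E|Z_k - \widetilde{Z}_k| \leq \|Z_k - \widetilde{Z}_k\|_2$), each increment satisfies $|A_k - A_{k-1}| \leq a\,\exp(anM)\,\|Z_k - \widetilde{Z}_k\|_2$. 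Summing $|A_n - A_1| \leq \sum_{k=2}^n |A_k - A_{k-1}|$ delivers the first stated inequality.

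For the second inequality I would bound $\|Z_k - \widetilde{Z}_k\|_2 \leq \Delta_{1,2}$ by an inner telescoping over the lag: write $\widetilde{Z}_k$ as the $L^2$-limit of $Z_{k,\{k-1,k-j\}}$ as $j \to \infty$, and bound each one-step change $\|Z_{k,\{k-1,k-j\}} - Z_{k,\{k-1,k-j-1\}}\|_2$ by $\delta_{j+1,2}$ after relabelling the i.i.d.\ starred innovations, because the pair $(Z_{k,\{k-1,k-j\}}, Z_{k,\{k-1,k-j-1\}})$ has the same joint distribution as $(Z_k, Z_{k,\{k-j-1\}})$. Summation over $j \geq 0$ gives $\Delta_{1,2}$, and summing again over $k = 2,\ldots,n$ produces the factor $(n-1)\Delta_{1,2}$.

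The main obstacle is the distributional-equivalence step in the inner telescoping: one must be careful that in comparing $Z_{k,\{k-1,k-j\}}$ with $Z_{k,\{k-1,k-j-1\}}$ only the innovation at position $k-j-1$ differs (starred versus unstarred), so that after renaming the i.i.d.\ copies this pair has exactly the joint law appearing in the definition of $\delta_{j+1,2}$, rather than merely matching marginals. Once that identification is made, the rest is routine bookkeeping in the telescoping and the exponential bounds.
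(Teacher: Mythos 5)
Your proof is correct and follows essentially the same path as the paper's: the same telescoping of the joint Laplace transform (your hybrids $A_k$ are just the summands of the paper's telescoping sum), the same coupling device $\widetilde{Z}_k = Z_{k,\{k-1,-\infty\}}$ to convert the product of expectations into a single expectation, and the same mean-value and Cauchy--Schwarz bounds. The only difference is cosmetic: the paper compresses the inner telescoping $\|Z_k - Z_{k,\{k-1,-\infty\}}\|_2 \leq \Delta_{1,2}$ into the phrase ``follows from the definition of $\delta_{s,q}$,'' whereas you spell out the lag-by-lag replacement and the distributional identification that makes each one-step change controlled by $\delta_{j+1,2}$, which is exactly right.
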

\begin{proof}
For the product of the Laplace transforms of each individual random variable, we have the telescoping decomposition as
\begin{align*}
    \prod_{i = 1}^n \mathbb{E}[\exp(aZ_i)] & = \mathbb{E}\Big[\exp\Big(a\sum_{i = 1}^{n-1} Z_i \Big) \Big]\mathbb{E}[\exp(aZ_n)]\\
    &+\mathbb{E}\Big[\exp\Big(a\sum_{i = 1}^{n-2} Z_i \Big) \Big]\prod_{j = n-1}^n\mathbb{E}[\exp(aZ_j)] - \mathbb{E}\Big[\exp\Big(a\sum_{i = 1}^{n-1} Z_i \Big) \Big]\mathbb{E}[\exp(aZ_n)]\\
    &+\mathbb{E}\Big[\exp\Big(a\sum_{i = 1}^{n-3} Z_i \Big) \Big]\prod_{j = n-2}^n\mathbb{E}[\exp(aZ_j)] - \mathbb{E}\Big[\exp\Big(a\sum_{i = 1}^{n-2} Z_i \Big) \Big]\prod_{j = n-1}^n\mathbb{E}[\exp(aZ_n)]\\
    &+ \dots\\
    &+\prod_{j = 1}^n\mathbb{E}[\exp(aZ_j)] - \mathbb{E}\Big[\exp\Big(a\sum_{i = 1}^{2} Z_i \Big) \Big]\prod_{j = 3}^n\mathbb{E}[\exp(aZ_n)].
\end{align*}
For notational simplicity, given a real value sequence $\{b_i\}_{i = 1}^n$, we write $\prod_{i = n+1}^n b_i = 1$. 
Then, it satisfies that
\begin{align}\label{eq:laplace_extract_all}
    &\mathbb{E}\Big[\exp\Big(a\sum_{i = 1}^n Z_i \Big) \Big] - \prod_{i = 1}^n \mathbb{E}[\exp(aZ_i)] \nonumber \\
    =& \sum_{s = 2}^{n}\bigg\{\bigg(\mathbb{E}\Big[\exp\Big(a\sum_{i = 1}^{s} Z_i \Big) \Big] - \mathbb{E}\Big[\exp\Big(a\sum_{i = 1}^{s-1} Z_i \Big) \Big]\mathbb{E}[\exp(aZ_s)] \bigg)\prod_{j = s+1}^n\mathbb{E}[\exp(aZ_j)] \bigg\}.
\end{align} 
Using coupling, we have that $Z_{s, \{s-1, -\infty\}}$ and $Z_{s^{\prime}}$ are independent for any $s^{\prime} \leq s-1$, and $Z_{s, \{s-1, -\infty\}}$ and $Z_s$ have the same distribution. We have that 
\begin{align}\label{eq:laplace_extract_one}
    &\bigg|\mathbb{E}\Big[\exp\Big(a\sum_{i = 1}^{s} Z_i \Big) \Big] - \mathbb{E}\Big[\exp\Big(a\sum_{i = 1}^{s-1} Z_i \Big) \Big]\mathbb{E}[\exp(aZ_s)]\bigg|\prod_{j = s+1}^n\mathbb{E}[\exp(aZ_j)]\nonumber \\
    =& \bigg|\mathbb{E}\Big[\exp\Big(a\sum_{i = 1}^{s-1} Z_i \Big) \Big(\exp(aZ_s) - \exp(aZ_{s,\{s-1, -\infty\}})\Big)\Big]\bigg| \prod_{j = s+1}^n\mathbb{E}[\exp(aZ_j)]\nonumber \\
    \leq& a\exp(anM)\mathbb{E}\big|Z_s - Z_{s,\{s-1, -\infty\}}\big|\leq  a\exp(anM)\big\Vert Z_s - Z_{s,\{s-1, -\infty\}}\big\Vert_2,    
\end{align} 
where the first inequality is due to the mean value theorem and the fact that $Z_i$ are bounded, and the second inequality follows from H\"older's inequality.
Combining \eqref{eq:laplace_extract_all} and \eqref{eq:laplace_extract_one}, we have
\begin{equation*}
\begin{aligned}
    \bigg|\mathbb{E}\Big[\exp\Big(a\sum_{i = 1}^n Z_i \Big) \Big] - \prod_{i = 1}^n \mathbb{E}[\exp(aZ_i)]\bigg| \leq& a\exp(anM)\sum_{i = 2}^n\big\Vert Z_i - Z_{i,\{i-1, -\infty\}}\big\Vert_2\\
    \leq& a(n-1)\exp(anM)\sum_{j = 1}^{\infty}\delta_{j,2},
\end{aligned}
\end{equation*}
where the second inequality follows from the definition of $\delta_{s,q}$.
\end{proof}

The following lemma can be used to relate the bound of the log-Laplace transform of sum of random variables to that of each individual random variable. This lemma is the Lemma 3 in \cite{merlevede2011bernstein}, we reproduce it for completeness.
\begin{lemma}\label{lemma:aggregate_log-laplace}
    Let $Z_1, Z_2, \dots$ be a sequence of $\mathbb{R}$-valued random variables. Assume that there exist positive constants $\sigma_1, \sigma_2, \dots$ and $c_1, c_2, \dots$ such that, for any positive integer $i$ and any $t \in [0, 1/c_i)$,
    \begin{equation*}
        \log\mathbb{E}[\exp(tZ_i)] \leq (\sigma_it)^2/(1-c_it).
    \end{equation*}
    Then, for any positive $n$ and any $t$ in $[0,1/(c_1+c_2+ \cdots+c_n))$,
    \begin{equation}\label{eq:tensorization}
        \log\mathbb{E}[\exp(t(Z_1+Z_2+\cdots+Z_n))] \leq (\sigma t)^2/(1-Ct),
    \end{equation}
    where $\sigma = \sigma_1+\sigma_2+\cdots+\sigma_n$ and $C = c_1+c_2+\cdots+c_n$.
\end{lemma}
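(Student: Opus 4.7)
My plan is to prove the inequality via a weighted H\"older decomposition of the exponential moment, followed by an explicit optimization over the weights that produces the announced constants $\sigma=\sum\sigma_i$ and $C=\sum c_i$. The key observation is that H\"older's inequality, $\mathbb{E}\bigl|\prod_i X_i\bigr|\le\prod_i\bigl(\mathbb{E}|X_i|^{p_i}\bigr)^{1/p_i}$ whenever $\sum 1/p_i=1$, requires no independence assumption on the $Z_i$'s, so it can be applied freely to $X_i=\exp(tZ_i)$. This is the only place where the $Z_i$'s interact, and it transforms the joint log-Laplace into a convex combination of individual log-Laplaces.

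Concretely, I would introduce strictly positive weights $\lambda_1,\dots,\lambda_n$ with $\sum_i\lambda_i=1$, apply H\"older's inequality with exponents $p_i=1/\lambda_i$, and take logarithms to obtain
\[
  \log\mathbb{E}\!\left[\exp\!\Big(t\sum_{i=1}^n Z_i\Big)\right]
  \;\le\;\sum_{i=1}^n \lambda_i\,\log\mathbb{E}\!\left[\exp\!\bigl((t/\lambda_i)Z_i\bigr)\right].
\]
For each $i$, I would then invoke the hypothesis with argument $s_i=t/\lambda_i$, provided $\lambda_i>c_it$, to get $\lambda_i\log\mathbb{E}[\exp(s_iZ_i)]\le \sigma_i^2 t^2/(\lambda_i-c_it)$. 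Summing yields the intermediate bound
\[
  \log\mathbb{E}\!\left[\exp\!\Big(t\sum_{i=1}^n Z_i\Big)\right]
  \;\le\;\sum_{i=1}^n\frac{\sigma_i^2\,t^2}{\lambda_i-c_it}.
\]

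The main (and really the only) technical step is the choice of weights. I would pick
\[
  \lambda_i \;=\; c_it\,+\,\frac{\sigma_i}{\sigma}(1-Ct),\qquad i=1,\dots,n,
\]
and verify (i) $\sum_i\lambda_i=Ct+(1-Ct)=1$, (ii) $\lambda_i-c_it=(\sigma_i/\sigma)(1-Ct)>0$ whenever $Ct<1$, which is precisely the stated range of $t$, so H\"older's inequality and each individual bound are legitimate. Substituting this choice gives
\[
  \sum_{i=1}^n\frac{\sigma_i^2\,t^2}{\lambda_i-c_it}
  \;=\;\sum_{i=1}^n\frac{\sigma_i\sigma\,t^2}{1-Ct}
  \;=\;\frac{(\sigma t)^2}{1-Ct},
\]
which is \eqref{eq:tensorization}. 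I do not anticipate any serious obstacle: the result is a deterministic algebraic identity once the weights are guessed correctly, and the guess itself can be motivated by Cauchy--Schwarz applied to the partial fraction $\sum\sigma_i^2/(\lambda_i-c_it)$ subject to $\sum(\lambda_i-c_it)=1-Ct$, which is minimized precisely when $\lambda_i-c_it$ is proportional to $\sigma_i$.
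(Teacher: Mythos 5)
Your proof is correct. It uses the same key mechanism as the paper---H\"older's inequality to split the joint log-Laplace transform into a weighted sum of individual ones, with weights of the form $\lambda_i = c_i t + (\sigma_i/\sigma)(1-Ct)$---but you apply a single $n$-way H\"older step with the weights written out explicitly and verified to sum to one, whereas the paper proceeds by induction on $n$, applying a two-term H\"older at each step with $u = \bigl(\sum_{j\le k}\sigma_j/\sum_{j\le k+1}\sigma_j\bigr)(1-t\sum_{j\le k+1}c_j)+t\sum_{j\le k}c_j$ (which is exactly the complementary weight $1-\lambda_{k+1}$ in your notation). Unrolling the paper's recursion produces precisely your $\lambda_i$'s, so the two arguments are equivalent; yours is arguably cleaner in that it makes the final weight assignment transparent and avoids the inductive bookkeeping, while the paper's version may be easier to discover since at each step one only has to guess a single scalar split. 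Your side remark that the choice of $\lambda_i$ is the Cauchy--Schwarz-optimal one for minimizing $\sum \sigma_i^2/(\lambda_i-c_it)$ under $\sum(\lambda_i-c_it)=1-Ct$ is a nice touch that the paper does not spell out.
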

\begin{proof}
    For $i \geq 1$, denote the partial sums $S_i = \sum_{j = 1}^iZ_j$. The proof is by induction.
    For $n = 1$, we have $\sigma = \sigma_1$ and $C = c_1$, and \eqref{eq:tensorization} holds obviously.
    
    Assuming \eqref{eq:tensorization} holds for $n = k$, i.e.~for any $t \in [0,1/(c_1+c_2+\cdots+c_k))$, it satisfies that
    \begin{equation*}
        \log\mathbb{E}[\exp(tS_k)] \leq \Big(t\sum_{j=1}^k\sigma_j\Big)^2\Big/\Big(1-t\sum_{j=1}^kc_j\Big).
    \end{equation*}
    For $n = k+1$, by H\"older's inequality we have for any $u \in (0,1)$ that
    \begin{equation} \label{eq:tensorization_stepk}
        \log\mathbb{E}[\exp(t(S_k+Z_{k+1}))] \leq u\log\mathbb{E}[\exp(u^{-1}tS_k)] + (1-u)\log\mathbb{E}[\exp((1-u)^{-1}tZ_{k+1})].
    \end{equation}
    Choose
    \begin{equation*}
        u = \Big(\sum_{j = 1}^{k}\sigma_j\Big/\sum_{j = 1}^{k+1}\sigma_j\Big)\Big(1 - t\sum_{j = 1}^{k+1}c_j\Big) + t\sum_{j = 1}^{k}c_j,
    \end{equation*}
    and thus 
    \begin{equation*}
        1-u = \Big(\sigma_{k+1}\Big/\sum_{j = 1}^{k+1}\sigma_j\Big)\Big(1 - t\sum_{j = 1}^{k+1}c_j\Big) + tc_{k+1}.        
    \end{equation*}
    Since $1/(c_1+c_2+\cdots+c_{k+1})$ is less than $1/(c_1+c_2+\cdots+c_{k})$ and $1/c_{k+1}$,
    we have for any $t \in [0, 1/(c_1+c_2+\cdots+c_{k+1}))$ that
    \begin{equation*}
        \eqref{eq:tensorization_stepk} \leq \frac{t^2\big(\sum_{j=1}^{k}\sigma_j\big)^2}{u - t\sum_{j=1}^{k}c_j} + \frac{t^2\sigma_{k+1}^2}{(1-u)-tc_{k+1}} = \frac{\big(t\sum_{j=1}^{k+1}\sigma_j\big)^2}{1 - t\sum_{j = 1}^{k+1}c_j},
    \end{equation*}
    which completes the proof.
\end{proof}

The following lemma is the Burkholder's moment inequality for martingale.
\begin{lemma}[Theorem 2.1 in \cite{rio2009moment}]\label{lemma:burkholder}
Let $M_n = \sum_{i = 1}^n\xi_i$, where $\{\xi_i\}_{i \in \mathbb{Z}}$ are martingale differences. Then for any $q \geq 2$,
\begin{equation*}
    \Vert M_n \Vert_q \leq \sqrt{q-1}\Big(\sum_{i = 1}^{n}\Vert \xi_i \Vert_q^2 \Big)^{1/2}.
\end{equation*}
\end{lemma}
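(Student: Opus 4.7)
The plan is to prove the claim by induction on $n$, through the sharper recursive inequality
\begin{equation*}
\|M_n\|_q^2 \leq \|M_{n-1}\|_q^2 + (q-1)\|\xi_n\|_q^2,
\end{equation*}
which, iterated from $M_0 \equiv 0$, yields $\|M_n\|_q^2 \leq (q-1)\sum_{i=1}^n\|\xi_i\|_q^2$ and is equivalent, upon taking square roots, to the stated bound. The base case $n=1$ is immediate since $M_1 = \xi_1$.

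For the inductive step I would start from the second-order Taylor expansion of the convex function $\phi(x) = |x|^q$, which for $q \geq 2$ has derivative $\phi'(x) = q|x|^{q-2}x$ and absolutely continuous second derivative $\phi''(x) = q(q-1)|x|^{q-2}$. Writing $M_n = M_{n-1} + \xi_n$, Taylor's formula with integral remainder gives
\begin{equation*}
|M_n|^q = |M_{n-1}|^q + q|M_{n-1}|^{q-2}M_{n-1}\,\xi_n + q(q-1)\int_0^1 (1-t)\,|M_{n-1}+t\xi_n|^{q-2}\,\xi_n^2\,dt.
\end{equation*}
Taking conditional expectation with respect to $\mathcal{F}_{n-1} = \sigma(\xi_1,\ldots,\xi_{n-1})$ and using the martingale property $\mathbb{E}[\xi_n \mid \mathcal{F}_{n-1}] = 0$ eliminates the linear term; taking an outer expectation then leaves $\mathbb{E}|M_n|^q$ as $\mathbb{E}|M_{n-1}|^q$ plus a non-negative quadratic remainder in $\xi_n$.

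The heart of the argument, and the main technical obstacle, is to bound that remainder so that the \emph{sharp} constant $q-1$ emerges. The required tool is the convex-analytic inequality that is the content of Theorem~2.1 of \cite{rio2009moment}: for any $q \geq 2$, any $\mathcal{F}_{n-1}$-measurable $U \in L^q$, and any $V \in L^q$ with $\mathbb{E}[V \mid \mathcal{F}_{n-1}] = 0$,
\begin{equation*}
\|U+V\|_q^2 \leq \|U\|_q^2 + (q-1)\|V\|_q^2.
\end{equation*}
Applied to $U = M_{n-1}$ and $V = \xi_n$, this is exactly the recursion sought. Rio derives it by substituting the Taylor remainder above into $\mathbb{E}|U+V|^q$ and applying H\"older's inequality with exponents $(q/(q-2), q/2)$ to the factors $|U+tV|^{q-2}$ and $V^2$, followed by a one-dimensional convexity estimate on the resulting function of $t \in [0,1]$; it is precisely at this last step that the constant $q-1$, rather than any larger one, is extracted, matching the known sharp bound attained by sums of independent Gaussians.

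Finally, iterating the recursion over $n$ and taking square roots closes the induction and yields the stated inequality. Since the displayed statement is a verbatim restatement of Theorem~2.1 in \cite{rio2009moment}, a fully acceptable alternative is to invoke that theorem directly; the sketch above isolates the single genuinely non-trivial convex-analytic ingredient on which the sharp constant depends.
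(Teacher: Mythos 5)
Your proposal is correct and matches the paper, which establishes this lemma purely by citing Theorem~2.1 of \cite{rio2009moment} (the lemma is labelled as such and no further proof is given); your closing remark that direct invocation suffices is exactly the paper's route. Your sketch of Rio's argument (Taylor expansion of $|x|^q$, vanishing of the linear term by the martingale property, H\"older with exponents $(q/(q-2),q/2)$, and the scalar convexity estimate yielding the constant $q-1$, then iteration of the recursion $\|M_n\|_q^2\le\|M_{n-1}\|_q^2+(q-1)\|\xi_n\|_q^2$) is an accurate account of the cited proof.
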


For a possibly nonstationary process with dependence measure decays exponentially, the next lemma gives an upper bound on its long-run variance.
\begin{lemma}\label{lemma:lrv_univar}
    Let $\{X_i\}_{i \in \mathbb{Z}}$ be a sequence of $\mathbb{R}$-valued random variables in the form of \eqref{eq:nonstationary-2}. Assume $\sup_{i \in \mathbb{Z}}\Vert X_i \Vert_2 < \infty$ and there exist some absolute constants $c, C_{\mathrm{FDM}}, \gamma_1 > 0$ such that
    \begin{equation*}
        \sup_{m \geq 0}\exp(cm^{\gamma_1})\Delta_{m,2} \leq C_{\mathrm{FDM}}.
    \end{equation*}
    Then, the long-run variance of $\{X_t\}_{t \in \mathbb{Z}}$ satisfies that
    \begin{equation}\label{eq:def_LRV_UB}
        \lim_{n \to \infty}\var\Big(\frac{1}{\sqrt{n}}\sum_{i = 1}^n X_i\Big) \leq \sum_{l = -\infty}^{\infty}\sup_{t \in \mathbb{Z}}\big|\cov(X_{t}, X_{t+l})\big| = C_{\mathrm{LRV}} < \infty.
    \end{equation}
\end{lemma}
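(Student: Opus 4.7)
The plan is to bound the variance of the partial sum by a double sum of covariances, control each covariance through a coupling argument using the functional dependence measure, and finish by invoking the exponential decay of $\Delta_{m,2}$ to get summability.

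The first step will be straightforward: by expanding
\begin{equation*}
\var\Big(\frac{1}{\sqrt{n}}\sum_{i=1}^n X_i\Big) = \frac{1}{n}\sum_{i,j=1}^n \cov(X_i, X_j) = \sum_{|l|<n}\Big(1-\frac{|l|}{n}\Big)\cdot\frac{1}{n-|l|}\sum_{i:\,1\le i,\,i+l\le n}\cov(X_i,X_{i+l}),
\end{equation*}
bounding the inner averaged covariance by $\sup_{t\in\mathbb Z}|\cov(X_t,X_{t+l})|$, and dominating by the doubly infinite sum $\sum_{l\in\mathbb Z}\sup_t|\cov(X_t,X_{t+l})|$, which is exactly $C_{\mathrm{LRV}}$.

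The heart of the argument is bounding $\sup_t|\cov(X_t,X_{t+l})|$ for $l\geq 0$. The idea is to note that the coupled variable $X_{t+l,\{t,-\infty\}}$ only depends on $\{\epsilon_r^*\}_{r\le t}\cup\{\epsilon_r\}_{t<r\le t+l}$ and hence is independent of $X_t$; moreover, because $\epsilon_r^*$ has the same distribution as $\epsilon_r$, it shares the marginal law of $X_{t+l}$. This yields $\cov(X_t,X_{t+l})=\mathbb E[(X_t-\mathbb E X_t)(X_{t+l}-X_{t+l,\{t,-\infty\}})]$, so by Cauchy--Schwarz and $\|X_t-\mathbb E X_t\|_2\le M:=\sup_i\|X_i\|_2$, it suffices to control $\|X_{t+l}-X_{t+l,\{t,-\infty\}}\|_2$. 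I would do this by telescoping one coordinate at a time, writing
\begin{equation*}
X_{t+l}-X_{t+l,\{t,-\infty\}}=\sum_{m=0}^{\infty}\bigl(X_{t+l,\{t,t-m+1\}}-X_{t+l,\{t,t-m\}}\bigr),
\end{equation*}
and then using the iid exchangeability of the mixed $\{\epsilon_r\}\cup\{\epsilon_r^*\}$ innovations to argue that each increment has the same $L^2$-norm as $X_{t+l}-X_{t+l,\{t-m\}}$, hence is at most $\delta_{l+m,2}$. Summing in $m$ gives $\|X_{t+l}-X_{t+l,\{t,-\infty\}}\|_2\le\Delta_{l,2}$, uniformly in $t$, so $\sup_t|\cov(X_t,X_{t+l})|\le M\,\Delta_{l,2}$, and by symmetry (swapping $X_t$ and $X_{t+l}$) the same bound with $\Delta_{|l|,2}$ holds for negative lags.

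The conclusion is immediate once the bound on covariances is in hand, because the assumption $\Delta_{m,2}\le C_{\mathrm{FDM}}\exp(-cm^{\gamma_1})$ makes $\sum_{l\in\mathbb Z}\Delta_{|l|,2}$ converge, so $C_{\mathrm{LRV}}\le M\sum_l\Delta_{|l|,2}<\infty$. The only delicate point I expect is the telescoping step: because the process is nonstationary, single-coordinate replacements of partially coupled sequences are not literally covered by the definition of $\delta_{s,2}$, and one must argue carefully via an explicit relabeling of the $\{\epsilon_r,\epsilon_r^*\}$ pairs that the single-coordinate perturbation of an already-coupled realization has the same law as the single-coordinate perturbation of the original process. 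Everything else -- the variance expansion, Cauchy--Schwarz, and the geometric-type summation -- is routine.
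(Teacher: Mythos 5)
Your proof is correct, and it is a genuinely different route from the paper's. The paper bounds the autocovariance by first decomposing each $X_i - \mathbb{E}[X_i] = \sum_{k\geq 0}\mathcal{P}_{i-k}X_i$ via the projection operators $\mathcal{P}_j = \mathbb{E}[\cdot\,|\,\mathcal{F}_j] - \mathbb{E}[\cdot\,|\,\mathcal{F}_{j-1}]$, invoking orthogonality of projections, Cauchy--Schwarz, and the estimate $\|\mathcal{P}_jX_i\|_2 \leq \delta_{i-j,2}$ (the latter is itself obtained by a coupling-plus-Jensen argument), to arrive at $\sup_t|\cov(X_t, X_{t+l})| \leq \sum_k\delta_{k,2}\delta_{k+l,2} \leq \Delta_{0,2}\Delta_{|l|,2}$. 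You instead couple $X_{t+l}$ directly to an $X_t$-independent version $X_{t+l,\{t,-\infty\}}$, write the covariance as $\mathbb{E}[(X_t - \mathbb{E}X_t)(X_{t+l}-X_{t+l,\{t,-\infty\}})]$, and control the coupling distance by telescoping and relabeling, giving $\sup_t|\cov(X_t,X_{t+l})| \leq (\sup_i\|X_i\|_2)\,\Delta_{|l|,2}$. Both yield the same conclusion; the projection route replaces $\sup_i\|X_i\|_2$ by $\Delta_{0,2}$ (so it does not lean on the $L^2$-boundedness assumption in that step), while your route is more elementary, avoiding conditional expectations and the martingale structure, at the cost of the relabeling/measurability bookkeeping you correctly flag. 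One further remark: neither argument literally establishes the existence of $\lim_{n\to\infty}\var(n^{-1/2}\sum_{i=1}^nX_i)$ for a genuinely nonstationary process; both really prove $\limsup_n\var(n^{-1/2}\sum X_i)\leq C_{\mathrm{LRV}}$, and you do make the first-step expansion and the uniform-in-$n$ domination explicit, which the paper leaves implicit.
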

\begin{proof}
    Define the projection operator $\mathcal{P}_j\cdot = \mathbb{E}[\cdot|\mathcal{F}_j] - \mathbb{E}[\cdot| \mathcal{F}_{j-1}]$ with $j \in \mathbb{Z}$.  A random variable $X_i$ is decomposed as
    \begin{equation*}
        X_i - \mathbb{E}[X_i] = \sum_{k = 0}^{\infty}\big(\mathbb{E}[X_i|\mathcal{F}_{i-k}] - \mathbb{E}[X_i| \mathcal{F}_{i-k-1}]\big) = \sum_{k = 0}^{\infty}\mathcal{P}_{i-k}X_i.
    \end{equation*}
    It holds for any $l \geq 0$ that 
    \begin{equation*}
    \begin{aligned}
        &\sup_{t \in \mathbb{Z}}\big|\cov(X_{t}, X_{t+l})\big| = \sup_{t \in \mathbb{Z}}\bigg|\mathbb{E}\Big[\Big(\sum_{k = 0}^{\infty}\mathcal{P}_{-k}X_t\Big)\Big(\sum_{k = 0}^{\infty}\mathcal{P}_{l-k}X_{t+l}\Big)\Big]\bigg|\\
        =& \sup_{t \in \mathbb{Z}}\bigg|\sum_{k =     0}^{\infty}\mathbb{E}\big[(\mathcal{P}_{-k}X_t)(\mathcal{P}_{-k}X_{t+l})\big]\bigg| \leq \sum_{k =     0}^{\infty}\sup_{t \in \mathbb{Z}}\Big|\mathbb{E}\big[(\mathcal{P}_{-k}X_t)(\mathcal{P}_{-k}X_{t+l})\big]\Big|\\
        \leq& \sum_{k = 0}^{\infty}\sup_{t \in \mathbb{Z}}\Vert \mathcal{P}_{-k}X_{t} \Vert_2 \Vert \mathcal{P}_{-k}X_{t+l}\Vert_2 \leq \sum_{k = 0}^{\infty} \delta_{k,2}\delta_{k+l,2} \leq \sqrt{\sum_{k = 0}^{\infty} \delta_{k,2}^2}\sqrt{\sum_{k = 0}^{\infty} \delta_{k+l,2}^2}\\
        \leq& \Big(\sum_{k = 0}^{\infty} \delta_{k,2}\Big) \Big(\sum_{k = l}^{\infty} \delta_{k,2}\Big) = \Delta_{0,2}\Delta_{l,2},
    \end{aligned}
    \end{equation*}
    where the first inequality follows the triangle inequality and the second and fourth inequalities follow H\"older's inequality. The second equality also follows the orthogonality of $\mathcal{P}_j\cdot$, i.e.~for $i < j$
    \begin{equation*}
        \mathbb{E}[(\mathcal{P}_iX_r)(\mathcal{P}_jX_s)] = \mathbb{E}[\mathbb{E}[(\mathcal{P}_iX_r)(\mathcal{P}_jX_s)|\mathcal{F}_{i}]] = \mathbb{E}[(\mathcal{P}_iX_r)\mathbb{E}[X_s-X_s|\mathcal{F}_i]] = 0,
    \end{equation*}
    and the orthogonality also holds for $i > j$ by symmetry. The third inequality is due to the fact that
    \begin{align*}
        \Vert \mathcal{P}_{j}X_i \Vert_2 =& \Vert \mathbb{E}[X_i| \mathcal{F}_{j}] - \mathbb{E}[X_i| \mathcal{F}_{j-1}]\Vert_2 = \Vert\mathbb{E}[X_i| \mathcal{F}_{j}] - \mathbb{E}[X_{i,\{j\}}| \mathcal{F}_{j-1}]\Vert_2\\
       =& \Vert\mathbb{E}[X_i-X_{i,\{j\}}| \mathcal{F}_{j}]\Vert_2 \leq \Vert X_{i}-X_{i,\{j\}}\Vert_2 \leq \delta_{i-j,2},    
    \end{align*}
    where the second and the third equality follows the definition of the coupled random variables $X_{i,\{j\}}$, the first inequality follows Jensen's inequality, and the second inequality follows the definition of the functional dependence measure. By the same arguments, we have for any $l < 0$ that
    \begin{equation*}
    \begin{aligned}
        &\sup_{t \in \mathbb{Z}}\big|\cov(X_t, X_{t+l})\big| \leq \Delta_{0,2}\Delta_{-l,2}.
    \end{aligned}
    \end{equation*}
Therefore, we have that
\begin{equation}\label{eq:LRV_univariate_ub}
\begin{aligned}
    &C_{\mathrm{LRV}} = \sum_{l = -\infty}^{\infty}\sup_{t \in \mathbb{Z}}\big|\cov(X_t, X_{t+l})\big| \leq 2\Delta_{0,2}\sum_{l = 0}^{\infty}\Delta_{l,2} \leq 2C_{\mathrm{FDM}}^2\sum_{l = 0}^{\infty}\exp(-cl^{\gamma_1}).
\end{aligned}
\end{equation}
To bound \eqref{eq:LRV_univariate_ub}, we compare the series $\{a_m = \exp(-cm^{\gamma_1})\}_{m = 1}^{\infty}$ and $\{b_m = (m+1)^{-\nu}\}_{m=1}^{\infty}$ with $\nu > 1$. Since by the properties of the Riemann zeta function, it holds that $\sum_{m = 1}^{\infty}b_m < \infty$.

We embed the series $\{a_m\}_{m = 1}^{\infty}$ and $\{b_m\}_{m=1}^{\infty}$ into continuous time processes by defining $a_x = a_{\lceil x \rceil}$ and $b_x = b_{\lceil x \rceil}$ for $x \in [1,\infty)$. Since for any $x \in [1,\infty)$, $a_x, b_x > 0$ and we define a function $g(x)$ on $x \in [1, \infty)$ as
\begin{equation*}
    g(x) = \log\bigg(\frac{a_x}{b_x}\bigg) = -cx^{\gamma_1} + \nu\log(x+1).
\end{equation*}
We have the derivative as
\begin{equation*}
    g^{\prime}(x) = -c\gamma_1x^{\gamma_1-1} + \frac{\nu}{x+1}.
\end{equation*}
Since for any absolute constants $c, \gamma_1 > 0$ and $\nu > 1$, there exists a finite $x^* \geq 1$ such that for any $x \geq x^*$, it holds that
\begin{equation*}
    \exp(-cx^{\gamma_1}) \leq (x+1)^{-\nu} \;\; \text{and} \;\; g^{\prime}(x) < 0.
\end{equation*}
Letting $m^* = \lceil x^* \rceil$, we have that
\begin{align*}
    C_{\mathrm{LRV}} \leq& 2C_{\mathrm{FDM}}^2\sum_{m = 0}^{\infty}\exp(-cm^{\gamma_1}) = 2C_{\mathrm{FDM}}^2\sum_{m = 0}^{m^*}\exp(-cm^{\gamma_1}) + 2C_{\mathrm{FDM}}^2\sum_{m = m^*+1}^{\infty}\exp(-cm^{\gamma_1})\\
    \leq& 2C_{\mathrm{FDM}}^2\sum_{m = 0}^{m^*}\exp(-cm^{\gamma_1}) + 2C_{\mathrm{FDM}}^2\sum_{m = m^*+1}^{\infty}(m + 1)^{-\nu} < \infty.    
\end{align*}
\end{proof}

We compare the following two conditions in the next lemma. (i) There exist some absolute constants $c^{\prime}, \nu > 0$ such that
    \begin{equation}\label{ass:polynomial_decay_order_2}
        c^{\prime}\sup_{m \geq 0}(m+1)^{\nu}\Delta_{m,2} \leq 1.
    \end{equation}
    (ii) There exist some absolute constants $c,  C_{\mathrm{FDM}}, \gamma_1 > 0$ such that
    \begin{equation}\label{ass:exponential_decay_order_2}
        \sup_{m \geq 0}\exp(cm^{\gamma_1})\Delta_{m,2} \leq C_{\mathrm{FDM}}.
    \end{equation}
    
\begin{lemma}
    Let $\{X_t\}_{t \in \mathbb{Z}}$ be a sequence of $\mathbb{R}$-valued random variables in the form of \eqref{eq:nonstationary-2}. If there exist some absolute constants $c, \gamma_1 > 0$ such that \eqref{ass:exponential_decay_order_2} holds, then \eqref{ass:polynomial_decay_order_2} holds for any $c^{\prime}, \nu > 0$.
\end{lemma}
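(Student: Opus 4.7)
The plan is to observe that exponential decay of $\Delta_{m,2}$ dominates any prescribed polynomial rate, so the quantity $(m+1)^{\nu}\Delta_{m,2}$ is uniformly bounded in $m \ge 0$ for every $\nu > 0$. Concretely, assumption \eqref{ass:exponential_decay_order_2} gives $\Delta_{m,2} \le C_{\mathrm{FDM}}\exp(-cm^{\gamma_1})$, and the continuous function $t \mapsto (t+1)^{\nu}\exp(-ct^{\gamma_1})$ tends to zero as $t \to \infty$ for any $c,\gamma_1,\nu > 0$, hence attains a finite maximum $M_{\nu} < \infty$ on $[0,\infty)$. This yields
\[
    \sup_{m \ge 0}(m+1)^{\nu}\Delta_{m,2} \le C_{\mathrm{FDM}} M_{\nu} < \infty.
\]

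Given this uniform bound, the conclusion is immediate: the statement "\eqref{ass:polynomial_decay_order_2} holds for any $c',\nu > 0$" should be parsed as saying that for any $\nu > 0$ we may choose $c' > 0$ small enough (depending on $\nu$, $c$, $\gamma_1$, and $C_{\mathrm{FDM}}$) that $c' C_{\mathrm{FDM}} M_{\nu} \le 1$. With such a choice, $c'(m+1)^{\nu}\Delta_{m,2} \le 1$ uniformly in $m \ge 0$, which is \eqref{ass:polynomial_decay_order_2}.

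There is no real obstacle here; the lemma is a purely analytic observation that exponential-type decay subsumes polynomial-type decay. The only minor care needed is in the interpretation of the quantifier on $c'$, as noted above. If an explicit dependence is desired, the substitution $u = cm^{\gamma_1}$ gives $M_{\nu}$ proportional to $(\nu/(ce\gamma_1))^{\nu/\gamma_1}$, producing a closed-form admissible $c'$, but this quantitative refinement is not required for the qualitative comparison the lemma asserts.
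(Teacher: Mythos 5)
Your proof is correct, and it is a more direct version of the same elementary comparison the paper makes. The paper's own argument first extracts a bound on the individual terms, $\delta_{k,2} \leq C_{\mathrm{FDM}}\exp(-ck^{\gamma_1})$, then bounds $\sup_{m}(m+1)^{\nu}\sum_{k=m}^{\infty}\delta_{k,2}$ by the full series $C_{\mathrm{FDM}}\sum_{k=0}^{\infty}(k+1)^{\nu}\exp(-ck^{\gamma_1})$ (using $(m+1)^{\nu}\leq(k+1)^{\nu}$ for $k\geq m$), and finally establishes convergence of that series by splitting it at a finite index $k^*$ beyond which $\exp(-ck^{\gamma_1})\leq(k+1)^{-\nu'}$ for some $\nu'>\nu+1$. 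You instead apply the hypothesis \eqref{ass:exponential_decay_order_2} directly to the tail sum $\Delta_{m,2}$ itself and observe that $t\mapsto(t+1)^{\nu}\exp(-ct^{\gamma_1})$ is bounded on $[0,\infty)$, which bypasses the series-comparison step entirely and yields the same conclusion with less machinery. Both proofs resolve the awkward quantifier on $c'$ identically: the inequality \eqref{ass:polynomial_decay_order_2} can only hold for $c'$ small enough given $\nu$, and the paper's closing sentence (``Thus, there exists such $c'$'') confirms that the existential reading you adopt is the intended one.
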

\begin{proof}
    If \eqref{ass:exponential_decay_order_2} holds, then we have for any $m \geq 0$ that 
    \begin{equation*}
        \Delta_{m,2} = \sum_{k = m}^{\infty}\delta_{k,2} \leq C_{\mathrm{FDM}}\exp(-cm^{\gamma_1}),
    \end{equation*}
    and
    \begin{equation*}
    \begin{aligned}
        &\delta_{m,2} = \sum_{k = m}^{\infty}\delta_{k,2} - \sum_{k = m+1}^{\infty}\delta_{k,2} \leq C_{\mathrm{FDM}}\exp(-cm^{\gamma_1}).
    \end{aligned}
    \end{equation*}
    This further leads to
    \begin{equation}\label{eq:DAN_poly_nu_2}
        \sup_{m \geq 0}(m+1)^{\nu}\sum_{k = m}^{\infty}\delta_{k,2} \leq C_{\mathrm{FDM}}\sup_{m \geq 0}(m+1)^{\nu}\sum_{k = m}^{\infty}\exp(-ck^{\gamma_1}) \leq C_{\mathrm{FDM}}\sum_{k = 0}^{\infty}(k+1)^{\nu}\exp(-ck^{\gamma_1}).
    \end{equation}

Define the following function on $x \in [1, \infty)$ as
\begin{equation*}
    g(x) = \log\bigg(\frac{\exp(-cx^{\gamma_1})}{(x+1)^{-\nu^{\prime}}}\bigg) = -cx^{\gamma_1} + \nu^{\prime}\log(x+1),
\end{equation*}
where $\nu^{\prime}$ is some absolute constant such that $\nu^{\prime} - \nu > 1$. 
We have the derivative as
\begin{equation*}
    g^{\prime}(x) = -c\gamma_1x^{\gamma_1-1} + \frac{\nu^{\prime}}{x+1}.
\end{equation*}
Since for any absolute constants $c, \gamma_1 > 0$ and $\nu^{\prime} > \nu + 1$, there exists a finite $x^* \geq 1$ such that for any $x \geq x^*$, it holds that
\begin{equation*}
    g^{\prime}(x) < 0 \;\; \text{and} \;\; \exp(-cx^{\gamma_1}) \leq (x+1)^{-\nu^{\prime}}.
\end{equation*}
Letting $k^* = \lceil x^* \rceil$, we have that
\begin{equation*}
\begin{aligned}
    \sup_{m \geq 0}(m+1)^{\nu}\sum_{k = m}^{\infty}\delta_{k,2} \leq C_{\mathrm{FDM}}\sum_{k = 0}^{k^*}(k+1)^{\nu}\exp(-ck^{\gamma_1}) + C_{\mathrm{FDM}}\sum_{k = k^*+1}^{\infty}(k+1)^{-(\nu^{\prime}-\nu)}< \infty.
\end{aligned}
\end{equation*}
Thus, there exists such $c^{\prime}$.
\end{proof}

\subsection{Bernstein-type inequality for nonstationary processes}
\begin{theorem}\label{thm:bernstein_exp_subExp_nonlinear-2}
Let $\{X_i\}_{i \in \mathbb{Z}}$ be an $\mathbb{R}$-valued possibly nonstationary process of the form \eqref{eq:nonstationary-2}. Assume that $\mathbb{E}[X_i] = 0$ for any $i \in \mathbb{Z}$ and that there exist absolute constants $\gamma_1, \gamma_2, c, C_{\mathrm{FDM}} > 0$ such that
\begin{equation}\label{eq:dep_exponential_gamma1_2}
    \sup_{m \geq 0}\exp(cm^{\gamma_1})\Delta_{m,2} \leq C_{\mathrm{FDM}},
\end{equation}
for any $x > 0$ \begin{equation}\label{eq:moment_psi_gamma2_2}
    \sup_{i \in \mathbb{Z}}\mathbb{P}(|X_i| > x) \leq \exp(1 - x^{\gamma_2}),
\end{equation}
and $1/\gamma = 1/\gamma_1 + 1/\gamma_2 > 1$.
Then, we have for any $n \geq 3$ and $x > 0$,
\begin{align*}
    \mathbb{P}\Big( \Big|\sum_{i = 1}^nX_i\Big| \geq x \Big) \leq& n\exp\Big(-\frac{x^{\gamma}}{C_1}\Big) + \exp\Big(-\frac{x^2}{C_2(1+nC_{\mathrm{LRV}})}\Big)\\
    &+ \exp\Big(-\frac{x^2}{C_3n}\exp\Big(\frac{x^{\gamma(1-\gamma)}}{C_4(\log x)^{\gamma}}\Big)\Big),
\end{align*}
where $C_1, C_2, C_3, C_4 > 0$ are absolute constants and $C_{\mathrm{LRV}}$ is defined in \eqref{eq:def_LRV_UB}. 
\end{theorem}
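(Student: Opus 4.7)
The plan is to follow the truncation-and-coupling template that has become standard for Bernstein-type inequalities under functional dependence (in the spirit of Merlev\`ede--Peligrad--Rio for mixing sequences and Liu--Xiao--Wu / Zhang--Wu for functional dependence). The three terms in the bound correspond to three regimes: the first term captures the event that some individual $X_i$ is atypically large, the second is the Gaussian regime governed by the long-run variance $C_{\mathrm{LRV}}$ from Lemma~\ref{lemma:lrv_univar}, and the third is an intermediate (moderate-deviation) regime where the coupling cost becomes visible.

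\textbf{Step 1: Truncation.} Fix a threshold $M=M(x,n)$ to be chosen later and write $X_i=\bar X_i+R_i$, with $\bar X_i=X_i\mathbb{1}\{|X_i|\le M\}-\mathbb{E}[X_i\mathbb{1}\{|X_i|\le M\}]$ and $R_i=X_i-\bar X_i$. By construction $\bar X_i$ is mean zero and bounded by $2M$, and the coupling in the representation \eqref{eq:nonstationary-2} descends to $\bar X_i$, so $\bar X_i$ satisfies the same functional-dependence bound as $X_i$ (up to constants) in view of the $L^2$ contraction of conditional expectations. Using \eqref{eq:moment_psi_gamma2_2} and a union bound, $\mathbb{P}(\max_i|X_i|>M)\le n\exp(1-M^{\gamma_2})$, and the bias correction is controlled similarly. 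Choosing $M\asymp x^{\gamma/\gamma_2}$ (up to logarithmic factors) absorbs this contribution into the first summand $n\exp(-x^{\gamma}/C_1)$.

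\textbf{Step 2: $m$-dependent approximation.} For a second parameter $m$ to be chosen, approximate $\bar X_i$ by the $m$-dependent surrogate $\tilde X_i = \mathbb{E}[\bar X_i\mid \epsilon_{i-m},\ldots,\epsilon_i]$ (recentred). Then $\tilde X_i$ is $\sigma(\epsilon_{i-m},\ldots,\epsilon_i)$-measurable and the projection approach underlying Lemma~\ref{lemma:lrv_univar} gives $\|\bar X_i-\tilde X_i\|_2\le \delta_{m,2}$. A martingale-difference decomposition combined with Burkholder's inequality (Lemma~\ref{lemma:burkholder}) yields $\bigl\|\sum_{i=1}^n(\bar X_i-\tilde X_i)\bigr\|_2\lesssim n^{1/2}\Delta_{m,2}$, which by \eqref{eq:dep_exponential_gamma1_2} is $\lesssim n^{1/2}\exp(-cm^{\gamma_1})$. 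Choosing $m\asymp(\log x)^{1/\gamma_1}$ makes the sum $\sum_i(\bar X_i-\tilde X_i)$ smaller than $x/3$ with probability at least $1-n\exp(-x^{\gamma}/C_1)$, absorbing this error into the first term as well.

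\textbf{Step 3: Blocking and Bernstein for the $m$-dependent part.} Partition $\{1,\ldots,n\}$ into alternating big and small blocks of lengths $k$ and $m$, respectively, and let $T_j=\sum_{i\in B_j}\tilde X_i$ denote the sum over the $j$-th big block. Because $\tilde X_i$ is $m$-dependent and the spacers have length $m$, the odd-indexed $T_j$ are mutually independent, and likewise for the even-indexed ones. Each $T_j$ is bounded by $2Mk$ and, by Lemma~\ref{lemma:lrv_univar} applied on a single block, has variance at most $kC_{\mathrm{LRV}}$. Applying the classical Bennett--Bernstein bound on each independent family and invoking the tensorisation lemma (Lemma~\ref{lemma:aggregate_log-laplace}) gives
\begin{equation*}
\mathbb{P}\Bigl(\Bigl|\sum_j T_j\Bigr|\ge x/3\Bigr)\le 2\exp\Bigl(-\frac{(x/3)^2/2}{n\,C_{\mathrm{LRV}}+Mkx/3}\Bigr).
\end{equation*}
The residual dependence between blocks at the level of the Laplace transform is handled by Lemma~\ref{lemma:diff_laplace_joint_marginal}, whose cost $a\exp(anM)\cdot n\Delta_{1,2}$ is exponentially small in $m$ provided $a=t$ satisfies $tnM\lesssim 1$. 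The Gaussian-regime term $\exp(-x^2/[C_2(1+nC_{\mathrm{LRV}})])$ corresponds to the regime where $nC_{\mathrm{LRV}}$ dominates the denominator, and the moderate-deviation term arises from balancing $Mkx$ against $nC_{\mathrm{LRV}}$ while optimising $k$ (subject to $k\gtrsim m$); the additional $\exp\bigl(x^{\gamma(1-\gamma)}/[C_4(\log x)^{\gamma}]\bigr)$ factor tracks the improvement from letting $k$ grow with $x$.

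\textbf{Step 4: Collection and optimisation.} Splitting $\{|\sum_i X_i|\ge x\}$ into the events $\{|\sum_i R_i|\ge x/3\}$, $\{|\sum_i(\bar X_i-\tilde X_i)|\ge x/3\}$, and $\{|\sum_i \tilde X_i|\ge x/3\}$, and applying Steps~1--3 with the indicated choices of $M$ and $m$, yields the three claimed summands after simple constant adjustments.

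\textbf{Main obstacle.} The delicate step is Step~3: the block-Bernstein estimate must simultaneously reproduce the long-run variance $C_{\mathrm{LRV}}$ (not merely $\sup_i\operatorname{Var}(X_i)$) and track the precise rate in $x$ that produces the intermediate term. This forces a careful joint tuning of $M$, $m$, and $k$ that depends on the regime of $x$, together with a sharp control of the per-block Laplace transform via Lemma~\ref{lemma:diff_laplace_joint_marginal}. Because the cost in that lemma grows as $t\exp(tnM)$, the Laplace parameter $t$ must be kept of order $1/(nM)$ or smaller, and it is precisely this constraint that dictates the shape of the third term in the stated inequality.
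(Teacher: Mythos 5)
Your overall template (truncate, block, control the Laplace transform, split into regimes of $x$) is the right family of argument, but the specific mechanism you propose in Steps 2--3 does not deliver the stated bound, and the obstruction is exactly the one the paper's Cantor-set construction is built to overcome.

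The concrete gap is in Step 2. The only dependence information available is the $L^2$ quantity $\Delta_{m,2}$, so your bound $\bigl\|\sum_{i=1}^n(\bar X_i-\tilde X_i)\bigr\|_2\lesssim n^{1/2}\Delta_{m,2}\lesssim n^{1/2}\exp(-cm^{\gamma_1})$ can only be converted into a tail probability by Chebyshev, giving $\mathbb{P}\bigl(|\sum_i(\bar X_i-\tilde X_i)|\ge x/3\bigr)\lesssim n\exp(-2cm^{\gamma_1})x^{-2}$. With your choice $m\asymp(\log x)^{1/\gamma_1}$ this is polynomial in $x$, nowhere near the claimed $n\exp(-x^{\gamma}/C_1)$; there is no exponential-moment control on the approximation error that would let you do better. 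If you repair this by taking $m\asymp x^{\gamma/\gamma_1}$ (so that $\exp(-cm^{\gamma_1})=\exp(-cx^{\gamma})$), then the big blocks in Step 3 must have length $k\gtrsim m\asymp x^{\gamma/\gamma_1}$, each block sum is bounded by $2Mk\asymp x^{\gamma/\gamma_2}\cdot x^{\gamma/\gamma_1}=x$, and the Bennett--Bernstein denominator $nC_{\mathrm{LRV}}+Mkx$ is of order $nC_{\mathrm{LRV}}+x^2$ --- the bound becomes trivial precisely in the large-deviation regime it is supposed to cover. A single level of blocking cannot simultaneously make the coupling error exponentially small in $x^{\gamma}$ and keep the block sums small enough for a nontrivial exponential-moment bound. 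Relatedly, your closing observation that $t$ must be kept of order $1/(nM)$ is fatal rather than explanatory: with $t\asymp 1/(nM)$ the Chernoff exponent $-tx+\sigma^2t^2$ yields at best $\exp(-cx/(nM))$, so you would never recover the Gaussian term $\exp(-x^2/[C_2(1+nC_{\mathrm{LRV}})])$.

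The missing idea is the multi-scale, Merlev\`ede--Peligrad--Rio-type construction carried out in Propositions~\ref{prop:bernstein_bound_cantor} and \ref{prop:bernstein_bound_cantor_2}: within each top-level block of length $A\asymp u^{\gamma/\gamma_1}$ one builds a Cantor-like set by recursively halving blocks and inserting geometrically shrinking gaps, applies Lemma~\ref{lemma:diff_laplace_joint_marginal} at every scale (where the boundedness of the truncated variables lets the $L^2$ coupling error enter \emph{additively} into the log-Laplace transform via $|\log u-\log v|\le|u-v|$ for $u,v\ge1$), and, crucially, \emph{re-truncates at each scale} with $M_k=H^{-1}(\exp(-A^{\gamma_1}2^{-\gamma_1(l\wedge k/\gamma)}))$ so that the admissible Laplace parameter grows to $t\lesssim A^{-\gamma_1(1-\gamma)/\gamma}$ rather than $1/(AM)$. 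The discarded gap indices are recursively reassembled into further Cantor-like sets, and Lemma~\ref{lemma:aggregate_log-laplace} tensorises the resulting log-Laplace bounds. It is this recursion --- not a tuning of a single pair $(m,k)$ --- that produces both the Gaussian term with the long-run variance $C_{\mathrm{LRV}}$ and the intermediate term $\exp\bigl(-\tfrac{x^2}{C_3n}\exp(x^{\gamma(1-\gamma)}/[C_4(\log x)^{\gamma}])\bigr)$. Without it, your argument proves a strictly weaker inequality.
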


\begin{remark}
For completeness, we give the proof of Theorem \ref{thm:bernstein_exp_subExp_nonlinear-2}. The proof closely follows that of Theorem 1 in \cite{merlevede2011bernstein}. There are only two differences. (1) We replace Lemma 2 therein with Lemma \ref{lemma:diff_laplace_joint_marginal}, which is based on the functional dependence measure. (2) Lemma 5 in \cite{dedecker2004coupling} gives a coupling property of the $\tau$-mixing coefficient, i.e.~letting $\mathcal{M}$ be a sigma algebra and $X$ be an integrable
real-valued random variable, there exists a random variable $\tilde{X}$ independent of $\mathcal{M}$ and distributed as $X$ such that $\mathbb{E}|X - \tilde{X}|$ equals the $\tau$-mixing coefficient associated with $\mathcal{M}$. Instead, for random variables $\{X_t\}_{t \in \mathbb{Z}}$ of the form \eqref{eq:nonstationary-2}, we use coupling method \cite[see e.g.][]{berkes2009asymptotic} to directly construct $X_{s, \{k, -\infty\}}^{(l)}$ with $s > k$, which is independent of $\mathcal{F}_{k}$ and distributed as $X_s$.
\end{remark}

Before proving Theorem \ref{thm:bernstein_exp_subExp_nonlinear-2}, we introduce the truncation argument.

\textbf{Truncation.} For $\mathbb{R}$-valued random variables $\{X_i\}_{i \in \mathbb{Z}}$, define the truncated and centered version with parameter $M > 0$ as
\begin{equation*}
    \overline{X}_M(i) = \psi_M(X_i) - \mathbb{E}[\psi_M(X_i)],
\end{equation*}
where $\psi_M(x) = \sign(x)(|x| \wedge M)$ for any $x \in \mathbb{R}$. The choice of $M$ will be specified later. We list below some facts about $\overline{X}_M(i)$.
\begin{itemize}
    \item Due to the triangle inequality and the boundness, we have that 
    \begin{equation*}
        |\overline{X}_M(i)| \leq 2M.
    \end{equation*}
    \item Due to the triangle inequality, we have that 
    \begin{equation*}
        \mathbb{E}[|X_i - \overline{X}_M(i)|] \leq 2\mathbb{E}[|X_i - \psi_M(X_i)|].
    \end{equation*}
    \item $|X_i - \psi_M(X_i)| = (|X_i|-M)\mathbbm{1}\{|X_i| > M\} \leq |X_i|\mathbbm{1}\{|X_i| > M\}$.
\end{itemize}

\begin{proof}[Proof of Theorem \ref{thm:bernstein_exp_subExp_nonlinear-2}]
Without loss of generality, we let $C_{\mathrm{FDM}} = 1$ and the assumption \eqref{eq:dep_exponential_gamma1_2} becomes 
\begin{equation}\label{eq:dep_exponential_gamma1_2_CLRV1}
    \sup_{m \geq 0}\exp(cm^{\gamma_1})\Delta_{m,2} \leq 1,
\end{equation}
To see this, if $C_{\mathrm{FDM}} < 1$, \eqref{eq:dep_exponential_gamma1_2} implies \eqref{eq:dep_exponential_gamma1_2_CLRV1}.
If $C_{\mathrm{FDM}} > 1$, we can consider the normalized process $\{C_{\mathrm{FDM}}^{-1}X_i\}_{i \in \mathbb{Z}}$. In this case, by the assumption \eqref{eq:moment_psi_gamma2_2} we have for any $x > 0$ that \begin{equation*}
    \sup_{i \in \mathbb{Z}}\mathbb{P}(C_{\mathrm{FDM}}^{-1}|X_i| > x) \leq \exp(1 - C_{\mathrm{FDM}}x^{\gamma_2}) \leq \exp(1 - x^{\gamma_2}),
\end{equation*}

We define function $H(x) = \exp(1 - x^{\gamma_2})$ on $x > 0$.
Our goal is to obtain a Bernstein-type inequality for the partial sum $\sum_{i = 1}^nX_i$ for any $t > 0$ and any $n \geq 3$.
Using the truncation argument, we decompose the partial sum as
\begin{equation}\label{eq:decompose_partial_sum}
    \sum_{i = 1}^nX_i = \sum_{i = 1}^n(X_i - \overline{X}_M(i)) + \sum_{i = 1}^n\overline{X}_M(i) = \sum_{i = 1}^n(X_i - \overline{X}_M(i)) + \overline{S}_M.
\end{equation}

The first term on the right-hand side of \eqref{eq:decompose_partial_sum} can be handled using tail probability of the marginal distributions. To be specific, for any $x > 0$, it satisfies that
\begin{align*}
    &\mathbb{P}\Big( \Big|\sum_{i = 1}^n(X_i - \overline{X}_M(i))\Big| > x \Big) \leq \frac{1}{x}\sum_{i = 1}^n\mathbb{E}\big[\big|X_i - \overline{X}_M(i)\big|\big] \leq \frac{2}{x}\sum_{i = 1}^n\mathbb{E}[|X_i - \psi_M(X_i)|] \\
    \leq & \frac{2}{x}\sum_{i = 1}^n\mathbb{E}[|X_i|\mathbbm{1}\{|X_i| \geq M\}]= \frac{2}{x}\sum_{i = 1}^n\int_M^{\infty}\mathbb{P}(|X_i| \geq t)dt \leq \frac{2n}{x}\int_M^{\infty}H(t)dt,
\end{align*}
where the first inequality follows from the triangle and Markov's inequalities, the second, the third and the fourth inequalities follow from the definitions of $\overline{X}_M(i)$, $\psi_M(X_i)$ and $H(t)$ respectively.
Note that $\log(H(t)) = 1-t^{\gamma_2}$. It follows from that the function $t \mapsto t^2H(t)$ is nonincreasing for $t \geq (2/\gamma_2)^{1/\gamma_2}$. Hence, assuming
\begin{equation}\label{eq:M_requirement_1}
    M \geq (2/\gamma_2)^{1/\gamma_2},
\end{equation}
we have that
\begin{equation*}
    \int_M^{\infty}H(t)dt \leq M^2H(M)\int_M^{\infty}\frac{dt}{t^2} = MH(M).
\end{equation*}
Therefore, with any truncation parameter $M \geq (2/\gamma_2)^{1/\gamma_2}$, we have for any $x > 0$ that
\begin{equation}\label{eq:tail_truncation}
\begin{aligned}
    &\mathbb{P}\Big( \Big|\sum_{i = 1}^n(X_i - \overline{X}_M(i))\Big| > x \Big) \leq 2nx^{-1}MH(M).
\end{aligned}
\end{equation}

Bounding the tail probability of $\overline{S}_M$ is the key for obtaining the Bernstein-type inequality of $\sum_{i = 1}^nX_i$. 
Letting $u > 0$, we consider the following three cases.

\textbf{Case 1.} Suppose that $u \geq n^{\gamma_1/\gamma}$. Letting $M = u/n$, we have that
\begin{equation*}
    \big|\overline{S}_M\big| \leq 2nM = 2u.
\end{equation*}
We require $M = u/n \geq (2/\gamma_2)^{1/\gamma_2}$. Since $u/n \geq u^{1-\gamma/\gamma_1} = u^{\gamma/\gamma_2}$, this requirement holds if $u \geq (2/\gamma_2)^{1/\gamma}$.
We have by \eqref{eq:decompose_partial_sum} and \eqref{eq:tail_truncation} that for any $u \geq (n^{\gamma_1} \vee 2/\gamma_2)^{1/\gamma}$ that
\[    
    \mathbb{P}\Big( \Big|\sum_{i = 1}^nX_i\Big| > 3u \Big) \leq \mathbb{P}\Big( \Big|\sum_{i = 1}^n(X_i - \overline{X}_M(i))\Big| > u \Big) \leq 2H(M) =  2\exp(1-(u/n)^{\gamma_2}) \leq 2e\exp(-u^{\gamma}),
\]
where the last inequality follows from that $(u/n)^{\gamma_2} \geq u^{\gamma}$. While, if $n^{\gamma_1} < 2/\gamma_2$, for $n^{\gamma_1/\gamma} \leq u < (2/\gamma_2)^{1/\gamma}$, it holds trivially that
\begin{equation*}
    \mathbb{P}\Big( \Big|\sum_{i = 1}^nX_i\Big| > 3u \Big) \leq 1 \leq e\exp(-u^{\gamma}/C),
\end{equation*}
where $C$ is some positive constant such that $C \geq 2/\gamma_2$. Therefore, we have for any $u \geq n^{\gamma_1/\gamma}$ that
\begin{equation*}
    \mathbb{P}\Big( \Big|\sum_{i = 1}^nX_i\Big| > 3u \Big) \leq \exp(-u^{\gamma}/C_1),
\end{equation*}
for some sufficient large $C_1 > 0$.

\textbf{Case 2.} Suppose that $2^{(1-\gamma_1)/\gamma_2}(4\zeta)^{\gamma_1/\gamma} \leq u < n^{\gamma_1/\gamma}$, where $\zeta = \mu \vee (2/\gamma_2)^{1/\gamma_1}$ and $\mu = (2(2 \vee 4c_0^{-1})/(1-\gamma))^{2/(1-\gamma)}$. Let $r \in [1, n/2]$ be some real value to be chosen later. Let
\begin{equation*}
    A = \Big\lfloor \frac{n}{2r} \Big\rfloor, \;\; k = \Big\lfloor \frac{n}{2A} \Big\rfloor \;\; \text{and} \;\; M = H^{-1}(\exp(-A^{\gamma_1})).
\end{equation*}
Denoted $I_j = \{1+(j-1)A, \dots, jA\}$ for $j = 1, \dots, 2k$, and also $I_{2k+1} = \{1+2kA, \dots, n\}$. Denote $\overline{S}_M(I_j) = \sum_{i \in I_{j}}\overline{X}_M(i)$ for any $j = 1, \dots, 2k+1$. We have that
\begin{equation}\label{eq:decomp_odd_even}
\begin{aligned}
    \sum_{i = 1}^nX_i =& \sum_{i = 1}^n(X_i - \overline{X}_M(i)) + \sum_{j = 1}^k\overline{S}_M(I_{2j-1}) + \sum_{j = 1}^k\overline{S}_M(I_{2j}) + \overline{S}_M( I_{2k+1}).
\end{aligned}
\end{equation}
We now use the coupling method introduced by \cite{berkes2009asymptotic} to construct independent block sums for odd blocks and even blocks respectively. To be specific, for any $j = 3, 4, \dots, 2k$, let $\{\epsilon_t^{(j)}\}_{t \in \mathbb{Z}}$ be an i.i.d. sequence such that $\epsilon_t^{(j)}$ is an independent copy of $\epsilon_t$ and $\epsilon_t, \epsilon_t^{(3)}, \epsilon_t^{(4)}, \dots, \epsilon_t^{(2k)}$ are mutually independent. This is always possible by enlarging the original probability space. For $s > h$, let 
\begin{equation}\label{eq:coupling_block}
    X_{s, \{h, -\infty\}}^{(j)} = g_s(\mathcal{F}_{s, \{h, -\infty\}}^{(j)}),
\end{equation}
where $\mathcal{F}_{s, \{h, -\infty\}}^{(l)} = \sigma(\dots, \epsilon_{h-1}^{(l)}, \epsilon_h^{(l)}, \epsilon_{h+1}, \dots, \epsilon_s)$. 

We only consider the odd blocks, since the same argument applies for the even blocks. Using the notation in \eqref{eq:coupling_block}, for any $j = 2, \dots, k$ and $i \in I_{2j-1}$, let
\begin{equation*}
    \overline{X}_{M}^{(2j-1)}(i,\{(2j-3)A, -\infty\}) = \psi_{M}(X^{(2j-1)}_{i,\{(2j-3)A, -\infty\}}) - \mathbb{E}[\psi_M(X^{(2j-1)}_{i,\{(2j-3)A, -\infty\}})],
\end{equation*}
and denote
\begin{equation*}
    \overline{S}_M^*(I_{2j-1}) = \sum_{i = I_{2j-1}}\overline{X}_{M}^{(2j-1)}(i,\{(2j-3)A, -\infty\})
\end{equation*}
By the above construction, we have $\big\{\overline{S}_M(I_1), \overline{S}_M^*(I_{3}), \dots, \overline{S}_M^*(I_{2k-1})\big\}$ are mutually independent, and for any $j = 2,\dots, k$, $\overline{S}_M^*(I_{2j-1})$ and $\overline{S}_M(I_{2j-1})$ have the same distribution. Moreover, we have for any $j = 2,\dots, k$ that
\begin{align*}
    &\mathbb{E}\big|\overline{S}_M(I_{2j-1}) - \overline{S}^*_M(I_{2j-1})\big|\leq \sum_{i \in I_{2j-1}}\big| \overline{X}_M^{(2j-1)}(i) - \overline{X}_{M}^{(2j-1)}(i,\{(2j-3)A, -\infty\}) \big|\\
    \leq& \sum_{i \in I_{2j-1}}\big\Vert \overline{X}_M^{(2j-1)}(i) - \overline{X}_{M}^{(2j-1)}(i,\{(2j-3)A, -\infty\}) \big\Vert_2 \leq 2\sum_{i \in I_{2j-1}}\big\Vert X_i - X^{(2j-1)}_{i,\{(2j-3)A, -\infty\}} \big\Vert_2 \\
    \leq& 2\sum_{i \in I_{2j-1}}\sum_{s = i-(2j-3)A}^{\infty}\delta_{s,2} \leq 2A\sum_{s = A+1}^{\infty}\delta_{s,2} \leq 2A\exp(-cA^{\gamma_1}).
\end{align*}
where the first inequality follows from the triangle inequality, the second inequality follows from H\"older's inequality, the third inequality follows from the second fact about the truncaton, the fourth inequality follows from the definition of $\delta_{s,2}$, and by writing the difference as telescoping sums, and the last two inequalities follow from \eqref{eq:dep_exponential_gamma1_2}. The same argument can be applied to the even blocks, with block sums denoted as
\begin{equation*}
    \overline{S}_M^*(I_{2j}) = \sum_{i = I_{2j}}\overline{X}_{M}^{(2j)}(i,\{(2j-2)A, -\infty\}) \;\; \text{for} \;\; j = 2, \dots, k.
\end{equation*}
Denote also $\overline{S}^*_M(I_{1}) = \overline{S}_M(I_{1})$ and $\overline{S}^*_M(I_{2}) = \overline{S}_M(I_{2})$. Then, \eqref{eq:decomp_odd_even} can be further decomposed as
\begin{equation}\label{eq:decomp_odd_even_coup}
\begin{aligned}
    \sum_{i = 1}^nX_i =& \sum_{i = 1}^n \{X_i - \overline{X}_M(i)\} + \sum_{j = 1}^k\overline{S}^*_M(I_{2j-1}) + \sum_{j = 1}^k\overline{S}^*_M(I_{2j}) + \overline{S}_M( I_{2k+1})\\
    &+ \sum_{j = 2}^k\big\{\overline{S}_M(I_{2j-1}) - \overline{S}^*_M(I_{2j-1})\big\} + \sum_{j = 2}^k\big\{\overline{S}_M(I_{2j}) - \overline{S}^*_M(I_{2j})\big\}.
\end{aligned}
\end{equation}
Since $\big| \overline{S}_M( I_{2k+1}) \big| \leq 2AM$, for any $u \geq 2AM$, it holds that
\begin{equation}
\begin{aligned}
    \mathbb{P}\Big( \Big|\sum_{i = 1}^nX_i\Big| > 6u \Big)
    \leq& \mathbb{P}\Big( \Big|\sum_{i = 1}^n(X_i - \overline{X}_M(i))\Big| > u \Big) + \mathbb{P}\Big( \Big|\sum_{j = 1}^k\overline{S}^*_M(I_{2j-1})\Big| > u \Big)\\
    &+ \mathbb{P}\Big( \Big|\sum_{j = 1}^k\overline{S}^*_M(I_{2j})\Big| > u \Big) + 4(k-1)Au^{-1}\exp(-cA^{\gamma_1})\\
    =& I + II + III + 4(k-1)Au^{-1}\exp(-cA^{\gamma_1}),
\end{aligned}
\end{equation}
where the first inequality follows from Markov's inequality
We bound $I$ using \eqref{eq:tail_truncation}. Since $H^{-1}(y) = (\log(e/y))^{1/\gamma_2}$, we have $M = (1+A^{\gamma_1})^{1/\gamma_2} \geq (A^{\gamma_1})^{1/\gamma_2}$. For any $A \geq (2/\gamma_2)^{1/\gamma_1}$, which satisfies the requirement \eqref{eq:M_requirement_1}, we have for any $u > 0$ that
\begin{equation*}
    I \leq 2nu^{-1}M\exp(-A^{\gamma_1}).
\end{equation*}
Since $\big\{\overline{S}^*_M(I_{2j-1})\big\}_{j = 1}^k$ are mutually independent, by Chebyshev's inequality, we have for any $t > 0$ that
\begin{equation*}
    II \leq 2e^{-ut}\prod_{i = 1}^k\mathbb{E}\Big[\exp\big(t\overline{S}_M(I_{2j-1})\big)\Big] .
\end{equation*}
Applying Proposition \ref{prop:bernstein_bound_cantor_2} for each Laplace transform of block sum, if $A \geq \mu$, we have for any $t < \nu A^{-\gamma_1(1-\gamma)/\gamma}$ that
\begin{equation*}
    \sum_{i = 1}^k\log\mathbb{E}\Big[\exp\big(t\overline{S}_M(I_{2j-1})\big)\Big] \leq \frac{kAV(A)t^2}{1-\nu^{-1}A^{\gamma_1(1-\gamma)/\gamma}t}.
\end{equation*}
Then, if $A \geq \zeta$, we have for any $u \geq 0$ that
\begin{equation*}
\begin{aligned}
    II \leq& 2\exp\Big( -ut + \frac{kAV(A)t^2}{1-\nu^{-1}A^{\gamma_1(1-\gamma)/\gamma}t} \Big) = 2\exp\Big( -\frac{u^2}{4kAV(A) + 2\nu^{-1}A^{\gamma_1(1-\gamma)/\gamma}u} \Big),
\end{aligned}
\end{equation*}
where the equality follows by letting $t = u\big(2kAV(A) + \nu^{-1}A^{\gamma_1(1-\gamma)/\gamma}u\big)^{-1}$. Using the same argument, we obtain the same bound for $III$.

Therefore, if $A \geq \zeta$, we have for any $u \geq 2AM$ that
\begin{equation}\label{eq:tail_case2}
\begin{aligned}
    \mathbb{P}\Big( \Big|\sum_{i = 1}^nX_i\Big| > 6u \Big) =& 4(k-1)Au^{-1}\exp(-cA^{\gamma_1}) + 2nu^{-1}M\exp(-A^{\gamma_1})\\
    &+ 4\exp\Big( -\frac{u^2}{4kAV(A) + 2\nu^{-1}A^{\gamma_1(1-\gamma)/\gamma}u} \Big).
\end{aligned}
\end{equation}
We now choose $r = 2^{(1-\gamma_1)/(\gamma_1+\gamma_2)}nu^{-\gamma/\gamma_1}$. By the definition of $A$, we have that 
\begin{equation*}
    2^{(\gamma_1-1)/(\gamma_1+\gamma_2)}u^{\gamma/\gamma_1} - 2 \leq 2A \leq 2^{(\gamma_1-1)/(\gamma_1+\gamma_2)}u^{\gamma/\gamma_1}.
\end{equation*}
Since $M = (1+A^{\gamma_1})^{1/\gamma_2} \leq 2^{1/\gamma_2}A^{\gamma_1/\gamma_2}$, the choice of $r$ implies that 
\begin{equation*}
    2AM \leq 2^{1+1/\gamma_2}A^{\gamma_1/\gamma} \leq u.
\end{equation*}
Moreover, by the definition of $\zeta$ and $\mu$, we have that $\zeta \geq \mu \geq 16$ and $u^{\gamma/\gamma_1} \geq 4\zeta \geq 64$. Since $u \geq 2^{(1-\gamma_1)/\gamma_2}(4\zeta)^{\gamma_1/\gamma}$, it holds that 
\begin{equation*}
    A \geq 2^{(\gamma_1-1)/(\gamma_1+\gamma_2)}u^{\gamma/\gamma_1}/2 - 1 \geq (4\zeta)/2-1 \geq (4\zeta)/4 \geq \zeta.
\end{equation*}
Therefore, we have that \eqref{eq:tail_case2} holds for any $2^{(1-\gamma_1)/\gamma_2}(4\zeta)^{\gamma_1/\gamma} \leq u < n^{\gamma_1/\gamma}$. We list the following facts.
\begin{itemize}
    \item By the definition of $k$ and the fact that $4\zeta \geq 2^6$, we have that 
    \begin{equation*}
        4(k-1)Au^{-1} \leq 2nu^{-1} \leq 2^{1+(\gamma_1-1)/\gamma_2}n(4\zeta)^{-\gamma_1/\gamma} \leq 2^{-(5\gamma_1+1)/\gamma_2-5}n.
    \end{equation*}
    \item Since $A \geq 2\zeta - 1 \geq 31$, we have that $2nu^{-1}M \leq n/A \leq n/31$.
    \item Since $2A \leq 2^{(\gamma_1-1)/(\gamma_1+\gamma_2)}u^{\gamma/\gamma_1}$, $0 < \gamma < 1$ and $\gamma_2 > 0$, we have that 
    \begin{equation*}
        A^{\gamma_1(1-\gamma)/\gamma}u \leq 2^{-(1+\gamma_2)(1-\gamma)\gamma_2}u^{2-\gamma} \leq u^{2-\gamma}.
    \end{equation*}
\end{itemize}
We have that
\begin{equation*}
\begin{aligned}
    \eqref{eq:tail_case2} \leq& \frac{n}{2^{5+(1+5\gamma_1)/\gamma_2}}\exp\Big(-c\frac{u^{\gamma}}{2^{\gamma_1}}\Big) + \frac{n}{31}\exp\Big(-\frac{u^{\gamma}}{2^{\gamma_1}}\Big)+ 4\exp\Big( -\frac{u^2}{2nV(A) + 2\nu^{-1}u^{2-\gamma}} \Big).
\end{aligned}
\end{equation*}
Recall that $V(A) = (\nu_1C_{\mathrm{LRV}} + \nu_2\exp\big(-\nu_3A^{\gamma_1(1-\gamma)}(\log A)^{-\gamma}\big)$, where $\nu_1, \nu_2, \nu_3 > 0$ are constants depending only on $c$, $\gamma_1$ and $\gamma_2$. Therefore, we have for any $2^{(1-\gamma_1)/\gamma_2}(4\zeta)^{\gamma_1/\gamma} \leq u < n^{\gamma_1/\gamma}$ that
\begin{equation*}
\begin{aligned}
    \mathbb{P}\Big( \Big|\sum_{i = 1}^nX_i\Big| > 6u \Big) =& n\exp\Big(-\frac{u^{\gamma}}{C_1}\Big) + \exp\Big( -\frac{u^2}{C_2(1+nC_{\mathrm{LRV}})} \Big)\\
    &+ \exp\Big( -\frac{u^2}{C_3n}\exp\Big(\frac{x^{\gamma(1-\gamma)}}{C_4(\log A)^{\gamma}} \Big) \Big),
\end{aligned}
\end{equation*}
where $C_1, C_2, C_3, C_4 > 0$ are absolute constants depending only on $\gamma_1$, $\gamma_2$ and $c$.

\textbf{Case 3.} Suppose that $u < 6\times2^{(1-\gamma_1)/\gamma_2}(4\zeta)^{\gamma_1/\gamma}$. It holds trivially that
\begin{equation*}
    \mathbb{P}\Big( \Big|\sum_{i = 1}^nX_i\Big| > u \Big) \leq 1 \leq e\exp\Big(-\frac{u^{\gamma}}{6^{\gamma}2^{(1-\gamma_1)\gamma/\gamma_2}(4\zeta)^{\gamma_1}}\Big),
\end{equation*}
which is smaller than $n\exp(-u^{\gamma}/C_1)$, if $n \geq 3$ and $C_1 \geq 6^{\gamma}2^{(1-\gamma_1)\gamma/\gamma_2}(4\zeta)^{\gamma_1}$. Combining the three cases completes the proof.
\end{proof}

Theorem \ref{thm:bernstein_exp_subExp_nonlinear-2} relies on the upper bound on log-Laplace transform of a block sum of bounded random variables, where the block size $A$ is relatively smaller than the sample size $n$. In the following, we describe the idea to obtain such bound. The results are summarized in Propositions \ref{prop:bernstein_bound_cantor} and \ref{prop:bernstein_bound_cantor_2}.

Let $A$ be a positive integer and $\{X_i\}_{i = 1}^A$ be a sequence of random variables of the form \eqref{eq:nonstationary-2} and with mean zero.
Our goal is to upper bound the log-Laplace transform of partial sums i.e.~$\log\mathbb{E}\big[\exp\big(t\sum_{i = 1}^AX_i\big)\big]$, for any small $t > 0$. To this end, we introduce the construction of the Cantor-like set $K_A^{(l)} = \bigcup_{k=1}^{2^l} I_{l,k} \subset \{1, \dots, A\}$, where $\{I_{l,k}\}_{k = 1}^{2^l}$ are integer intervals with same cardinality $n_l$, and all neighboring integer intervals are separated by some integer intervals. Heuristically, this construction reduces the dependence by creating gaps.

\textbf{Construction of $K_A^{(l)}$}.
The construction of $K_A^{(l)}$ of $\{1, \dots, A\}$ involves $l$ recursive steps, where $l$ satisfies $A2^{-l} \geq (2 \vee 4c_0^{-1})$ is a positive integer. Let $c_0 \in (0, (2^{1/\gamma-1}-1)(2(2^{1/\gamma}-1))^{-1}]$ be some constant. We will see that such requirements of $l$ and $c_0$ justify the following construction of $K_A^{(l)}$. We do not need the specific values of $l$ and $c_0$ until we give them respectively in \eqref{eq:select_l} and \eqref{eq:requirement_c0}.
\begin{enumerate}
    \item[Step C$1$.] Let $n_0 = A$. Define integer $d_0$ as
    \begin{equation*}
        d_0 = \begin{cases}
            \max\{m \in 2\mathbb{N}, m \leq c_0n_0\} \;\; &\text{if $n_0$ is even},\\
            \max\{m \in 2\mathbb{N}+1, m \leq c_0n_0\} \;\; &\text{if $n_0$ is odd}.
        \end{cases}
    \end{equation*}
    Note that $n_0 - d_0$ is even. Define $n_1 = (n_0 - d_0)/2$ and two integer intervals of cardinality $n_1$ separated by a gap of $d_0$ integers as
    \begin{equation*}
        I_{1,1} = \{1, \dots, n_1\} \;\; \text{and} \;\; I_{1,2} = \{n_1+d_0+1, \dots, n_0\}.
    \end{equation*}
    We discard the gap of $d_0$ integers for the next step.
    \item[Step C$2$.] Define integer $d_1$ as
    \begin{equation*}
        d_1 = \begin{cases}
            \max\{m \in 2\mathbb{N}, m \leq c_0n_02^{-(l \wedge 1/\gamma)}\} \;\; &\text{if $n_1$ is even},\\
            \max\{m \in 2\mathbb{N}+1, m \leq c_0n_02^{-(l \wedge 1/\gamma)}\} \;\; &\text{if $n_1$ is odd}.
        \end{cases}
    \end{equation*}
    Note that $n_1 - d_1$ is even, and define $n_2 = (n_1 - d_1)/2$. Based on $I_{1,1}$, define two integer intervals of cardinality $n_2$ separated by a gap of $d_1$ integers as
    \begin{equation*}
        I_{2,1} = \{1, \dots, n_2\} \;\; \text{and} \;\; I_{2,2} = \{n_2+d_1+1, \dots, n_1\}.
    \end{equation*}
    Based on $I_{1,2}$, define similarly two integer intervals of cardinality $n_2$ separated by a gap of $d_1$ integers as
    \begin{equation*}
        I_{2,3} = (n_1 + d_0) + I_{2,1} \;\; \text{and} \;\; I_{2,4} = (n_1 + d_0) + I_{2,2}.
    \end{equation*}
    We discard the two gaps of $d_1$ integers for the next step.
    \item[Step C$k$.] After Step C$(k-1)$ with $2 \leq k \leq l$, we obtain integer intervals $\{I_{k-1, j}\}_{j = 1}^{2^{k-1}}$. Denote for $1 \leq j \leq 2^{k-1}$, $I_{k-1, j} = \{a_{k-1,j}, \dots, b_{k-1,j}\}$.
    Define integer $d_{k-1}$ as
    \begin{equation}\label{eq:gap_size_j-1}
        d_{k-1} = \begin{cases}
            \max\{m \in 2\mathbb{N}, m \leq c_0n_02^{-(l \wedge (k-1)/\gamma)}\} \;\; &\text{if $n_{k-1}$ is even},\\
            \max\{m \in 2\mathbb{N}+1, m \leq c_0n_02^{-(l \wedge (k-1)/\gamma)}\} \;\; &\text{if $n_{k-1}$ is odd}.
        \end{cases}
    \end{equation}
    Note that $n_{k-1} - d_{k-1}$ is even, and define $n_k = (n_{k-1} - d_{k-1})/2$. For each $j = 1, \dots, 2^{k-1}$, based on $I_{k-1,j}$, we define two integer intervals of cardinality $n_k$ separated by a gap of $d_{k-1}$ integers as
    \begin{equation*}
        I_{k,2j-1} = \{a_{k,2j-1}, \dots, b_{k,2j-1}\} \;\; \text{and} \;\; I_{k,2j} = \{a_{k,2j}, \dots, b_{k,2j}\},
    \end{equation*}
    where $a_{k,2j-1} = a_{k-1,j}$, $b_{k,2j-1} = a_{k,2j-1} + n_k - 1$, $a_{k,2j} = b_{k,2j}-n_k+1$ and $b_{k,2j} = b_{k-1,j}$. Moreover, we have $a_{k,2j} - b_{k,2j-1} - 1 = d_{k-1}$.
    We discard the $2^{k-1}$ gaps of $d_{k-1}$ integers.
\end{enumerate}
Finally, after Step C$l$, we obtain $\{I_{l,1}, \dots, I_{l,2^l}\}$ and $K_A^{(l)} = \bigcup_{i = 1}^{2^l}I_{l,i}$. Moreover, for any $k = 0, 1, \dots, l$ and $j = 1, \dots, 2^k$, we also define
\begin{equation}\label{eq:cantor_decomp_1}
    K_{A,k,j}^{(l)} = \bigcup_{i = (j-1)2^{l-k}+1}^{j2^{l-k}}I_{l,i},
\end{equation}
and we have
\begin{equation}\label{eq:cantor_decomp_2}
    K_A^{(l)} = \bigcup_{j=1}^{2^k}K_{A,k,j}^{(l)}.
\end{equation}

Moreover, it follows that
\begin{equation*}
\begin{aligned}
    &A - \card(K_A^{(l)}) = \sum_{j = 0}^{l-1}2^jd_j \leq Ac_0\Big(\sum_{j = 0}^{\lfloor \gamma l \rfloor}2^{j(1-1/\gamma)} + \sum_{j = \lfloor \gamma l \rfloor + 1}^{l-1}2^{j-l}\Big)\\
    \leq& Ac_0\Big(\frac{2 - 2^{1-1/\gamma} - 2^{\lfloor \gamma l\rfloor - l}}{1-2^{1-1/\gamma}}\Big) \leq Ac_0\Big(\frac{2 - 2^{1-1/\gamma}}{1-2^{1-1/\gamma}}\Big).
\end{aligned}
\end{equation*}
To ensure $\card(K_A^{(l)}) \geq A/2$, we choose some $c_0$ such that
\begin{equation}\label{eq:c0}
    0 < c_0 \leq \frac{2^{1/\gamma-1}-1}{2(2^{1/\gamma}-1)}. 
\end{equation}

Also, by definition, $n_k = 2^{-k}\big(n_0 - \lfloor c_0n_0 \rfloor - \sum_{i = 1}^{k-1}2^i\lfloor c_0n_02^{-(l \wedge i/\gamma)} \rfloor\big)$. For any $k$ such that $(k - 1)/\gamma \leq l$, we need $n_k \geq 1$ to ensure the construction of $K_A^{(l)}$. Since 
    \begin{equation}\label{eq:requirement_c0_nk}
    \begin{aligned}
        n_k \geq& 2^{-k}\Big(n_0 - c_0n_0 - c_0n_0\sum_{i = 1}^{k-1}2^{i(1-1/\gamma)}\Big)\\
        =& 2^{-k}\Big(n_0 - c_0n_0 - c_0n_0\frac{2^{1-1/\gamma}(1-2^{(k-1)(1-1/\gamma)})}{1-2^{1-1/\gamma}}\Big)\\
        \geq& 2^{-k}\Big(n_0 - c_0n_0 - c_0n_0\frac{2^{1-1/\gamma}}{1-2^{1-1/\gamma}}\Big) = 2^{-k}A\Big(1 - c_0\frac{1}{1-2^{1-1/\gamma}}\Big).
    \end{aligned}
    \end{equation}
    We choose 
    \begin{equation}\label{eq:requirement_c0}
    c_0 = \frac{2^{1/\gamma-1}-1}{2(2^{1/\gamma}-1)}.
    \end{equation}
    Plugging \eqref{eq:requirement_c0} into \eqref{eq:requirement_c0_nk}, we have for any $\gamma \in (0, 1)$ and any $k$ such that $(k - 1)/\gamma \leq l$ that
    \begin{equation*}
    \begin{aligned}
        n_k \geq& 4c_0^{-1}\Big(1 - c_0\frac{1}{1-2^{1-1/\gamma}}\Big) = 4\Big(c_0^{-1} - \frac{1}{1-2^{1-1/\gamma}}\Big)
        = 12 + \frac{8}{2^{1/\gamma}-2} \geq 12,
    \end{aligned}
    \end{equation*}
    where the first inequality follows from the definition of $l$, i.e.~$A2^{-l} \geq (2 \vee 4c_0^{-1})$ and $k \leq l$. The second inequality follows from the fact $2^{1/\gamma} > 2$ for any $\gamma \in (0,1)$.
    Moreover, since we require $l$ such that $A2^{-l} \geq (2 \vee 4c_0^{-1})$, we have that $n_l \geq A2^{-l-1} \geq 1$.

We list the following facts regarding $K_A^{(l)}$.
\begin{itemize}
    \item $\card(I_{l,j}) = n_l$ for any $j = 1, \dots, 2^l$, and $\card(K_A^{(l)}) = 2^ln_l$.
    \item $A/2 \leq \card(K_A^{(l)}) \leq A$.
\end{itemize}

Recall $c_0$ and $c$ given respectively in \eqref{eq:c0} and \eqref{eq:dep_exponential_gamma1_2}, we define the following positive constants depending only on $c$, $\gamma_1$ and $\gamma_2$.
\begin{equation}\label{eq:c1c2c3}
\begin{aligned}
    c_1 = \min\{c^{1/\gamma_1}c_0/4, 2^{-1/\gamma}\},\; c_2 = 2^{-(2+\gamma_1/\gamma+1/\gamma_2)}c_1^{\gamma_1},\; c_3 = 2^{-(1+1/\gamma_2)},
\end{aligned}
\end{equation}
and
\begin{equation}\label{eq:kappa}
    \kappa = \min\{c_2, c_3\}.
\end{equation}
Note that by definition, we have $c_1, c_2, c_3, \kappa \in (0,1)$.

The next proposition gives upper bounds on the log-Laplace transform for sum of random variables on a Cantor-like set.
\begin{proposition}\label{prop:bernstein_bound_cantor}
    Let $A$ be a positive integer and $\{X_i\}_{i = 1}^A$ be a sequence of random variables of the form \eqref{eq:nonstationary-2}, with mean zero. Assume that \eqref{eq:dep_exponential_gamma1_2} and \eqref{eq:moment_psi_gamma2_2} hold.
    Let $M = H^{-1}(\exp(-A^{\gamma_1}))$ and $l$ be a positive integer such that $A2^{-l} \geq (1 \vee 2c_0^{-1})$. Then, we have the following results.
    \begin{enumerate}
        \item[(i)] For any $t \leq 2^{-(1+1/\gamma_2)}A^{-\gamma_1/\gamma}$, it holds that
        \begin{equation*}
            \log\mathbb{E}\Big[\exp\Big(t\sum_{j \in K_{A}^{(l)}}\overline{X}_{M}(j)\Big)\Big] \leq C_{\mathrm{LRV}}t^2A,
        \end{equation*}
        where $C_{\mathrm{LRV}}$ is defined in \eqref{eq:def_LRV_UB}.
        \item[(ii)] For any $2^{-(1+1/\gamma_2)}A^{-\gamma_1/\gamma} \leq t \leq \kappa\big(A^{\gamma-1} \wedge (2^l/A)\big)^{\gamma_1/\gamma}$, it holds that
        \begin{equation*}
        \log\mathbb{E}\Big[\exp\Big(t\sum_{j \in K_{A}^{(l)}}\overline{X}_{M}(j)\Big)\Big] \leq (tAl + 6A^{\gamma})\exp(-(2^{-l}c_1A)^{\gamma_1}/2) + C_{\mathrm{LRV}}t^2A.
    \end{equation*}
    \end{enumerate}
\end{proposition}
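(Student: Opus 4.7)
The plan is to exploit the self-similar, gap-separated structure of the Cantor-like set $K_A^{(l)}$ via a recursive decoupling argument. For each level $k \in \{0, \ldots, l\}$ and index $j \in \{1, \ldots, 2^k\}$, set $S_{k,j} = \sum_{i \in K_{A,k,j}^{(l)}} \overline{X}_M(i)$, so that $S_{0,1}$ is the target sum. The decomposition \eqref{eq:cantor_decomp_1}--\eqref{eq:cantor_decomp_2} writes $S_{k,j} = S_{k+1,2j-1} + S_{k+1,2j}$, where the two summands live on sets separated by a gap of $d_k$ integers. The first step is to prove a gap-adapted variant of Lemma \ref{lemma:diff_laplace_joint_marginal}:
\[
\bigl|\mathbb{E}[\exp(t S_{k,j})] - \mathbb{E}[\exp(t S_{k+1,2j-1})]\,\mathbb{E}[\exp(t S_{k+1,2j})]\bigr| \leq t\, e^{2tM\,\mathrm{card}(K_{A,k,j}^{(l)})} \cdot \mathrm{card}(K_{A,k+1,2j}^{(l)}) \cdot \Delta_{d_k,2}.
\]
This follows from the same mean-value telescoping as in the proof of Lemma \ref{lemma:diff_laplace_joint_marginal}, but where the coupled variables $X_{i,\{\max K_{A,k+1,2j-1}^{(l)},-\infty\}}^{(\mathrm{new})}$ are used to produce an independent copy of the second block sum, yielding the bound in terms of $\Delta_{d_k,2}$ rather than $\Delta_{1,2}$.

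Iterating this inequality $l$ times factorizes
\[
\mathbb{E}[\exp(t S_{0,1})] \leq \prod_{m=1}^{2^l} \mathbb{E}\Big[\exp\Big(t \sum_{i \in I_{l,m}} \overline{X}_M(i)\Big)\Big] + \sum_{k=0}^{l-1} 2^k \cdot t\, e^{2tMA} \cdot \mathrm{card}(K_{A,k+1,2}^{(l)}) \cdot \Delta_{d_k,2}.
\]
Using \eqref{eq:dep_exponential_gamma1_2_CLRV1} and the gap schedule \eqref{eq:gap_size_j-1}, one has $\Delta_{d_k,2} \leq \exp(-c d_k^{\gamma_1})$ with $d_k \asymp c_0 A\, 2^{-(l \wedge k/\gamma)}$. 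The constants in \eqref{eq:c0}--\eqref{eq:kappa} are tuned precisely so that $\sup_{0 \leq k \leq l-1} 2^k \exp(-c d_k^{\gamma_1})$ is of order $\exp\bigl(-(2^{-l} c_1 A)^{\gamma_1}/2\bigr)$. Passing to $\log$ and using $\log(x+y) \leq \log x + y/x$ produces the additive error $(tAl + 6A^{\gamma}) \exp(-(2^{-l} c_1 A)^{\gamma_1}/2)$ of part (ii); under the stricter range $t \leq 2^{-(1+1/\gamma_2)} A^{-\gamma_1/\gamma}$ of part (i), the same term is absorbed into the main contribution.

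For the single-block factors, treat each block sum $S^{(m)} = \sum_{i \in I_{l,m}} \overline{X}_M(i)$ as a bounded centered random variable with $|S^{(m)}| \leq 2Mn_l$ and variance $\mathbb{E}[(S^{(m)})^2] \leq C_{\mathrm{LRV}}\, n_l$; the latter comes from applying Lemma \ref{lemma:lrv_univar} to the truncated process $\{\overline{X}_M(i)\}$, whose functional dependence measure is controlled by that of $\{X_i\}$. The one-variable Bernstein inequality
\[
\log \mathbb{E}[\exp(t S^{(m)})] \leq \frac{C_{\mathrm{LRV}}\, n_l\, t^2}{2(1 - 2 M n_l t / 3)},
\]
summed across the $2^l$ blocks and combined with $2^l n_l \leq A$, yields the main term $C_{\mathrm{LRV}}\, t^2\, A$ provided $M n_l t$ stays bounded; in the regime of part (i) this is immediate since $M \lesssim A^{\gamma_1/\gamma_2}$ and $t \leq 2^{-(1+1/\gamma_2)} A^{-\gamma_1/\gamma}$ together give $M n_l t \leq n_l/(2A) \leq 1/2$.

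The main obstacle is the quantitative bookkeeping across levels. One must show that under each of the two ranges of $t$ in (i) and (ii), the decoupling error at every level $k \in \{0, \ldots, l-1\}$ is dominated either by the Bernstein main term or by the level-independent error stated in the proposition, and that cross terms do not blow up through the $l$-fold recursion. The specific values of $c_0$, $c_1$, $c_2$, $c_3$, and $\kappa$ in \eqref{eq:c0}--\eqref{eq:kappa} are engineered exactly to balance these pieces, and their verification requires a careful comparison of the exponents $\gamma, \gamma_1, \gamma_2$ against $2^{-l}A$, $M$, and $n_l$ at every level.
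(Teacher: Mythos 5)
Your overall architecture matches the paper's: recursively split the Cantor-like set along \eqref{eq:cantor_decomp_1}--\eqref{eq:cantor_decomp_2}, decouple across each gap with a coupled-variable version of Lemma \ref{lemma:diff_laplace_joint_marginal} to pick up a factor $\Delta_{d_k,2}\leq\exp(-cd_k^{\gamma_1})$, accumulate the errors on the log scale using that Laplace transforms of centered variables are $\geq 1$, and finish with the one-block bound $\log\mathbb{E}[\exp(tS^{(m)})]\leq t^2g(tM)\mathbb{E}[(S^{(m)})^2]\leq C_{\mathrm{LRV}}t^2n_l$ from \eqref{eq:laplace_small_quantity} and Lemma \ref{lemma:lrv_univar}. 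Part (i) is also fine, though the paper gets it more directly: for $t\leq 2^{-(1+1/\gamma_2)}A^{-\gamma_1/\gamma}$ the whole sum satisfies $t|\sum_{j\in K_A^{(l)}}\overline{X}_M(j)|\leq 2tM_0A\leq 1$, so \eqref{eq:laplace_small_quantity} applies to it in one shot with no recursion at all.

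The genuine gap is in part (ii): you keep the truncation level fixed at $M$ throughout the recursion, and with that choice the bookkeeping cannot close. Your decoupling error at level $k$ carries the prefactor $\exp(2tM\,\card(K_{A,k,j}^{(l)}))\leq\exp(2tMA2^{-k})$. At the top of the range in (ii), $t\asymp A^{\gamma_1(\gamma-1)/\gamma}$ and $M\leq 2^{1/\gamma_2}A^{\gamma_1/\gamma_2}$, so $tMA\asymp A^{\gamma_1}$ (using $1/\gamma=1/\gamma_1+1/\gamma_2$), and the prefactor is $\exp(cA^{\gamma_1}2^{-k})$. The gap decay at level $k$ is only $\exp(-cd_k^{\gamma_1})\asymp\exp(-c'A^{\gamma_1}2^{-k\gamma_1/\gamma})$, and since $\gamma_1/\gamma=1+\gamma_1/\gamma_2>1$, for every $k\geq1$ the blow-up $2^{-k}$ dominates the decay $2^{-k\gamma_1/\gamma}$: the product of the two exponentials is exponentially \emph{large} in $A$, not small. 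No tuning of $c_0,c_1,c_2,c_3,\kappa$ fixes this, because the mismatch is in the $k$-dependence of the exponents, not in the constants. The paper's proof resolves this by re-truncating at each step, replacing $M_{k-1}$ by $M_k=H^{-1}(\exp(-A^{\gamma_1}2^{-\gamma_1(l\wedge k/\gamma)}))$ so that $2^{1-k}tM_kA\leq 2^{-(1+\gamma_1/\gamma)}(c_12^{-k/\gamma}A)^{\gamma_1}$, i.e.\ the blow-up now decays at the same rate $2^{-k\gamma_1/\gamma}$ as the gap term and is at most half of it; the re-truncation itself costs an extra error $2^{k+1}\exp(-A^{\gamma_1}2^{-k\gamma_1/\gamma})\exp(2^{2-k}tM_{k-1}A)$ per level, which is controlled by the tail assumption \eqref{eq:moment_psi_gamma2_2} and is where the $6A^{\gamma}$ contribution in the statement comes from. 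You need to add this progressive truncation refinement (for the first $k_l+1\approx\gamma l$ levels, after which $M_k$ and $d_k$ stabilize and the remaining levels can be collapsed into one application of Lemma \ref{lemma:diff_laplace_joint_marginal}) for the argument to go through.
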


\textbf{Decomposition of partial sums}. 
After obtaining a Cantor-like set from an integer interval $\{1, \dots, A\}$, there are $A - \card(K_A^{l})$ indices left over. These indices can be reassembled and reused to construct Canto-like set. We perform such reassembling and construction recursively until the number of remaining indices is small enough, and the final remainder can be dealt easily. Recall $l$ is a positive integer, to be chosen later, which satisfies $A2^{-l} \geq (2 \vee 4c_0^{-1})$. For any $j = 0, 1, \dots$, we will define $A_j$ as the number of indices to start with at Step D$j$ of the decomposition. Since $A_j$ decreases as $j$ increases, the decomposition stops at Step D$m(A)$, where 
\begin{equation}\label{eq:def_m(A)}
    m(A) = \min\{m \in \mathbb{N}, A_m \leq A2^{-l}\}.
\end{equation}
Let the initial truncation parameter $T_0 = M = H^{-1}(\exp(-A_0^{\gamma_1}))$ with $A_0 = A$. We need to refine the truncation parameter at each step, define for $1 \leq j \leq m(A)-1$ that
\begin{equation}\label{eq:Tj}
    T_j = H^{-1}(\exp(-A_j^{\gamma_1})).
\end{equation}
Also, for $1 \leq j \leq m(A)-1$, $A_j$ may varies. We choose $l_j$ as
\begin{equation*}
    l_j = \min\{k \in \mathbb{N}, A_j2^{-k} \leq A_02^{-l}\}.
\end{equation*}
\begin{enumerate}
    \item[Step D$0$] Let $A_0 = A$, $T_0 = M$ and $\overline{X}_{T_0}^{(0)}(k) = \overline{X}_M(k)$ for any $k = 1, \dots, A_0$. Let $l_0 = l$. Let $K_{A_0}^{(l_0)}$ be the Cantor-like set defined from $\{1, \dots, A_0\}$.
    \item[Step D$1$] Let $A_1 = A_0 - \card(K_{A_0}^{(l_0)})$ and $\{i_1, \dots, i_{A_1}\} = \{1, \dots, A\} \setminus K_{A_0}^{(l_0)}$. Define for any $k = 1, \dots, A_1$, $\overline{X}_{T_1}^{(1)}(k) = \overline{X}_{T_1}(i_k)$. Let $K_{A_1}^{(l_1)}$ be the Cantor-like set defined from $\{i_1, \dots, i_{A_1}\}$.
    \item[Step D$j$] For $1 \leq j \leq m(A)-1$, let $A_j = A_{j-1} - \card(K_{A_{j-1}}^{(l_{j-1})})$ and $\{i_1, \dots, i_{A_j}\} = \{1, \dots, A\} \setminus \big(\bigcup_{k = 0}^{j-1} K_{A_{k}}^{(l_{k})}\big)$. Define for any $k = 1, \dots, A_j$, $\overline{X}_{T_j}^{(j)}(k) = \overline{X}_{T_j}(i_k)$. Let $K_{A_j}^{(l_j)}$ be the Cantor-like set defined from $\{i_1, \dots, i_{A_j}\}$.
    \item[Step D$m(A)$] Let $A_{m(A)} = A_{m(A)-1} - \card(K_{A_{m(A)-1}}^{(l_{m(A)-1})})$ and $\{i_1, \dots, i_{A_{m(A)}}\} = \{1, \dots, A\} \setminus \big(\bigcup_{k = 0}^{m(A)-1} K_{A_{k}}^{(l_{k})}\big)$. Define for any $k = 1, \dots, A_{m(A)}$, $\overline{X}_{T_{m(A)-1}}^{(m(A))}(k) = \overline{X}_{T_{m(A)-1}}(i_k)$.
\end{enumerate}

Regarding $m(A)$, we have the following facts.
\begin{itemize}
    \item For $0 \leq j \leq m(A)$, $A_j \leq A2^{-j}$, which follows from the fact of $K_A^{(l)}$.
    \item $m(A) \geq 1$, since $l \geq 1$ and $A_0 > A2^{-l}$. Moreover, $m(A) \leq l$, which follows from the definition of $m(A)$.
\end{itemize}
Therefore, we have the following decomposition.
\begin{align}\label{eq:decompose_partial_sum_2}
    &\sum_{i = 1}^A\overline{X}_M(i) = \sum_{i = 1}^{A_0}\overline{X}_{T_0}^{(0)}(i) = \sum_{i = K_{A_0}^{(l_0)}}\overline{X}_{T_0}^{(0)}(i) + \sum_{i = 1}^{A_1}\overline{X}_{T_0}^{(1)}(i) \nonumber \\
    =& \sum_{i = K_{A_0}^{(l_0)}}\overline{X}_{T_0}^{(0)}(i) + \sum_{i = 1}^{A_1}\big\{\overline{X}_{T_0}^{(1)}(i) - \overline{X}_{T_1}^{(1)}(i) \big\} + \sum_{i = 1}^{A_1}\overline{X}_{T_1}^{(1)}(i) \nonumber \\
    =& \sum_{i = K_{A_0}^{(l_0)}}\overline{X}_{T_0}^{(0)}(i) + \sum_{i = 1}^{A_1}\big\{\overline{X}_{T_0}^{(1)}(i) - \overline{X}_{T_1}^{(1)}(i) \big\} + \sum_{i = K_{A_1}^{(l_1)}}\overline{X}_{T_1}^{(1)}(i) + \sum_{i = 1}^{A_2}\overline{X}_{T_1}^{(2)}(i)\nonumber \\
    =& \sum_{i = K_{A_0}^{(l_0)}}\overline{X}_{T_0}^{(0)}(i) + \sum_{i = 1}^{A_1}\big\{\overline{X}_{T_0}^{(1)}(i) - \overline{X}_{T_1}^{(1)}(i) \big\} + \sum_{i = K_{A_1}^{(l_1)}}\overline{X}_{T_1}^{(1)}(i)+ \sum_{i = 1}^{A_2}\big\{\overline{X}_{T_1}^{(2)}(i) - \overline{X}_{T_2}^{(2)}(i)\big\} + \sum_{i = 1}^{A_2}\overline{X}_{T_2}^{(2)}(i)\nonumber \\
    =& \sum_{j = 0}^1\sum_{i = K_{A_j}^{(l_j)}}\overline{X}_{T_j}^{(j)}(i) + \sum_{j=1}^2\sum_{i = 1}^{A_j}\big\{\overline{X}_{T_{j-1}}^{(j)}(i) - \overline{X}_{T_j}^{(j)}(i) \big\} + \sum_{i = 1}^{A_2}\overline{X}_{T_2}^{(2)}(i)\nonumber \\
    =& \dots\nonumber \\
    =& \sum_{j = 0}^{m(A)-2}\sum_{i \in K_{A_j}^{(l_j)}}\overline{X}_{T_j}^{(j)}(i) + \sum_{j = 1}^{m(A)-1}\sum_{i = 1}^{A_j}\big\{\overline{X}_{T_{j-1}}^{(j)}(i) - \overline{X}_{T_j}^{(j)}(i)\big\} + \sum_{i = 1}^{A_{m(A)-1}}\overline{X}_{T_{m(A)-1}}^{(m(A)-1)}(i)\nonumber \\
    =& \sum_{j = 0}^{m(A)-1}\sum_{i \in K_{A_j}^{(l_j)}}\overline{X}_{T_j}^{(j)}(i) + \sum_{j = 1}^{m(A)-1}\sum_{i = 1}^{A_j}\big\{\overline{X}_{T_{j-1}}^{(j)}(i) - \overline{X}_{T_j}^{(j)}(i)\big\} + \sum_{i = 1}^{A_{m(A)}}\overline{X}_{T_{m(A)-1}}^{(m(A))}(i)\nonumber \\
    =& I + II + III.
\end{align} 
    
The next proposition gives an upper bound on the log-Laplace transform of \eqref{eq:decompose_partial_sum_2}.
Recall $\kappa$ given in \eqref{eq:kappa}, we define the following positive constants depending only on $c$, $\gamma_1$ and $\gamma_2$.
\begin{equation}\label{eq:def_mu_c4}
    \mu = (2(2 \vee 4c_0^{-1})/(1-\gamma))^{2/(1-\gamma)}, \;\; c_4 = 2^{1+1/\gamma_2}3^{\gamma_1/\gamma_2}c_0^{-\gamma_1/\gamma_2}
\end{equation}
and
\begin{equation}\label{eq:def_nu}
    \nu = (3c_4 + \kappa^{-1})^{-1}(1 - 2^{-\gamma_1(1-\gamma)/\gamma}) .
\end{equation}
Note that by definition, we have $\nu \in (0,1)$ and $c_4 > 1$.

\begin{proposition}\label{prop:bernstein_bound_cantor_2}
    Let $A$ be a positive integer and $\{X_i\}_{i = 1}^A$ be a sequence of random variables of the form \eqref{eq:nonstationary-2}, with mean zero. Assume \eqref{eq:dep_exponential_gamma1_2} and \eqref{eq:moment_psi_gamma2_2} hold.
    Let $M = H^{-1}(\exp(-A^{\gamma_1}))$. Then, if $A > \mu$, we have for any $t \in [0,\nu A^{-\gamma_1(1-\gamma)/\gamma})$ that
\begin{equation*}
    \log\mathbb{E}\Big[\exp\Big(t\sum_{i = 1}^A\overline{X}_M(i)\Big)\Big] \leq \frac{AV(A)t^2}{1-\nu^{-1}A^{\gamma_1(1-\gamma)/\gamma}t},
\end{equation*}
where $V(A) = (\nu_1 C_{\mathrm{LRV}} + \nu_2\exp\big(-\nu_3A^{\gamma_1(1-\gamma)}(\log A)^{-\gamma}\big)$ with constants $\nu_1, \nu_2, \nu_3 > 0$ depending only on $c$, $\gamma_1$ and $\gamma_2$, and $C_{\mathrm{LRV}}$ is defined in \eqref{eq:def_LRV_UB}.
\end{proposition}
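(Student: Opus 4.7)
The plan is to feed the ready-made decomposition \eqref{eq:decompose_partial_sum_2} into \Cref{prop:bernstein_bound_cantor} and then bundle the three resulting contributions. Recall that \eqref{eq:decompose_partial_sum_2} splits $\sum_{i=1}^{A}\overline X_{M}(i)$ into (I) the Cantor-set sums $\sum_{j=0}^{m(A)-1}\sum_{i\in K_{A_{j}}^{(l_{j})}}\overline X_{T_{j}}^{(j)}(i)$, (II) the re-truncation differences $\sum_{j=1}^{m(A)-1}\sum_{i=1}^{A_{j}}\bigl\{\overline X_{T_{j-1}}^{(j)}(i)-\overline X_{T_{j}}^{(j)}(i)\bigr\}$, and (III) the small remainder $\sum_{i=1}^{A_{m(A)}}\overline X_{T_{m(A)-1}}^{(m(A))}(i)$ over at most $A\,2^{-l}$ indices. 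The first step is to apply H\"older's inequality with weights $\{\alpha_{k}\}$ summing to $1$ to turn the joint log-Laplace transform of the three blocks into a weighted sum of the individual log-Laplace transforms evaluated at rescaled arguments $t\alpha_{k}^{-1}$; the weights will be roughly proportional to the relative ``size'' of each block so that each rescaled argument still falls in the admissible range of \Cref{prop:bernstein_bound_cantor}.

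For block (I), I would invoke \Cref{prop:bernstein_bound_cantor} separately for each level $j\in\{0,\ldots,m(A)-1\}$. In regime (i) of that proposition the contribution is the clean sub-Gaussian bound $C_{\mathrm{LRV}}(t\alpha_{k}^{-1})^{2}A_{j}$, while in regime (ii) there is the additional $\bigl((t\alpha_{k}^{-1})A_{j}l_{j}+6A_{j}^{\gamma}\bigr)\exp(-(2^{-l_{j}}c_{1}A_{j})^{\gamma_{1}}/2)$ error. Using the geometric decay $A_{j}\le A\,2^{-j}$ and the fact that $m(A)\le l$, the variance contributions telescope into a master $\nu_{1}C_{\mathrm{LRV}}t^{2}A$ term, while the regime-(ii) remainders are all dominated by the $j=0$ term, which is of the stated form $A^{\gamma}\exp(-\nu_{3}A^{\gamma_{1}(1-\gamma)}(\log A)^{-\gamma})$ once $l$ is tuned as described below.

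For block (II), the pointwise bound $|\overline X_{T_{j-1}}^{(j)}(i)-\overline X_{T_{j}}^{(j)}(i)|\le 2T_{j-1}$ and the tail assumption \eqref{eq:moment_psi_gamma2_2} give an effective variance $\mathbb{E}[(\overline X_{T_{j-1}}-\overline X_{T_{j}})^{2}]\lesssim T_{j-1}^{2}\exp(-A_{j}^{\gamma_{1}})$, so the associated log-Laplace transform, even after the crude use of independence-like Bernstein via \Cref{lemma:aggregate_log-laplace} on a martingale reconstruction or via direct bounding of $\mathbb{E}\exp(tZ)\le 1+(tT_{j-1})^{2}e^{tT_{j-1}}\mathbb{P}(|X|>T_{j})$, is of the same exponentially small order. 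For block (III), the crude bound $|III|\le 2A_{m(A)}T_{m(A)-1}$ combined with $A_{m(A)}\le A\,2^{-l}$ is absorbed into the same error. The choice $l=l(A)$ is then dictated by wanting $2^{-l}c_{1}A\asymp (\log A)^{1/\gamma_{1}}$, i.e.\ $l\asymp\log_{2}(A/(\log A)^{1/\gamma_{1}})$, which precisely converts $\exp(-(2^{-l}c_{1}A)^{\gamma_{1}}/2)$ into the target $\exp(-\nu_{3}A^{\gamma_{1}(1-\gamma)}(\log A)^{-\gamma})$ and gives the second term of $V(A)$.

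The main obstacle will be the bookkeeping at the threshold $t\uparrow\nu A^{-\gamma_{1}(1-\gamma)/\gamma}$: one has to verify that the rescaled arguments $t\alpha_{k}^{-1}$ remain inside the range $\kappa\bigl(A_{j}^{\gamma-1}\wedge 2^{l_{j}}/A_{j}\bigr)^{\gamma_{1}/\gamma}$ of \Cref{prop:bernstein_bound_cantor} \emph{simultaneously} for every $j$, and that the sub-Gaussian-like contributions combine into a Bernstein-form denominator $1-\nu^{-1}A^{\gamma_{1}(1-\gamma)/\gamma}t$ rather than a looser one. The constants $c_{0},c_{1},\kappa,\mu,\nu$ introduced in \eqref{eq:c0}--\eqref{eq:def_nu} are engineered exactly so that, under the standing condition $A>\mu$, these range constraints are compatible with $t<\nu A^{-\gamma_{1}(1-\gamma)/\gamma}$; the condition $A>\mu$ also ensures that the Cantor construction is well-defined at the chosen $l$, which in turn is what forces $n_{k}\ge 12$ in the verification inside \eqref{eq:requirement_c0_nk}.
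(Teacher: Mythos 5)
Your strategy is essentially the paper's: start from the ready-made decomposition \eqref{eq:decompose_partial_sum_2} into the Cantor sums, the re-truncation differences, and the short remainder; apply \Cref{prop:bernstein_bound_cantor} to the first block, the small-argument Laplace bound \eqref{eq:laplace_small_quantity} to the other two; and tensorize via the H\"older-with-weights argument encapsulated in \Cref{lemma:aggregate_log-laplace} so that the Bernstein denominators combine additively. Two issues, however, one cosmetic and one fatal.

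The cosmetic issue: you describe the weights in the H\"older step as ``roughly proportional to the relative `size' of each block.'' In \Cref{lemma:aggregate_log-laplace} the weight $u$ at each inductive step is chosen as a specific function of the Bernstein parameters $\sigma_j$, $c_j$ \emph{and} the current $t$, precisely so that the denominator becomes $1-Ct$ with $C=\sum c_j$; a naive size-proportional weight does not give this additivity. You flag this as the ``main obstacle,'' which is the right instinct, but the description of the fix is incorrect — the resolution is to invoke the lemma as stated, not to engineer constants.

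The fatal issue: your proposed choice of the Cantor depth $l$, namely $2^{-l}c_1A \asymp (\log A)^{1/\gamma_1}$, does not produce the stated $V(A)$. With that $l$ one has $(A2^{-l})^{\gamma_1}\asymp\log A$, hence $\exp\big(-(2^{-l}c_1A)^{\gamma_1}/2\big)\asymp A^{-\mathrm{const}}$, a merely polynomial decay in $A$. The paper instead selects $l=\min\{k\in\mathbb N:\;2^k\ge A^{\gamma}(\log A)^{\gamma/\gamma_1}\}$, giving $A2^{-l}\asymp A^{1-\gamma}(\log A)^{-\gamma/\gamma_1}$ and therefore $(A2^{-l})^{\gamma_1}\asymp A^{\gamma_1(1-\gamma)}(\log A)^{-\gamma}$, which is exactly the exponent appearing in $V(A)$. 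Your own sentence claiming that your $l$ ``precisely converts $\exp(-(2^{-l}c_1A)^{\gamma_1}/2)$ into the target $\exp(-\nu_3A^{\gamma_1(1-\gamma)}(\log A)^{-\gamma})$'' is arithmetically inconsistent with the $l$ you wrote down; with your $l$ the second term of $V(A)$ decays only polynomially, so the bound you would obtain is strictly weaker than the proposition's and would in turn weaken the tail bound in \Cref{thm:bernstein_exp_subExp_nonlinear-2} downstream. Fixing $l$ to the paper's choice, and then verifying $A>\mu$ guarantees $A2^{-l}\ge(2\vee 4c_0^{-1})$, closes the gap.
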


\subsection{Proofs of propositions}
\begin{proof}[Proof of Proposition \ref{prop:bernstein_bound_cantor}]
    Recall the definitions \eqref{eq:cantor_decomp_1} and \eqref{eq:cantor_decomp_2}, we have for $k = 2, \dots, l$ and $j = 1, \dots, 2^k$ that
    \begin{equation*}
    K_A^{(l)} = \bigcup_{i=1}^{2}K_{A,1,i}^{(l)} \;\; \text{and} \;\; K_{A,k-1,j}^{(l)} = \bigcup_{i = 2j-1}^{2j}K_{A,k,i}^{(l)}.
    \end{equation*}
    Each block at previous step is split into two sub-blocks at the current step. The key of this proof is to upper bound the telescoping distance between the log-Laplace transform of the block and the sum of the log-Laplace transforms of two sub-blocks at each step. Our proof contains $l$ steps. Let $k_l = \max\{k \in \mathbb{N}, k/\gamma < l\}$. Since $\gamma < 1$, $k_l < \gamma l \leq l-1$, we need at least $k_l + 2$ steps. At each step $k = 1, \dots, k_l+1$, we apply Lemma \ref{lemma:diff_laplace_joint_marginal} and adapt the truncation parameter to a smaller value. While at steps $k_l + 2, \dots, l$, the truncation parameter no longer needs to be adapted.
    Let $M_0 = M$. Define the truncation parameter at Step $k$ for any $k = 1, \dots, k_l+1$ as
    \begin{equation*}
        M_k = H^{-1}(\exp(-A^{\gamma_1}2^{-\gamma_1(l \wedge k/\gamma)})),
    \end{equation*}
    and the truncation parameter maintains $M_{k_l+1}$ after Step $k_l+1$.
    Since $H^{-1}(y) = (\log(e/y))^{1/\gamma_2}$ for any $y \leq e$, we have for any $x \geq 1$,
    \begin{equation}\label{eq:property_H^-1}
        H^{-1}(\exp(-x^{\gamma_1})) = (1 + x^{\gamma_1})^{1/\gamma_2} \leq 2^{1/\gamma_2}x^{\gamma_1/\gamma_2}.
    \end{equation}
    Note that for any $k = 0, \dots, k_l$, $A2^{-k/\gamma} \geq A2^{-l} \geq 1$, we have that
    \begin{equation}\label{eq:mj_ub}
        M_k \leq 2^{1/\gamma_2}(A2^{-(l \wedge k/\gamma)})^{\gamma_1/\gamma_2} \;\; \text{for any} \;\; k = 0, \dots, k_l+1.
    \end{equation}
    \textbf{Step $1$.}
    Recall that for $i = 1, 2$, $\card(K_{A,1,i}^{(l)}) \leq A/2$ and for any $j = 1, \dots, A$, $|\overline{X}_{M_0}(j)| \leq 2M_0$. Applying Lemma \ref{lemma:diff_laplace_joint_marginal}, we have for any $t > 0$ that
    \begin{equation*}
    \begin{aligned}
        &\Big|\mathbb{E}\Big[\exp\Big(t\sum_{j \in K_{A}^{(l)}}\overline{X}_{M_0}(j)\Big)\Big] - \prod_{i=1}^2\mathbb{E}\Big[\exp\Big( t\sum_{j \in K_{A,1,i}^{(l)}}\overline{X}_{M_0}(j) \Big)\Big]\Big|\\
        \leq& t\exp(2tAM_0)\Big\Vert \sum_{j \in K_{A,1,2}}\{\overline{X}_{M_0}(j) - \overline{X}_{M_0, \{n_1, -\infty\}}(j)\} \Big\Vert_2\\
        \leq& t\exp(2tAM_0)\sum_{j \in K_{A,1,2}^{(l)}} \sum_{s = j-n_1}^{\infty}\delta_{s,2} \leq \frac{tA}{2}\exp(2tAM_0)\exp(-cd_0^{\gamma_1}),
    \end{aligned}
    \end{equation*}
    where $n_1$ is the maximal integer in $K_{A,1,1}^{(l)}$, $d_0$ is the gap between $K_{A,1,1}^{(l)}$ and $K_{A,1,2}^{(l)}$, the second inequality follows from the triangle inequality and the definition of $\delta_{j,2}$, and the third inequality follows from the facts that $\card(K_{A,1,2}^{(l)}) = A/2$ and $j - n_1 \geq d_0$ for any $j \in K_{A,1,2}^{(l)}$ and the assumption \eqref{eq:dep_exponential_gamma1_2}. Since $\overline{X}_{M_0}(j)$ are centered and by Jensen's inequality, we have that the Laplace transforms are all greater than $1$. By the inequality 
    \begin{equation}\label{eq:elementary_inequ_1}
        |\log(x) - \log(y)| \leq |x -y| \;\; \text{for} \;\; x,y \geq 1,
    \end{equation}
    we have that
    \begin{equation}\label{eq:decomp_step1}
    \begin{aligned}
        &\Big|\log\mathbb{E}\Big[\exp\Big(t\sum_{j \in K_{A}^{(l)}}\overline{X}_{M_0}(j)\Big)\Big] - \sum_{i=1}^2\log\mathbb{E}\Big[\exp\Big( t\sum_{j \in K_{A,1,i}^{(l)}}\overline{X}_{M_0}(j) \Big)\Big]\Big|\\
        \leq& \frac{tA}{2}\exp(2tAM_0)\exp(-cd_0^{\gamma_1}).
    \end{aligned}
    \end{equation}
    We adapt the truncation parameter to $M_1$. By the inequality 
    \begin{equation}\label{eq:elementary_inequ_2}
        |e^{tx} - e^{ty}| \leq |t||x-y|(e^{|tx|} \vee e^{|ty|}),
    \end{equation}
    we have for $i = 1, 2$ and any $t >0$ that
    \begin{equation*}
    \begin{aligned}
        &\Big|\mathbb{E}\Big[\exp\Big( t\sum_{j \in K_{A,1,i}^{(l)}}\overline{X}_{M_0}(j) \Big)\Big] - \mathbb{E}\Big[\exp\Big( t\sum_{j \in K_{A,1,i}^{(l)}}\overline{X}_{M_1}(j) \Big)\Big]\Big|\\
        \leq& te^{tAM_0}\sum_{j \in K_{A,1,i}^{(l)}} \mathbb{E}[|\overline{X}_{M_0}(j)-\overline{X}_{M_1}(j)|] \leq 2te^{tAM_0}\sum_{j \in K_{A,1,i}^{(l)}} \mathbb{E}[|\psi_{M_0}(X_j) -\psi_{M_1}(X_j)|]\\
        \leq& 2tM_0e^{tAM_0}\sum_{j \in K_{A,1,i}^{(l)}} \mathbb{P}(|X_j| \geq M_1) \leq 2tAM_0e^{tAM_0}H(M_1)\\
        =& 2tAM_0e^{tAM_0}\exp(-A^{\gamma_1}2^{-\gamma_1/\gamma}) \leq 2e^{2tAM_0}\exp(-A^{\gamma_1}2^{-\gamma_1/\gamma}),
    \end{aligned}
    \end{equation*}
    where the first inequality follows from that 
    \begin{equation*}
        \Big|t\sum_{j \in K_{A,1,i}^{(l)}}\overline{X}_{M_1}(j)\Big| \leq \Big|t\sum_{j \in K_{A,1,i}^{(l)}}\overline{X}_{M_0}(j)\Big| \leq tAM_0,
    \end{equation*}
    and the triangle inequality, the second and the third inequalities follow from the facts about $\overline{X}_M(j)$, the fourth inequality follows from the assumption \eqref{eq:moment_psi_gamma2_2}, the equality follows from the definition of $M_1$, and the last inequality follows from that $1 + x \leq e^x$.
    By the inequality \eqref{eq:elementary_inequ_1}, we have for $i = 1,2$ and any $t > 0$ that
    \begin{equation}\label{eq:adapt_trunc_step1}
    \begin{aligned}
        &\Big|\log\mathbb{E}\Big[\exp\Big( t\sum_{j \in K_{A,1,i}^{(l)}}\overline{X}_{M_0}(j) \Big)\Big] - \log\mathbb{E}\Big[\exp\Big( t\sum_{j \in K_{A,1,i}^{(l)}}\overline{X}_{M_1}(j) \Big)\Big]\Big|\\
        \leq& 2e^{2tAM_0}\exp(-A^{\gamma_1}2^{-\gamma_1/\gamma}),
    \end{aligned}
    \end{equation}
    Combining \eqref{eq:decomp_step1} and \eqref{eq:adapt_trunc_step1}, we have that
    \begin{equation}\label{eq:together_step1}
    \begin{aligned}
        &\Big|\log\mathbb{E}\Big[\exp\Big(t\sum_{j \in K_{A}^{(l)}}\overline{X}_{M_0}(j)\Big)\Big] - \sum_{i=1}^2\log\mathbb{E}\Big[\exp\Big( t\sum_{j \in K_{A,1,i}^{(l)}}\overline{X}_{M_1}(j) \Big)\Big]\Big|\\
        \leq& \frac{tA}{2}\exp(2tAM_0)\exp(-cd_0^{\gamma_1}) + 4\exp(2tAM_0)\exp(-A^{\gamma_1}2^{-\gamma_1/\gamma}).
    \end{aligned}
    \end{equation}
    
    \textbf{Step k.} For any $k = 2, \dots, k_l+1$, we use the same arguments as in Step 1. Note that $\card(K_{A,k,i}^{(l)}) \leq 2^{-k}A$, for any $i = 1, \dots, 2^k$, we have that
    \begin{equation*}
    \begin{aligned}
        &\Big|\sum_{i = 1}^{2^{k-1}}\log\mathbb{E}\Big[\exp\Big(t\sum_{j \in K_{A, k-1,i}^{(l)}}\overline{X}_{M_{k-1}}(j)\Big)\Big] - \sum_{i=1}^{2^k}\log\mathbb{E}\Big[\exp\Big( t\sum_{j \in K_{A,k,i}^{(l)}}\overline{X}_{M_k}(j) \Big)\Big]\Big|\\
        \leq& 2^{k-1}\frac{tA}{2^k}\exp(t2M_{k-1}A/2^{k-1})\exp(-cd_{k-1}^{\gamma_1}) + 2^{k+1}\exp(t2M_{k-1}A/2^{k-1})\exp(-A^{\gamma_1}2^{-\gamma_1(l \wedge k/\gamma)})\\
        =& \Big[\frac{tA}{2}\exp(-cd_{k-1}^{\gamma_1}) + 2^{k+1}\exp(-A^{\gamma_1}2^{-\gamma_1(l \wedge k/\gamma)})\Big]\exp(2^{2-k}tM_{k-1}A).
    \end{aligned}
    \end{equation*}

    After $k_l+1$ steps, we upper bound all telescoping distances to the sum of log-Laplace transforms for $K_{A,k_l+1,i}$, $i = 1, \dots, 2^{k_l+1}$. Combining them together using the triangle inequality, we have that
    \begin{align}\label{eq:together_stepkl}
        &\Big|\log\mathbb{E}\Big[\exp\Big(t\sum_{j \in K_{A}^{(l)}}\overline{X}_{M_0}(j)\Big)\Big] - \sum_{i=1}^{2^{k_l+1}}\log\mathbb{E}\Big[\exp\Big( t\sum_{j \in K_{A,k_l+1,i}^{(l)}}\overline{X}_{M_{k_l+1}}(j) \Big)\Big]\Big| \nonumber \\
        \leq& \frac{tA}{2}\sum_{k = 1}^{k_l+1}\big\{\exp(-cd_{k-1}^{\gamma_1})\exp(2^{2-k}tM_{k-1}A)\big\}+ \sum_{k = 1}^{k_l}\big\{2^{k+1}\exp(-A^{\gamma_1}2^{-k\gamma_1/\gamma})\exp(2^{2-k}tM_{k-1}A)\big\}\nonumber \\
        &+ 2^{k_l+1}\exp(-A^{\gamma_1}2^{-\gamma_1l})\exp(2^{1-k_l}tM_{k_l}A).
    \end{align}

    Now we consider the steps $k = k_l+2, \dots, l$. These steps can be simplified as one step, since at these steps, by the definitions of $d_{k-1}$ and $M_{k-1}$, both the size of gaps and the truncation parameter remain unchanged. We consider this extra step. For any $i = 1, \dots, 2^{k_l+1}$, $\sum_{j \in K_{A, k_l+1, i}^{(l)}}\overline{X}_{M_{k_l}+1}$ is a sum of $2^{l-k_l-1}$ sub-blocks, each of size $n_l$ and bounded by $2M_{k_l+1}n_l$. In additional, the sub-blocks are equidistant with size $d_{k_l+1}$ between two neighbouring sub-blocks. By Lemma \ref{lemma:diff_laplace_joint_marginal}, we have that
    \begin{equation}\label{eq:together_step_extra}
    \begin{aligned}
        &\Big|\log\mathbb{E}\Big[\exp\Big(t\sum_{j \in K_{A, k_l+1,i}^{(l)}}\overline{X}_{M_{k_l+1}}(j)\Big)\Big] - \sum_{s=(i-1)2^{l-k_l-1}+1}^{i2^{l-k_l-1}}\log\mathbb{E}\Big[\exp\Big( t\sum_{j \in I_{l,s}}\overline{X}_{M_{k_l+1}}(j) \Big)\Big]\Big|\\
        \leq& tn_l2^{l-k_l-1}\exp(2^{l-k_l}tM_{k_l+1}n_l).
    \end{aligned}
    \end{equation}
    
    Combining \eqref{eq:together_stepkl} and \eqref{eq:together_step_extra}, we have that
    \begin{align} \label{eq:together}
        &\Big|\log\mathbb{E}\Big[\exp\Big(t\sum_{j \in K_{A}^{(l)}}\overline{X}_{M_0}(j)\Big)\Big] - \sum_{i=1}^{2^{l}}\log\mathbb{E}\Big[\exp\Big( t\sum_{j \in I_{l,i}}\overline{X}_{M_{k_l+1}}(j) \Big)\Big]\Big| \nonumber \\
        \leq& \frac{tA}{2}\sum_{k = 1}^{k_l+1}\big\{\exp(-cd_{k-1}^{\gamma_1})\exp(2^{2-k}tM_{k-1}A)\big\} + \sum_{k = 1}^{k_l}\big\{2^{k+1}\exp(-A^{\gamma_1}2^{-k\gamma_1/\gamma})\exp(2^{2-k}tM_{k-1}A)\big\}\nonumber \\
        &+ 2^{k_l+1}\exp(-A^{\gamma_1}2^{-\gamma_1l})\exp(2^{1-k_l}tM_{k_l}A) + tn_l2^{l}\exp(-cd_{k_l+1}^{\gamma_1})\exp(2^{l-k_l}tM_{k_l+1}n_l).
    \end{align} 
    Due to the definition of $d_j$ given in \eqref{eq:gap_size_j-1}, we have for any $j = 0, \dots, l-1$ that
    \begin{equation*}
        d_j+1 \geq \lfloor c_0A2^{-(l \wedge j/\gamma)} \rfloor,
    \end{equation*}
    where for $x \in \mathbb{R}$, $\lfloor x \rfloor$ is the largest integer no greater than $x$. 
    Moreover, since $c_0A2^{-(l\wedge j/\gamma)} \geq 2$ by assumption, we have
    \begin{equation*}
        d_j \geq (d_j+1)/2 \geq c_0A2^{-(l \wedge j/\gamma)-2}.
    \end{equation*}
    Recall also that $\gamma l < k_l + 1 \leq l-1$ and $2^ln_l \leq A$.
    Thus, we have that
    \begin{align*}
        \eqref{eq:together} \leq& \frac{tA}{2}\sum_{k = 1}^{k_l+1}\big\{\exp(-c(c_02^{-2}A2^{- (k-1)/\gamma})^{\gamma_1} + 2^{2-k}tM_{k-1}A)\big\}\\
        &+ \sum_{k = 1}^{k_l}\big\{2^{k+1}\exp(-A^{\gamma_1}2^{-k\gamma_1/\gamma} + 2^{2-k}tM_{k-1}A)\big\} + 2^{k_l+1}\exp(-A^{\gamma_1}2^{-\gamma_1l}+2^{1-k_l}tM_{k_l}A)\\
        &+ tA\exp(-c(c_02^{-2}A2^{-l})^{\gamma_1})\exp(2^{-k_l}tM_{k_l+1}A)\\
        =& \frac{tA}{2}\sum_{k = 0}^{k_l}\big\{\exp(-(c_1A2^{-k/\gamma})^{\gamma_1} + 2^{1-k}tM_{k}A)\big\}\\
        &+ \sum_{k = 0}^{k_l-1}\big\{2^{k+2}\exp(-(c_1A2^{-k/\gamma})^{\gamma_1} + 2^{1-k}tM_{k}A)\big\}\\
        &+ 2^{k_l+1}\exp(-(A2^{-l})^{\gamma_1}+2^{1-k_l}tM_{k_l}A)+ tA\exp(-(c_1A2^{-l})^{\gamma_1}+2^{-k_l}tM_{k_l+1}A),
    \end{align*}
    where the equality is due to the definition $c_1 = \min\{c^{1/\gamma_1}c_0/4, 2^{-1/\gamma}\}$.
    
    By \eqref{eq:mj_ub}, $1/\gamma = 1/\gamma_1 + 1/\gamma_2$ with $\gamma < 1$ and $\gamma l < k_l + 1 \leq l-1$, we have the following facts
    \begin{itemize}
        \item $2^{1-k}M_{k}A \leq 2^{1+1/\gamma_2}(2^{-k}A)^{\gamma_1/\gamma}$ for any $k = 0, \dots, k_l$,
        \item $M_{k_l+1} \leq 2^{1/\gamma_2}(A2^{-l})^{\gamma_1/\gamma_2} \leq 2^{1/\gamma_2}(A2^{-\gamma l})^{\gamma_1/\gamma_2}$,
        \item $2^{-k_l}M_{k_l+1}A = 2AM_{k_l+1}2^{-(k_l+1)} \leq 2^{1+1/\gamma_2}A^{\gamma_1/\gamma}2^{-\gamma l\gamma_1/\gamma_2-\gamma l} \leq 2^{1+1/\gamma_2}A^{\gamma_1/\gamma}2^{-\gamma_1l}$,
        \item and \begin{align*}
            & 2^{1-k_l}M_{k_l}A \leq 2^{1+1/\gamma_2}A^{\gamma_1/\gamma}2^{-k_l}(2^{-k_l/\gamma})^{\gamma_1/\gamma_2} \leq 2^{1+1/\gamma_2+\gamma_1/\gamma}(A2^{-k_l-1})^{\gamma_1/\gamma} \\
            \leq & 2^{2+(1+\gamma_1)/\gamma_2}(A2^{-\gamma l})^{\gamma_1/\gamma} \leq 2^{2+(1+\gamma_1)/\gamma_2}A^{\gamma_1/\gamma}2^{-\gamma_1l}.
        \end{align*}
    \end{itemize}
    Recall that $c_2 = 2^{-(2+\gamma_1/\gamma+1/\gamma_2)}c_1^{\gamma_1}$, and for $t \leq c_2A^{\gamma_1(\gamma-1)/\gamma}$, we have that
    \begin{align*}
        \eqref{eq:together}
        \leq& \frac{tA}{2}\sum_{k = 0}^{k_l}\big\{\exp(-(c_1A2^{-k/\gamma})^{\gamma_1} + 2^{-(1+\gamma_1/\gamma)}(c_12^{-k/\gamma}A)^{\gamma_1})\big\}\\
        &+ \sum_{k = 0}^{k_l-1}\big\{2^{k+2}\exp(-(c_1A2^{-k/\gamma})^{\gamma_1} + 2^{-(1+\gamma_1/\gamma)}(c_12^{-k/\gamma}A)^{\gamma_1})\big\}\\
        &+ 2^{k_l+1}\exp(-(c_1A2^{-l})^{\gamma_1}+2^{-1}(c_12^{-l}A)^{\gamma_1}) + tA\exp(-(c_1A2^{-l})^{\gamma_1}+2^{-(1+\gamma_1/\gamma)}(c_12^{-l}A)^{\gamma_1})\\
        \leq& \frac{tA}{2}\sum_{k = 0}^{k_l}\exp(-(c_1A2^{-k/\gamma})^{\gamma_1}/2) + \sum_{k = 0}^{k_l-1}\big\{2^{k+2}\exp(-(c_1A2^{-k/\gamma})^{\gamma_1}/2)\big\}\\
        &+ (2^{k_l+1} + tA)\exp(-(c_1A2^{-l})^{\gamma_1}/2)\\
        \leq& \frac{tA}{2}(k_l+1)\exp(-(c_1A2^{-k_l/\gamma})^{\gamma_1}/2) + (2^{k_l+2}-2)\exp(-(c_1A2^{-(k_l-1)/\gamma})^{\gamma_1}/2)\\
        &+ (2^{k_l+1} + tA)\exp(-(c_1A2^{-l})^{\gamma_1}/2)\\
        \leq& (tAl + 6A^{\gamma})\exp(-2^{-1}(2^{-l}c_1A)^{\gamma_1}),
    \end{align*}
    where the last inequality follows from $2^{k_l} \leq 2^{l\gamma} \leq A^{\gamma}$, $k_l+1 \leq l-1$ and $l > 2$.
    
    We upper bound the log-Laplace transform of $\sum_{j \in I_{l,i}}\overline{X}_{M_{k_l+1}}(j)$ in \eqref{eq:together}. The sum is bounded by $2M_{k_l+1}n_l \leq n_l2^{1+1/\gamma_2}(A2^{-l})^{\gamma_1/\gamma_2} \leq 2^{1+1/\gamma_2}(A2^{-l})^{\gamma_1/\gamma}$, which is due to \eqref{eq:mj_ub} and $n_l \leq A2^{-l}$. Note that the function $x \mapsto g(x) = x^{-2}(e^x-x-1)$ is increasing on $\mathbb{R}$ and $g(1) < 1$. For any centered random variable $U \in \mathbb{R}$ such that $|U| \leq M$, and any $t > 0$, we have that
    \begin{equation}\label{eq:laplace_small_quantity}
        \mathbb{E}[\exp(tU)] \leq 1+t^2g(tM)\mathbb{E}[U^2] \leq \exp\big(t^2g(tM)\mathbb{E}[U^2]\big).
    \end{equation}
    Let $t \leq 2^{-(1+1/\gamma_2)}(A2^{-l})^{-\gamma_1/\gamma} = c_3(A2^{-l})^{-\gamma_1/\gamma}$, we have $t\sum_{j \in I_{l,i}}\overline{X}_{M_{k_l+1}}(j)$ is bounded by $1$. Thus, by \eqref{eq:laplace_small_quantity} it holds that
    \begin{equation}\label{eq:individual_logLaplace}
    \begin{aligned}
        &\log\mathbb{E}\Big[\exp\Big(t\sum_{j \in I_{l,i}}\overline{X}_{M_{k_l+1}}(j)\Big)\Big] \leq t^2g(1)\mathbb{E}\Big[\Big(\sum_{j \in I_{l,i}}\overline{X}_{M_{k_l+1}}(j)\Big)^2\Big]\leq C_{\mathrm{LRV}}t^2n_l,
    \end{aligned}
    \end{equation}
    where the second inequality follows from Lemma \ref{lemma:lrv_univar} and the fact that the functional dependence measure of the truncated process is bounded by that of the original process, i.e. for any $M > 0$
    \begin{equation*}
        \big\Vert \overline{X}_{M}(s) - \overline{X}_{M}^*(s) \big\Vert_2 = \Vert \psi_{M}(X_s) - \psi_{M}(X_s^*) \Vert_2 \leq \delta_{s,2}, 
    \end{equation*}
    where the inequality follows from that $\psi_M(x)$ is Lipschitz continuous.
    
    Recall \eqref{eq:mj_ub} that $\big|\sum_{j \in K_{A}^{(l)}}\overline{X}_{M_0}(j)\big| \leq 2M_0A \leq 2^{1+1/\gamma_2}A^{\gamma_1/\gamma}$.
    If $t \leq 2^{-(1+1/\gamma_2)}A^{-\gamma_1/\gamma}$, then $t\sum_{j \in K_{A}^{(l)}}\overline{X}_{M_0}(j)$ is bounded by $1$. By \eqref{eq:laplace_small_quantity}, we have for any $t \leq 2^{-(1+1/\gamma_2)}A^{-\gamma_1/\gamma}$
    \begin{equation*}
        \begin{aligned}
        \log\mathbb{E}\Big[\exp\Big(t\sum_{j \in K_{A}^{(l)}}\overline{X}_{M_0}(j)\Big)\Big] \leq C_{\mathrm{LRV}}t^2A.
    \end{aligned}
    \end{equation*}
    Recall the definition of $\kappa$ given in \eqref{eq:kappa}. For any $2^{-(1+1/\gamma_2)}A^{-\gamma_1/\gamma} \leq t \leq \kappa\big(A^{\gamma_1(\gamma-1)/\gamma} \vee (A2^{-l})^{-\gamma_1/\gamma}\big)$, combining \eqref{eq:together} and \eqref{eq:individual_logLaplace} leads to
    \begin{equation*}
    \begin{aligned}
        &\log\mathbb{E}\Big[\exp\Big(t\sum_{j \in K_{A}^{(l)}}\overline{X}_{M_0}(j)\Big)\Big]\leq (tAl + 6A^{\gamma})\exp(-(2^{-l}c_1A)^{\gamma_1}/2) + C_{\mathrm{LRV}}t^2n_l2^l\\
        \leq& (tAl + 6A^{\gamma})\exp(-(2^{-l}c_1A)^{\gamma_1}/2) + C_{\mathrm{LRV}}t^2A.
    \end{aligned}
    \end{equation*}
    where the first inequality follows from the triangle inequality and that the log-Laplace transform of a centered random is nonnegative.
\end{proof}

\begin{proof}[Proof of Proposition \ref{prop:bernstein_bound_cantor_2}]
The proof is essentially bounding all the three terms in \eqref{eq:decompose_partial_sum_2}.

\textbf{Log-Laplace transform of the term $I$ in \eqref{eq:decompose_partial_sum_2}.} For any $j = 0, \dots, m(A)-1$, by definition we have that
\begin{equation}\label{eq:decomp_property_Aj}
    2^{-l_j}A_j = 2^{1-l_j}A_j/2 > 2^{-l}A/2 \geq (1 \vee 2c_0^{-1}).
\end{equation}
By Proposition \ref{prop:bernstein_bound_cantor}, for any $j = 0, \dots, m(A)-1$, we have for any $t \leq 2^{-(1+1/\gamma_2)}A_j^{-\gamma_1/\gamma}$ that
\begin{equation}\label{eq:laplace_whole_set_II_case1}
\begin{aligned}
    &\log\mathbb{E}\Big[\exp\Big(t\sum_{i \in K_{A_j}^{(l_j)}}\overline{X}_{T_j}^{(j)}(i)\Big)\Big] \leq C_{\mathrm{LRV}}t^2A_j.
\end{aligned}
\end{equation}
Moreover, for any 
$2^{-(1+1/\gamma_2)}A_j^{-\gamma_1/\gamma} \leq t \leq \kappa(A_j^{\gamma-1} \wedge (2^{l_j}/A_j))^{\gamma_1/\gamma}$, it holds that
\begin{align}\label{eq:laplace_whole_set_II_case2}
    &\log\mathbb{E}\Big[\exp\Big(t\sum_{i \in K_{A_j}^{(l_j)}}\overline{X}_{T_j}^{(j)}(i)\Big)\Big]\leq (tA_jl_j + 4A_j^{\gamma})\exp(-(2^{-l_j}c_1A_j)^{\gamma_1}/2) + C_{\mathrm{LRV}}t^2A_j \nonumber \\
    \leq& t^2\Big((C_{\mathrm{LRV}}A_j)^{1/2} +  \big(l_i^{1/2}A_j^{1/2}2^{1/2+1/(2\gamma_2)}A_j^{\gamma_1/(2\gamma)} + 2A_j^{\gamma/2}2^{1+1/\gamma_2}A_j^{\gamma_1/\gamma}\big) \exp(-(2^{-l_j}c_1A_j)^{\gamma_1}/4)\Big)^2.
\end{align}
Note that, by construction, we have the following facts.
\begin{itemize}
    \item For each $j = 0, \dots, m(A)-1$, it holds that $l_j \leq l \leq A$.
    \item For any $j = 0, \dots, m(A)$, it holds that $A_j \leq A2^{-j}$.
    \item For any $j = 0, \dots, m(A)-1$, it holds that $2^{-l-1}A < 2^{-l_j}A_j \leq 2^{-l}A$.
\end{itemize}
We have that
\begin{equation*}
\begin{aligned}
    \eqref{eq:laplace_whole_set_II_case2} 
    \leq& t^2\bigg[\bigg(\frac{C_{\mathrm{LRV}}A}{2^{j}}\bigg)^{1/2} +  \bigg(\frac{A^{1+\frac{\gamma_1}{2\gamma}}}{(2^j)^{\frac{1}{2}+\frac{\gamma_1}{2\gamma}}}2^{\frac{1}{2}+\frac{1}{2\gamma_2}} + 2^{2+\frac{1}{\gamma_2}}\frac{A^{\frac{\gamma}{2}+\frac{\gamma_1}{\gamma}}}{(2^j)^{\frac{\gamma}{2}+\frac{\gamma_1}{\gamma}}}\bigg) \exp\bigg(-\frac{c_1^{\gamma_1}}{2^{2+\gamma_1}}\bigg(\frac{A}{2^l}\bigg)^{\gamma_1}\bigg)\bigg]^2\\
    \leq& t^2\bigg[\bigg(\frac{C_{\mathrm{LRV}}A}{2^{j}}\bigg)^{1/2} +  \bigg(\frac{A^{1+\frac{\gamma_1}{\gamma}}}{(2^j)^{\frac{\gamma}{2}+\frac{\gamma_1}{2\gamma}}}2^{3+\frac{1}{\gamma_2}}\bigg) \exp\bigg(-\frac{c_1^{\gamma_1}}{2^{2+\gamma_1}}\bigg(\frac{A}{2^l}\bigg)^{\gamma_1}\bigg)\bigg]^2
    = t^2\sigma_{I,j}^2.
\end{aligned}
\end{equation*}
Combining \eqref{eq:laplace_whole_set_II_case1} and \eqref{eq:laplace_whole_set_II_case2}, we have for any $t \leq \kappa(A_j^{\gamma-1} \wedge (2^{l_j}/A_j))^{\gamma_1/\gamma}$, it holds that
\begin{equation}\label{eq:log-laplace_termI}
\begin{aligned}
    \log\mathbb{E}\Big[\exp\Big(t\sum_{i \in K_{A_j}^{(l_j)}}\overline{X}_{T_j}^{(j)}(i)\Big)\Big] \leq t^2\sigma_{I,i}^2.
\end{aligned}
\end{equation}

\textbf{Log-Laplace transform of the term $II$ in \eqref{eq:decompose_partial_sum_2}.} Before proceeding, we state the following fact. For any $j = 0, \dots, m(A)-1$, it holds that
\begin{equation}\label{eq:relation_Ajs}
    A_{j+1} \geq c_0A_j/3.
\end{equation}
Recall in the construction of $K_A^{(l)}$ that $A - \card(K_A^{(l)}) = \sum_{i = 0}^{l-1}2^id_j \geq \lfloor c_0A \rfloor - 1$ by the definition of $d_i$. We thus have that $A_{j+1} \geq \lfloor c_0A_j \rfloor - 1$. Since by \eqref{eq:decomp_property_Aj} $c_0A_j \geq 2$, we have that $\lfloor c_0A_j \rfloor - 1 \geq (\lfloor c_0A_j \rfloor + 1)/3 \geq c_0A_j/3$, which leads to \eqref{eq:relation_Ajs}.

By the property of the truncation function, for any $j = 1, \dots, m(A)-1$, we have  that
\begin{equation}\label{eq:bound_Aj_diff_trun}
\begin{aligned}
    &\Big|\sum_{i = 1}^{A_j}\big(\overline{X}_{T_{j-1}}^{(j)}(i) - \overline{X}_{T_j}^{(j)}(i)\big)\Big|\leq \sum_{i = 1}^{A_j}\big\{|\psi_{T_{j-1}}(X_i) - \psi_{T_{j}}(X_i)| + \mathbb{E}[|\psi_{T_{j-1}}(X_i) - \psi_{T_{j}}(X_i)|]\big\}\\
    \leq& T_{j-1}\sum_{i = 1}^{A_j}\big(\mathbbm{1}\{|X_i| > T_j\} + \mathbb{P}(|X_i| > T_j)\big).
\end{aligned}
\end{equation}
Then, \eqref{eq:bound_Aj_diff_trun} leads to
\begin{equation}\label{eq:2mom_Aj_diff_trun}
\begin{aligned}
    &\mathbb{E}\Big[\Big|\sum_{i = 1}^{A_j}\big(\overline{X}_{T_{j-1}}^{(j)}(i) - \overline{X}_{T_j}^{(j)}(i)\big)\Big|^2\Big] \leq (2T_{j-1})^2\mathbb{E}\bigg[\Big(\sum_{i = 1}^{A_j}\frac{\mathbbm{1}\{|X_i| > T_j\} + \mathbb{P}(|X_i| > T_j)}{2}\Big)^2\bigg]\\
    \leq& (2T_{j-1})^2A_j\sum_{i = 1}^{A_j}\mathbb{E}\bigg[\Big(\frac{\mathbbm{1}\{|X_i| > T_j\} + \mathbb{P}(|X_i| > T_j)}{2}\Big)^2\bigg] \leq (2A_jT_{j-1})^2H(T_j).
\end{aligned}
\end{equation}
By \eqref{eq:Tj} and \eqref{eq:relation_Ajs}, \eqref{eq:bound_Aj_diff_trun} and \eqref{eq:2mom_Aj_diff_trun} can be further bounded respectively as
\begin{equation}\label{eq:bounded_2AjTj-1}
\begin{aligned}
    &\Big|\sum_{i = 1}^{A_j}\big(\overline{X}_{T_{j-1}}^{(j)}(i) - \overline{X}_{T_j}^{(j)}(i)\big)\Big| \leq 2A_jT_{j-1} \leq 2A_j2^{1/\gamma_2}A_{j-1}^{\gamma_1/\gamma_2} \leq (3/c_0)^{\gamma_1/\gamma_2}2^{1+1/\gamma_2}A_j^{\gamma_1/\gamma} = c_4A_j^{\gamma_1/\gamma},
\end{aligned}
\end{equation}
and
\begin{equation*}
\begin{aligned}
    &\mathbb{E}\Big[\Big|\sum_{i = 1}^{A_j}\big(\overline{X}_{T_{j-1}}^{(j)}(i) - \overline{X}_{T_j}^{(j)}(i)\big)\Big|^2\Big] \leq (c_4A_j^{\gamma_1/\gamma})^2\exp(-A_j^{\gamma_1}),
\end{aligned}
\end{equation*}
where $c_4 = 2^{1+1/\gamma_2}3^{\gamma_1/\gamma_2}c_0^{-\gamma_1/\gamma_2}$ defined in \eqref{eq:def_mu_c4}. Since $A_j \leq A2^{-j}$ and for any 
    \[
        t \leq (2c_4)^{-1}(2^j/A)^{\gamma_1(1-\gamma)/\gamma},
    \]
    we have
\begin{equation*}
    c_4tA_j^{\gamma_1/\gamma} \leq 2^{-1}A_j^{\gamma_1 - \gamma_1/\gamma}A_j^{\gamma_1/\gamma} = A_j^{\gamma_1}/2.
\end{equation*}
Then by \eqref{eq:laplace_small_quantity}, we have for any $t \leq (2c_4)^{-1}(2^j/A)^{\gamma_1(1-\gamma)/\gamma}$ that
\begin{equation}\label{eq:log-laplace_termII}
\begin{aligned}
    &\log\mathbb{E}\Big[ \exp\Big(t \sum_{i = 1}^{A_j}\big(\overline{X}_{T_{j-1}}^{(j)}(i) - \overline{X}_{T_j}^{(j)}(i)\big)\Big) \Big] \leq t^2c_4^2\Big(\frac{A}{2^j}\Big)^{\frac{2\gamma_1}{\gamma}}\exp\Big(-\frac{A_j^{\gamma_1}}{2}\Big)\\
    \leq& t^2\bigg(c_4\Big(\frac{A}{2^j}\Big)^{\frac{\gamma_1}{\gamma}}\exp\Big(-\frac{1}{4}\Big(\frac{A}{2^l}\Big)^{\gamma_1}\Big)\bigg)^2 = t^2\sigma_{II,i}^2,
\end{aligned}
\end{equation}
where the last inequality follows from that $A_j \geq A_{m(A)-1} > A2^{-l}$ by the definition of $m(A)$ given in \eqref{eq:def_m(A)}.

\textbf{Log-Laplace transform of the term $III$ in \eqref{eq:decompose_partial_sum_2}.} We have that
\begin{equation*}
    \Big|\sum_{i = 1}^{A_{m(A)}}\overline{X}_{T_{m(A)-1}}^{(m(A))}(i)\Big| \leq 2T_{m(A)-1}A_{m(A)} \leq c_4A_{m(A)}^{\gamma_1/\gamma} \leq c_4(A2^{-l})^{\gamma_1/\gamma},
\end{equation*}
where the second inequality follows from the same arguments in \eqref{eq:bounded_2AjTj-1}, and the third inequality follows from that $A_{m(A)} \leq A2^{-l}$ by the definition of $m(A)$. Then by \eqref{eq:laplace_small_quantity}, for any $t \leq c_4^{-1}(A2^{-l})^{-\gamma_1/\gamma}$, we have that
\begin{equation}\label{eq:log-laplace_termIII}
\begin{aligned}
    &\log\mathbb{E}\Big[\exp\Big(t\sum_{i = 1}^{A_{m(A)}}\overline{X}_{T_{m(A)-1}}^{(m(A))}(i)\Big)\Big] \leq t^2C_{\mathrm{LRV}}A2^{-l} \leq t^2\Big[\Big(\frac{C_{\mathrm{LRV}}A}{2^l}\Big)^{1/2}\Big]^2 =  t^2\sigma_{III}^2.
\end{aligned}
\end{equation}

Let 
\begin{equation*}
    C = \frac{1}{\kappa}\sum_{j = 0}^{m(A)-1}\Big(\Big(\frac{A}{2^j}\Big)^{1-\gamma} \vee \frac{A}{2^l}\Big)^{\gamma_1/\gamma} + 2c_4\sum_{j = 1}^{m(A)-1}\Big(\frac{A}{2^j}\Big)^{\gamma_1(1-\gamma)/\gamma} + c_4\Big( \frac{A}{2^l}\Big)^{\gamma_1/\gamma},
\end{equation*}
and
\begin{equation*}
\begin{aligned}
    &\sigma = \sum_{j = 0}^{m(A)-1}\sigma_{I,j} + \sum_{j = 1}^{m(A)-1}\sigma_{II,j} + \sigma_{III}\\
    =& \sum_{j = 0}^{m(A)-1}\bigg\{\bigg(\frac{C_{\mathrm{LRV}}A}{2^{j}}\bigg)^{1/2} +  \bigg(\frac{A^{1+\frac{\gamma_1}{\gamma}}}{(2^j)^{\frac{\gamma}{2}+\frac{\gamma_1}{2\gamma}}}2^{3+\frac{1}{\gamma_2}}\bigg) \exp\bigg(-\frac{c_1^{\gamma_1}}{2^{2+\gamma_1}}\bigg(\frac{A}{2^l}\bigg)^{\gamma_1}\bigg)\bigg\}\\
    &+ \sum_{j = 1}^{m(A)-1}\bigg\{c_4\bigg(\frac{A}{2^j}\bigg)^{\gamma_1/\gamma}\exp\bigg(-\frac{1}{4}\bigg(\frac{A}{2^l}\bigg)^{\gamma_1}\bigg)\bigg\} + \bigg(\frac{C_{\mathrm{LRV}}A}{2^l}\bigg)^{1/2}.
\end{aligned}
\end{equation*}
Note that $m(A) \leq l$ and $l \leq \log A/\log 2$. We select $l$ as
\begin{equation}\label{eq:select_l}
    l = \min\{k \in \mathbb{N}, 2^k \geq A^{\gamma}(\log A)^{\gamma/\gamma_1}\}.
\end{equation}
This selection satisfy our initial requirement for $l$, i.e.~$A2^{-l} \geq (2 \vee 4c_0^{-1})$, if
\begin{equation}\label{eq:condition_select_l}
    (2 \vee 4c_0^{-1})(\log A)^{\gamma/\gamma_1} \leq A^{1-\gamma}.
\end{equation}
We now show that \eqref{eq:condition_select_l} holds. For $A \geq 3$, since $\gamma_1/\gamma > 1$, we have that
\begin{equation*}
    (2 \vee 4c_0^{-1})(\log A)^{\gamma/\gamma_1} \leq (2 \vee 4c_0^{-1})\log A \leq 2(1-\gamma)^{-1}(2 \vee 4c_0^{-1})A^{(1-\gamma)/2},
\end{equation*}
where the last inequality follows from the inequality $\delta\log u \leq u^{\delta}$ for any $\delta, u > 0$. Recall the definition  
$\mu = \big(2(2 \vee 4c_0^{-1})/(1-\gamma)\big)^{\frac{2}{1-\gamma}}$ given in \eqref{eq:def_mu_c4}.
This implies that if $A > \mu$, then \eqref{eq:condition_select_l} holds and the selection of $l$ is valid. Moreover, recall
$\nu = \big(3c_4 + \kappa^{-1} \big)^{-1}(1 - 2^{-\gamma_1(1-\gamma)/\gamma})$ given in \eqref{eq:def_nu}.
Note that
\begin{equation*}
    \sum_{j=0}^{m(A)-1}\Big(\frac{A}{2^j}\Big)^{\gamma_1(1-\gamma)/\gamma} \leq \frac{A^{\gamma_1(1-\gamma)/\gamma}}{1 - 2^{-\gamma_1(1-\gamma)/\gamma}},
\end{equation*}
\begin{equation*}
    \sum_{j=0}^{m(A)-1}\Big(\frac{A}{2^l}\Big)^{\gamma_1/\gamma} \leq l\Big(\frac{A}{2^l}\Big)^{\gamma_1/\gamma} \leq \Big(\frac{A}{2^l}(\log A)^{\gamma/\gamma_1}\Big)^{\gamma_1/\gamma} \leq (A^{\gamma_1(1-\gamma)/\gamma}),
\end{equation*}
\begin{equation*}
    \sum_{j = 1}^{m(A)-1}\Big(\frac{A}{2^j}\Big)^{\gamma_1(1-\gamma)/\gamma} \leq \sum_{j=0}^{m(A)-1}\Big(\frac{A}{2^j}\Big)^{\gamma_1(1-\gamma)/\gamma},
\end{equation*}
and
\begin{equation*}
    \Big(\frac{A}{2^l}\Big)^{\gamma_1/\gamma} \leq l\Big(\frac{A}{2^l}\Big)^{\gamma_1/\gamma} \leq (A^{\gamma_1(1-\gamma)/\gamma}).
\end{equation*}
Therefore, it holds that
\begin{equation*}
    C \leq \Big(3c_4 + \frac{1}{\kappa}\Big)\frac{A^{\gamma_1(1-\gamma)/\gamma}}{1 - 2^{-\gamma_1(1-\gamma)/\gamma}} = \nu^{-1}A^{\gamma_1(1-\gamma)/\gamma}.
\end{equation*}

Since $\gamma_1/\gamma > 1$, it holds that
\begin{equation*}
    \sum_{j = 0}^{m(A)-1}2^{-j/2} \leq \frac{1}{1-2^{-1/2}}, \;\; \sum_{j = 0}^{m(A)-1}2^{-j\big(\frac{\gamma}{2}+\frac{\gamma_1}{2\gamma}\big)} \leq \frac{1}{1-2^{-\big(\frac{\gamma}{2}+\frac{\gamma_1}{2\gamma}\big)}} \leq \frac{1}{1-2^{-1/2}},
\end{equation*}
and
\begin{equation*}
    \sum_{j = 1}^{m(A)-1}2^{-j\frac{\gamma_1}{\gamma}} \leq \frac{2^{-\frac{\gamma_1}{\gamma}}}{1-2^{-\frac{\gamma_1}{\gamma}}} \leq 1.
\end{equation*}
Therefore, it holds that
\begin{equation*}
\begin{aligned}
    \sigma \leq& 4(C_{\mathrm{LRV}}A)^{1/2} + 28\times 2^{\frac{1}{\gamma_2}}A^{1+\frac{\gamma_1}{\gamma}}\exp\bigg(-\frac{c_1^{\gamma_1}}{2^{2+\gamma_1}}\bigg(\frac{A}{2^l}\bigg)^{\gamma_1}\bigg) + c_4\exp\bigg(-\frac{1}{4}\bigg(\frac{A}{2^l}\bigg)^{\gamma_1}\bigg).
\end{aligned}
\end{equation*}
Since by our choice of $l$, $A2^{-l} \geq 2^{-1}A^{1-\gamma}(\log A)^{-\gamma/\gamma_1}$, there exist constants $\nu_1, \nu_2, \nu_3 > 0$ depending only on $c$, $\gamma_1$ and $\gamma_2$ such that
\begin{equation*}
\begin{aligned}
    \sigma^2 \leq& A(\nu_1 C_{\mathrm{LRV}} + \nu_2\exp\big(-\nu_3A^{\gamma_1(1-\gamma)}(\log A)^{-\gamma}\big) = AV(A).
\end{aligned}
\end{equation*}
Combining \eqref{eq:decompose_partial_sum_2}, \eqref{eq:log-laplace_termI}, \eqref{eq:log-laplace_termII} and \eqref{eq:log-laplace_termIII}, and by Lemma \ref{lemma:aggregate_log-laplace}, we have for any $t \in [0,\nu A^{-\gamma_1(1-\gamma)/\gamma})$ that
\begin{equation*}
    \log\mathbb{E}\Big[\exp\Big(t\sum_{i = 1}^A\overline{X}_M(i)\Big)\Big] \leq \frac{AV(A)t^2}{1-\nu^{-1}A^{\gamma_1(1-\gamma)/\gamma}t}.
\end{equation*}
\end{proof}

\end{document}